\numberwithin{equation}{section}
\newcommand{\stirling}[2]{\genfrac{[}{]}{0pt}{}{#1}{#2}}
\newcommand{\stirlingb}[2]{\genfrac{\{}{\}}{0pt}{}{#1}{#2}}
\newcommand{\Res}{\mathop{\mathrm{Res}}\nolimits}
\newcommand{\ii}{{\rm{i}}}
\newcommand{\mA}{{\tilde a}}
\newcommand{\mB}{{\tilde b}}
\newcommand{\mtA}{{\tilde {\mathfrak{a}}}}
\newcommand{\mtB}{{\tilde {\mathfrak{b}}}}
\newcommand{\lA}{a}
\newcommand{\lB}{b}
\newcommand{\rA}{\mathfrak{a}}
\newcommand{\rB}{\mathfrak{b}}
\newcommand{\bB}{\mathbb{B}}
\newcommand{\bS}{\mathbb{S}}
\newcommand{\bI}{\mathbb{I}}
\newcommand{\bJ}{\mathbb{J}}
\def\dint{\textup{d}}
\newcommand{\Mod}[1]{\ (\mathrm{mod}\ #1)}
\newcommand{\E}{\mathbb E}
\newcommand{\R}{\mathbb{R}}
\newcommand{\N}{\mathbb{N}}
\newcommand{\C}{\mathbb{C}}
\newcommand{\Z}{\mathbb{Z}}
\renewcommand{\P}{\mathbb{P}}
\renewcommand{\Re}{\operatorname{Re}}
\newcommand{\Vol}{\mathop{\mathrm{Vol}}\nolimits}
\newcommand{\sgn}{\mathop{\mathrm{sgn}}\nolimits}
\newcommand{\conv}{\mathop{\mathrm{conv}}\nolimits}
\newcommand{\eps}{\varepsilon}
\newcommand{\bsl}{\backslash}
\newcommand{\ind}{\mathbbm{1}}
\newcommand{\dd}{{\rm d}}
\newcommand{\eee}{{\rm e}}
\theoremstyle{plain}
\newtheorem{theorem}{Theorem}[section]
\newtheorem{lemma}[theorem]{Lemma}
\newtheorem{proposition}[theorem]{Proposition}
\theoremstyle{definition}
\newtheorem{example}[theorem]{Example}
\theoremstyle{remark}
\newtheorem{remark}[theorem]{Remark}
\begin{document}

\author{Zakhar Kabluchko}
\address{Zakhar Kabluchko: Institut f\"ur Mathematische Stochastik,
Westf\"alische Wilhelms-Universit\"at M\"unster,
Orl\'eans-Ring 10,
48149 M\"unster, Germany}
\email{zakhar.kabluchko@uni-muenster.de}

\title[Angles of Random Simplices]{Angles of Random Simplices and Face Numbers of Random Polytopes}

\keywords{Stochastic geometry, random polytope, random simplex, solid angle, sum of angles, beta distribution, beta prime distribution, Poisson-Voronoi tessellation, typical cell, Poisson hyperplane tessellation, zero cell, approximation of convex bodies, recurrence relations}

\subjclass[2010]{Primary: 52A22, 60D05; Secondary: 52A55, 52B11, 60G55, 52A27.}

\begin{abstract}
Pick $d+1$ points uniformly at random on the unit sphere in $\mathbb R^d$. What is the expected value of  the angle sum of the simplex spanned by these points? Choose $n$ points uniformly at random in the $d$-dimensional ball. What is the expected number of faces of their convex hull?  We answer these and some other, seemingly unrelated, questions of stochastic geometry.  To this end, we compute expected internal angles of random simplices whose vertices are independent random points sampled from one of the following  $d$-dimensional distributions:
(i) the beta distribution with the density proportional to $(1-\|x\|^2)^{\beta}$, where $x$ is belongs to the unit ball in $\mathbb R^d$;
(ii) the beta' distribution with the density proportional to $(1+\|x\|^2)^{-\beta}$ on the whole of $\mathbb R^d$.
These results imply explicit formulae for the expected face numbers of the following random polytopes:  (a) the typical Poisson-Voronoi cell; (b)  the zero cell of the Poisson hyperplane tessellation; (c) beta and beta' polytopes defined as convex hulls of i.i.d.\ samples from the corresponding distributions.
\end{abstract}

\maketitle
\section{Main results}
\subsection{Introduction}\label{subsec:intro}
The aim of the present paper is to compute  expectations of several quantities appearing in stochastic geometry. In particular, we solve the following problems:
\vspace*{0.5mm}
\begin{quote}
\textsc{Problem A.} 
Compute the expected number of $k$-dimensional faces of the convex hull of $n$ points sampled uniformly at random in the unit ball (or on the unit sphere) in $\R^d$. 
\end{quote}
\vspace*{0.5mm}
\begin{quote}
\textsc{Problem B.} 
Compute the expected internal angles of a simplex spanned by $d+1$ points sampled uniformly at random in the unit ball (or on the unit sphere) in $\R^{d}$.
\end{quote}
\vspace*{0.5mm}
\begin{quote}
\textsc{Problem C.} Compute the expected number of $k$-dimensional faces of the typical cell of the Poisson-Voronoi tessellation in $\R^d$.
\end{quote}
\vspace*{0.5mm}
\begin{quote}
\textsc{Problem D.} Compute the expected number of $k$-dimensional faces of the zero cell of the Poisson hyperplane tessellation in $\R^d$.
\end{quote}
\vspace*{0.5mm}



Let us start with a discussion of Problem A. Pick $n$ points $X_1,\ldots,X_n$ uniformly at random in the unit ball in $\R^d$, where $n\geq d+1$.
Let $P=[X_1,\ldots,X_n]$ denote their convex hull.  Problem~A asks to determine the expected $f$-vector of the polytope $P$, i.e.\ the vector whose entries are the expectations $\E f_0(P), \ldots, \E f_{d-1}(P)$,  where $f_0(P)$ is the number of vertices of $P$, $f_1(P)$ is the number of edges of $P$, and, more generally, $f_k(P)$ is the number of $k$-dimensional faces of $P$.
If the uniform distribution of $X_1,\ldots,X_n$ is replaced by the $d$-dimensional standard Gaussian one, a formula for the complete expected $f$-vector  in terms of internal and external angles of the regular simplex has been obtained by Affentranger and Schneider~\cite{AS92} (whose result has to be combined with the observation of Baryshnikov and Vitale~\cite{baryshnikov_vitale}).
For distributions other than the Gaussian one, satisfactory results are available only  in a few special cases.
For example, Buchta and M\"uller~\cite{buchta_mueller} derived an explicit formula for the expected number of facets (that is, faces of dimension $k=d-1$) of $P$. In the case when the points are chosen uniformly on the sphere, a similar formula was obtained by Buchta, M\"uller and Tichy~\cite{buchta_mueller_tichy}, and more general distributions were treated by Affentranger~\cite{affentranger}. Since all faces of $P$ are simplices with probability $1$, relation $d f_{d-1}(P) = 2 f_{d-2}(P)$ also yields the value of $\E f_{d-2}(P)$.  The expected volume of $P$ is known explicitly~\cite{affentranger}, see also~\cite[Corollary~3.9]{beta_polytopes_temesvari}, and a trick called Efron's identity~\cite{efron} yields a formula for $\E f_0(P)$; see, e.g., \cite[Proposition~3.4]{beta_polytopes_temesvari}. A special case of this approach with $n= d+2$ is related to the classical Sylvester four-point problem which asks to compute the probability that four points chosen uniformly at random in a planar region have a convex hull which is a triangle. If the region is a disk, the answer is $35/(12\pi^2)$. Kingman~\cite{kingman_secants} considered a generalization of Sylvester's problem in which $n=d+2$ points are chosen uniformly at random in a $d$-dimensional unit ball and computed explicitly the probability that their convex hull is a simplex. It is an exercise to check that both, the expected volume of the simplex spanned by the first $d+1$ points and the expected number of vertices of $P$  in the special case when $n=d+2$, can be expressed through this probability.  We shall not attempt to review the vast subject of random polytopes (and, more generally, geometric probability) that started with Sylvester's question and refer to~\cite{SW08} for a monograph treatment, \cite{buchta_polyeder,schneider_polytopes,hug_rev,calka_classical_problems} for overview articles and to~\cite{buchta_zufallspolygone,buchta_reitzner,ReitznerCombinatorialStructure} for some highlights.

Previous works on random polytopes (including the papers cited above) used tools from stochastic and integral geometry~\cite{SW08}, most notably the Blaschke-Petkantschin formulae~\cite[Section~7.2]{SW08}, \cite{miles}.  In the present paper, we shall  completely solve Problems~A, B, C, D and some other problems of stochastic geometry by combining the integral-geometric approach of the paper~\cite{beta_polytopes} with the combinatorial analysis of certain new special functions related to the Schl\"afli function and having properties somewhat similar to the properties of the Stirling numbers of both kinds. The Schl\"afli function expresses the volumes of regular spherical simplices and appears as a limiting case of the functions studied here.

First of all, it is necessary to extend Problem~A to a more general class of random polytopes. We shall be interested in the so-called beta and beta' polytopes defined as the convex hulls of random samples from the following two families of distributions: the beta distributions with the density proportional to
$
(1-\|x\|^2)^{\beta}
$
on the unit ball in $\R^d$, and the beta' distributions whose density is proportional to $(1+\|x\|^2)^{-\beta}$ on the whole of $\R^d$.
Both classes of distributions were introduced by Miles~\cite[Section~12]{miles}. They are characterized by a remarkable property, called the canonical decomposition, which  was discovered by Ruben and Miles~\cite{ruben_miles}. In~\cite{beta_polytopes}, this decomposition was combined with other tools from stochastic geometry to express the complete expected $f$-vectors of beta and beta' polytopes in terms of two sorts of quantities: the expected external angle sums of random simplices sampled from the beta and beta' distributions, and the expected internal angle sums of the same simplices. Precise definitions of these quantities,  denoted by $\bI_{n,k}(\alpha)$ and $\bJ_{n,k}(\beta)$ in the beta case and $\tilde \bI_{n,k}(\alpha)$ and $\tilde \bJ_{n,k}(\beta)$ in the beta' case, will be given below. Moreover, it was shown in~\cite{beta_polytopes} that several problems of stochastic geometry can be solved in terms of  $\bI_{n,k}(\alpha)$ and $\bJ_{n,k}(\beta)$ or $\tilde \bI_{n,k}(\alpha)$ and $\tilde \bJ_{n,k}(\beta)$, most notably the determination of the expected $f$-vectors of the typical Poisson-Voronoi cell and the zero cell of the Poisson hyperplane tessellation, as well as the constants appearing in the work of Reitzner~\cite{ReitznerCombinatorialStructure} on random polytopes approximating smooth convex bodies.

While the external angle sums $\bI_{n,k}(\alpha)$ and $\tilde \bI_{n,k}(\alpha)$ are easy to evaluate (which has already been done in~\cite{beta_polytopes}), no formula for the internal angle sums $\bJ_{n,k}(\beta)$ and $\tilde \bJ_{n,k}(\beta)$ has been provided in~\cite{beta_polytopes}. In~\cite{kabluchko_algorithm}, we described a recursive scheme which allows to compute $\bJ_{n,k}(\beta)$ and $\tilde \bJ_{n,k}(\beta)$ in finite time for every given values of $n,k,\beta$. The aim of the present paper is to solve this recursive scheme, thus providing an explicit formula for the internal angle sums and, consequently,  completely solving Problems~A, B, C, D and some other problems of stochastic geometry.

We always aim at obtaining reasonably simple formulae for the quantities of interest.
Let us agree that the formulae are allowed to contain elementary functions, as well as multiple sums and multiple integrals thereof (including the Gamma function), but the multiplicity of sums and integrals is not allowed to depend on the parameters of the problem such as $d,n,k$. All formulae stated below can  easily be evaluated by  computer algebra systems.


\subsection{Internal angles}
Let us start with the necessary definitions. A $d$-dimensional \textit{simplex} in $\R^d$ is defined as the convex hull $[x_1,\ldots,x_n]$ of $n=d+1$ points $x_1,\ldots,x_n\in \R^{d}$ that are not contained in a common affine hyperplane.
The \textit{internal angle} of the simplex $T := [x_1,\ldots,x_n]$ at its face $F := [x_{i_1},\ldots,x_{i_k}]$, where $1\leq i_1<\ldots< i_k\leq n$, can be defined as
$$
\beta(F, T)
:=
\lim_{r\downarrow 0} \frac{\Vol_{d}(\bB_r^d(z)\cap T) }{\Vol_{d}(\bB_r^d(z))},
$$
where $\Vol_d$ denotes the Lebesgue measure in $\R^d$, $\bB_r^d(z)$ is the $d$-dimensional ball with radius $r>0$ centered at $z$, and $z$ is any point in $F$ not belonging to a face of smaller dimension. Note that we have chosen the units of measurement for angles in such a way that the full-space angle equals $1$. For each $k\in \{1,\ldots,n\}$ we let $\sigma_k(T)$ denote the sum of internal angles of $T$ at all its $k$-vertex faces, that is
$$
\sigma_k(T) = \sum_{\substack{1\leq i_1<\ldots< i_k\leq n\\F := [X_{i_1},\ldots,X_{i_k}]}} \beta(F, T).
$$
The sum of the measures of angles in any plane triangle is constant, but in dimensions $d\geq 3$ the angle sums $\sigma_k(T)$ are not constant except for the trivial values $\sigma_{d+1}(T) = 1$ and $\sigma_{d}(T) = \frac 12 (d+1)$. The range of all possible values of $\sigma_k(T)$, for every fixed $k$ and $d$,  has been determined by Perles and Shephard~\cite[pp.~208--209]{perles_shephard}.  It is therefore natural to ask what are the \textit{average} values of these quantities, which leads to Problem~B stated above. 
The aim of the present paper is to solve a more general version of this problem in which the vertices of the random simplex  have a beta or a beta' distribution.

\subsection{Beta simplices and their internal angles}\label{subsec:beta_simplices}
A random vector in $\R^d$ is said to have a $d$-dimensional \textit{beta distribution} with parameter $\beta>-1$ if its Lebesgue density is given by
\begin{equation}\label{eq:def_f_beta}
f_{d,\beta}(x)=c_{d,\beta} \left( 1-\left\| x \right\|^2 \right)^\beta\ind_{\{\|x\| <  1\}},\qquad x\in\R^d,\qquad
c_{d,\beta}= \frac{ \Gamma\left( \frac{d}{2} + \beta + 1 \right) }{ \pi^{ \frac{d}{2} } \Gamma\left( \beta+1 \right) },
\end{equation}
where $\|x\| = (x_1^2+\ldots+x_d^2)^{1/2}$ denotes the Euclidean norm of the vector $x= (x_1,\ldots,x_d)\in\R^d$. Since the special case of the normalizing constant $c_{d,\beta}$ with $d=1$ will frequently appear below, we introduce the shorthand
\begin{equation}\label{eq:c_beta}
c_{\beta} := c_{1,\beta} = \frac{ \Gamma\left(\beta + \frac{3}{2} \right) }{  \sqrt \pi\, \Gamma (\beta+1)}.
\end{equation}

\begin{example}
The uniform distribution on the unit ball $\bB^{d}:=\{x\in\R^d\colon \|x\| \leq 1\}$ coincides with the beta distribution with parameter $\beta = 0$. The uniform distribution on the unit sphere $\bS^{d-1} := \{x\in \R^d\colon \|x\| = 1\}$ is the weak limit of the beta distribution as $\beta \downarrow -1$, see~\cite[p.~102]{beta_polytopes_temesvari}. The standard Gaussian distribution on $\R^d$ is the limit of the appropriately rescaled beta distribution as $\beta \to +\infty$; see~\cite[Lemma~1.1]{beta_polytopes}. All formulae of the present paper apply to the uniform distribution on $\bS^{d-1}$ by taking $\beta= -1$ and to the standard Gaussian distribution by letting $\beta\to+\infty$.
\end{example}

Let $X_1,\ldots,X_{n}$ be independent random points in $\R^{n-1}$ sampled from the beta distribution $f_{n-1,\beta}$, where $\beta\geq -1$. For $\beta=-1$, this just means that $X_1,\ldots,X_n$ have uniform distribution on the unit sphere.  The convex hull $[X_1,\ldots,X_n]$ is referred to as the $(n-1)$-dimensional \textit{beta simplex}. Miles~\cite{miles} completely characterized the distribution of the volume of the beta simplex by computing its moments.
Our aim is to compute explicitly the expected internal angles of these random simplices, denoted by
\begin{equation}
J_{n,k}(\beta) := \E \beta ([X_1,\ldots,X_{k}], [X_1,\ldots,X_{n}]),
\end{equation}
for all integer $n\geq 2$ and $k\in \{1,\ldots,n\}$. By exchangeability, the expected sum of internal angles at all $k$-vertex faces of the beta simplex $[X_1,\ldots,X_n]$ is then
\begin{equation}
\bJ_{n,k}(\beta) :=  \binom {n}{k}  J_{n,k}(\beta),
\end{equation}
Clearly, $\bJ_{n,n}(\beta) = J_{n,n}(\beta) = 1$ and $\bJ_{n,n-1}(\beta) = n J_{n,n-1}(\beta) = \frac n2$ for all $n\geq 2$. It is natural to put $\bJ_{1,1}(\beta) = J_{1,1}(\beta) = 1$.  As another trivial example, we have $\bJ_{3,1} (\beta) = 1/2$  because the sum of angles in any plane triangle equals half the full angle.
In~\cite{kabluchko_angles}, we computed $\bJ_{n,k}(\beta)$ for $n=4$ and $n=5$ (which corresponds to simplices in dimensions $d=3$ and $d=4$). Some non-trivial values obtained there include
$$
\bJ_{4,1}(-1) = \frac 18,
\quad
\bJ_{5,1}(-1) = \frac{539}{288\pi^2}-\frac 16,
\quad
\bJ_{4,1}(0) = \frac {401}{2560},
\quad
\bJ_{5,1}(0) = \frac{1692197}{846720 \pi^2}-\frac 16.
$$
As explained at the end of~\cite{kabluchko_angles}, the method used there cannot be extended to higher dimensions. In~\cite{kabluchko_algorithm}, we derived recursive relations for $\bJ_{n,k}(\beta)$ which  yielded a complicated formula for these quantities. Although this formula allows us to compute $\bJ_{n,k}(\beta)$ in finitely many steps, its complexity rapidly increases with $n, k$ and $\beta$,  and it cannot be considered satisfactory.
In the present paper, we prove the following explicit formula for the quantities $\bJ_{n,k}(\beta)$, where $\beta\geq -1$.
\begin{theorem}\label{theo:bJ_formula_integral}
Let $n\geq 3$ be integer and $k\in \{1,\ldots,n\}$.  For all $\alpha \geq n-3$ we have
\begin{equation}\label{eq:J_nk_integral}
\bJ_{n,k}\left(\frac{\alpha - n + 1}{2}\right)
=
\binom nk \int_{-\pi/2}^{+\pi/2} c_{\frac{\alpha n}2} (\cos x)^{\alpha n + 1}
\left(\frac 12  + \ii \int_0^x  c_{\frac{\alpha-1}{2}} (\cos y)^{-\alpha-1}\dd y \right)^{n-k} \dd x.
\end{equation}
\end{theorem}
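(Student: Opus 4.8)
The plan is to prove the equivalent statement for the single internal angle $J_{n,k}(\beta)=\bJ_{n,k}(\beta)/\binom nk$ with $\beta=\tfrac{\alpha-n+1}{2}$, by first reducing the geometric quantity to an expected solid angle and then establishing a decoupling integral representation. Throughout I write $T=[X_1,\dots,X_n]$ and $F=[X_1,\dots,X_k]$, where $X_1,\dots,X_n$ are i.i.d.\ beta points in $\R^{n-1}$.

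First I would reduce the internal angle to a lower-dimensional solid angle. At a relative interior point $z$ of $F$ the tangent cone of $T$ splits as $\lin\{X_i-z:i\le k\}+\pos\{X_j-z:j>k\}$, so $\beta(F,T)$ equals the normalized solid angle, taken in the $(n-k)$-dimensional space $W:=(\lin\{X_2-X_1,\dots,X_k-X_1\})^\perp$, of the cone $\pos\{\Pi_W(X_j)-\Pi_W(X_1):j>k\}$, where $\Pi_W$ denotes orthogonal projection. Conditionally on $X_1,\dots,X_k$ the outer points $X_{k+1},\dots,X_n$ are i.i.d.\ beta, and since the projection of a beta law onto a fixed subspace is again beta with a shifted parameter, the projected points $\Pi_W(X_j)$ are i.i.d.\ $(n-k)$-dimensional beta of parameter $\beta+\tfrac{k-1}{2}=\tfrac{\alpha-(n-k)}{2}$. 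Hence $J_{n,k}(\beta)=\E\big[\angle_W\,\pos\{\Pi_W(X_j)-\Pi_W(X_1):j>k\}\big]$ is an expectation over $n-k$ i.i.d.\ points, which is the source of the exponent $n-k$ in \eqref{eq:J_nk_integral}. The apex shift by $\Pi_W(X_1)$ is the one genuinely delicate feature of the reduction; I would remove it using the Ruben--Miles canonical decomposition, which disintegrates the beta simplex into a radial factor and a configuration whose projection to $W$, relative to the face, is of beta (respectively beta$'$) type with apex at the origin.

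The analytic core, and the main obstacle, is to turn this expected solid angle into a single scalar integral of the shape $\int_{-\pi/2}^{\pi/2}(\text{weight})\cdot(\text{single-point factor})^{n-k}$. The mechanism is to introduce one auxiliary angular variable $x\in(-\pi/2,\pi/2)$ --- the tilt of a distinguished direction, equivalently the variable of a Fourier/Cauchy inversion of the indicator of the cone --- conditionally on which the $n-k$ projected points contribute independent, identical factors. I expect the outer weight to be the beta--sine density $c_{\alpha n/2}(\cos x)^{\alpha n+1}$: indeed $c_\gamma(\cos x)^{2\gamma+1}$ with $\gamma=\alpha n/2$ is exactly the law of $\arcsin U$ for a one-dimensional beta variable $U$ of parameter $\gamma$, hence normalized, which already forces the value $J_{n,n}=1$. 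The single-point factor is $\tfrac12+\ii\int_0^x c_{(\alpha-1)/2}(\cos y)^{-\alpha-1}\dd y$: the real part $\tfrac12$ is the unconditional probability that one projected point lies on a prescribed side of a central hyperplane, and the imaginary term encodes the conditioning on $x$. The crux is to justify this \emph{complex} factor. The genuine projected one-point law on a line is a beta--sine density with exponent $+\alpha$, whereas the factor above has the reflected exponent $-\alpha-1$ and an explicit $\ii$; this reflection is precisely the beta~$\leftrightarrow$~beta$'$ analytic continuation that converts the elementary, real, external-angle-type probabilities already evaluated in~\cite{beta_polytopes} into internal angles. Carrying out this continuation rigorously --- the contour inversion of the cone indicator together with the analytic deformation of the beta parameter --- is the step I expect to be hardest and the one demanding the most care with contours and convergence.

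Finally I would verify the formula against boundary data and the known recursion. The case $k=n-1$ comes out for free: $x\mapsto\int_0^x c_{(\alpha-1)/2}(\cos y)^{-\alpha-1}\dd y$ is odd while the weight $(\cos x)^{\alpha n+1}$ is even, so the imaginary contribution integrates to zero over the symmetric interval and only $\tfrac12\cdot 1=\tfrac12$ survives, matching $J_{n,n-1}=\tfrac12$. To obtain a fully rigorous proof I would then check that the right-hand side of~\eqref{eq:J_nk_integral} satisfies the recursion for $\bJ_{n,k}(\beta)$ derived in~\cite{kabluchko_algorithm} together with these initial values, which pins the solution uniquely. The admissible range $\alpha\ge n-3$, i.e.\ $\beta\ge-1$, must be examined at the endpoint $\beta=-1$ (the uniform law on $\bS^{n-2}$): there the inner integrand has exponent $-(n-2)$ and $\int_0^x(\cos y)^{-(n-2)}\dd y$ diverges as $x\to\pm\pi/2$, yet its product with $(\cos x)^{\alpha n+1}$ still vanishes at the endpoints, so the outer integral converges as an improper integral; this degenerate case is exactly where the interchange of expectation with the scalar integral needs separate justification.
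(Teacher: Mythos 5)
Your proposal is a plan with two routes, and in both the decisive step is missing. The geometric route (projection to $W$, Ruben--Miles decomposition, then a ``Fourier/Cauchy inversion'' producing the complex one-point factor $\frac 12+\ii\int_0^x c_{\frac{\alpha-1}{2}}(\cos y)^{-\alpha-1}\,\dd y$) stops exactly where the mathematics begins: you yourself flag the contour inversion and analytic continuation as the hardest step, but you give no mechanism by which the reflected exponent $-\alpha-1$ and the factor $\ii$ would emerge from the projected one-point law (whose exponent is $+\alpha$), and no such direct derivation is carried out in the paper either --- the paper deliberately avoids arguing geometrically at this point.

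Your fallback route --- verify that the right-hand side of \eqref{eq:J_nk_integral} satisfies the relations of~\cite{kabluchko_algorithm} together with $\xi_{n,n}=1$, then invoke uniqueness --- is in outline exactly the paper's proof (Proposition~\ref{prop:bJ_unique} supplies the uniqueness). But you treat the verification as a routine check, whereas it is the entire content of Section~\ref{sec:proof_beta}: one must show that the sums $\sum_{m=k}^n(-1)^{m-k}\bI_{n,m}(\alpha)\,\xi_{m,k}$ vanish when $\xi_{m,k}$ is the proposed integral, and the coefficients $\bI_{n,m}(\alpha)$ are themselves integrals. The paper accomplishes this by encoding both families as $\lB\{\nu,\kappa\}$ (real integrals over $[-\pi/2,\pi/2]$, Equation~\eqref{eq:def_B}) and $\lA[\nu,\kappa]$ (contour integrals along $\ii\R$, Equation~\eqref{eq:def_A_0}), deriving integration-by-parts recurrences for each (Propositions~\ref{prop:B_relation} and~\ref{prop:A_0_relations}), proving that the bilinear sum $s_r(\kappa)$ is invariant under the shift $\kappa\mapsto\kappa-\frac 2\alpha$ (Proposition~\ref{prop:periodic}), and then evaluating it in the limit of large argument by Laplace asymptotics (Lemmas~\ref{lem:B_asymptotic} and~\ref{lem:A_0_asymptotic}) to obtain the biorthogonality $\sum_m(-1)^{m-k}\lB\{n,m\}\left(m+\frac 1\alpha\right)\lA\left[m+\frac 2\alpha,k+\frac 2\alpha\right]=\delta_{nk}$ of Proposition~\ref{prop:relations_A_B}. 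Your checks of the normalization ($k=n$, and the free case $k=n-1$ by oddness of the inner integral) are correct but only fix the diagonal of the system; without an argument for the vanishing of the off-diagonal sums --- or an honest execution of the geometric continuation --- there is no proof.
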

By making  the change of variables  $t=\tanh x$, $s=\tanh y$ in~\eqref{eq:J_nk_integral} and then substituting $t=\sin u$, $s=\sin v$ in the resulting formula, it is possible to obtain the following equivalent versions of~\eqref{eq:J_nk_integral}:
\begin{align}
\bJ_{n,k}\left(\frac{\alpha - n + 1}{2}\right)
&=
\binom nk \int_{-1}^{+1} c_{\frac{\alpha n}2} (1-t^2)^{\frac{\alpha n}2}
\left(\frac 12  + \ii \int_0^t  c_{\frac{\alpha-1}{2}} (1-s^2)^{-\frac \alpha2 -1}\dd s \right)^{n-k} \dd t \label{eq:bJ_nk_equiv1}
\\
&=
\binom nk \int_{-\infty}^{+\infty} c_{\frac{\alpha n}2} (\cosh u)^{-\alpha n - 2}
\left(\frac 12  + \ii \int_0^u  c_{\frac{\alpha-1}{2}} (\cosh v)^{\alpha}\dd v \right)^{n-k} \dd u. \label{eq:bJ_nk_equiv2}
\end{align}
If $\alpha$ is integer, the inner and then the outer integrals in these formulae can be evaluated exactly by standard integration techniques.  This gives explicit expressions for $\bJ_{n,k}(\beta)$ is $\beta:= (\alpha-n+1)/2\geq -1$ is integer or half-integer; see~\cite{kabluchko_algorithm} for the tables of $\bJ_{n,k}(\beta)$ for some values of $\beta$ including $\beta = 0$ and $\beta=-1$. In some cases, it is possible to evaluate the integral more efficiently  with the help of residue calculus.

\begin{theorem}\label{theo:bJ_formula_residue}
Let $n\geq 3$, $k\in \{1,\ldots,n\}$ and $\alpha \geq n-3$ be integer. If either (i) $\alpha$ is even and $n-k$ is odd, or (ii) both $\alpha$ and $n$ are odd, then
\begin{equation}\label{eq:J_nk_residue}
\bJ_{n,k}\left(\frac{\alpha - n + 1}{2}\right)
=
\binom nk
c_{\frac{\alpha n}2} \left(c_{\frac{\alpha-1}{2}}\right)^{n-k}
\pi  \Res\limits_{x=0} \left[\frac{\left(\int_{0}^x (\sin y)^{\alpha} \dd y\right)^{n-k}}{(\sin x)^{\alpha n +2}}\right].
\end{equation}
\end{theorem}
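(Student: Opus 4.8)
The plan is to derive Theorem~\ref{theo:bJ_formula_residue} directly from the trigonometric integral representation~\eqref{eq:J_nk_integral} of Theorem~\ref{theo:bJ_formula_integral}, reducing the real integral over $[-\pi/2, \pi/2]$ to a single residue at the origin. The starting point is to insert the constant $c_{(\alpha-1)/2}$ into the inner integral and write the integrand of~\eqref{eq:J_nk_integral} as a product of $c_{\alpha n/2}(\cos x)^{\alpha n +1}$ with the $(n-k)$-th power of a function involving the antiderivative of $(\cos y)^{-\alpha -1}$. The crucial observation I would exploit is that when $\alpha$ is an integer, the inner integrand $(\cos y)^{-\alpha -1}$ is meromorphic, and after absorbing the power of cosine appropriately one should be able to rewrite everything so that the complex factor $\tfrac12 + \ii \int_0^x c_{(\alpha-1)/2}(\cos y)^{-\alpha-1}\dd y$ becomes, up to constants, a ratio whose numerator is $\int_0^x (\sin y)^{\alpha}\dd y$.

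First I would perform the substitution needed to convert $(\cos y)^{-\alpha -1}$ terms into $(\sin y)^{\alpha}$ terms, most naturally via the shift $y \mapsto \pi/2 - y$ (or an equivalent complex rotation), since this interchanges $\cos$ and $\sin$ and will turn the outer factor $(\cos x)^{\alpha n+1}$ into a $(\sin x)$-power appearing in the denominator of~\eqref{eq:J_nk_residue}. Second, I would extend the real integral to a contour integral in the complex plane: the integrand, after the substitution, is a meromorphic function of $x$ whose only singularity inside a suitable contour is at $x=0$, of order $\alpha n +2$. The parity hypotheses---case~(i) $\alpha$ even, $n-k$ odd, and case~(ii) $\alpha$ and $n$ both odd---are precisely the conditions under which the boundary contributions of the contour cancel (by symmetry or periodicity), leaving only the residue at $0$. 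Third, I would apply the residue theorem, collecting the factor $\pi$ and the constants $c_{\alpha n/2}(c_{(\alpha -1)/2})^{n-k}$ exactly as they appear in~\eqref{eq:J_nk_residue}.

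The main obstacle I anticipate is the rigorous justification of the contour deformation and the vanishing of the non-residue boundary terms, which is where the parity conditions do their work. The factor $\tfrac12 + \ii\int_0^x(\dots)$ is only single-valued and the contour integral only closes up nicely when the integrand has the right symmetry under $x \mapsto x + \pi$ or $x \mapsto -x$; tracking how the power $n-k$ interacts with the $\pm$ sign picked up by $\int_0^x(\sin y)^\alpha\dd y$ and by $(\sin x)^{\alpha n +2}$ under these symmetries is the delicate combinatorial-analytic step. I expect that in case~(i) the oddness of $n-k$ combined with evenness of $\alpha$ makes the integrand odd across the relevant symmetry (so two halves of the contour combine), while in case~(ii) the joint oddness of $\alpha$ and $n$ yields $\alpha n$ odd, controlling the order and parity of the pole. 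Once the symmetry bookkeeping is settled, the identification of the residue with the stated expression is essentially a matter of rewriting the antiderivative $\int_0^x(\sin y)^\alpha \dd y$ and reading off the coefficient, which I would treat as routine.
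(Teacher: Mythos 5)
Your overall instinct---a contour/residue argument in which the parity hypotheses make boundary contributions match---is the right one, but the function you propose to run it on is the wrong one, and this is a genuine gap rather than a technicality. First, your ``crucial observation'' fails at the algebraic level: the shift $y\mapsto\pi/2-y$ turns $(\cos y)^{-\alpha-1}$ into $(\sin y)^{-\alpha-1}$, not into $(\sin y)^{\alpha}$, and no real substitution (nor any ``absorbing of powers of cosine'') can flip the sign of the exponent, or move the outer factor $(\cos x)^{\alpha n+1}$ from the numerator into the factor $(\sin x)^{\alpha n+2}$ in the denominator of~\eqref{eq:J_nk_residue}. The exponent flip $(-\alpha-1,\,\alpha n+1)\rightsquigarrow(\alpha,\,-(\alpha n+2))$ is exactly what distinguishes~\eqref{eq:J_nk_integral} from its hyperbolic counterpart~\eqref{eq:bJ_nk_equiv2}; these are related by the transcendental real substitution $\sin x=\tanh u$, not by a contour deformation. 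The integrand of~\eqref{eq:bJ_nk_equiv2} is the restriction to the imaginary axis $x=\ii u$ of the genuinely meromorphic function $G(x)=(F(x))^{n-k}(\cos x)^{-\alpha n-2}$, where $F(x)=\int_{-\pi/2}^{x}(\cos t)^{\alpha}\dd t$ is entire for integer $\alpha\geq 0$, and this $G$ is the function on which the paper performs the residue computation (via the quantities $\lA$ and~\eqref{eq:form_J_n_k_wspom}): a rectangle with vertical sides on $\Re x=0$ and $\Re x=-\pi$ encloses the single pole at $x=-\pi/2$, the horizontal sides vanish as the height tends to infinity, and the parity assumptions (i)/(ii) enter precisely where you predicted---in showing that the two vertical sides contribute equally, via $F(-\pi-\ii u)=(-1)^{\alpha+1}F(\ii u)$---after which the shift by $\pi/2$ converts the residue at $-\pi/2$ into the sine-residue at $0$.

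By contrast, the analytic continuation of the integrand of~\eqref{eq:J_nk_integral} itself can carry no such residue identity, in either parity case. If $\alpha$ is even (your case (i)), the residue of $(\cos y)^{-\alpha-1}$ at $\pm\pi/2$ is nonzero, so $\int_0^x(\cos y)^{-\alpha-1}\dd y$ has logarithmic branch points there; the integrand is multivalued, not meromorphic, so your claim that ``the only singularity inside a suitable contour is at $x=0$, of order $\alpha n+2$'' is false. If $\alpha$ is odd (case (ii)), the inner antiderivative is single-valued meromorphic with poles of order $\alpha$ at $\pi/2+\pi\Z$, but then the zero of order $\alpha n+1$ of the outer factor overcompensates the pole of order $\alpha(n-k)$ of the inner factor, so the integrand continues to an \emph{entire} function: for $\alpha=1$, $n=3$, $k=1$ it equals $(\cos x)^{4}\bigl(\tfrac12+\tfrac{\ii}{2}\tan x\bigr)^{2}$, all of whose closed-contour integrals vanish, even though $\bJ_{3,1}(-\tfrac12)=\tfrac12\neq 0$. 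Thus in both cases there is simply no pole of the continued integrand of~\eqref{eq:J_nk_integral} from which the right-hand side of~\eqref{eq:J_nk_residue} could be extracted; the proof must first pass to the hyperbolic representation~\eqref{eq:bJ_nk_equiv2} and only then close the contour, which is the route the paper takes.
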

Here, $\Res_{x=x_0}f(x)$ denotes the residue of the meromorphic function $f(x)$ at $x_0\in\C$, that is the coefficient of $(x-x_0)^{-1}$ in the Laurent expansion of $f(x)$ around $x_0$.
There are some cases not covered by this theorem. The best result we have in the case when (iii) $\alpha$ is odd and $n$ is even are Equations~\eqref{eq:J_nk_integral}, \eqref{eq:bJ_nk_equiv1}, \eqref{eq:bJ_nk_equiv2}. In the case when (iv) both $\alpha$ and $n-k$ are even, we can compute $\bJ_{n,k}(\frac{\alpha - n + 1}{2})$ by combining case (i) of Theorem~\ref{theo:bJ_formula_residue} with the Poincar\'e relations. These are linear relations between angle sums of any deterministic simplex which, in the special case of the beta simplex, imply that
$$
\sum_{k=m}^n (-1)^k \binom{k}{m} \bJ_{n,k}(\beta) = (-1)^n\bJ_{n,m}(\beta),
\qquad \bJ_{n,0}(\beta) := 0,
$$
for all $m\in\{0,\ldots,n\}$ and $\beta\geq -1$. As will be explained in Section~\ref{sec:dehn_sommerville}, these relations allow to express the odd-positioned entries of the vector  $(\bJ_{n,1}(\beta), \ldots, \bJ_{n,n}(\beta))$ as linear combinations of the even-positioned ones, and vice versa.  It is possible to transform these considerations into the following ``ugly'' formula.
\begin{proposition}\label{prop:ugly_residue}
Let $n\geq 3$, $k\in \{1,\ldots,n\}$ and $\alpha \geq n-3$ be integer.
If  both $\alpha$ and $n-k$ are even, then
\begin{equation}\label{eq:J_nk_residue_ugly}
\bJ_{n,k}\left(\frac{\alpha - n + 1}{2}\right)
=
(-1)^{\frac{n-k}2}
\frac{n!}{k!} \pi c_{\frac {\alpha n}{2}} \cdot  [u^{n-k}x^{-1}] \left( \frac{ \sin \left(u c_{\frac{\alpha-1}{2}}\int_{0}^x (\sin y)^{\alpha} \dd y\right)}{\tan\left(\frac u2\right)(\sin x)^{\alpha n +2}}\right),
\end{equation}
where $[u^{n-k}x^{-1}]g(u,x)$ denotes the coefficient of $u^{n-k}x^{-1}$ in the series expansion of the function $g$ around $(0,0)$.
\end{proposition}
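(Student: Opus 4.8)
The plan is to reduce the case where $\alpha$ and $n-k$ are both even (case (iv)) to case (i) of Theorem~\ref{theo:bJ_formula_residue} via the Poincar\'e relations, and then to repackage the resulting finite sum as a single bivariate coefficient extraction. First I would encode the Poincar\'e relations $\sum_{k=m}^n(-1)^k\binom km\bJ_{n,k}(\beta)=(-1)^n\bJ_{n,m}(\beta)$ in the generating polynomial $A(t):=\sum_{k=0}^n\bJ_{n,k}(\beta)t^k$. Expanding these relations as Taylor coefficients turns them into the single functional equation $A(t)=(-1)^nA(-t-1)$; equivalently, after the shift $w=t+\tfrac12$, the polynomial $\tilde A(w):=A(w-\tfrac12)$ has parity $(-1)^n$. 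This is the clean form of the statement (announced for Section~\ref{sec:dehn_sommerville}) that the even- and odd-positioned entries of $(\bJ_{n,1}(\beta),\ldots,\bJ_{n,n}(\beta))$ determine each other.

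The core step is to solve this half-shift symmetry explicitly and to show that, for $n-k$ even,
\[
\bJ_{n,k}(\beta)=\frac1{k!}\sum_{j\,:\,n-j\ \mathrm{odd}}(-1)^{\frac{n-k}2+\frac{n-j-1}2}\,j!\,c_{j-k}\,\bJ_{n,j}(\beta),
\qquad c_\ell:=[u^\ell]\cot\tfrac u2 .
\]
I expect this to be the main obstacle: one must recognise that the inverse of the triangular system produced by the parity of $\tilde A$ has matrix entries equal to the Laurent coefficients of $\cot(u/2)=1/\tan(u/2)$, i.e.\ that $\cot(u/2)$ is exactly the reciprocal series undoing the shift by $\tfrac12$. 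Writing $d_m:=m!\bJ_{n,m}(\beta)=A^{(m)}(0)$, the identity is equivalent to the convolution $\sum_\ell c_\ell\,\ii^{-\ell}d_{k+\ell}=\ii\,d_k$ for $k\equiv n\ (\mathrm{mod}\ 2)$; the cotangent enters through the elementary identity $\cot(\ii z)=-\ii\coth z$ together with the shift-operator reading $e^{\pm D/2}f|_{0}=f(\pm\tfrac12)$ of the functional equation. The one delicate point is the $D^{-1}$ (antiderivative) contribution coming from the pole of $\cot(u/2)$ at $u=0$: it must be interpreted as the index-lowering antiderivative $D^{-1}A^{(k)}=A^{(k-1)}$ and tracked against the known top entries $\bJ_{n,n}(\beta)=1$ and $\bJ_{n,n-1}(\beta)=n/2$. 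A small check ($n=3$: recovering $\bJ_{3,1}=\tfrac12$ and $\bJ_{3,3}=1$ from $\bJ_{3,2}=\tfrac32$ via $c_{-1}=2$, $c_1=-\tfrac16$) confirms the signs and normalisation.

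With the inversion in hand, the remainder is assembly. Set $B(x):=c_{\frac{\alpha-1}{2}}\int_0^x(\sin y)^\alpha\dd y$ and, for each $j$ with $n-j$ odd, substitute case (i) of Theorem~\ref{theo:bJ_formula_residue} in the form $\bJ_{n,j}(\beta)=\binom nj\,\pi c_{\frac{\alpha n}2}\,[x^{-1}]\,B(x)^{n-j}/(\sin x)^{\alpha n+2}$, absorbing the factor $(c_{\frac{\alpha-1}{2}})^{n-j}$ into $B^{n-j}$. The odd powers $B^{n-j}$ are precisely those generated by $\sin(uB(x))=\sum_{\ell\ge0}\frac{(-1)^\ell}{(2\ell+1)!}(uB)^{2\ell+1}$ upon setting $n-j=2\ell+1$, while the inversion coefficients $c_{j-k}$ are supplied by $\cot(u/2)$ through $[u^{n-k}]\cot(\tfrac u2)\sin(uB)=\sum_{n-j\ \mathrm{odd}}\tfrac{(-1)^{(n-j-1)/2}}{(n-j)!}c_{j-k}B^{n-j}$. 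Collecting the prefactor $(-1)^{(n-k)/2}\tfrac{n!}{k!}\pi c_{\frac{\alpha n}2}$, performing the $[x^{-1}]$-extraction, and simplifying $\tfrac{1}{(n-j)!\binom nj}=\tfrac{j!}{n!}$ reproduces the right-hand side of the displayed inversion, hence $\bJ_{n,k}(\beta)$. This is exactly the coefficient extraction $[u^{n-k}x^{-1}]$ of $\sin(uB(x))/(\tan(\tfrac u2)(\sin x)^{\alpha n+2})$ claimed in~\eqref{eq:J_nk_residue_ugly}. The parity hypotheses guarantee that $n-k-(2\ell+1)$ is odd, so only the genuine (odd-power) Laurent part of $\cot(u/2)$ contributes and every extraction is well defined.
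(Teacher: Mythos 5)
Your proposal is correct and follows essentially the same route as the paper: both proofs reduce the even case to case (i) of Theorem~\ref{theo:bJ_formula_residue} via the Poincar\'e relations, invert the resulting parity constraint through the Laurent coefficients of $\cot\left(\frac u2\right)$ (equivalently, the Bernoulli-number relations of Proposition~\ref{prop:solve_poincare_rel}), and resum the odd powers of $c_{\frac{\alpha-1}{2}}\int_0^x(\sin y)^{\alpha}\,\dd y$ into a sine before extracting the coefficient of $u^{n-k}x^{-1}$. The only cosmetic difference is that you keep trigonometric functions (with the sign $(-1)^{(n-k)/2}$ built in) throughout, whereas the paper works with $\sinh$ and $\coth\left(\frac u2\right)$ and converts to $\sin$ and $\tan\left(\frac u2\right)$ only at the very end.
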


In Theorems~3.8 and 3.9 of~\cite{kabluchko_algorithm} we gave explicit formulae for $\bJ_{n,1}(-\frac 12)$ and $\bJ_{n,1}(\frac 12)$ with arbitrary $n\in\N$ in terms of products of Gamma functions. Stated in different terms, the latter value can be found in the work of Hug~\cite[Corollary~7.1 and p.~209]{hug_rev}. The proofs in these works used only stochastic geometry. Using Theorem~\ref{theo:bJ_formula_residue}, we can generalize these formulae to $\bJ_{n,1}(m- \frac 12)$ with arbitrary $m\in \N_0$. The formulae are as explicit as possible but involve rational functions whose complexity increases rapidly with $m$.

\begin{proposition}\label{prop:J_n_1_half_integer}
There is a sequence of rational functions $R_0(x), R_1(x), \ldots$ with rational coefficients such that for every $n\geq 3$ and $m\in\N_0$, we have
$$
\bJ_{n,1} \left(m - \frac 12\right) =
c_{\frac{(n+2m-2) n}2} \left(c_{\frac{n+2m-3}{2}}\right)^{n-1}
\frac{\pi n \, R_m(n)}{(n+2m-1)^{n-1}}.
$$
The firs terms are $R_0(n) = 1$, $R_1(n) = \frac{n^2+n+2}{2(n+3)}$, $R_2(n)=\frac{n^5+15 n^4+81 n^3+225 n^2+326 n+216}{8 (n+5)^2 (n+7)},\ldots$.
\end{proposition}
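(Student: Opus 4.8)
The plan is to specialize the residue formula of Theorem~\ref{theo:bJ_formula_residue} to $k=1$ and $\beta = m - \frac12$, and then to analyze the resulting residue as an explicit function of $n$. First I would solve $m - \frac 12 = \frac{\alpha - n + 1}{2}$ for $\alpha$, obtaining $\alpha = n + 2m - 2$; note that $\alpha \geq n-2 > n-3$, so the standing hypothesis $\alpha \geq n-3$ is satisfied, $\alpha$ is a nonnegative integer, and $\alpha$ has the same parity as $n$. Since $k=1$ gives $n-k = n-1$, exactly one of the two cases of Theorem~\ref{theo:bJ_formula_residue} always applies: if $n$ is odd then $\alpha$ is odd, which is case (ii); if $n$ is even then $\alpha$ is even and $n-1$ is odd, which is case (i). Substituting $\alpha = n+2m-2$ into~\eqref{eq:J_nk_residue} and using $\binom n1 = n$ and $n+2m-1 = \alpha+1$ reduces the claim to showing that
\[
\Res\limits_{x=0} \left[\frac{\bigl(\int_{0}^x (\sin y)^{\alpha} \dd y\bigr)^{n-1}}{(\sin x)^{\alpha n +2}}\right] = \frac{R_m(n)}{(n+2m-1)^{n-1}}
\]
for a rational function $R_m$ of $n$ with rational coefficients.

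Next I would extract the residue as a single power-series coefficient. Writing $(\sin y)^\alpha = y^\alpha (\sin y/y)^\alpha$ and integrating term by term yields $\int_0^x (\sin y)^\alpha \dd y = \frac{x^{\alpha+1}}{\alpha+1} P(x)$, where $P(x) = 1 + \sum_{j\geq 1} p_j(\alpha) x^{2j}$ is an even power series with $p_j(\alpha) = \frac{(\alpha+1) a_j(\alpha)}{\alpha+2j+1}$; here $a_j(\alpha) = [y^{2j}](\sin y/y)^\alpha$ is, via $(\sin y/y)^\alpha = \exp(\alpha \log(\sin y/y))$, a polynomial in $\alpha$ with rational coefficients, so each $p_j(\alpha)$ is a rational function of $\alpha$. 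Factoring $(\sin x)^{\alpha n + 2} = x^{\alpha n + 2}(\sin x/x)^{\alpha n + 2}$, the residue is the coefficient of $x^{\alpha n+1}$ in $\bigl(\int_0^x(\sin y)^\alpha\dd y\bigr)^{n-1}(\sin x/x)^{-(\alpha n+2)}$, and since the factor $\bigl(\int_0^x(\sin y)^\alpha\dd y\bigr)^{n-1}$ equals $(\alpha+1)^{-(n-1)} x^{(\alpha+1)(n-1)} P(x)^{n-1}$ with $\alpha n + 1 - (\alpha+1)(n-1) = \alpha - n + 2 = 2m$, this residue equals
\[
\frac{1}{(\alpha+1)^{n-1}}\,[x^{2m}]\Bigl(P(x)^{n-1}\,(\sin x/x)^{-(\alpha n + 2)}\Bigr).
\]
I would then simply define $R_m(n)$ to be this coefficient of $x^{2m}$.

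It remains to verify that $R_m(n)$ is rational in $n$ with rational coefficients. The governing principle is that a fixed-order coefficient of a power series raised to an exponent $N$ is a polynomial in $N$: concretely, $[x^{2i}]P(x)^{n-1}$ is a polynomial in $n-1$ of degree at most $i$ whose coefficients are polynomials in $p_1(\alpha),\ldots,p_i(\alpha)$, while $[x^{2j}](\sin x/x)^{-(\alpha n+2)}$ is a polynomial in $\alpha n + 2$ of degree at most $j$ with rational coefficients. Because $\alpha = n+2m-2$ and $\alpha n + 2 = (n+2m-2)n+2$ are polynomials in $n$, and the $p_j(\alpha)$ are rational in $n$, the finite Cauchy convolution $[x^{2m}](\cdots) = \sum_{i+j=m}[x^{2i}]P^{n-1}\cdot[x^{2j}](\sin x/x)^{-(\alpha n+2)}$ is a rational function of $n$ with rational coefficients, as required. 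I expect the only real (and purely bookkeeping) obstacle to be disentangling the $n$-dependence that sits simultaneously in the two exponents $n-1$ and $\alpha n + 2$ and in the argument $\alpha$ of the Taylor coefficients; the observation that resolves it is precisely that only the single coefficient $[x^{2m}]$ is needed, so only finitely many Taylor coefficients enter and each contributes polynomially in $n$. As a sanity check this gives $R_0(n)=1$, and computing $p_1(\alpha) = -\frac{\alpha(\alpha+1)}{6(\alpha+3)}$ together with $[x^2](\sin x/x)^{-N} = N/6$ and substituting $\alpha = n$, $N = n^2+2$ for $m=1$ recovers $R_1(n) = \frac{n^2+n+2}{2(n+3)}$, matching the statement.
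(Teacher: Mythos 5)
Your proposal is correct and takes essentially the same approach as the paper's own proof: both specialize Theorem~\ref{theo:bJ_formula_residue} to $\alpha = n+2m-2$, $k=1$ (checking that exactly one parity case applies), exploit the crucial fact that the Laurent expansion of the integrand begins at $x^{-2m-1}$ with exponent independent of $n$, and then establish rationality of the single needed coefficient by power-series bookkeeping in which every Taylor coefficient depends polynomially or rationally on $n$. The only difference is organizational --- you extract one coefficient via the ``fixed coefficient of an $N$-th power is polynomial in $N$'' principle and a Cauchy convolution, whereas the paper builds the Laurent expansion sequentially through intermediate rational functions --- and your sanity check of $R_1(n)$ is correct.
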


The arithmetic properties of the quantities $\bJ_{n,k}(\beta)$ for integer and half-integer values of  $\beta$ are summarized in the following theorem. It turns out that these are either rational numbers, or polynomials in $\pi^{-2}$ with rational coefficients which in some cases reduce to a single rational multiple of a power of $\pi^{-2}$.
\begin{theorem}
\label{theo:arithm_J}
Let $\beta\geq -1$ be integer or half-integer. Let also $n\in\N$ and $k\in \{1,\ldots,n\}$.
\begin{itemize}
\item[(a)] If $2\beta + n$ is even, then $\bJ_{n,k}(\beta)$ is a rational number.
\item[(b)] If both $2\beta + n$  and $n-k$ are odd, then $\bJ_{n,k}(\beta)$ is a number of the form $q\pi^{-(n-k-1)}$ with some rational $q$.
\item[(c)] If $2\beta + n$ is odd and $n-k$ is even, then $\bJ_{n,k}(\beta)$ can be expressed as  $q_0 + q_2 \pi^{-2} + q_4 \pi^{-4} + \ldots + q_{n-k} \pi^{-(n-k)}$,
     where the numbers $q_{2j}$ are rational.
\end{itemize}
\end{theorem}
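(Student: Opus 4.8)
The plan is to read off the arithmetic type of $\bJ_{n,k}(\beta)$ directly from the explicit residue formulae established in Theorem~\ref{theo:bJ_formula_residue} and Proposition~\ref{prop:ugly_residue}, together with the integral representation~\eqref{eq:J_nk_integral} for the remaining case. Writing $\alpha = 2\beta + n - 1$, the hypothesis that $\beta$ is integer or half-integer is exactly the statement that $\alpha$ is a nonnegative integer, and the parity of $2\beta+n$ equals the parity of $\alpha+1$; thus ``$2\beta+n$ even'' means $\alpha$ is odd and ``$2\beta+n$ odd'' means $\alpha$ is even. The key observation is that all the constants $c_{\frac{\alpha n}{2}}$ and $c_{\frac{\alpha-1}{2}}$ appearing in these formulae are, by~\eqref{eq:c_beta}, ratios of Gamma functions at integer or half-integer arguments, and such ratios are always of the form (rational number) times a power of $\pi^{\pm 1/2}$; one must track these half-integer powers of $\pi$ and verify they combine into integer powers of $\pi^{-2}$.

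First I would analyze case (b), where $\alpha$ is even and $n-k$ is odd. This is precisely case (i) of Theorem~\ref{theo:bJ_formula_residue}, so $\bJ_{n,k}(\beta)$ equals $\binom nk\, c_{\frac{\alpha n}{2}}\, (c_{\frac{\alpha-1}{2}})^{n-k}\,\pi$ times a residue. Since $\alpha$ is an even integer, $\sin y)^\alpha$ is a genuine Laurent/Taylor object with rational Taylor coefficients, so the integral $\int_0^x(\sin y)^\alpha\dd y$ is an odd power series in $x$ with rational coefficients; raising to the power $n-k$ and dividing by $(\sin x)^{\alpha n+2}$ again yields a Laurent series with rational coefficients, whence the residue is rational. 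It then remains to compute the $\pi$-content of the prefactor $c_{\frac{\alpha n}{2}}(c_{\frac{\alpha-1}{2}})^{n-k}\pi$. Using $c_\gamma = \Gamma(\gamma+\tfrac32)/(\sqrt\pi\,\Gamma(\gamma+1))$ and the fact that for $\alpha$ even both $\frac{\alpha n}{2}$ and $\frac{\alpha-1}{2}+\tfrac12 = \frac{\alpha}{2}$ have the appropriate integer/half-integer structure, a short bookkeeping shows that $c_{\frac{\alpha n}{2}}$ contributes a rational number and each factor $c_{\frac{\alpha-1}{2}}$ contributes a rational multiple of $\pi^{-1}$; together with the explicit $\pi$ this produces exactly $\pi^{-(n-k)}\cdot\pi = \pi^{-(n-k-1)}$, giving the claimed form $q\pi^{-(n-k-1)}$ in~(b).

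For case (a), where $\alpha$ is odd, the relevant representation is case (ii) of Theorem~\ref{theo:bJ_formula_residue} when $n$ is also odd, and Proposition~\ref{prop:ugly_residue} (via the Poincaré/Dehn–Sommerville relations) when $n$ is even. In both subcases the residue is again rational by the same Laurent-series argument, and the point is simply that the half-integer powers of $\pi$ now cancel completely: for $\alpha$ odd the argument $\frac{\alpha-1}{2}$ is an integer, so $c_{\frac{\alpha-1}{2}}$ is a rational multiple of $\sqrt\pi$, each such factor supplies $\pi^{-1/2}$, and with $n-k$ even (which is forced in case (a) once one checks the parity constraints, using the Poincaré relations to reduce the odd-$(n-k)$ situation to even ones) these combine with the $c_{\frac{\alpha n}{2}}$ factor and the explicit $\pi$ to yield a net integer power of $\pi$ that equals $\pi^0$, i.e.\ a rational number. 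Case (c), where $\alpha$ is even and $n-k$ is even, is handled by the Poincaré relation $\sum_{j=m}^n(-1)^j\binom jm\bJ_{n,j}(\beta) = (-1)^n\bJ_{n,m}(\beta)$: expressing the even-indexed $\bJ_{n,k}$ as a rational linear combination of the odd-indexed ones (each of which is a rational multiple of $\pi^{-(\text{odd})}$ by part~(b)) produces a $\bQ$-linear combination of the powers $\pi^{0},\pi^{-2},\ldots,\pi^{-(n-k)}$, which is the asserted polynomial in $\pi^{-2}$.

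I expect the main obstacle to be the parity bookkeeping for the Gamma-function prefactors, i.e.\ proving cleanly that the accumulated half-integer powers of $\pi$ arising from the various $c_\gamma$'s always assemble into the correct integer power of $\pi^{-2}$ (and into no fractional power). Concretely, one needs the elementary lemma that $\Gamma(m+\tfrac12)/\Gamma(m+1)\in\bQ\cdot\pi^{-1/2}$ and $\Gamma(m+1)\in\bQ$ for integer $m\ge 0$, applied to both $c_{\frac{\alpha n}{2}}$ and $c_{\frac{\alpha-1}{2}}$ under each parity hypothesis on $\alpha$; the delicate point is that whether $\frac{\alpha n}{2}$ is integer or half-integer depends jointly on the parities of $\alpha$ and $n$, so the three cases~(a),(b),(c) must be cross-checked against all parities of $n$ and $k$, with the Poincaré relations invoked precisely to cover the subcases (odd $\alpha$ with $n$ even, and even $\alpha$ with $n-k$ even) where a direct residue formula is unavailable. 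Once this parity lemma is in place, the rest of the argument is a direct reading of the established formulae.
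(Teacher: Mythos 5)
Your parts (b) and (c) are sound. For (b) your argument is essentially the paper's own: the paper proves (b) by combining Theorem~\ref{theo:bJ_formula_residue} with the observation that the residue in~\eqref{eq:J_nk_residue} is rational, and your extra Gamma-function bookkeeping (that $c_{\frac{\alpha n}{2}}\in\bQ$ and $c_{\frac{\alpha-1}{2}}\in\bQ\cdot\pi^{-1}$ when $\alpha$ is even) correctly produces the power $\pi^{-(n-k-1)}$. Your derivation of (c) from (b) via the Poincar\'e relations is also legitimate — it is exactly the mechanism behind Proposition~\ref{prop:ugly_residue} — whereas the paper itself simply cites \cite{kabluchko_algorithm} for (a) and (c).

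The genuine gap is in part (a), in the sub-case where $n$ is even (equivalently, $\beta$ is an integer, e.g.\ $\bJ_{4,1}(0)=\frac{401}{2560}$). There $\alpha=2\beta+n-1$ is odd and $n$ is even, which is precisely the case (iii) that the paper explicitly flags as \emph{not} covered by any residue formula: case (i) of Theorem~\ref{theo:bJ_formula_residue} requires $\alpha$ even, case (ii) requires both $\alpha$ and $n$ odd, and Proposition~\ref{prop:ugly_residue} — which you invoke for this sub-case — has the hypothesis that $\alpha$ is \emph{even}, so it does not apply. The Poincar\'e relations cannot repair this: they only relate the entries $\bJ_{n,1}(\beta),\ldots,\bJ_{n,n}(\beta)$ for the \emph{same} $n$ and $\beta$ to one another, and in this sub-case no entry beyond the trivial $\bJ_{n,n}=1$, $\bJ_{n,n-1}=\frac n2$ has a known arithmetic form, so there is nothing to bootstrap from — your phrase ``$n-k$ even is forced \ldots\ using the Poincar\'e relations'' does not correspond to any valid reduction. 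This is exactly why the paper proves only (b) from its new residue formula and cites \cite{kabluchko_algorithm} (where (a) and (c) were established via the recursive scheme) for the remaining parts; to make your proof self-contained in case (a) with $n$ even you would need a separate argument starting from the integral representation~\eqref{eq:J_nk_integral}, not from the residue formulas.
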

\begin{proof}
Parts (a) and (c) were established in~\cite{kabluchko_algorithm}.  Part (b), which was only  conjectured there,  follows from  Theorem~\ref{theo:bJ_formula_residue} because the residue appearing in~\eqref{eq:J_nk_residue} is a rational number.
\end{proof}

\subsection{Beta' simplices and their internal angles}\label{subsec:beta_simplices_prime}
A random vector in $\R^d$ has \textit{beta' distribution} with parameter $\beta>d/2$ if its Lebesgue density is given by
\begin{equation}\label{eq:def_f_beta_prime}
\tilde{f}_{d,\beta}(x)=\tilde{c}_{d,\beta} \left( 1+\left\| x \right\|^2 \right)^{-\beta},\qquad
x\in\R^d,\qquad
\tilde{c}_{d,\beta}= \frac{ \Gamma\left( \beta \right) }{\pi^{ \frac{d}{2} } \Gamma\left( \beta - \frac{d}{2} \right)}.
\end{equation}
We shall use parallel notation for beta and beta' distributions, with the quantities related to beta' distributions being always marked by a tilde. For the special case of the normalizing constant with $d=1$ we introduce the shorthand
\begin{equation}\label{eq:c_beta_tilde}
\tilde c_{\beta}
:=
\tilde{c}_{1,\beta}
=
\frac{ \Gamma(\beta)}{ \sqrt \pi\, \Gamma\left( \beta - \frac{1}{2}\right)}.
\end{equation}
Let $\tilde X_1,\ldots,\tilde X_{n}$ be independent random points in $\R^{n-1}$ sampled from the beta' distribution $\tilde f_{n-1,\beta}$, where $\beta > (n-1)/2$. Their convex hull $[\tilde X_1,\ldots,\tilde X_n]$ is called the $(n-1)$-dimensional \textit{beta' simplex}. Its expected internal angles are denoted by
\begin{equation}
\tilde J_{n,k}(\beta) := \E \beta ([\tilde X_1,\ldots,\tilde X_{k}], [\tilde X_1,\ldots,\tilde X_{n}],
\end{equation}
for all integer $n\geq 2$ and $k\in \{1,\ldots,n\}$. The expected sum of internal angles at all $k$-vertex faces of the beta' simplex is
\begin{equation}
\tilde \bJ_{n,k}(\beta) :=  \binom {n}{k}  \tilde J_{n,k}(\beta).
\end{equation}
By convention,  $\tilde J_{n,n}(\beta) = \tilde \bJ_{n,n}(\beta) = 1$ for all $n\in\N$.
For the quantities $\tilde \bJ_{n,k}(\beta)$, we shall prove the following explicit formula.
\begin{theorem}\label{theo:bJ_tilde_formula_integral}
Let $\alpha>0$. Then, for all $n\in\N$ and $k\in \{1,\ldots,n\}$ such that $\alpha n>1$ we have that
\begin{equation}\label{eq:J_nk_tilde_integral}
\tilde \bJ_{n,k}\left(\frac{\alpha + n - 1}{2}\right)
=
\binom nk \int_{-\pi/2}^{+\pi/2} \tilde c_{\frac{\alpha n}2} (\cos x)^{\alpha n - 2} \left(\frac 12  + \ii \int_0^x \tilde c_{\frac{\alpha+1}{2}}(\cos y)^{-\alpha}\dd y \right)^{n-k} \dd x.
\end{equation}
\end{theorem}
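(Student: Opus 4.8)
The plan is to mirror the derivation of Theorem~\ref{theo:bJ_formula_integral} in the beta' setting, rather than to deduce \eqref{eq:J_nk_tilde_integral} from \eqref{eq:J_nk_integral} by a substitution. A quick inspection shows this is unavoidable: the two integrands differ in the exponents of $\cos x$ and $\cos y$ (by $3$ and by $1$, respectively) and in the normalizing constants ($c$ versus $\tilde c$), and these shifts are mutually inconsistent unless $n=3$. Hence \eqref{eq:J_nk_tilde_integral} is not a reparametrization of \eqref{eq:J_nk_integral}, and a genuine beta' computation is required. The natural device is the recursive scheme of \cite{kabluchko_algorithm}, which characterizes $\tilde\bJ_{n,k}$ through a recursion whose only distribution-dependent inputs are the one-dimensional marginals of the beta' distribution. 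I would fix $\alpha>0$, set $\tilde\beta=(\alpha+n-1)/2$, denote by $\tilde G_{n,k}(\alpha)$ the right-hand side of \eqref{eq:J_nk_tilde_integral}, and verify that $\tilde G_{n,k}$ solves this recursion together with the correct boundary data.

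The geometric content, inherited from the integral-geometric approach of \cite{beta_polytopes}, is that the internal angle at the $k$-vertex face $F=[\tilde X_1,\dots,\tilde X_k]$ is a solid angle of the simplicial cone spanned by the $n-k$ apex vectors projected onto the $(n-k)$-dimensional orthogonal complement of $\aff(F)$ in $\R^{n-1}$; this is the origin of the exponent $n-k$. Integrating out, via the beta' canonical decomposition, all directions transverse to a single ``axis'' reduces this solid angle to a one-dimensional integral over an axis variable $x\in(-\pi/2,\pi/2)$. The outer weight $\tilde c_{\alpha n/2}(\cos x)^{\alpha n-2}$ should then be identified as the density of this axis variable, while a characteristic-function representation of the half-space indicator attached to each apex point produces the factor $\tfrac12+\ii\int_0^x \tilde c_{(\alpha+1)/2}(\cos y)^{-\alpha}\,\dd y$, raised to the power $n-k$ by independence of $\tilde X_{k+1},\dots,\tilde X_n$.

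With the candidate in hand, the verification splits into boundary checks and the recursion step. For $k=n$ the inner factor is $1$, and the substitution $t=\tan x$ turns $(\cos x)^{\alpha n-2}\,\dd x$ into $(1+t^2)^{-\alpha n/2}\,\dd t$, so that $\tilde G_{n,n}(\alpha)=1$ is exactly the normalization \eqref{eq:def_f_beta_prime}--\eqref{eq:c_beta_tilde} of the one-dimensional beta' density with parameter $\alpha n/2$; note that this computation requires $\alpha n/2>1/2$, which pins down the stated hypothesis $\alpha n>1$. The case $k=n-1$ should reproduce $\tilde\bJ_{n,n-1}=n/2$ by an elementary symmetry/parity argument applied to the purely imaginary inner integral. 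The \emph{main obstacle}, and the real work, is the recursion step: one must show that $\tilde G_{n,k}(\alpha)$ obeys the same recursion as $\tilde\bJ_{n,k}$. I expect this to rest on the differentiation identity $\tfrac{\dd}{\dd x}\bigl(\tfrac12+\ii\int_0^x \tilde c_{(\alpha+1)/2}(\cos y)^{-\alpha}\,\dd y\bigr)=\ii\,\tilde c_{(\alpha+1)/2}(\cos x)^{-\alpha}$, combined with integration by parts and Fubini, so that lowering $k$ (or $n$) in $\tilde G_{n,k}$ matches the kernel of the recursion. Faithfully tracking the beta' constants $\tilde c_{\alpha n/2}$ and $\tilde c_{(\alpha+1)/2}$ through these manipulations, and checking that all boundary terms at $x=\pm\pi/2$ vanish, will be the delicate part.

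A final technical point deserves attention in the write-up: when $\alpha\ge 1$ the inner integral $\int_0^x \tilde c_{(\alpha+1)/2}(\cos y)^{-\alpha}\,\dd y$ itself diverges as $x\to\pm\pi/2$, so the integrand of $\tilde G_{n,k}$ is not obviously integrable. A short asymptotic analysis near $x=\pm\pi/2$, where the outer weight decays like $(\tfrac\pi2-|x|)^{\alpha n-2}$ and the apex factor grows at most like $(\tfrac\pi2-|x|)^{(1-\alpha)(n-k)}$, shows that the total exponent is $(n-k)+\alpha k-2>-1$ precisely under the assumptions $\alpha>0$, $\alpha n>1$, $1\le k\le n$; this secures convergence and also justifies the differentiations and the use of Fubini above. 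Once the recursion and the boundary values $\tilde G_{n,n}=1$, $\tilde G_{n,n-1}=n/2$ are matched, uniqueness of the solution of the recursive scheme of \cite{kabluchko_algorithm} yields $\tilde\bJ_{n,k}(\tilde\beta)=\tilde G_{n,k}(\alpha)$, which is \eqref{eq:J_nk_tilde_integral}.
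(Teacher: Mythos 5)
Your overall strategy is the same as the paper's: take the right-hand side of \eqref{eq:J_nk_tilde_integral} as a candidate, verify it against the characterization of $\tilde \bJ_{n,k}$ as the \emph{unique} solution of the linear system \eqref{eq:system_alpha_arbitrary} of Proposition~\ref{prop:bJ_tilde_unique} (this is precisely the recursive scheme of~\cite{kabluchko_algorithm} you invoke), and you correctly note that the beta' formula is not a substitution instance of the beta one. The first genuine gap is that the verification step itself --- showing $\sum_{m=k}^n(-1)^{m-k}\tilde \bI_{n,m}(\alpha)\,\tilde\xi_{m,k}=0$ for the candidate --- is the entire technical heart of the proof, and the mechanism you propose (``differentiation identity plus integration by parts and Fubini'') does not close it. The relation couples two different families of integrals: the coefficients $\tilde \bI_{n,m}(\alpha)$ are integrals of $(\cos x)^{\alpha m-1}(\tilde F(x))^{n-m}$ over $(-\pi/2,\pi/2)$ with $\tilde F(x)=\int_{-\pi/2}^x(\cos y)^{\alpha-1}\dd y$, while the candidate, after the substitution $\sin x=\tanh u$, is the same kind of integrand taken over the imaginary axis, i.e.\ the contour-integral quantities $\mA[\nu,\kappa]$ of \eqref{eq:def_A_0_tilde1}. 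Also note that \eqref{eq:system_alpha_arbitrary} is a full triangular system, not a two-term recursion ``lowering $k$ or $n$''. In the paper the required cancellation is obtained by proving separate three-term recurrences for $\mB\{\nu,\kappa\}$ and $\mA[\nu,\kappa]$ by integration by parts (Propositions~\ref{prop:B_relation_tilde} and~\ref{prop:A_0_relations_tilde}), feeding \emph{both} recurrences into an Abel-summation argument showing that the alternating sum $\tilde s_r(\kappa)$ of \eqref{eq:def_s_r_tilde} satisfies $\tilde s_r(\kappa)=\tilde s_r(\kappa+\frac 2\alpha)$ (Proposition~\ref{prop:periodic_tilde}), iterating this shift $T$ times, and evaluating the limit $T\to\infty$ by Laplace asymptotics (Lemmas~\ref{lem:B_asymptotic_tilde} and~\ref{lem:A_asymptotic_tilde}), which collapses the sum to $\sum_{\ell}(-1)^\ell\binom r\ell=\delta_{r,0}$. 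Nothing in your sketch anticipates this periodicity-plus-asymptotics mechanism, and integrating by parts in $\tilde G_{n,k}$ alone cannot produce the vanishing, because the identity is not local in $(n,k)$.

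The second gap concerns the range of validity. Your uniqueness argument needs every entry $\tilde G_{m,k}$ with $k\le m\le n$ to be defined, in particular the diagonal normalizations $\tilde G_{m,m}=1$ for $m\ge k$; but, as your own endpoint analysis shows, the integral defining $\tilde G_{k,k}$ converges only when $\alpha k>1$, which is strictly stronger than the stated hypothesis $\alpha n>1$. The paper faces exactly this issue: the candidate-based verification is carried out under the assumption $\alpha k>1$ (this is where Conditions~\eqref{eq:cond_nu_kappa1}--\eqref{eq:cond_nu_kappa2} enter), and the extension to $\alpha n>1$ is a separate analytic-continuation argument in the complex variable $\alpha$: the right-hand side of \eqref{eq:J_nk_tilde_integral} is analytic for $\Re\alpha>\frac 1n$, the angle side is analytic for $\Re\alpha>0$ by an argument from~\cite{beta_polytopes}, and the two agree for real $\alpha>\frac 1k$. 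This continuation step is absent from your plan and cannot be dispensed with.
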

Making in~\eqref{eq:J_nk_tilde_integral} the substitutions $t=\tanh x$, $s=\tanh y$  and then $t=\sin u$, $s=\sin v$, we arrive at the following equivalent versions of~\eqref{eq:J_nk_tilde_integral}:
\begin{align}
\tilde \bJ_{n,k}\left(\frac{\alpha + n - 1}{2}\right)
&=
\binom nk \int_{-1}^{+1} \tilde c_{\frac{\alpha n}2} (1-t^2)^{\frac{\alpha n - 3}2} \left(\frac 12  + \ii \int_0^t \tilde c_{\frac{\alpha+1}{2}}(1-s^2)^{-\frac{\alpha+1}2}\dd s \right)^{n-k} \dd t
\label{eq:bJ_nk_tilde_equiv1}
\\
&=
\binom nk \int_{-\infty}^{+\infty} \tilde c_{\frac{\alpha n}2} (\cosh u)^{- (\alpha n - 1)} \left(\frac 12  + \ii \int_0^u \tilde c_{\frac{\alpha+1}{2}}(\cosh v)^{\alpha-1}\dd v \right)^{n-k} \dd u. \label{eq:bJ_nk_tilde_equiv2}
\end{align}
Again, it is easy to evaluate the integrals exactly if $\alpha$ is integer, which gives explicit expressions for $\tilde \bJ_{n,k}(\beta)$ if $\beta:= (\alpha+n-1)/2 > (n-1)/2$ is integer or half-integer. In some cases, it is possible to evaluate the integral by means of  the residue calculus as follows.
\begin{theorem}\label{theo:bJ_tilde_formula_residue}
Let  $\alpha\in\N$.  Then,  for all $n\in\N$ and $k\in \{1,\ldots,n\}$  such that $\alpha k$ is even, we have
\begin{equation}\label{eq:J_nk_tilde_res}
\tilde \bJ_{n,k}\left(\frac{\alpha + n-1}{2}\right)
=
\binom nk
\tilde c_{\frac{\alpha n}2} \left(\tilde c_{\frac{\alpha+1}{2}}\right)^{n-k}
\pi  \Res\limits_{x=0} \left[\frac{\left(\int_{0}^x (\sin y)^{\alpha-1} \dd y\right)^{n-k}}{(\sin x)^{\alpha n - 1}}\right].
\end{equation}
\end{theorem}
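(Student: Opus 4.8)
The plan is to evaluate the hyperbolic integral representation \eqref{eq:bJ_nk_tilde_equiv2} by contour integration, in a manner parallel to the argument underlying Theorem~\ref{theo:bJ_formula_residue}. Writing $\tilde \bJ_{n,k}\bigl(\tfrac{\alpha+n-1}{2}\bigr)=\binom nk \tilde c_{\frac{\alpha n}2}\int_{\R} g(u)\,\dd u$ with
\[
g(u):=(\cosh u)^{-(\alpha n-1)}\left(\tfrac12+\ii\,\tilde c_{\frac{\alpha+1}2}\int_0^u(\cosh v)^{\alpha-1}\,\dd v\right)^{n-k},
\]
the decisive structural observation is that for $\alpha\in\N$ the function $g$ is \emph{meromorphic} on all of $\C$: the inner antiderivative $\int_0^u(\cosh v)^{\alpha-1}\dd v$ is entire, and $(\cosh u)^{-(\alpha n-1)}$ has genuine poles (no branch cuts, as $\alpha n-1\in\Z$) exactly at $u=\ii(\tfrac\pi2+\pi m)$, $m\in\Z$. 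I would integrate $g$ over the boundary of the rectangle with vertices $\pm R,\ \pm R+\ii\pi$ and let $R\to\infty$. The unique pole in the strip $0<\Im u<\pi$ is $u=\ii\pi/2$, so the residue theorem gives $\int_{\R}g-\int_{\R+\ii\pi}g=2\pi\ii\,\Res_{u=\ii\pi/2}g$.

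Next I would relate the top edge to the bottom one via $u\mapsto u+\ii\pi$. Since $\cosh(w+\ii\pi)=-\cosh w$ one has $(\cosh(w+\ii\pi))^{-(\alpha n-1)}=(-1)^{\alpha n-1}(\cosh w)^{-(\alpha n-1)}$, while splitting the path shows $\int_0^{w+\ii\pi}(\cosh v)^{\alpha-1}\dd v=\mu+(-1)^{\alpha-1}\int_0^w(\cosh v)^{\alpha-1}\dd v$ with $\mu=\ii\int_0^\pi(\cos s)^{\alpha-1}\dd s$; by the Beta-integral evaluation of $\int_0^\pi(\cos s)^{\alpha-1}\dd s$ together with \eqref{eq:c_beta_tilde}, the quantity $\tilde c_{\frac{\alpha+1}2}\mu$ equals $0$ when $\alpha$ is even and $\ii$ when $\alpha$ is odd. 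Substituting, and using that $\Lambda(w):=\tilde c_{\frac{\alpha+1}2}\int_0^w(\cosh v)^{\alpha-1}\dd v$ is real for real $w$, I obtain $g(w+\ii\pi)=-\overline{g(w)}$ when $\alpha$ is even and $g(w+\ii\pi)=(-1)^{k+1}\overline{g(w)}$ when $\alpha$ is odd. Because $\int_\R g$ is real (it is $\tilde\bJ_{n,k}$ divided by positive constants), the residue identity becomes $(1-\varepsilon)\int_\R g=2\pi\ii\,\Res_{u=\ii\pi/2}g$, where $\varepsilon=-1$ for $\alpha$ even and $\varepsilon=(-1)^{k+1}$ for $\alpha$ odd. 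The factor $1-\varepsilon$ equals $2$ exactly when $\alpha k$ is even (always if $\alpha$ is even; iff $k$ is even if $\alpha$ is odd) and vanishes otherwise, which is precisely the hypothesis of the theorem and explains why the excluded case ($\alpha,k$ both odd) forces the residue to vanish and leaves $\int_\R g$ undetermined by this method. In the admissible cases this yields $\int_\R g=\pi\ii\,\Res_{u=\ii\pi/2}g$.

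It then remains to convert the residue at $u=\ii\pi/2$ into a residue at the origin. I would substitute $u=\ii(\tfrac\pi2-x)$, so that $\cosh u=\sin x$, $\dd u=-\ii\,\dd x$, and $u=\ii\pi/2$ corresponds to $x=0$. A short computation gives $\int_0^u(\cosh v)^{\alpha-1}\dd v=\ii\bigl(\int_0^{\pi/2}(\sin y)^{\alpha-1}\dd y-\int_0^x(\sin y)^{\alpha-1}\dd y\bigr)$, so the inner factor becomes $\tfrac12-\tilde c_{\frac{\alpha+1}2}\int_0^{\pi/2}(\sin y)^{\alpha-1}\dd y+\tilde c_{\frac{\alpha+1}2}\int_0^x(\sin y)^{\alpha-1}\dd y$. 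The crucial simplification is the identity $\tilde c_{\frac{\alpha+1}2}\int_0^{\pi/2}(\sin y)^{\alpha-1}\dd y=\tfrac12$, an immediate consequence of the Beta integral and \eqref{eq:c_beta_tilde}: it cancels the constant $\tfrac12$ exactly, leaving $g(u(x))=(\tilde c_{\frac{\alpha+1}2})^{n-k}(\sin x)^{-(\alpha n-1)}\bigl(\int_0^x(\sin y)^{\alpha-1}\dd y\bigr)^{n-k}$. By invariance of the residue of a meromorphic $1$-form under this change of variables, $\Res_{u=\ii\pi/2}g=-\ii(\tilde c_{\frac{\alpha+1}2})^{n-k}\Res_{x=0}\bigl[(\sin x)^{-(\alpha n-1)}(\int_0^x(\sin y)^{\alpha-1}\dd y)^{n-k}\bigr]$, and combining this with $\int_\R g=\pi\ii\,\Res_{u=\ii\pi/2}g$ reproduces exactly \eqref{eq:J_nk_tilde_res}.

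The step I expect to be the main obstacle is the analytic bookkeeping for the contour, namely proving that the two vertical edges contribute nothing as $R\to\infty$. On $\Re u=\pm R$ one has $|(\cosh u)^{-(\alpha n-1)}|\asymp e^{-(\alpha n-1)R}$, while the inner factor grows at most like $e^{(\alpha-1)(n-k)R}$, so the integrand is $O\bigl(e^{(1-\alpha k-(n-k))R}\bigr)$; this decays precisely when $\alpha k+(n-k)>1$, which holds in every configuration allowed by the hypotheses, the borderline case $\alpha=n=k=1$ being ruled out since $\alpha k$ is then odd. I would also record the routine checks that $g$ has no poles on the horizontal edges and that $u=\ii\pi/2$ is the only pole in the open strip. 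All remaining ingredients are the elementary Beta-integral evaluations and parity computations indicated above.
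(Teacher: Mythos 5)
Your proof is correct and is essentially the paper's own argument (Proposition~\ref{prop:a_nu_kappa_residue_tilde}): under the substitution $y=\ii u$ your rectangle with vertices $\pm R,\ \pm R+\ii\pi$ in the $u$-plane is exactly the paper's contour through $-\pi\pm R\ii$, $\mp R\ii$ in the trigonometric variable, with the same unique pole, the same vanishing-edge estimate $\alpha k+(n-k)>1$, and the same parity mechanism singling out $\alpha k$ even. The only cosmetic difference is that you relate the two unbounded edges via conjugation symmetry and the reality of $\int_\R g$, where the paper uses the functional equation $\tilde F(-\pi-\ii u)=(-1)^{\alpha}\tilde F(\ii u)$ directly.
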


It is interesting to note that the validity domain of this formula is larger than in the beta case. Indeed, it gives the complete vector of expected internal angles if $\alpha$ is even.  In the case when $\alpha$ is odd, this theorem identifies $\tilde \bJ_{n,k}(\frac{\alpha + n-1}{2})$ only for even $k$. The values with odd $k$ can be obtained using the Poincar\'e relations
$$
\sum_{k=m}^n (-1)^k \binom{k}{m} \tilde \bJ_{n,k}(\beta) = (-1)^n \tilde \bJ_{n,m}(\beta),
\qquad
\tilde \bJ_{n,0}(\beta) := 0,
$$
for all $n\geq 2$,  $m\in \{0,\ldots,n\}$ and $\beta>\frac{n-1}{2}$. The best result we were able to derive in this way is as follows.

\begin{proposition}\label{prop:ugly_residue_tilde}
Let $\alpha\in\N$. Let also $n\in\N$ and $k\in \{1,\ldots,n\}$ be such that $\alpha k$ is odd. If $n\neq 1$ is odd, then
\begin{equation}\label{eq:J_nk_residue_ugly_tilde}
\tilde \bJ_{n,k}\left(\frac{\alpha +  n - 1}{2}\right)
=
(-1)^{\frac{n-k}2}
\frac{n!}{k!} \pi \tilde c_{\frac {\alpha n}{2}} \cdot  [u^{n-k}x^{-1}] \left( \frac{ \sin \left(u \tilde c_{\frac{\alpha+1}{2}}\int_{0}^x (\sin y)^{\alpha-1} \dd y\right)}{\tan\left(\frac u2\right)(\sin x)^{\alpha n -1}}\right).
\end{equation}
If $n$ is even, then
\begin{equation}\label{eq:J_nk_residue_ugly_tilde1}
\tilde \bJ_{n,k}\left(\frac{\alpha +  n - 1}{2}\right)
=
(-1)^{\frac{n-k-1}2}
\frac{n!}{k!} \pi \tilde c_{\frac {\alpha n}{2}} \cdot  [u^{n-k}x^{-1}] \left( \frac{ \cos \left(u \tilde c_{\frac{\alpha+1}{2}}\int_{0}^x (\sin y)^{\alpha-1} \dd y\right)}{\cot\left(\frac u2\right)(\sin x)^{\alpha n - 1}}\right).
\end{equation}
\end{proposition}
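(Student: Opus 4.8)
The plan is to treat Proposition~\ref{prop:ugly_residue_tilde} as the ``other parity'' companion of Theorem~\ref{theo:bJ_tilde_formula_residue}: when $\alpha$ is odd, that theorem already delivers $\tilde\bJ_{n,k}$ for all \emph{even} $k$, and the Poincar\'e relations must be used to recover the odd-$k$ values. The cleanest way to solve the Poincar\'e relations is to repackage them as a single functional equation for a generating function. Writing $p=n-k$ and $C_p:=\tilde J_{n,n-p}$, I would first use $\binom{k}{m}\binom nk=\binom nm\binom{n-m}{k-m}$ to rewrite the stated Poincar\'e relations (after dividing by $\binom nm$) in the symmetric form $\sum_{i=0}^{l}(-1)^i\binom li\,C_i=C_l$ for $l=0,\dots,n$. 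In terms of the exponential generating function $\mathcal C(u):=\sum_{p=0}^{n}C_p\,u^p/p!$ this is exactly
\[
\mathcal C(u)\ \equiv\ \eee^{u}\,\mathcal C(-u)\ \ \pmod{u^{n+1}}.
\]
Splitting $\mathcal C=\mathcal C_{+}+\mathcal C_{-}$ into even and odd parts in $u$, inserting $\mathcal C(-u)=\mathcal C_{+}-\mathcal C_{-}$, and solving the two linear equations yields $\mathcal C_{+}(u)=\mathcal C_{-}(u)\coth(u/2)$ and $\mathcal C_{-}(u)=\mathcal C_{+}(u)\tanh(u/2)$.

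Second, I would encode the input from Theorem~\ref{theo:bJ_tilde_formula_residue} as a residue generating function. Setting $\phi(x):=\tilde c_{\frac{\alpha+1}{2}}\int_0^x(\sin y)^{\alpha-1}\dd y$, define $\mathcal R(u):=\pi\,\tilde c_{\frac{\alpha n}{2}}\Res_{x=0}\big[\eee^{u\phi(x)}/(\sin x)^{\alpha n-1}\big]$, so that by Theorem~\ref{theo:bJ_tilde_formula_residue} the coefficient of $u^{p}/p!$ in $\mathcal R$ equals $C_{p}$ whenever $k=n-p$ is even (this is precisely where $\alpha k$ even is invoked, $\alpha$ being odd). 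Since $\alpha-1$ is even, $\phi$ is odd; combined with the parity of $(\sin x)^{\alpha n-1}$ and the rule $\Res_{x=0}h(x)=-\Res_{x=0}h(-x)$, the substitution $x\mapsto -x$ gives $\mathcal R(-u)=(-1)^{n}\mathcal R(u)$. Thus $\mathcal R$ is odd when $n$ is odd and even when $n$ is even, i.e.\ it lives in exactly the parity class determined by Theorem~\ref{theo:bJ_tilde_formula_residue}: $\mathcal R=\mathcal C_{-}$ for $n$ odd and $\mathcal R=\mathcal C_{+}$ for $n$ even (to the needed order).

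Third, I would combine the two ingredients and translate to trigonometric form. For $n$ odd the unknown entries ($k$ odd, $p$ even) constitute $\mathcal C_{+}=\mathcal R\coth(u/2)$; writing $\mathcal R(u)=\pi\tilde c_{\frac{\alpha n}{2}}\Res_{x=0}[\sinh(u\phi)/(\sin x)^{\alpha n-1}]$ (legitimate since $\mathcal R$ is odd) and extracting $\tilde\bJ_{n,k}=\tfrac{n!}{k!}[u^{n-k}]\mathcal C_{+}$, the final rotation $u\mapsto \ii u$ turns $\sinh$ into $\sin$ and $\coth$ into $\cot$, producing the factor $(-1)^{(n-k)/2}$ and hence~\eqref{eq:J_nk_residue_ugly_tilde}. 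For $n$ even the unknowns form $\mathcal C_{-}=\mathcal R\tanh(u/2)$ with $\mathcal R(u)=\pi\tilde c_{\frac{\alpha n}{2}}\Res_{x=0}[\cosh(u\phi)/(\sin x)^{\alpha n-1}]$, and the same rotation (now $\cosh\to\cos$, $\tanh\to\tan$) yields $(-1)^{(n-k-1)/2}$ and~\eqref{eq:J_nk_residue_ugly_tilde1}.

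The hard part will be the bookkeeping of truncation orders at the top of the range, since the functional equation only holds modulo $u^{n+1}$ and solving for $\mathcal C_{+}$ requires dividing by $1-\eee^{u}$, which has valuation one and therefore threatens to lose the coefficient $[u^{n-1}]$ needed for the important case $k=1$. The key observation that rescues this is a pole-order count: $\phi^{\,n-k}$ has a zero of order $\alpha(n-k)$ at $x=0$, while $(\sin x)^{\alpha n-1}$ has a zero of order $\alpha n-1$, so the integrand in $\mathcal R$ is regular, and the residue vanishes, exactly when $\alpha k\le 1$. The spurious top coefficient (corresponding to $k=0$, $p=n$) is therefore zero, matching the convention $\tilde\bJ_{n,0}=0$, so $\mathcal R$ in fact agrees with the relevant even/odd part of $\mathcal C$ modulo $u^{n+1}$ and the division costs nothing throughout the needed range $n-k\le n-1$. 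Verifying this vanishing, together with the exact matching of parities and of the normalizing constants $\tilde c_{\frac{\alpha n}{2}},\tilde c_{\frac{\alpha+1}{2}}$, is the only genuinely delicate step; everything else is formal manipulation of generating functions.
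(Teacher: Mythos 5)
Your proposal is correct and takes essentially the same route as the paper: the paper also solves the Poincar\'e relations by splitting the generating function $\sum_r y_r u^r$ (with $y_{n-k}=\frac{k!}{n!}\,\tilde \bJ_{n,k}$) into even and odd parts linked by $\coth(u/2)$ and $\tanh(u/2)$, inserts the residues of Theorem~\ref{theo:bJ_tilde_formula_residue} summed into a $\sinh$ (for $n$ odd) or $\cosh$ (for $n$ even), and then rotates $u\mapsto \ii u$ to obtain the trigonometric form with the signs $(-1)^{(n-k)/2}$ and $(-1)^{(n-k-1)/2}$. Your derivation of the $\coth/\tanh$ identities via the single functional equation $\mathcal C(u)\equiv \eee^u\,\mathcal C(-u) \pmod{u^{n+1}}$, and your explicit pole-order check that the spurious $k=0$ coefficient vanishes, are only cosmetic refinements of the paper's argument (which handles these points through the convention $\tilde\bJ_{n,0}=0$ and the $O(u^{n})$ error terms).
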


The arithmetic structure of $\tilde \bJ_{n,k}(\beta)$ for integer and half-integer $\beta$ is described in the following theorem.
\begin{theorem}
\label{theo:arithm_J_tilde}
Let $n\in\N$ and  $k\in \{1,\ldots,n\}$. Let also $\beta> \frac{n-1}{2}$ be integer or half-integer.
\begin{itemize}
\item[(a)] If $2\beta - n$ is odd, then $\tilde \bJ_{n,k}(\beta)$ is a rational number.
\item[(b)] If $2\beta - n$ is even and $k$ is even, then  $\tilde \bJ_{n,k}(\beta)$ has the form $q\pi^{-(n-k)}$ (if $n-k$ is even) or $q\pi^{-(n-k-1)}$ (if $n-k$ is odd) with some rational $q$.
\item[(c)] If $2\beta - n$ is even, then $\tilde \bJ_{n,k}(\beta)$ can be expressed as
$q_0 + q_2 \pi^{-2} + q_4 \pi^{-4} + \ldots + q_{n-k-1} \pi^{-(n-k-1)}$ (if $n-k$ is odd) or
$q_0 + q_2 \pi^{-2} + q_4 \pi^{-4} + \ldots + q_{n-k} \pi^{-(n-k)}$ (if $n-k$ is even),
where the coefficients $q_{2j}$ are rational numbers.
\end{itemize}
\end{theorem}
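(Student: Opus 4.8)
The plan is to deduce all three parts from the residue representation in Theorem~\ref{theo:bJ_tilde_formula_residue} together with the Poincar\'e relations, after translating the parity hypotheses on $2\beta-n$ into parity hypotheses on the integer $\alpha := 2\beta - n + 1$. Since $\beta$ is integer or half-integer and $\beta>(n-1)/2$, we have $\alpha\in\N$, and $\beta = (\alpha+n-1)/2$, $2\beta - n = \alpha - 1$, so ``$2\beta-n$ odd'' means $\alpha$ is even and ``$2\beta-n$ even'' means $\alpha$ is odd. Two elementary arithmetic facts about $\tilde c_\beta = \Gamma(\beta)/(\sqrt\pi\,\Gamma(\beta-\tfrac12))$ drive the whole argument, and I would record them first. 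Evaluating the Gamma function at integers and half-integers gives $\tilde c_m \in \Q\,\pi^{-1}$ whenever $m$ is a positive integer, while $\tilde c_{m+1/2}\in\Q$ whenever $m+\tfrac12$ is a half-integer. Moreover, for integer $\alpha\ge1$ both $\sin x$ and $\int_0^x(\sin y)^{\alpha-1}\dd y$ are power series in $x$ with rational coefficients, and $(\sin x)^{\alpha n-1} = x^{\alpha n-1}(1+\ldots)$ has invertible leading term; hence the meromorphic function in~\eqref{eq:J_nk_tilde_res} is a Laurent series with rational coefficients, so its residue at $0$ is a rational number.

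Parts (a) and (b) then follow by bookkeeping the powers of $\pi$ in~\eqref{eq:J_nk_tilde_res}. For part (a), $\alpha$ is even, so $\alpha k$ is even for every $k$ and Theorem~\ref{theo:bJ_tilde_formula_residue} applies to all $k\in\{1,\ldots,n\}$. Here $\tfrac{\alpha n}2$ is an integer and $\tfrac{\alpha+1}2$ a half-integer, whence $\tilde c_{\alpha n/2}\in\Q\,\pi^{-1}$ and $\tilde c_{(\alpha+1)/2}\in\Q$; the explicit factor $\pi$ in~\eqref{eq:J_nk_tilde_res} cancels the $\pi^{-1}$, and multiplication by the rational residue yields $\tilde\bJ_{n,k}(\beta)\in\Q$. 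For part (b), $\alpha$ is odd and $k$ is even, so $\alpha k$ is even and the formula again applies. Now $\tfrac{\alpha+1}2$ is an integer, so $(\tilde c_{(\alpha+1)/2})^{n-k}\in\Q\,\pi^{-(n-k)}$, while $\tfrac{\alpha n}2$ is an integer if $n$ is even (contributing an extra $\pi^{-1}$) and a half-integer if $n$ is odd (contributing a rational). Combining with the factor $\pi$ and the rational residue gives $\tilde\bJ_{n,k}(\beta)\in\Q\,\pi^{-(n-k)}$ when $n$ is even (so $n-k$ is even) and $\tilde\bJ_{n,k}(\beta)\in\Q\,\pi^{-(n-k-1)}$ when $n$ is odd (so $n-k$ is odd), exactly as claimed.

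For part (c) we are in the case $\alpha$ odd. When $k$ is even, part (b) already exhibits $\tilde\bJ_{n,k}(\beta)$ as a single power $\pi^{-2\lfloor(n-k)/2\rfloor}$, which is the top term of the asserted polynomial in $\pi^{-2}$. The remaining case is $k$ odd, where I would use the Poincar\'e relations, as developed in Section~\ref{sec:dehn_sommerville}, to express each odd-indexed $\tilde\bJ_{n,k}(\beta)$ as a rational linear combination of even-indexed entries $\tilde\bJ_{n,k'}(\beta)$ together with already-solved odd entries of strictly larger index, and then to argue by downward induction on $k$. Since every even-indexed entry lies in $\Q\,\pi^{-2\lfloor(n-k')/2\rfloor}$ and the exponent $2\lfloor(n-k')/2\rfloor$ is always even, the result is a polynomial in $\pi^{-2}$ with rational coefficients. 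The degree is governed by the smallest even index $k'$ entering the expansion: for $n$ even this index is $k+1$, giving top power $\pi^{-(n-k-1)}$ with $n-k$ odd, while for $n$ odd it is $k-1$, giving top power $\pi^{-(n-k)}$ with $n-k$ even, matching the two asserted forms.

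The main obstacle is this last step: verifying that the Poincar\'e relations genuinely express the odd-indexed entries through the even-indexed ones with the sharp triangular structure required for the degree bound, and in particular pinning down the smallest even index $k'$ appearing in each parity regime ($k+1$ for even $n$ via the relation with $m=k$, and $k-1$ for odd $n$ via the relation with $m=k-2$, with $k=1$ a harmless exceptional case where the stated bound is not attained). This is precisely the content I would isolate and prove in Section~\ref{sec:dehn_sommerville}; once the triangular reduction is in place, the arithmetic tracking above is routine.
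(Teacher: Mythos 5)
Your proof is correct, but it takes a genuinely different route from the paper's. The paper disposes of the theorem in two lines: parts (a) and (c) are quoted from the earlier work \cite{kabluchko_algorithm}, and only part (b) is derived, from Equation~\eqref{eq:J_nk_tilde_res} of Theorem~\ref{theo:bJ_tilde_formula_residue} together with the observation that the residue there is rational. You instead give a self-contained derivation of all three parts: (a) from the same residue formula, which is legitimate because $\alpha k$ is even for \emph{every} $k$ once $\alpha=2\beta-n+1$ is even (this is the point the paper itself makes right after Theorem~\ref{theo:bJ_tilde_formula_residue}), and (c) from (b) via the Poincar\'e relations; your $\pi$-bookkeeping through the factors $\tilde c_{\alpha n/2}$ and $(\tilde c_{(\alpha+1)/2})^{n-k}$ is accurate in both (a) and (b). For (c), the triangular reduction you flag as the main obstacle is precisely Proposition~\ref{prop:solve_poincare_rel} of the paper: Equations~\eqref{eq:sol_bernoulli1a}--\eqref{eq:sol_bernoulli1b} express each odd-indexed entry as a rational linear combination of \emph{even-indexed entries only} (indices $k-1,k+1,k+3,\ldots$ when $n-k$ is even, and $k+1,k+3,\ldots$ when $n-k$ is odd), so your downward induction through ``already-solved odd entries of larger index'' can be skipped entirely; the degree bounds $\pi^{-(n-k)}$, respectively $\pi^{-(n-k-1)}$, then follow from the smallest even index occurring, exactly as you compute, with your $k=1$, $n$ odd case indeed harmless since the stated bound need not be attained. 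As for what each approach buys: the paper's proof is minimal but leans on an external reference for (a) and (c), whereas yours shows that the tools internal to this paper (the residue formula plus the machinery of Section~\ref{sec:dehn_sommerville}) suffice for the whole theorem — which is very much in the spirit of Proposition~\ref{prop:ugly_residue_tilde}, where the paper carries out the same Poincar\'e reduction to get explicit formulas in the odd-$\alpha k$ case.
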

\begin{proof}
Parts (a) and (c) were established in~\cite{kabluchko_algorithm}.  Part (b), which was only  conjectured there,  follows from Equation~\eqref{eq:J_nk_tilde_res} of Theorem~\ref{theo:bJ_tilde_formula_residue} since the residue appearing there is a rational number.
\end{proof}

In general, it does not seem possible to simplify the residue in~\eqref{eq:J_nk_tilde_res}. However, if $\alpha$ and $k$ are fixed, we can show that it is essentially a polynomial in $n$.
\begin{proposition}\label{prop:J_n_tilde_polynomial}
Fix $\alpha\in\N$ and $k\in\N$ such that $\alpha k$ is even. Then, there is a polynomial $P_{\alpha,k}(n)$ with rational coefficients such that for all integer $n\geq k$,
$$
\tilde \bJ_{n,k}\left(\frac{\alpha + n-1}{2}\right)
=
\binom nk
\tilde c_{\frac{\alpha n}2} \left(\tilde c_{\frac{\alpha+1}{2}}\right)^{n-k}
\frac{\pi\, P_{\alpha,k}(n)}{\alpha^{n-k}} .
$$
For example, we have
\begin{align*}
&P_{1,2}(n) = 1, &&P_{1,4}(n)= \frac{n-1} 6,&& P_{1,6}(n) = \frac{5 n^2-8 n+3}{360},&&  P_{1,8}(n) = \frac{35 n^3-63 n^2+37 n-9}{45360},\\
&P_{2,1}(n) = 1, && P_{2,2}(n) = \frac n4,&& P_{2,3}(n) = \frac{n (n+1)}{32},&&
P_{2,4}(n) = \frac{n (n^2+3 n+2)}{384}.
\end{align*}
\end{proposition}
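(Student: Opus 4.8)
The plan is to deduce everything from the residue representation of Theorem~\ref{theo:bJ_tilde_formula_residue}, which applies exactly because $\alpha k$ is even. Comparing its right-hand side with the target formula of the proposition, the whole statement reduces to the single assertion that
$$
\Res_{x=0}\left[\frac{\left(\int_{0}^x (\sin y)^{\alpha-1}\,\dd y\right)^{n-k}}{(\sin x)^{\alpha n -1}}\right]
=
\frac{P_{\alpha,k}(n)}{\alpha^{\,n-k}}
$$
for a polynomial $P_{\alpha,k}$ with rational coefficients. So I would fix $\alpha$ and $k$ and study the $n$-dependence of this residue.

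First I would isolate the powers of $x$ that create the pole. Writing $\sin x = x\,\phi(x)$ and $\int_{0}^x (\sin y)^{\alpha-1}\,\dd y = \frac{x^\alpha}{\alpha}\,h(x)$, one checks straight from the Taylor series of $\sin$ that $\phi$ and $h$ are \emph{even} power series with rational coefficients, constant term $1$, and — crucially — coefficients that do not depend on $n$. (For $\alpha=1$ one has $h\equiv 1$; in general $h(x)$ comes from $\phi(y)^{\alpha-1}$ integrated term by term.) Substituting and cancelling the explicit powers of $x$ rewrites the integrand as
$$
\frac{1}{\alpha^{\,n-k}}\; x^{\,1-\alpha k}\;\frac{h(x)^{\,n-k}}{\phi(x)^{\,\alpha n -1}} .
$$
Hence the residue equals $\alpha^{-(n-k)}$ times the coefficient of $x^{\alpha k-2}$ in $h(x)^{n-k}\phi(x)^{-(\alpha n-1)}$, and $\alpha k-2$ is a fixed nonnegative integer since $\alpha k\geq 2$ is even.

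The polynomiality in $n$ now follows from passing to logarithms, which is the key step. Setting $A(x) := \log h(x) - \alpha\log\phi(x)$ and $B(x) := \log\phi(x) - k\log h(x)$, both $A$ and $B$ are even power series with rational coefficients, independent of $n$, and vanishing to order $x^2$ at the origin. Then
$$
\frac{h(x)^{\,n-k}}{\phi(x)^{\,\alpha n -1}}
= e^{B(x)}\,e^{\,n A(x)}
= e^{B(x)}\sum_{j\geq 0}\frac{n^j}{j!}\,A(x)^j .
$$
Since $A(x)=O(x^2)$, the factor $A(x)^j$ contributes only to powers $x^{\geq 2j}$, so only the finitely many indices $j$ with $2j\leq \alpha k-2$ affect the coefficient of $x^{\alpha k-2}$; for each such $j$ that contribution is $n^j$ times a rational number independent of $n$. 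Therefore the coefficient of $x^{\alpha k-2}$ is a polynomial in $n$ of degree at most $\frac{\alpha k}{2}-1$ with rational coefficients, which is the desired $P_{\alpha,k}(n)$, and the identity holds for every integer $n\geq k$ in the range of Theorem~\ref{theo:bJ_tilde_formula_residue}.

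I do not expect a genuine obstacle here: the only points requiring care are the verification that $h$ and $\phi$ have $n$-independent rational coefficients and the bookkeeping of exponents in the cancellation, after which the $e^{nA(x)}$ expansion makes the finite-degree dependence on $n$ completely transparent. As a sanity check, the degree bound $\frac{\alpha k}{2}-1$ reproduces the degrees of all the tabulated examples $P_{1,2},\ldots,P_{2,4}$ in the statement.
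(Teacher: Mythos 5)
Your proof is correct, and its skeleton is the same as the paper's: both start from Theorem~\ref{theo:bJ_tilde_formula_residue} (applicable exactly because $\alpha k$ is even), reduce the proposition to showing that $\alpha^{n-k}$ times the residue is a polynomial in $n$ with rational coefficients, and establish this by analyzing the Laurent expansion at $x=0$ after factoring out the explicit powers of $x$ (your $h$ and $\phi$ are precisely the paper's series $\sum_j a_j x^{2j}$ and $\sum_j c_j(n)^{1/(\alpha n -1)}$-type factors, suitably normalized). The difference lies in how polynomiality in $n$ is extracted. The paper raises the $n$-independent even series directly to the $n$-dependent powers $n-k$ and $\alpha n-1$, and argues via the multinomial-counting observation (borrowed from the proof of Proposition~\ref{prop:J_n_1_half_integer}) that each coefficient of the resulting quotient series is a polynomial in $n$; you instead take logarithms, write $h(x)^{n-k}\phi(x)^{-(\alpha n-1)}=e^{B(x)}e^{nA(x)}$ with $A,B$ even, rational, $n$-independent and $O(x^2)$, and expand $e^{nA(x)}=\sum_j \frac{n^j}{j!}A(x)^j$. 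Your version makes the polynomial dependence on $n$ completely transparent and yields, as a bonus, the explicit degree bound $\deg P_{\alpha,k}\leq \frac{\alpha k}{2}-1$ (consistent with all tabulated examples), which the paper's argument does not record; the paper's version avoids the (harmless, since $h(0)=\phi(0)=1$) passage to logarithms and stays entirely within elementary power-series manipulation.
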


\section{Applications to random polytopes}\label{sec:appl}

In this section, we give applications of the above results to some problems of stochastic geometry including the determination of the expected $f$-vectors of two random polytopes: the zero cell of the Poisson hyperplane tessellation and the typical cell of the Poisson-Voronoi tessellation.  Recall that for a convex $d$-dimensional polytope $P$, we let $f_{\ell}(P)$ denote the number of $\ell$-dimensional faces of $P$, for all $\ell\in \{0,\ldots,d\}$.  The \textit{$f$-vector} of $P$ is then the vector $(f_\ell(P))_{\ell = 0}^{d-1}$. If $P$ is random, we are interested in its expectation, called the \textit{expected $f$-vector}. We start with the so-called Poisson polytopes which can be used to treat both models.

\subsection{Poisson polytopes}
For $\alpha>0$ let $\Pi_{d,\alpha}$ be a Poisson point process on $\R^d\backslash\{0\}$ with the following power-law intensity function:
$$
x \mapsto \|x\|^{-d-\alpha},\qquad x\in \R^d \bsl \{0\}.
$$
Let $\conv \Pi_{d,\alpha}$ denote the convex hull of the atoms of $\Pi_{d,\alpha}$. We call $\conv \Pi_{d,\alpha}$ the \textit{Poisson polytope}.  Even though the number of atoms of $\Pi_{d,\alpha}$ is a.s.\ infinite because the atoms cluster at the origin, it can be shown that $\conv \Pi_{d,\alpha}$ is a.s.\ a polytope; see~\cite[Corollary~4.2]{convex_hull_sphere}. Moreover, this polytope is simplicial (that is, all of its faces are simplices) and contains the origin in its interior, with probability $1$.

In~\cite[Theorem~1.21]{beta_polytopes}, the expected $f$-vector of the Poisson polytope $\conv\Pi_{d,\alpha}$ has been expressed through the quantities $\tilde \bJ_{m,\ell}(\gamma)$. Namely, for every $d\in\N$ and every $k\in \{1,\ldots,d\}$, the expected number of $(k-1)$-dimensional faces of $\conv \Pi_{d,\alpha}$ is given by
\begin{equation} \label{eq:E_f_k_beta_prime_to_poisson}
\E f_{k-1}(\conv \Pi_{d,\alpha})
=
2 \sum_{\substack{m\in \{k,\ldots,d\}\\ m\equiv d \Mod{2}}}
\tilde \bI_{\infty,m}(\alpha) \tilde \bJ_{m,k}\left(\frac{\alpha + m-1}{2}\right),
\end{equation}
where\footnote{To explain our notation, let us mention that it was shown in the proof of Theorem~1.21 in~\cite{beta_polytopes} that $\tilde \bI_{\infty,m}(\alpha)= \lim_{n\to\infty} \tilde \bI_{n,m}(\alpha)$, for some  quantities $\tilde \bI_{n,m}(\alpha)$ to be defined below.}
\begin{equation}\label{eq:poisson_voronoi_f_k_addition}
\tilde \bI_{\infty,m}(\alpha)
:=
\frac{\tilde c_{\frac{\alpha m+1}{2}}}{\tilde c_{\frac{\alpha+1}{2}}^m} \cdot \frac{\alpha^{m-1}}{m}
=
\frac{\Gamma({\frac{m\alpha  + 1} 2})}{\Gamma(\frac {m\alpha}2)} \left(\frac {\Gamma(\frac \alpha 2)} {\Gamma(\frac{\alpha+1} 2)}\right)^{m}
\frac{(\sqrt{\pi}\alpha)^{m-1}}{m}.
\end{equation}
Using our results on the quantities $\tilde \bJ_{m,\ell}(\gamma)$ we are able to prove the following explicit formula for the expected $f$-vector of $\conv \Pi_{d,\alpha}$.
\begin{theorem}\label{theo:poisson_polyhedra}
Let $d\in\N$ and $\alpha>0$. Then, for every $k\in \{1,\ldots,d\}$ we have
\begin{multline*}
\E f_{k-1}(\conv \Pi_{d,\alpha})
\\
=
\binom{d}{k}
\cdot
\left(\frac{\alpha}{\tilde c_{\frac{\alpha+1}{2}}}\right)^d
\cdot
\frac 1 \pi
\int_{-\infty}^{+\infty} (\cosh u)^{- (\alpha d + 1)} \left(\frac 12  + \ii \int_0^u \tilde c_{\frac{\alpha+1}{2}}(\cosh v)^{\alpha-1}\dd v \right)^{d-k} \dd u.
\end{multline*}
If, additionally,  $\alpha\in\N$ and $k\in \{1,\ldots,d\}$ are such that $\alpha k$ is even, then
\begin{equation}\label{eq:f_vect_poisson_poly_residue}
\E f_{k-1}(\conv \Pi_{d,\alpha})
= \alpha^{d}  \binom{d}{k} \left(\frac{\sqrt \pi\, \Gamma(\frac \alpha2)}{\Gamma(\frac{\alpha+1}{2})}\right)^k
\Res\limits_{x=0} \left[\frac{\left(\int_{0}^x (\sin y)^{\alpha-1} \dd y\right)^{d-k}}{(\sin x)^{\alpha d+1}}\right].
\end{equation}
\end{theorem}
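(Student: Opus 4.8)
The plan is to feed the explicit angle formulas into the sum \eqref{eq:E_f_k_beta_prime_to_poisson} and then collapse the finite sum over $m$ to its top term $m=d$ by a telescoping argument. Write $\lambda := \tilde c_{\frac{\alpha+1}{2}}$. First I would record the two elementary constant identities $\tilde c_{\frac{\alpha m+1}{2}}\,\tilde c_{\frac{\alpha m}{2}} = \frac{\alpha m-1}{2\pi}$ and $1/\lambda = \sqrt\pi\,\Gamma(\tfrac\alpha2)/\Gamma(\tfrac{\alpha+1}{2})$, both immediate from $\tilde c_\beta = \Gamma(\beta)/(\sqrt\pi\,\Gamma(\beta-\tfrac12))$ and $\Gamma(z+1)=z\Gamma(z)$. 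Substituting \eqref{eq:poisson_voronoi_f_k_addition} and the $\cosh$-form \eqref{eq:bJ_nk_tilde_equiv2} of $\tilde\bJ_{m,k}$ into \eqref{eq:E_f_k_beta_prime_to_poisson}, and interchanging the finite sum with the integral, reduces the first assertion to
$$
\frac1\alpha\sum_{\substack{m=k\\ m\equiv d \Mod{2}}}^{d}\frac{\alpha m-1}{m}\binom mk\Big(\tfrac\alpha\lambda\Big)^m A_m = \binom dk\Big(\tfrac\alpha\lambda\Big)^d B_d,
$$
where $A_m := \int_{-\infty}^{+\infty} g(u)^{m-k}(\cosh u)^{-(\alpha m-1)}\,\dd u$, $B_m := \int_{-\infty}^{+\infty} g(u)^{m-k}(\cosh u)^{-(\alpha m+1)}\,\dd u$, and $g(u) := \tfrac12+\ii\lambda\int_0^u(\cosh v)^{\alpha-1}\dd v$.

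The heart of the argument is a three-term recursion produced by integration by parts. Writing $(\cosh u)^{-(\alpha m-1)} = (1+\sinh^2 u)(\cosh u)^{-(\alpha m+1)}$, using $(\cosh u)^{-(\alpha m+1)}\sinh u = -\tfrac1{\alpha m}\tfrac{\dd}{\dd u}(\cosh u)^{-\alpha m}$, integrating by parts twice, and invoking $g'(u)=\ii\lambda(\cosh u)^{\alpha-1}$, I expect to obtain
$$
\frac{\alpha m-1}{\alpha m}\,A_m = B_m-\frac{(m-k)(m-k-1)}{\alpha^2 m(m-1)}\,B_{m-2}+(\text{boundary terms}),
$$
the crucial points being that the two integrations by parts convert $A_m$ into $B_{m-2}$ (same parity as $m$) with exactly this rational coefficient, and that $(\ii\lambda)^2=-\lambda^2$ supplies the correct sign. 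Multiplying by $\tfrac1\alpha\binom mk(\alpha/\lambda)^m$, reindexing the $B_{m-2}$-sum via $m\mapsto m+2$, and invoking the binomial identity
$$
\binom{m+2}{k}\frac{(m+2-k)(m+1-k)}{(m+2)(m+1)}=\binom mk
$$
makes the $B_m$-contributions telescope, leaving only $\binom dk(\alpha/\lambda)^d B_d$; the would-be bottom term carries $\binom{k-2}{k}=0$ and drops out. This proves the integral formula once the boundary terms are shown to vanish.

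For the residue formula \eqref{eq:f_vect_poisson_poly_residue} I would run the identical telescoping on the residue side. Assuming $\alpha\in\N$ and $\alpha k$ even, Theorem~\ref{theo:bJ_tilde_formula_residue} applies to every $\tilde\bJ_{m,k}$ in the sum, and the same substitution reduces \eqref{eq:f_vect_poisson_poly_residue} to the purely algebraic identity
$$
\sum_{\substack{m=k\\ m\equiv d \Mod{2}}}^{d}\frac{(\alpha m-1)\alpha^{m-1}}{m}\binom mk\,a_m=\alpha^d\binom dk\,b_d,\qquad a_m:=\Res_{x=0}\frac{G(x)^{m-k}}{(\sin x)^{\alpha m-1}},\quad b_m:=\Res_{x=0}\frac{G(x)^{m-k}}{(\sin x)^{\alpha m+1}},
$$
with $G(x)=\int_0^x(\sin y)^{\alpha-1}\dd y$. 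Here the recursion $\tfrac{\alpha m-1}{\alpha m}a_m=b_m-\tfrac{(m-k)(m-k-1)}{\alpha^2 m(m-1)}b_{m-2}$ is derived exactly as before, now via $\sin^2=1-\cos^2$, $\tfrac{\cos x}{(\sin x)^{\alpha m+1}}=-\tfrac1{\alpha m}\tfrac{\dd}{\dd x}(\sin x)^{-\alpha m}$, and the fact that $\Res_{x=0}\tfrac{\dd}{\dd x}(\cdots)=0$; this kills the boundary terms automatically, which is precisely why the residue version is cleaner. The same reindex-and-telescope step then gives the claim.

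The main obstacle will be twofold: discovering and correctly executing the two-step integration-by-parts recursion with its exact rational coefficient, so that the reindexed sum matches the top term through the binomial identity; and, in the integral case, justifying the vanishing of the boundary terms at $\pm\infty$. For the latter, both boundary contributions behave like $g(u)^{m-k}\sinh u\,(\cosh u)^{-\alpha m}$, and since $g$ grows only like $(\cosh u)^{\alpha-1}$ the net exponent equals $-(\alpha k+(m-k)-1)$, which is negative throughout the convergence range; the borderline small-$\alpha$ cases I would dispatch by analytic continuation in $\alpha$ from the region where every $A_m$ and $B_m$ converges absolutely.
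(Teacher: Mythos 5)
Your strategy is correct and is, in essence, the paper's own proof of the first claim: the paper likewise substitutes the angle formulas into \eqref{eq:E_f_k_beta_prime_to_poisson}, collapses the $m$-sum by telescoping, and extends from $\alpha k>1$ to all $\alpha>0$ by analytic continuation in $\alpha$; your double-integration-by-parts recursion for $A_m,B_m$ is exactly the paper's recurrence \eqref{eq:a_0_relation_tilde} (Proposition~\ref{prop:A_0_relations_tilde}), since with your $\lambda=\tilde c_{\frac{\alpha+1}{2}}$ one has $A_m\propto \mA\left[m-\frac 2\alpha,k-\frac 2\alpha\right]$ and $B_m\propto \mA[m,k]$. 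One concrete correction is needed: in the integral case the coefficient of $B_{m-2}$ must carry the factor $\lambda^2$ coming from $(\ii\lambda)^2=-\lambda^2$, i.e.\ the recursion reads $\frac{\alpha m-1}{\alpha m}A_m=B_m-\frac{(m-k)(m-k-1)\lambda^2}{\alpha^2m(m-1)}B_{m-2}+(\text{boundary terms})$. This $\lambda^{2}$ is precisely what turns $(\alpha/\lambda)^{m}\cdot \lambda^{2}/\alpha^{2}$ into $(\alpha/\lambda)^{m-2}$ so that, after your binomial identity, the reindexed term equals $\binom{m-2}{k}(\alpha/\lambda)^{m-2}B_{m-2}$ and the sum genuinely telescopes; with the coefficient as you displayed it (no $\lambda^{2}$) the reindexed terms are off by $\lambda^{-2}$ and the collapse fails, so the slip must be repaired, though your own remark about $(\ii\lambda)^2$ shows the computation you describe produces the right factor. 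For the residue formula \eqref{eq:f_vect_poisson_poly_residue} your route is a mild but genuine reorganization of the paper's: the paper telescopes first and then converts the resulting integral $\mA[d,k]$ into a residue via Proposition~\ref{prop:a_nu_kappa_residue_tilde}, whereas you convert each summand via Theorem~\ref{theo:bJ_tilde_formula_residue} and telescope on the residue side, where your recursion for $a_m,b_m$ is correct exactly as stated (here $G'(x)=(\sin x)^{\alpha-1}$ carries no $\ii\lambda$) and follows from the fact that residues of derivatives vanish. Both routes rest on the same underlying contour computation, but yours has the small advantage that the boundary-term discussion disappears entirely in the residue case.
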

Equation~\eqref{eq:f_vect_poisson_poly_residue} identifies the complete expected $f$-vector $(\E f_{\ell}(\conv \Pi_{d,\alpha}))_{\ell=0}^{d-1}$  if $\alpha$ is even. If $\alpha$ is odd, it gives only the entries with odd $\ell$. However, since the polytope $\conv \Pi_{d,\alpha}$ is simplicial a.s., the remaining entries are uniquely determined by the Dehn-Sommerville relations; see Section~\ref{sec:dehn_sommerville}. In the same way as in Proposition~\ref{prop:ugly_residue_tilde}, it is possible to prove the following statement complementing~\eqref{eq:f_vect_poisson_poly_residue}.

\begin{proposition}\label{prop:poisson_polytope_ugly}
Let $\alpha\in\N$. Let also $d\in\N$ and $k\in \{1,\ldots,d\}$ be such that $\alpha k$ is odd. If $d$ is odd, then
\begin{equation}\label{eq:poisson_polytope_ugly}
\E f_{k-1}(\conv \Pi_{d,\alpha})
=
\frac{(-1)^{\frac{d-k}2}
\alpha^d d!}{\tilde c_{\frac {\alpha+1}{2}}^{d} k!}   \cdot  [u^{d-k}x^{-1}] \left( \frac{ \sin \left(u \tilde c_{\frac{\alpha+1}{2}}\int_{0}^x (\sin y)^{\alpha-1}\dd y\right)}{\tan\left(\frac u2\right)(\sin x)^{\alpha d +1}}\right).
\end{equation}
If $d$ is even, then
\begin{equation}\label{eq:poisson_polytope_ugly1}
\E f_{k-1}(\conv \Pi_{d,\alpha})
=
\frac{(-1)^{\frac{d-k-1}2}
\alpha^d d!}{\tilde c_{\frac {\alpha+1}{2}}^{d} k!} \cdot  [u^{d-k}x^{-1}] \left( \frac{ \cos \left(u \tilde c_{\frac{\alpha+1}{2}}\int_{0}^x (\sin y)^{\alpha-1}\dd y\right)}{\cot\left(\frac u2\right)(\sin x)^{\alpha d +1}}\right).
\end{equation}
\end{proposition}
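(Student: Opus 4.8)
The plan is to bootstrap from the already-established residue formula \eqref{eq:f_vect_poisson_poly_residue}, exactly as Proposition~\ref{prop:ugly_residue_tilde} bootstraps from Theorem~\ref{theo:bJ_tilde_formula_residue}, rather than to re-derive everything from \eqref{eq:E_f_k_beta_prime_to_poisson}. Since $\alpha k$ is odd, $\alpha$ is odd, so \eqref{eq:f_vect_poisson_poly_residue} already supplies $\E f_{j-1}(\conv\Pi_{d,\alpha})$ for every \emph{even} $j\in\{1,\dots,d\}$. Because $\conv\Pi_{d,\alpha}$ is a.s.\ a simplicial polytope (recorded in the Poisson-polytopes subsection), its expected face numbers satisfy the Dehn--Sommerville relations (Section~\ref{sec:dehn_sommerville}), which in the form $\sum_{j=k}^{d}(-1)^{j}\binom{j}{k}\E f_{j-1}=(-1)^{d}\E f_{k-1}$ (with $\E f_{-1}:=1$) are \emph{formally identical} to the Poincar\'e relations for $\tilde\bJ_{n,k}$ used in the proof of Proposition~\ref{prop:ugly_residue_tilde}, under the dictionary $n\leftrightarrow d$ and $\tilde\bJ_{n,j}\leftrightarrow\E f_{j-1}$. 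This is the observation that makes the argument go through with no new analysis. One could instead substitute Proposition~\ref{prop:ugly_residue_tilde} into \eqref{eq:E_f_k_beta_prime_to_poisson} directly, but that forces a second, harder resummation over $m$ (the one shifting the exponent $\alpha m-1$ to $\alpha d+1$, i.e.\ the collapse already carried out in proving Theorem~\ref{theo:poisson_polyhedra}); starting from \eqref{eq:f_vect_poisson_poly_residue} sidesteps this entirely.

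Concretely, I would first rewrite \eqref{eq:f_vect_poisson_poly_residue} in the normalized form $\E f_{j-1}=\binom{d}{j}(\alpha/\tilde c_{\frac{\alpha+1}{2}})^{d}\,\Res_{x=0}[F(x)^{d-j}/(\sin x)^{\alpha d+1}]$ for even $j$, where $F(x):=\tilde c_{\frac{\alpha+1}{2}}\int_{0}^{x}(\sin y)^{\alpha-1}\dd y$ (using $1/\tilde c_{\frac{\alpha+1}{2}}=\sqrt\pi\,\Gamma(\tfrac\alpha2)/\Gamma(\tfrac{\alpha+1}2)$ and $\int_0^x(\sin y)^{\alpha-1}\dd y=F/\tilde c_{\frac{\alpha+1}{2}}$). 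Crucially, the prefactor $(\alpha/\tilde c_{\frac{\alpha+1}{2}})^{d}$ and the denominator $(\sin x)^{\alpha d+1}$ do not depend on $j$, so they factor out of every Dehn--Sommerville combination and are carried along as inert common factors (note there is no leftover $\pi$ or $\tilde c_{\frac{\alpha d}2}$, which is why the target prefactor contains $\alpha^d/\tilde c_{\frac{\alpha+1}{2}}^{d}$ and not $\pi\tilde c_{\frac{\alpha d}2}$). Solving the Dehn--Sommerville system for the odd-indexed $\E f_{k-1}$ in terms of the known even-indexed values and using $\binom{d}{j}\binom{j}{k}=\binom{d}{k}\binom{d-k}{j-k}$, the remaining task is a binomial resummation over a fixed-parity index: the alternating sum of $\binom{d-k}{j-k}F^{d-j}$ is exactly the coefficient of $u^{d-k}$ in $\sin(uF)/\tan(u/2)$ when $d$ is odd, and in $\cos(uF)/\cot(u/2)$ when $d$ is even. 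This is the same generating-function identity verified in the proof of Proposition~\ref{prop:ugly_residue_tilde}; transporting it here collapses the sum into the single coefficient extraction $[u^{d-k}x^{-1}]$ of \eqref{eq:poisson_polytope_ugly} (for odd $d$) or \eqref{eq:poisson_polytope_ugly1} (for even $d$), with the prefactor $(-1)^{(d-k)/2}\tfrac{\alpha^{d}d!}{\tilde c_{\frac{\alpha+1}{2}}^{d}k!}$ arising from combining $\binom{d}{k}(d-k)!=d!/k!$, the reciprocal factorial supplied by the $u$-expansion, and the overall sign.

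The one genuinely new point to check is that the Dehn--Sommerville relations for $\conv\Pi_{d,\alpha}$ really take the stated shape; this is where simpliciality is used, and it is otherwise a standard fact (it can be confirmed, e.g., against small simplicial polytopes, and it fails for non-simplicial ones). I expect the main delicacy to be parity bookkeeping rather than hard analysis. For even $d$ the relations form a system that is triangular from the top facet number $\E f_{d-1}$ (the $j=d$, known, term) downward, and each odd-indexed value is solved directly; the resulting generating function is $\cos/\cot$, matching \eqref{eq:poisson_polytope_ugly1}. For odd $d$, by contrast, the relation indexed by the target $k$ degenerates to a tautology (both sides cancel because $k\equiv d\Mod 2$), so the target value must instead be recovered through the relations indexed by the even $m$'s; the closed-form solution is then $\sin/\tan$, matching \eqref{eq:poisson_polytope_ugly}. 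In both cases the parity of $d$ dictates the trigonometric generating function in exactly the same way the parity of $n$ does in the dichotomy between \eqref{eq:J_nk_residue_ugly_tilde} and \eqref{eq:J_nk_residue_ugly_tilde1}, so the verification reduces to transporting the already-proven identity with the new inert factors in place.
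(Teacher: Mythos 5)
Your proposal is correct and is essentially the paper's own proof: the paper likewise bootstraps from \eqref{eq:f_vect_poisson_poly_residue} via the Dehn--Sommerville relations for the a.s.\ simplicial polytope $\conv \Pi_{d,\alpha}$, running the generating-function machinery of Section~\ref{sec:dehn_sommerville} (the template proof of Proposition~\ref{prop:ugly_residue}), with the parity of $d$ selecting which of the two equalities in \eqref{eq:f_even_f_odd_relation} is used --- exactly the $\sin/\tan$ versus $\cos/\cot$ dichotomy and prefactor bookkeeping you describe. The only point you leave implicit --- that the Euler term $\E f_{-1}=1$ (the $j=0$ entry of the even-indexed data, which does feed into the $u^{d-1}$, i.e.\ $k=1$, coefficient when $d$ is odd) is consistent with the collapsed generating function, equivalently $\alpha^{d}\Res_{x=0}\bigl[\bigl(\int_0^x(\sin y)^{\alpha-1}\dd y\bigr)^{d}/(\sin x)^{\alpha d+1}\bigr]=1$ --- is a one-line Laurent-series check that the paper's terse proof glosses over as well.
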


\vspace*{2mm}
Let us now consider three special cases of Poisson polytopes.

\subsection{Poisson zero cell: \texorpdfstring{$\alpha=1$}{alpha=1}}\label{subsec:alpha_1}
It is known that on the space of all affine hyperplanes in $\R^d$ there is an infinite measure invariant under rotations and translations of $\R^d$. Moreover, this measure is unique up to multiplication by constants; see~\cite[Section~13.2]{SW08}.  Consider a Poisson process on the space of affine hyperplanes having this measure as intensity. The hyperplanes belonging to this point process dissect $\R^d$ into countably many polytopes, called the cells of the (homogeneous and isotropic) Poisson hyperplane tessellation. The a.s.\ unique cell containing the origin is called the \textit{Poisson zero polytope} and is denoted here by $\mathcal Z_d$; see~\cite[Sections~10.3, 10.4]{SW08}. The expected $f$-vector of $\mathcal Z_d$ has been identified only recently in~\cite{kabluchko_poisson_zero}. There  we have shown that for all $\ell\in \{1,\ldots, d\}$ such that $d-\ell$ is even,
\begin{equation}\label{eq:f_vect_poisson_zero1}
\E f_{\ell}(\mathcal Z_d) = \frac{\pi^{d-\ell}}{(d-\ell)!} \cdot  [x^{d-\ell}]
\prod_{\substack{j\in \{1,\ldots,d-1\} \\ j \not\equiv d \Mod{2}}} (1+ j^2 x^2).
\end{equation}
Theorem~\ref{theo:poisson_polyhedra} can be used to derive the following alternative formula.
\begin{theorem}\label{theo:poisson_zero_f_vect}
For all $d\in\N$ and $\ell\in \{0,\ldots, d\}$ such that $d-\ell$ is even, we have
\begin{equation}\label{eq:f_vect_poisson_zero2}
\E f_{\ell}(\mathcal Z_d) = \pi^{d-\ell} \binom dl [x^{d-\ell}] \left(\frac{x}{\sin x}\right)^{d+1}.
\end{equation}
\end{theorem}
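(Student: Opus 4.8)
The plan is to realize the zero cell $\mathcal{Z}_d$ as the polar dual of the Poisson polytope $\conv\Pi_{d,1}$ and then read its face numbers off from the $\alpha=1$ instance of Theorem~\ref{theo:poisson_polyhedra}; the passage from a residue to a coefficient of a power of $x/\sin x$ is then a one-line manipulation. Concretely, write each hyperplane of the isotropic Poisson hyperplane tessellation as $\{y\colon \langle y,u\rangle = t\}$ with foot point $p=tu$, so that $\mathcal{Z}_d=\{y\colon \langle y,u_i\rangle\le t_i\ \forall i\}$ and its polar dual is $\conv\{u_i/t_i\}=\conv\{p_i/\|p_i\|^2\}$. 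A short Jacobian computation shows that the foot‑point process, whose intensity is $\propto\|p\|^{-(d-1)}$, is mapped by the inversion $p\mapsto p/\|p\|^2$ to a Poisson process of intensity $\propto\|q\|^{-(d-1)}\cdot\|q\|^{-2d}=\|q\|^{-d-1}$, i.e.\ exactly $\Pi_{d,1}$. Since polar duality is a combinatorial anti-isomorphism of face lattices, exchanging $\ell$-faces with $(d-1-\ell)$-faces, this yields
\[
\E f_\ell(\mathcal{Z}_d) = \E f_{d-1-\ell}(\conv\Pi_{d,1}),\qquad \ell\in\{0,\ldots,d-1\},
\]
the top case $\ell=d$ being trivial. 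In the notation $f_{k-1}$ of Theorem~\ref{theo:poisson_polyhedra} this amounts to setting $k=d-\ell$.

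I would then substitute $\alpha=1$ and $k=d-\ell$ into the residue formula~\eqref{eq:f_vect_poisson_poly_residue}. Its hypothesis ``$\alpha k$ even'' becomes ``$d-\ell$ even,'' which is precisely the standing assumption of the theorem, so~\eqref{eq:f_vect_poisson_poly_residue} is applicable in exactly the stated range. At $\alpha=1$ the constants collapse, since $\sqrt\pi\,\Gamma(\frac12)/\Gamma(1)=\pi$ and $\int_0^x(\sin y)^{0}\dd y=x$, while the exponent of $\sin x$ becomes $\alpha d+1=d+1$. Because $d-k=\ell$, this leaves
\[
\E f_\ell(\mathcal{Z}_d)=\binom{d}{d-\ell}\,\pi^{d-\ell}\,\Res\limits_{x=0}\left[\frac{x^{\ell}}{(\sin x)^{d+1}}\right].
\]

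Finally, I would convert the residue into the coefficient appearing in~\eqref{eq:f_vect_poisson_zero2}. Writing $(\sin x)^{-(d+1)}=x^{-(d+1)}(x/\sin x)^{d+1}$ gives $x^{\ell}(\sin x)^{-(d+1)}=x^{-(d-\ell+1)}(x/\sin x)^{d+1}$, so the residue, being the coefficient of $x^{-1}$, equals $[x^{d-\ell}](x/\sin x)^{d+1}$; combined with $\binom{d}{d-\ell}=\binom{d}{\ell}$ this is exactly~\eqref{eq:f_vect_poisson_zero2}. I expect the only genuinely non-routine point to be the duality identification $\E f_\ell(\mathcal{Z}_d)=\E f_{d-1-\ell}(\conv\Pi_{d,1})$ (established in~\cite{kabluchko_poisson_zero}); everything after it is bookkeeping, and the parity restriction to even $d-\ell$ is forced, since for odd $d-\ell$ the coefficient $[x^{d-\ell}](x/\sin x)^{d+1}$ of the even function $(x/\sin x)^{d+1}$ vanishes and the clean residue formula ceases to apply. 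A self‑contained alternative would be to check directly that the right-hand sides of the known formula~\eqref{eq:f_vect_poisson_zero1} and of~\eqref{eq:f_vect_poisson_zero2} coincide; this, however, is itself a nontrivial coefficient identity rather than a shortcut, so I would regard the duality route above as the natural proof.
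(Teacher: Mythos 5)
Your proof is correct and follows essentially the same route as the paper's own: identify $\mathcal Z_d$ with the polar dual of $\conv\Pi_{d,1}$ so that $\E f_{\ell}(\mathcal Z_d)=\E f_{d-\ell-1}(\conv \Pi_{d,1})$, apply the residue formula~\eqref{eq:f_vect_poisson_poly_residue} with $\alpha=1$ and $k=d-\ell$ (whose parity hypothesis is exactly ``$d-\ell$ even''), and convert the residue into $[x^{d-\ell}]\left(\frac{x}{\sin x}\right)^{d+1}$. The only blemish is a typo in your Jacobian sketch: the pulled-back intensity is $\|q\|^{d-1}\cdot\|q\|^{-2d}$, not $\|q\|^{-(d-1)}\cdot\|q\|^{-2d}$, which indeed equals $\|q\|^{-d-1}$ as you conclude.
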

\begin{proof}
The claim is trivial for $\ell=d$, so let $\ell \in \{0,\ldots,d-1\}$.
It is known~\cite{beta_polytopes} that $\conv \Pi_{d,1}$ can be identified (up to scaling) with the convex dual of the zero cell $\mathcal Z_d$ of the homogeneous and isotropic Poisson hyperplane tessellation.  In particular,
\begin{equation}\label{eq:E_f_ell_Z_d_Poisson_Process}
\E f_{\ell}(\mathcal Z_d)= \E f_{d-\ell-1}(\conv \Pi_{d,1})
\end{equation}
for all $\ell\in \{0,\ldots,d-1\}$. Taking $\alpha = 1$ in the second claim of Theorem~\ref{theo:poisson_polyhedra} we obtain after straightforward transformations that for all even $k\in \{1,\ldots, d\}$,
$$
\E f_{k-1}(\conv \Pi_{d,1}) = \pi^k \binom dk [x^k] \left(\frac{x}{\sin x}\right)^{d+1}.
$$
The claim follows by taking $k=d-\ell$ and applying~\eqref{eq:E_f_ell_Z_d_Poisson_Process}.
\end{proof}

\begin{remark}
Comparing~\eqref{eq:f_vect_poisson_zero1} and~\eqref{eq:f_vect_poisson_zero2}, we obtain the following curious combinatorial identity which is valid for all $d\in\N$ and all even $m\in \{0,\ldots,d\}$:
$$
\frac{d!}{(d-m)!} [x^m]  \left(\frac{x}{\sin x}\right)^{d+1} =
[x^m] \cdot \prod_{\substack{j\in \{1,\ldots,d-1\} \\ j \not\equiv d \Mod{2}}} (1+ j^2 x^2).
$$
\end{remark}

\begin{remark}
The values of $\E f_{\ell}(\mathcal Z_d)$ with even $d-\ell$ were called ``nice'' in~\cite{kabluchko_poisson_zero} because they are rational multiples of powers of $\pi^2$. The values with odd $d-\ell$ were called ``ugly'' and are polynomials in $\pi^2$ with rational coefficients.  For the ugly values, Proposition~\ref{prop:poisson_polytope_ugly} and~\eqref{eq:E_f_ell_Z_d_Poisson_Process} yield the formulae
\begin{align*}
\E f_{\ell}(\mathcal Z_d)
&=
\frac{(-1)^{\frac{\ell}2} \pi^{d} d!}{(d-\ell)!} \cdot  [u^{\ell} x^{-1}] \left( \frac{\cot\left(\frac u2\right) \sin \left( \frac{u x}{\pi} \right)}{(\sin x)^{d +1}}\right), &&\text{if $d$ is odd and $\ell$ is even},\\
\E f_{\ell}(\mathcal Z_d)
&=
\frac{(-1)^{\frac{\ell-1}2} \pi^{d} d!}{(d-\ell)!} \cdot  [u^{\ell}x^{-1}] \left(\frac{\tan\left(\frac u2\right)\cos\left(\frac{u x}{\pi} \right)}{(\sin x)^{d +1}}\right), &&\text{if $d$ is even and $\ell$ is odd}.
\end{align*}
\end{remark}

Note finally that all results of this section can be restated in terms of $\tilde \bJ_{n,k}(\frac n2)$ since, as we have shown in~\cite[Equation~(3.9)]{kabluchko_poisson_zero} and~\cite[Proposition~1.2]{kabluchko_poisson_zero},
$$
\tilde \bJ_{n,k}\left(\frac n2\right)
=
\frac{n (\E f_{n-k}(\mathcal Z_n) - \E f_{n-k-2}(\mathcal Z_{n-2}))}{2\pi^n \tilde c_{\frac{n+1}{2}}}
=
\frac{n(n-1)^2 \E f_{n-k}(\mathcal Z_{n-2})}{2\pi^{n-2} k(k-1) \tilde c_{\frac{n+1}{2}}},
$$
for all $n\in \{3,4,\ldots\}$, $k\in \{1,\ldots,n-2\}$, where the second equality additionally requires $k\neq 1$.

\subsection{The simplest case: \texorpdfstring{$\alpha=2$}{alpha=2}}\label{subsec:alpha_2}
As it turns out, this case can be treated by purely combinatorial tools.
\begin{theorem}\label{theo:bJ_alpha_2}
For all $n\in \N$ and $k\in \{1,\ldots,n\}$ we have\footnote{In the cases $n=1$ and $n=2$ we use the natural definitions $\binom{-1}{1} = -1$ and $\binom {0}{1} = \binom {0}{2}=0$.}
$$
\binom {2n}{n} \tilde \bJ_{n,k}\left(\frac{n+1}2\right) = \binom nk \binom{n+k}{k} - \binom {n-2} k \binom{n-2+k}{k}.
$$
\end{theorem}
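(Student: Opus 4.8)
The plan is to specialize the residue formula of Theorem~\ref{theo:bJ_tilde_formula_residue} to the case $\alpha=2$ and evaluate the resulting residue in closed form. Setting $\alpha=2$ makes $\alpha k=2k$ even for every $k$, so Theorem~\ref{theo:bJ_tilde_formula_residue} applies to the entire $f$-vector with no parity restriction. With $\alpha=2$, the inner integrand becomes $(\sin y)^{\alpha-1}=\sin y$, so $\int_0^x \sin y\,\dd y = 1-\cos x$, and the power $(\sin x)^{\alpha n -1}=(\sin x)^{2n-1}$. Hence the residue in~\eqref{eq:J_nk_tilde_res} is
\begin{equation*}
\Res\limits_{x=0}\left[\frac{(1-\cos x)^{n-k}}{(\sin x)^{2n-1}}\right].
\end{equation*}
First I would also simplify the prefactor $\binom nk\,\tilde c_{\alpha n/2}(\tilde c_{(\alpha+1)/2})^{n-k}\pi$ at $\alpha=2$ using~\eqref{eq:c_beta_tilde}, reducing $\tilde c_{n}$ and $\tilde c_{3/2}$ to explicit Gamma ratios; I expect these to combine with $\binom{2n}{n}$ on the left-hand side into a clean rational constant, leaving essentially the residue above to be matched against the combinatorial right-hand side.

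\emph{The main computation} is evaluating that residue. The idea is to use the half-angle substitution: write $1-\cos x = 2\sin^2(x/2)$ and $\sin x = 2\sin(x/2)\cos(x/2)$, so that
\begin{equation*}
\frac{(1-\cos x)^{n-k}}{(\sin x)^{2n-1}}
=
\frac{2^{n-k}\sin^{2(n-k)}(x/2)}{2^{2n-1}\sin^{2n-1}(x/2)\cos^{2n-1}(x/2)}
=
2^{-n-k+1}\,\frac{\sin^{-2k+1}(x/2)}{\cos^{2n-1}(x/2)}.
\end{equation*}
Setting $w=x/2$ converts the residue at $x=0$ into (twice) the residue at $w=0$ of $(\sin w)^{-(2k-1)}(\cos w)^{-(2n-1)}$, which is the coefficient of $w^{2k-2}$ in the power series of $(w/\sin w)^{2k-1}(\cos w)^{-(2n-1)}$. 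The cleanest route is then generating-function bookkeeping: I would identify the product $(w/\sin w)^{2k-1}(\cos w)^{-(2n-1)}$ with a standard generating function whose coefficients are known binomial expressions. In fact the quantity $\binom nk\binom{n+k}{k}$ is the coefficient appearing in the expansion of $(1-z)^{-(n+1)}$-type series, and the difference structure on the right-hand side suggests matching against a telescoping identity in $n$.

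\emph{The key structural step} is to recognize the right-hand side as a difference that telescopes. Writing $g(n,k):=\binom nk\binom{n+k}{k}$, the claim asserts that $\binom{2n}{n}\tilde\bJ_{n,k}(\tfrac{n+1}2)=g(n,k)-g(n-2,k)$. I would therefore try to prove the unreduced statement $\binom{2n}{n}\tilde\bJ_{n,k}=g(n,k)-g(n-2,k)$ by showing both sides satisfy the same second-order recurrence in $n$ (shift by $2$) with matching initial data at small $n$, rather than evaluating the residue head-on for general $n,k$. The recurrence for the left-hand side should come from the recursive scheme of~\cite{kabluchko_algorithm} mentioned in the introduction, or alternatively from a contiguity relation among the residues as $n\mapsto n-2$. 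On the combinatorial side, the identity $g(n,k)-g(n-2,k)=\binom{2n}{n}\cdot(\text{residue expression})$ is a finite binomial identity that can be verified by the Wilf–Zeilberger / Gosper machinery or by a direct snake-oil summation.

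\textbf{Main obstacle.} I expect the hard part to be extracting the coefficient $[w^{2k-2}](w/\sin w)^{2k-1}(\cos w)^{-(2n-1)}$ in a form that manifestly equals $\binom{2n}{n}^{-1}(g(n,k)-g(n-2,k))$. The factor $(w/\sin w)^{2k-1}$ has no elementary closed-form coefficients individually, so the difficulty is that the two factors must be combined and simplified \emph{before} extraction. The most promising workaround is to avoid the residue entirely and prove the purely combinatorial identity $\binom{2n}{n}\tilde\bJ_{n,k}=g(n,k)-g(n-2,k)$ by verifying that the right-hand side satisfies the same recurrence and boundary values as the known recursive description of $\tilde\bJ_{n,k}(\tfrac{n+1}2)$; this sidesteps the analytic residue computation in favor of a self-contained induction on $n$, which is exactly the sense in which the theorem is ``purely combinatorial.''
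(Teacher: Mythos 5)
Your route is genuinely different from the paper's and is, in principle, legitimate. The paper proves this theorem in a self-contained, purely combinatorial way: it invokes the uniqueness statement of Proposition~\ref{prop:bJ_tilde_unique} for the linear system with coefficients $\tilde\bI_{n,m}(2)$, and then verifies that the candidate formula solves that system by establishing the explicit recurrences~\eqref{eq:rec_A_alpha_2}, \eqref{eq:rec_B_alpha_2} for $\mtA[n,k]$, $\mtB\{n,k\}$ and telescoping the resulting sum down to $\tilde s_r(0)=0$. You instead propose to specialize Theorem~\ref{theo:bJ_tilde_formula_residue} at $\alpha=2$; this is non-circular, since that theorem is proved in Section~\ref{sec:proof_beta_tilde} independently of the present statement, and your observation that $\alpha k=2k$ is always even (so no parity restriction) is correct, as is the half-angle reduction of the residue to $\Res_{w=0}\bigl[(\sin w)^{-(2k-1)}(\cos w)^{-(2n-1)}\bigr]$ up to the factor you track. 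What your route buys is brevity, given the heavy machinery of Section~\ref{sec:proof_beta_tilde}; what the paper's route buys is an elementary argument whose ideas (recurrences, telescoping, matrix inversion) serve as a template for the general case.

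However, as written your proposal has a genuine gap exactly where you flag the ``main obstacle'': you never evaluate the residue, and both proposed workarounds are unsatisfactory. The generating-function/WZ route is not carried out, and the fallback (verify that both sides satisfy ``the same recurrence'' from the recursive scheme of~\cite{kabluchko_algorithm}) is underspecified — making it precise would essentially amount to reproducing the paper's own proof. The good news is that the obstacle dissolves under the substitution $u=\sin w$: since $\dd u=\cos w\,\dd w$ and $(\cos w)^{2n-2}=(1-u^2)^{n-1}$ near $w=0$, a small loop around $0$ maps to a small loop around $0$ and
\begin{equation*}
\Res\limits_{w=0}\left[\frac{1}{(\sin w)^{2k-1}(\cos w)^{2n-1}}\right]
=
\Res\limits_{u=0}\left[\frac{1}{u^{2k-1}(1-u^2)^{n}}\right]
=
[u^{2k-2}]\,(1-u^2)^{-n}
=
\binom{n+k-2}{k-1},
\end{equation*}
i.e.\ the integrand becomes rational and the coefficient extraction is immediate (this is precisely the ``curious identity'' recorded in Section~\ref{subsec:alpha_2}). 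Feeding this back through your prefactor bookkeeping (with $\tilde c_{3/2}=\tfrac12$ and $\tilde c_n=4^{n-1}/(\pi\binom{2n-2}{n-1})$) reduces the theorem to
\begin{equation*}
\frac{2(2n-1)}{n}\binom nk\binom{n+k-2}{k-1}
=
\binom nk\binom{n+k}{k}-\binom{n-2}{k}\binom{n-2+k}{k},
\end{equation*}
which follows from the one-line computation $(n+k)(n+k-1)-(n-k)(n-k-1)=2k(2n-1)$ (with the stated conventions handling $n=1,2$ and $k>n-2$). With these two steps supplied, your proof is complete and correct; without them, the central computation of the argument is missing.
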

The proof will be given in Section~\ref{sec:simplest_case_proof}. As a corollary, we can compute the expected $f$-vector of $\conv \Pi_{d,2}$.
\begin{theorem}
For all $d\in\N$ and $k\in \{1,\ldots,d\}$ we have
\begin{equation}\label{eq:exp_f_vect_Pi_d_2}
\E f_{k-1}(\conv \Pi_{d,2}) = \binom dk \binom {d+k}{k}.
\end{equation}
\end{theorem}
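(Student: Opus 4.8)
The plan is to specialize the master formula~\eqref{eq:E_f_k_beta_prime_to_poisson} to $\alpha = 2$, evaluate the external-angle weights explicitly, insert Theorem~\ref{theo:bJ_alpha_2}, and recognize the resulting sum as telescoping. For $\alpha = 2$ the internal-angle parameter in~\eqref{eq:E_f_k_beta_prime_to_poisson} becomes $\frac{\alpha + m - 1}{2} = \frac{m+1}{2}$, which is exactly the value computed in Theorem~\ref{theo:bJ_alpha_2}, so
$$
\E f_{k-1}(\conv \Pi_{d,2})
=
2 \sum_{\substack{m\in\{k,\ldots,d\}\\ m \equiv d \Mod{2}}}
\tilde\bI_{\infty,m}(2)\, \tilde\bJ_{m,k}\!\left(\frac{m+1}{2}\right).
$$

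First I would simplify the weight $\tilde\bI_{\infty,m}(2)$. Setting $\alpha = 2$ in~\eqref{eq:poisson_voronoi_f_k_addition} and using $\Gamma(1) = 1$, $\Gamma(\tfrac 32) = \tfrac{\sqrt\pi}{2}$ together with the duplication identity $\Gamma(m + \tfrac 12) = \frac{(2m)!}{4^m m!}\sqrt\pi$, one checks that all powers of $\pi$ cancel and the powers of $2$ collapse to a single factor $\tfrac 12$, leaving
$$
\tilde\bI_{\infty,m}(2) = \frac 12 \binom{2m}{m}.
$$
After this substitution the prefactor $2$ cancels the $\tfrac 12$, so the problem reduces to evaluating
$$
\E f_{k-1}(\conv \Pi_{d,2})
=
\sum_{\substack{m\in\{k,\ldots,d\}\\ m \equiv d \Mod{2}}}
\binom{2m}{m}\, \tilde\bJ_{m,k}\!\left(\frac{m+1}{2}\right).
$$

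Next I would invoke Theorem~\ref{theo:bJ_alpha_2}, which states precisely that the summand equals $g(m) - g(m-2)$ with $g(m) := \binom mk \binom{m+k}{k}$. The summation index runs over the arithmetic progression $\{m : k \le m \le d,\ m \equiv d \Mod{2}\}$, whose top element is always $m = d$; hence the sum telescopes to $g(d) - g(m_{\min} - 2)$, where $m_{\min}$ is the least admissible index. If $k \equiv d \Mod{2}$ then $m_{\min} = k$ and the boundary term is $g(k-2)$, while otherwise $m_{\min} = k+1$ and it is $g(k-1)$. In either case the boundary term vanishes: each of $g(k-2) = \binom{k-2}{k}\binom{2k-2}{k}$ and $g(k-1) = \binom{k-1}{k}\binom{2k-1}{k}$ contains a vanishing binomial factor under the conventions fixed in the footnote to Theorem~\ref{theo:bJ_alpha_2} (for $k \ge 2$ the first factor is zero, and for $k = 1$ one of the two factors is zero). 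Thus the sum equals $g(d) = \binom dk \binom{d+k}{k}$, which is the asserted identity.

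The argument is largely bookkeeping, and I expect no genuine obstacle. The only steps requiring care are the Gamma-function simplification yielding $\tilde\bI_{\infty,m}(2) = \tfrac 12\binom{2m}{m}$ and the verification that the lower endpoint of the telescoping sum contributes zero irrespective of the parity of $d - k$; both are elementary.
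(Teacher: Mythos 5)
Your proposal is correct and follows precisely the paper's own route: it specializes \eqref{eq:E_f_k_beta_prime_to_poisson} to $\alpha=2$, simplifies $\tilde\bI_{\infty,m}(2)=\tfrac12\binom{2m}{m}$ via the duplication formula, and then telescopes using Theorem~\ref{theo:bJ_alpha_2}. The paper compresses the last step into ``evaluate the telescope sum''; your explicit check that the boundary term $g(m_{\min}-2)$ vanishes under the footnote's conventions (in both parity cases of $d-k$) correctly supplies the details the paper omits.
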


\begin{proof}
Recalling~\eqref{eq:poisson_voronoi_f_k_addition} and using Legendre's duplication formula, it is easy to check that for all $m\in\N$,
$$
\tilde \bI_{\infty,m}(2)
=
\frac{\Gamma({\frac{2m  + 1} 2})}{\Gamma(m)} \left(\frac {\Gamma(1)} {\Gamma(\frac 32)}\right)^{m}
\frac{(2\sqrt{\pi})^{m-1}}{m} = \frac 12  \binom {2m}{m}.
$$
By~\eqref{eq:E_f_k_beta_prime_to_poisson}, we obtain
\begin{equation*}
\E f_{k-1}(\conv \Pi_{d,2})
=
\sum_{\substack{m\in \{k,\ldots,d\}\\ m\equiv d \Mod{2}}}
\binom {2m}{m}  \tilde \bJ_{m,k}\left(\frac{m+1}{2}\right).
\end{equation*}
To complete the proof, apply Theorem~\ref{theo:bJ_alpha_2} and evaluate the telescope sum.
\end{proof}

On the other hand, Theorem~\ref{theo:poisson_polyhedra} with $\alpha = 2$ yields
$$
\E f_{k-1}(\conv \Pi_{d,2})
= 2^{d+k} \binom dk
\Res\limits_{x=0} \left[\frac{(1-\cos x)^{d-k}}{(\sin x)^{2d+1}}\right]
=
\binom dk
\Res\limits_{y=0} \left[(\sin y)^{-2k-1}(\cos y)^{-2d-1}\right],
$$
where the last step follows from the formulae $1-\cos x = 2 \sin^2 \frac x2$ and $\sin x = 2 \sin \frac x2 \cos \frac x2$ after the substitution $y:= x/2$.
Comparing both results, we obtain the curious identity
$$
\Res\limits_{y=0} \left[(\sin y)^{-2k-1}(\cos y)^{-2d-1}\right] = \binom {d+k}{k}
$$
which can be shown to be valid in the range $d\in\Z$, $k\in\N_0$.

The numbers on the right-hand side of~\eqref{eq:exp_f_vect_Pi_d_2} appear as Entry A063007 in~\cite{sloane}. Interestingly, the expected $f$-vector of $\conv \Pi_{d,2}$ coincides with the $f$-vector of the dual of the associahedron of type $B_n$ (cyclohedron)~\cite{fomin_reading}. We have no explanation for this coincidence, but one trivial reason can be easily ruled out: It is not true that the combinatorial type of $\conv \Pi_{d,2}$ is that of the cyclohedron (or any other deterministic polytope), with probability $1$. Instead we claim that for every $\alpha>0$ and every given simplicial $d$-dimensional polytope $P$, the probability that $\conv \Pi_{d,\alpha}$ has  the same combinatorial type as $P$ is strictly positive. To see this, embed $P$ into $\R^d$ such that the origin is in the interior of $P$. Then, for every $\eps>0$ the probability that each $\eps$-ball around each vertex of $P$ contains exactly one point of the Poisson point process $\Pi_{d,\alpha}$, while all other points of $\Pi_{d,\alpha}$ are located inside $\frac 12 P$, is positive. If $\eps>0$ is sufficiently small and this event occurs, then $\conv \Pi_{d,\alpha}$ has the same combinatorial type as $P$ (because $P$ is simplicial and hence its combinatorial type does not change under small perturbations).

\subsection{Typical Poisson-Voronoi cell: \texorpdfstring{$\alpha=d$}{alpha=d}}
Let $P_1,P_2,\ldots$ be the points of a Poisson point process on $\R^d$ with constant intensity $1$. The \textit{Voronoi cell} $\mathcal C_i$ of the point $P_i$ consists of all points $x\in\R^d$ whose distance to $P_i$ is smaller or equal than the distances to all other points $P_j$ with $j\neq i$. With probability $1$, the cells $\mathcal C_1,\mathcal C_2,\ldots$ are polytopes with disjoint interiors that form a covering of $\R^d$ which is called the \textit{Poisson-Voronoi tessellation}. We shall be interested in the \textit{typical cell} of this tessellation which can be constructed as follows~\cite[p.~450, p.~106]{SW08}.
Take some Borel set $Q\subset \R^d$ with finite but non-zero Lebesgue measure $\Vol_d(Q)$. Then, the typical Poisson-Voronoi cell is the random polytope $\mathcal V_d$ whose probability law is given by
$$
\P[\mathcal V_d\in \cdot\,] = \frac{1}{\Vol_d(Q)}\, \E \left[\sum_{i\in \N: P_i\in Q} \ind_{\{\mathcal C_i -P_i \in \cdot\,\}}\right],
$$
where $\mathcal C_i - P_i = \{x-P_i\colon x\in \mathcal C_i\}$ denotes the cell $\mathcal C_i$ shifted in such a way that  its ``center'' $P_i$ moves to the origin.
For our purposes, the following explicit construction is more convenient: the typical Poisson-Voronoi cell is the random polytope
\begin{equation}\label{eq:def_typical_voronoi}
\mathcal V_d := \{x\in \R^d\colon \|x\| \leq \|x-P_j\| \text{ for all $j\in\N$}\}.
\end{equation}
In words, $\mathcal V_d$ the Voronoi cell of the origin in the Poisson process with an additional atom at the origin. The equivalence of both constructions of $\mathcal V_d$ follows from the characterisation of the Palm distribution of the Poisson process~\cite[Theorem~3.3.5]{SW08}.

The Poisson-Voronoi tessellation is one of the basic objects in stochastic geometry~\cite{SW08,okabe_etal_book,stoyan_etal_book,moller,moller_book,calka_classical_problems} and has been intensively studied at least since the works of Meijering~\cite{meijering}, Gilbert~\cite{gilbert} and Miles~\cite{miles_synopsis}. However, such a natural functional as the expected number of $k$-faces of the typical cell has been known explicitly only in some special cases. It is a classical fact~\cite[Theorem 10.2.5]{SW08} that in dimension $d=2$, the typical cell is ``on average'' a hexagon, that is
$$
\E f_0 (\mathcal V_2) = \E f_1 (\mathcal V_2) = 6.
$$
In the $1953$ work, Meijering~\cite{meijering} determined explicitly some basic characteristics of $\mathcal V_d$ with $d=3$ including the expected $f$-vector
$$
\E  f_0 (\mathcal V_{3}) = \frac{96 \pi ^2}{35},
\quad
\E  f_1 (\mathcal V_{3}) = \frac{144 \pi ^2}{35},
\quad
\E  f_2 (\mathcal V_{3}) = 2+\frac{48 \pi ^2}{35}.
$$
In his $1970$ work, Miles~\cite{miles_synopsis} stated a formula for $\E f_0(\mathcal V_d)$ which is valid for all $d\in\N$. A full proof was given by Miles in~\cite{miles_sectional} and  can also be found in the work of M{\o}ller~\cite{moller}. In view of the fact that $\mathcal V_d$ is a simple polytope (that is, its dual is simplicial), this gives also a formula for $\E f_1(\mathcal V_d)= \frac d2\, \E f_0(\mathcal V_d)$. All the results listed above and many more references can be found in Section 10.2 of the book of Schneider and Weil~\cite{SW08} and in the notes thereafter. Asymptotic results on the expected $f$-vectors of $\mathcal V_d$ (and more general Poisson polytopes) as $d\to\infty$ have been obtained in~\cite{HoermannHugReitznerThaele}.
Very recently, we described~\cite{kabluchko_algorithm} an algorithm which allowed us to compute the expected $f$-vector of $\mathcal V_d$ in dimensions $d\leq 10$.

Now  we are able to derive an explicit formula for the expected $f$-vector of $\mathcal V_d$. It is known~\cite{HoermannHugReitznerThaele,beta_polytopes} that $\mathcal V_d$ has the same distribution as the convex dual of $\Pi_{d,\alpha}$ with $\alpha=d$, up to scaling.\footnote{Let us sketch the proof. Denote by $H_j$ the half-space containing the origin and whose bounding hyperplane passes through the point $P_j/2$ and is orthogonal to the segment $[0,P_j]$. By~\eqref{eq:def_typical_voronoi},  $\mathcal V_d$ is the intersection of the $H_j$'s, $j\in\N$. The dual polytope of $\mathcal V_d$ is the convex hull of the polars of the hyperplanes bounding the $H_j$'s, i.e.\ of the points $Q_j$ obtained by inverting $P_j/2$ w.r.t.\ the unit sphere. By the transformation property of the Poisson point processes, the points $Q_j$ form a Poisson point process whose intensity is easily seen to be of the form $\text{const} \cdot \|x\|^{-2d}$.} In particular, for the expected $f$-vectors of these random polytopes we have the duality relation
$$
\E f_{d-k}(\mathcal V_d) = \E f_{k-1}(\conv\Pi_{d,d}),
$$
for all $k\in \{1,\ldots,d\}$.
Taking $\alpha=d$ in Theorem~\ref{theo:poisson_polyhedra}, we obtain the following explicit formula for the expected $f$-vector of the typical Poisson-Voronoi cell.
\begin{theorem}\label{theo:typical_poisson_voronoi}
For all  $d\in\N$ and $k\in \{1,\ldots,d\}$ such that $dk$ is even, we have
\begin{equation}
\E f_{d-k}(\mathcal V_d)
= d^{d} \binom{d}{k} \left(\frac{ \sqrt \pi\, \Gamma(\frac d2)}{\Gamma(\frac{d+1}{2})}\right)^k
\Res\limits_{x=0} \left[\frac{\left(\int_{0}^x (\sin y)^{d-1} \dd y\right)^{d-k}}{(\sin x)^{d^2+1}}\right].
\end{equation}
\end{theorem}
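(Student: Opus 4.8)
The plan is to obtain the formula as a direct specialization of the residue identity~\eqref{eq:f_vect_poisson_poly_residue} in Theorem~\ref{theo:poisson_polyhedra}, using the duality relation already recorded above the statement. That relation,
$$
\E f_{d-k}(\mathcal V_d) = \E f_{k-1}(\conv\Pi_{d,d}),
$$
follows from the distributional identity between $\mathcal V_d$ and the polar dual of $\conv\Pi_{d,d}$ (sketched in the preceding footnote) together with the scale-invariance of face numbers and the face correspondence $f_\ell(P)=f_{d-1-\ell}(P^\circ)$ of polar duality: taking $\ell=d-k$ turns $f_\ell(\mathcal V_d)$ into $f_{d-1-(d-k)}(\conv\Pi_{d,d})=f_{k-1}(\conv\Pi_{d,d})$. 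Thus the entire task reduces to setting $\alpha=d$ in Theorem~\ref{theo:poisson_polyhedra}.

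Before doing so I would check that the hypotheses match. Equation~\eqref{eq:f_vect_poisson_poly_residue} is valid precisely when $\alpha\in\N$ and $\alpha k$ is even. With $\alpha=d$ the integrality is automatic, while the parity condition $\alpha k=dk$ even is exactly the hypothesis imposed in the present theorem. Hence the specialization is legitimate for all admissible $k\in\{1,\ldots,d\}$ with $dk$ even, and no allowed value of $k$ is lost since the duality relation itself holds for every $k\in\{1,\ldots,d\}$.

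Substituting $\alpha=d$ into~\eqref{eq:f_vect_poisson_poly_residue}, and simplifying the exponent $\alpha d+1=d^2+1$ in the denominator of the residue, yields
$$
\E f_{k-1}(\conv\Pi_{d,d}) = d^{d}\binom{d}{k}\left(\frac{\sqrt\pi\,\Gamma(\tfrac d2)}{\Gamma(\tfrac{d+1}{2})}\right)^k \Res\limits_{x=0}\left[\frac{\left(\int_0^x(\sin y)^{d-1}\dd y\right)^{d-k}}{(\sin x)^{d^2+1}}\right],
$$
and combining this with the duality relation gives the asserted expression for $\E f_{d-k}(\mathcal V_d)$. Since the substantive analytic content lies in the derivation of~\eqref{eq:f_vect_poisson_poly_residue} and in the geometric duality, both established earlier, there is no genuine obstacle here; the only points demanding care are the parity bookkeeping (matching ``$dk$ even'' with ``$\alpha k$ even'') and the face-dimension shift $\ell\mapsto d-1-\ell$, both of which are routine.
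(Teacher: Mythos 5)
Your proposal is correct and follows exactly the paper's own route: the paper likewise invokes the duality $\E f_{d-k}(\mathcal V_d)=\E f_{k-1}(\conv\Pi_{d,d})$ (stated just before the theorem, based on $\mathcal V_d$ being distributed as the scaled convex dual of $\Pi_{d,d}$) and then sets $\alpha=d$ in Equation~\eqref{eq:f_vect_poisson_poly_residue} of Theorem~\ref{theo:poisson_polyhedra}, with the parity hypothesis $dk$ even matching $\alpha k$ even. Your additional bookkeeping of the face-dimension shift $\ell\mapsto d-1-\ell$ is consistent with what the paper leaves implicit.
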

This gives the complete expected $f$-vector of  $\mathcal V_d$ if the dimension $d$ is even. In the case when $d$ is odd, the theorem identifies only the entries with even codimension, while the remaining entries are uniquely determined by the (dual) Dehn-Sommerville relations; see Section~\ref{sec:dehn_sommerville}. It is also straightforward to write down an explicit though ``ugly'' formula by taking $\alpha = d$ in Proposition~\ref{prop:poisson_polytope_ugly}.
The next theorem describes the arithmetic structure of the expected $f$-vector of $\mathcal V_d$.
\begin{theorem}\label{theo:poisson_voronoi_arithm}
Let $d\in\N$ and $\ell\in \{0,\ldots,d-1\}$.
\begin{itemize}
\item[(a)] If $d$ is even, then $\E f_\ell(\mathcal V_{d})$ is a rational number.
\item[(b)]  If both $d$ and $\ell$ are odd, then $\E f_\ell(\mathcal V_{d})$ is a number of the form $q\pi^{d-\ell}$ with some rational $q$.
\item[(c)] If $d$ is odd and $\ell$ is even, then $\E f_\ell(\mathcal V_{d})$ can be expressed as  $q_{d-1} \pi^{d-1} + q_{d-3} \pi^{d-3} + \ldots +q_{d-\ell-1}\pi^{d-\ell-1}$, where the coefficients $q_{i}$ are rational.
\end{itemize}
\end{theorem}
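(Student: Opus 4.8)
The plan is to reduce the statement to the arithmetic of the internal angle sums $\tilde\bJ_{m,k}$, already described by Theorem~\ref{theo:arithm_J_tilde}, combined with an arithmetic analysis of the external angle factors $\tilde\bI_{\infty,m}(d)$. The starting point is the duality relation $\E f_{d-k}(\mathcal V_d)=\E f_{k-1}(\conv\Pi_{d,d})$ together with~\eqref{eq:E_f_k_beta_prime_to_poisson} at $\alpha=d$, which gives
\[
\E f_{d-k}(\mathcal V_d)=2\sum_{\substack{m\in\{k,\ldots,d\}\\ m\equiv d\Mod{2}}}\tilde\bI_{\infty,m}(d)\,\tilde\bJ_{m,k}\!\left(\tfrac{d+m-1}{2}\right),
\]
for $k\in\{1,\ldots,d\}$; setting $\ell=d-k$ then identifies the entry in question. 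The relevant parameter is $\beta=\frac{d+m-1}{2}$, for which $2\beta-m=d-1$, so that the parity of $2\beta-m$ in Theorem~\ref{theo:arithm_J_tilde} is governed entirely by $d$, and since $m\equiv d\Mod{2}$ we may assume throughout that $m$ and $d$ have the same parity.

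First I would settle the arithmetic of $\tilde\bI_{\infty,m}(d)$ from the explicit expression~\eqref{eq:poisson_voronoi_f_k_addition}, using only that $\Gamma$ takes rational (integer) values at positive integers and rational multiples of $\sqrt\pi$ at positive half-integers. A short bookkeeping of the $\sqrt\pi$-powers coming from the four Gamma-factors and from the factor $(\sqrt\pi\,d)^{m-1}$ shows that all half-integer powers cancel: $\tilde\bI_{\infty,m}(d)$ is rational when $d$ is even, while for $d$ odd it equals $q\,\pi^{m-1}$ with $q$ rational and the exponent $m-1$ even. This cancellation is the one computational point that needs care.

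With this in hand the three cases follow by combining with Theorem~\ref{theo:arithm_J_tilde}. If $d$ is even, then $2\beta-m=d-1$ is odd, so part~(a) of that theorem makes every $\tilde\bJ_{m,k}$ rational; multiplying by the rational $\tilde\bI_{\infty,m}(d)$ and summing yields part~(a). If $d$ is odd and $\ell$ is odd, then $k=d-\ell$ is even and $m-k$ is odd, so part~(b) gives $\tilde\bJ_{m,k}=q\,\pi^{-(m-k-1)}$; multiplying by $q'\pi^{m-1}$ produces $q''\pi^{k}$, and the decisive observation is that the exponent $k$ is independent of the summation index $m$, so the whole sum is a rational multiple of $\pi^{k}=\pi^{d-\ell}$, which is part~(b). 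Finally, if $d$ is odd and $\ell$ is even, then $k$ is odd, $m-k$ is even, and part~(c) gives $\tilde\bJ_{m,k}=q_0+q_2\pi^{-2}+\cdots+q_{m-k}\pi^{-(m-k)}$; multiplying by $q'\pi^{m-1}$ gives even powers of $\pi$ running from $\pi^{k-1}$ up to $\pi^{m-1}$, and summing over $m\in\{k,k+2,\ldots,d\}$ collects exactly the even powers from $\pi^{d-\ell-1}=\pi^{k-1}$ up to $\pi^{d-1}$, which is part~(c).

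The only genuine obstacle is the exact power-of-$\pi$ accounting in $\tilde\bI_{\infty,m}(d)$ and tracking which powers survive after the multiplication and after summing over $m$. Once the identity $\tilde\bI_{\infty,m}(d)=q\,\pi^{m-1}$ (with $q$ rational and $m-1$ even) is established for odd $d$, the $m$-independence of the exponent in part~(b) and the range of exponents in part~(c) are immediate, and the overall factor $2$ is harmless as it is absorbed into the rational coefficients. Notably, none of this requires the residue formulas of Theorem~\ref{theo:bJ_tilde_formula_residue}; the statement is a direct corollary of Theorem~\ref{theo:arithm_J_tilde} together with the elementary arithmetic of Gamma values at integers and half-integers.
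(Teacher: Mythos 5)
Your proof is correct, but it takes a genuinely different route from the paper's. The paper settles parts (a) and (c) by citing the earlier work \cite{kabluchko_algorithm}, and obtains the new part (b) in one line from Theorem~\ref{theo:typical_poisson_voronoi}: for $d$ odd and $k=d-\ell$ even the hypothesis ``$dk$ even'' holds, the residue there is rational, and the prefactor $\bigl(\sqrt\pi\,\Gamma(\tfrac d2)/\Gamma(\tfrac{d+1}{2})\bigr)^{k}$ is a rational multiple of $\pi^{k}=\pi^{d-\ell}$ --- essentially the same Gamma-function arithmetic you perform, but applied once to a closed formula rather than termwise inside a sum. You instead derive all three parts uniformly from $\E f_{d-k}(\mathcal V_d)=2\sum_m\tilde\bI_{\infty,m}(d)\,\tilde\bJ_{m,k}\bigl(\tfrac{d+m-1}{2}\bigr)$, the parity bookkeeping for $\tilde\bI_{\infty,m}(d)$ (which you carry out correctly: the $\sqrt\pi$-exponent is $1-m+(m-1)=0$ for $d$ even, and $-1+m+(m-1)=2(m-1)$ for $d$ odd, giving $q\pi^{m-1}$), and Theorem~\ref{theo:arithm_J_tilde} with $2\beta-m=d-1$; the cancellation $\pi^{m-1}\cdot\pi^{-(m-k-1)}=\pi^{k}$, which makes the exponent independent of the summation index $m$ in case (b), and the collection of even powers $\pi^{k-1},\ldots,\pi^{d-1}$ in case (c), are both exactly right. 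What your route buys is uniformity: one mechanism yields (a), (b), (c) without outsourcing (a) and (c) to the external reference. What it obscures is your closing claim that none of this requires Theorem~\ref{theo:bJ_tilde_formula_residue}: part (b) of Theorem~\ref{theo:arithm_J_tilde}, which you invoke for your case (b), was only conjectured in \cite{kabluchko_algorithm} and is proved in this paper precisely via Equation~\eqref{eq:J_nk_tilde_res} of Theorem~\ref{theo:bJ_tilde_formula_residue}, so the dependence on the residue calculus is hidden one layer deeper rather than removed.
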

\begin{proof}
Parts (a) and (c) were established in~\cite{kabluchko_algorithm}, whereas part (b) was conjectured there and is now an immediate consequence of Theorem~\ref{theo:typical_poisson_voronoi} because the residue appearing there is a rational number.
\end{proof}

\begin{remark}
For $j\in \{0,\ldots,d\}$, the \textit{intensity of $j$-faces} in the Poisson-Voronoi tessellation, denoted by $\gamma^{(j)}$, is the large $n$ limit of the number of $j$-dimensional faces of the Poisson-Voronoi tessellation contained in the cubical window $Q_n =[-n,n]^d$, divided by the volume of $Q_n$.  The intensity of the $j$-faces satisfies
$$
\gamma^{(j)} = \frac{\E f_j(\mathcal V_d)}{d-j+1}, \qquad j\in \{0,\ldots,d\};
$$
see Theorems~10.1.2 and~10.2.3 of~\cite{SW08}. Theorem~\ref{theo:typical_poisson_voronoi} and Proposition~\ref{prop:poisson_polytope_ugly} (with $\alpha = d$) yield explicit formulae for all $\gamma^{(j)}$'s. A summary of what has previously been known on $\gamma^{(j)}$ can be found in~\cite[pp.~476--477]{SW08}.
\end{remark}

\subsection{Reitzner constants}
Let $K\subset \R^d$ be a $d$-dimensional convex body. Assume that the  boundary $\partial K$ is  $\mathcal C^2$-differentiable and the Gaussian curvature $\kappa(x)>0$ is strictly positive at every point $x\in \partial K$.  Let $U_1,U_2,\ldots$ be independent random points distributed uniformly in $K$. Denote the convex hull of $n$ such points by $K_{n,d}= [U_1,\ldots,U_n]$. As $n\to\infty$, the random polytopes $K_{n,d}$ approximate $K$ and their asymptotic properties have been intensively studied.
In the case $d=2$, R\'enyi and Sulanke~\cite[Satz~3]{renyi_sulanke1} proved that
$$
\lim_{n\to\infty}
\frac{\E f_0(K_{n,2})}{n^{1/3}}
=
\lim_{n\to\infty}
\frac{\E f_1(K_{n,2})}{n^{1/3}}
=
 \Gamma\left(\frac 53\right) \sqrt[3]{\frac 23} \frac 1{(\Vol_2(K))^{1/3}} \int_{\partial K}(\kappa(x))^{1/3} \dint x.
$$
In the case of arbitrary dimension $d\in\N$,  Reitzner~\cite[p.~181]{ReitznerCombinatorialStructure} proved that  for every $k\in\{0,1,\ldots,d-1\}$,
\begin{equation}\label{eq:ReitznerExpectation}
\lim_{n\to\infty}
\frac{\E f_k(K_{n,d})}{n^{\frac{d-1}{d+1}}}
=
C_{d,k}\frac{\Omega(K)/ (\Vol_d(K))^{\frac{d-1}{d+1}}}{\Omega(\bB^d)/(\Vol_d(\bB^d))^{\frac{d-1}{d+1}}},
\end{equation}
where $\Omega(K):=\int_{\partial K}(\kappa(x))^{\frac{1} {d+1}} \dint x$ is the so-called \textit{affine surface area} of $K$ and $C_{d,0}, \ldots, C_{d,d-1}$ are strictly positive constants\footnote{In fact, Reitzner used the slightly different notation $c_{d,k} = C_{d,k}\cdot (\Vol_d(\bB^d))^{\frac{d-1}{d+1}}/\Omega(\bB^d)$. Note that he assumes that $\Vol_d(K) = 1$, which is why additional terms involving the volume appear in~\eqref{eq:ReitznerExpectation}.} not depending on $K$. It seems that so far only the values $C_{d,d-1}$ and $C_{d,0}$ have been known~\cite[Corollary~1 on p.~366]{affentranger}, \cite[Theorem~3 on p.~760]{buchta_mueller}, \cite[Corollary~7.1]{hug_rev}; see also~\cite[\S~3.4]{kabluchko_algorithm} for a discussion. This yields also the value $C_{d,d-2}= (d/2) C_{d,d-1}$ since the polytope $K_{n,d}$ is simplicial.   In~\cite[Remark~1.9]{beta_polytopes}, the constant $C_{d,k}$ was expressed through $\bJ_{d,k+1}(1/2)$. This was used in~\cite{kabluchko_algorithm} to compute $C_{d,k}$ for all $d\leq 10$. Using Theorem~\ref{theo:bJ_formula_residue} we can now state an explicit formula for $C_{d,k}$, thus answering a question posed by Reitzner~\cite[p.~181]{ReitznerCombinatorialStructure}.

\begin{theorem}
For all $d\in\N$ and $k\in\{0,\ldots,d-1\}$ such that either (i) both $d$ and $d-k$ are even or (ii) $d$ is odd, we have
$$
C_{d,k} = \frac{d^2+1}{d!} (d+1)^{\frac{d^2-d}{d+1}}  \binom d{k+1} \Gamma\left(\frac{d^2+1}{d+1}\right) \left(\frac{\sqrt \pi \, \Gamma\left(\frac{d+1}{2}\right)}{\Gamma\left(\frac{d+2}{2}\right)} \right)^{k + \frac{2}{d+1}}  \Res\limits_{x=0} \left[\frac{\left(\int_{0}^x (\sin y)^{d} \dd y\right)^{d-k-1}}{(\sin x)^{d^2 + 2}}\right].
$$
\end{theorem}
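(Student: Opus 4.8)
The plan is to combine the known representation of the Reitzner constant in terms of an internal angle of a beta simplex with parameter $\frac12$ with the residue evaluation furnished by Theorem~\ref{theo:bJ_formula_residue}. Conceptually the theorem is an immediate corollary of that residue formula: the only genuinely new input is the latter, and everything else is a substitution followed by a simplification of Gamma factors. Concretely, I would start from the identity of \cite[Remark~1.9]{beta_polytopes}, which expresses $C_{d,k}$ as an explicit elementary constant (a ratio of Gamma functions carrying the $n^{(d-1)/(d+1)}$ scaling and the homogeneous normalisation $\Omega(\bB^d)/(\Vol_d(\bB^d))^{(d-1)/(d+1)}$ appearing in~\eqref{eq:ReitznerExpectation}) multiplied by $\bJ_{d,k+1}(\tfrac12)$. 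It therefore suffices to evaluate $\bJ_{d,k+1}(\tfrac12)$ in closed form and then to reorganise the accompanying constants.

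Next I would match $\bJ_{d,k+1}(\tfrac12)$ to the parametrisation of Theorem~\ref{theo:bJ_formula_residue}. Writing $\beta = (\alpha-n+1)/2$ with $n=d$ and $\beta=\tfrac12$ forces $\alpha=d$, so $\bJ_{d,k+1}(\tfrac12)=\bJ_{d,k+1}\big(\tfrac{d-d+1}{2}\big)$ with $\alpha=d$, $n=d$ and face-size $k+1$; for $d\geq 3$ the hypotheses $n\geq 3$ and $\alpha\geq n-3$ of that theorem hold automatically. I would then verify that the admissibility conditions there translate \emph{exactly} into the two cases asserted here: its case (i), ``$\alpha$ even and $n-(k+1)$ odd'', becomes ``$d$ even and $d-k$ even'', while its case (ii), ``$\alpha$ and $n$ both odd'', becomes ``$d$ odd''. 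These are precisely cases (i) and (ii) of the present statement. Applying~\eqref{eq:J_nk_residue} with these values yields
\[
\bJ_{d,k+1}\!\left(\tfrac12\right)
=
\binom{d}{k+1}\, c_{\frac{d^2}{2}}\,\big(c_{\frac{d-1}{2}}\big)^{d-k-1}\,
\pi\,\Res\limits_{x=0}\!\left[\frac{\big(\int_0^x(\sin y)^d\,\dd y\big)^{d-k-1}}{(\sin x)^{d^2+2}}\right],
\]
and the residue here is already the one appearing in the claimed formula, while the binomial $\binom{d}{k+1}$ also passes through unchanged.

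Finally I would substitute this expression into the relation recalled in the first step and collapse the remaining prefactors into the stated closed form, using $c_\beta=\Gamma(\beta+\tfrac32)/(\sqrt\pi\,\Gamma(\beta+1))$ throughout. The factors $c_{\frac{d^2}{2}}$ and $(c_{\frac{d-1}{2}})^{d-k-1}$, together with the affine-surface-area normalisation and the $n^{(d-1)/(d+1)}$ scaling constant from~\eqref{eq:ReitznerExpectation}, must recombine into $\frac{d^2+1}{d!}\,(d+1)^{\frac{d^2-d}{d+1}}\,\Gamma\big(\frac{d^2+1}{d+1}\big)$ and the fractional power $\big(\sqrt\pi\,\Gamma(\frac{d+1}{2})/\Gamma(\frac{d+2}{2})\big)^{k+\frac{2}{d+1}}$; in particular the non-integer exponent $k+\frac{2}{d+1}$ and the powers of $d+1$ originate solely from the $(d-1)/(d+1)$-homogeneous normalisation, not from the angle itself, which only contributes the integer power $c_{\frac{d-1}{2}}^{\,d-k-1}$. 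This reorganisation of Gamma functions is the only laborious part and is where I expect essentially all of the work—and the only real risk of arithmetic error—to lie; the analytic heart of the statement is already contained in Theorem~\ref{theo:bJ_formula_residue}.
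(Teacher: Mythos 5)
Your proposal is correct and follows exactly the paper's own proof: both start from the formula of \cite[Remark~1.9]{beta_polytopes} expressing $C_{d,k}$ through $\bJ_{d,k+1}(\tfrac12)$, apply Theorem~\ref{theo:bJ_formula_residue} with $\alpha=n=d$ and $k+1$ in place of $k$ (with the parity cases translating precisely as you describe), and then simplify the Gamma-function prefactors. The Gamma bookkeeping indeed works out as you anticipate, e.g.\ $\frac{d^2+1}{d+1}-(d-k-1)=k+\frac{2}{d+1}$, so nothing beyond routine cancellation remains.
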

\begin{proof}
In~\cite[Remark~1.9]{beta_polytopes}, it has been shown that for all $d\in\N$ and $k\in\{0,\ldots,d-1\}$, we have
\begin{equation}\label{eq:Limitf_kBall}
C_{d,k}
= \frac{2\pi^{\frac{d(d-1)} {2(d+1)}}}{(d+1)!}
\frac{\Gamma(1+\frac{d^2}{2})\Gamma(\frac{d^2+1}{d+1})}{\Gamma(\frac {d^2+1}{2})}
(d+1)^{\frac{d^2+1}{d+1}}
\left(\frac{\Gamma(\frac{d+1}{2})}{\Gamma\left(1+\frac {d}{2}\right)}\right)^{\frac{d^2+1}{d+1}} \bJ_{d,k+1}\left(\frac 12\right).
\end{equation}
To compute $\bJ_{d,k+1}(\frac 12)$ we apply Theorem~\ref{theo:bJ_formula_residue} with $\alpha = n= d$ and $k+1$ instead of $k$. After some cancellations,  we obtain the required formula.
\end{proof}

Similar analysis can be done for random polytopes whose vertices are uniformly distributed on the sphere. Let  $U_1^*,U_2^*,\ldots$ be independent random points  chosen uniformly at random on the unit sphere $\bS^{d-1}$, $d\geq 2$. Denote the convex hull of $n$ such points by $K_{n,d}^*:= [U_1^*,\ldots, U_n^*]$.  In~\cite[Remark~1.9]{beta_polytopes}, it has been shown that for all $k\in \{0,\ldots,d-1\}$,
\begin{equation}\label{eq:reitzner_const_sphere}
C_{d,k}^*:=\lim_{n\to\infty} \frac 1n  \E f_k(K^*_{n,d})
=
\frac{2^d\pi^{{\frac d2}-1}}{d(d-1)^2} \frac{\Gamma\left(1+\frac{d(d-2)}{2}\right)}{\Gamma\left(\frac{(d-1)^2}{2}\right)}\left(\frac{\Gamma\left({\frac{d+1}{2}}\right)}{\Gamma\left(\frac d 2\right)}\right)^{d-1}
\bJ_{d,k+1}\left(-\frac 12\right).
\end{equation}
It seems that previously only the values $C_{d,d-1}^*$, $C_{d,d-2}^* = (d/2)C_{d,d-1}^*$ and $C_{d,0}^*=1$ have been known~\cite[Corollary~1 on p.~366]{affentranger}, \cite[p.~231]{buchta_mueller_tichy}; see also~\cite[\S~3.5]{kabluchko_algorithm}  for a discussion.

\begin{theorem}
For all $d\in\N$ and $k\in\{0,\ldots,d-1\}$ such that either (i) both $d$ and $d-k$ are even or (ii) $d$ is odd, we have
$$
C_{d,k}^*
=
\frac{\sqrt \pi}{d} (d-1)^{d-1}  \binom d{k+1}
\left(\frac{\sqrt \pi\, \Gamma\left(\frac{d-1}{2}\right)}{\Gamma\left(\frac d2\right)}\right)^{k}
\Res\limits_{x=0} \left[\frac{\left(\int_{0}^x (\sin y)^{d-2} \dd y\right)^{d-k-1}}{(\sin x)^{d^2 - 2d + 2}}\right].
$$
\end{theorem}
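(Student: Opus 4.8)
The plan is to combine the representation~\eqref{eq:reitzner_const_sphere} of $C_{d,k}^*$ in terms of $\bJ_{d,k+1}(-\frac12)$ with the residue formula of Theorem~\ref{theo:bJ_formula_residue}, exactly as in the proof of the preceding theorem for the ball. Since Theorem~\ref{theo:bJ_formula_residue} evaluates $\bJ$ at $\frac{\alpha-n+1}{2}$, to reach the argument $-\frac12$ at $n=d$ I take $\alpha=d-2$ and put $k+1$ in the role of the index $k$ of that theorem; then $\frac{\alpha-n+1}{2}=-\frac12$, the denominator becomes $(\sin x)^{\alpha n+2}=(\sin x)^{d^2-2d+2}$, and the numerator becomes $\bigl(\int_0^x(\sin y)^{d-2}\dd y\bigr)^{d-k-1}$, matching the residue in the statement. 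Before anything else I would verify admissibility: $\alpha=d-2\geq n-3=d-3$ holds automatically, and the two parity hypotheses of Theorem~\ref{theo:bJ_formula_residue}, namely ``$\alpha$ even and $n-(k+1)$ odd'' and ``$\alpha,n$ both odd'', become precisely ``$d$ and $d-k$ both even'' and ``$d$ odd'', i.e.\ cases (i) and (ii). (The low dimensions $d\le 2$, not covered by the residue theorem, reduce to classical or trivial cases.)

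Substituting Theorem~\ref{theo:bJ_formula_residue} yields
\[
\bJ_{d,k+1}\!\left(-\tfrac12\right)
=\binom{d}{k+1}\, c_{\frac{d(d-2)}2}\left(c_{\frac{d-3}2}\right)^{d-k-1}\pi\,
\Res\limits_{x=0}\left[\frac{\bigl(\int_0^x(\sin y)^{d-2}\dd y\bigr)^{d-k-1}}{(\sin x)^{d^2-2d+2}}\right],
\]
which I would insert into~\eqref{eq:reitzner_const_sphere}. The bulk of the work is the collapse of the Gamma functions. Using the definition~\eqref{eq:c_beta} of $c_\beta$, the factor $\Gamma\!\left(1+\frac{d(d-2)}2\right)/\Gamma\!\left(\frac{(d-1)^2}2\right)$ in~\eqref{eq:reitzner_const_sphere} combines with $c_{\frac{d(d-2)}2}$, and the functional equation $\Gamma(z+1)=z\Gamma(z)$ telescopes this into the single factor $\frac{(d-1)^2}{2\sqrt\pi}$. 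Next I would pair $\bigl(\Gamma(\frac{d+1}2)/\Gamma(\frac d2)\bigr)^{d-1}$ with $(c_{\frac{d-3}2})^{d-k-1}$; writing $c_{\frac{d-3}2}=\Gamma(\frac d2)/(\sqrt\pi\,\Gamma(\frac{d-1}2))$ and using $\Gamma(\frac{d+1}2)=\frac{d-1}2\,\Gamma(\frac{d-1}2)$, the $\Gamma(\frac d2)$ and $\Gamma(\frac{d-1}2)$ factors cancel down to a power of $\frac{d-1}2$ and leave the single surviving ratio $\bigl(\sqrt\pi\,\Gamma(\frac{d-1}2)/\Gamma(\frac d2)\bigr)^{k}$. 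Collecting the remaining powers of $2$, of $\pi$ and of $(d-1)$ then produces the scalar prefactor of the displayed formula.

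I expect the only genuine difficulty to be the bookkeeping of the exponents of $2$, $\pi$ and $(d-1)$ through these cancellations, where a stray factor of $\sqrt\pi$ or $2$ is easy to miscount. To fix the overall constant unambiguously I would cross-check against the boundary value $C_{d,0}^*=1$, which is classically known. For $k=0$ the formula rests on $\bJ_{d,1}(-\frac12)$, and the corresponding residue equals $(d-1)^{-(d-1)}$ --- this is exactly Proposition~\ref{prop:J_n_1_half_integer} with $m=0$ (equivalently, Theorem~\ref{theo:bJ_formula_residue} at index $1$, since $R_0(n)=1$). Substituting $k=0$ together with this residue value into the simplified expression must return $1$; this both pins down the overall scalar and provides a complete check of the constant factor, after which the general-$k$ formula follows from the same simplification with the residue left intact.
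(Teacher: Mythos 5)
Your route is exactly the paper's own proof: the paper also computes $\bJ_{d,k+1}(-\tfrac12)$ from Theorem~\ref{theo:bJ_formula_residue} with $n=d$, $\alpha=d-2$ and $k+1$ in place of $k$, inserts this into~\eqref{eq:reitzner_const_sphere}, and simplifies. Your admissibility check ($\alpha=d-2\geq d-3$) and the translation of the parity hypotheses into cases (i) and (ii) are correct, and so are your intermediate simplifications: the telescoping of $\Gamma(1+\tfrac{d(d-2)}{2})/\Gamma(\tfrac{(d-1)^2}{2})$ against $c_{\frac{d(d-2)}{2}}$ into $\tfrac{(d-1)^2}{2\sqrt\pi}$, and the pairing of $(\Gamma(\tfrac{d+1}{2})/\Gamma(\tfrac d2))^{d-1}$ with $(c_{\frac{d-3}{2}})^{d-k-1}$.

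However, your two concluding claims are mutually inconsistent, and carrying out the bookkeeping decides between them. Counting the powers of $\pi$: one gets $\pi^{\frac d2-1}$ from~\eqref{eq:reitzner_const_sphere}, $\pi^{-\frac12}$ from $c_{\frac{d(d-2)}{2}}$, $\pi^{-\frac{d-k-1}{2}}$ from $(c_{\frac{d-3}{2}})^{d-k-1}$, and $\pi^{+1}$ explicit in~\eqref{eq:J_nk_residue}, for a total of $\pi^{k/2}$ --- exactly the $(\sqrt\pi)^k$ absorbed into the bracket, with nothing left over; the powers of $2$ cancel completely and the powers of $(d-1)$ collapse to $(d-1)^{d-1}$. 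The computation therefore yields
\[
C_{d,k}^{*}=\frac{(d-1)^{d-1}}{d}\binom{d}{k+1}\left(\frac{\sqrt{\pi}\,\Gamma\left(\frac{d-1}{2}\right)}{\Gamma\left(\frac{d}{2}\right)}\right)^{k}\Res\limits_{x=0}\left[\frac{\left(\int_{0}^{x}(\sin y)^{d-2}\,\dd y\right)^{d-k-1}}{(\sin x)^{d^{2}-2d+2}}\right],
\]
i.e.\ \emph{without} the extra $\sqrt\pi$ appearing in the displayed statement. Your own proposed cross-check settles this: at $k=0$ the residue equals $(d-1)^{-(d-1)}$, so the formula above gives $C_{d,0}^{*}=1$, the classically known (indeed trivial, since all points on the sphere are vertices) value, whereas the statement as printed would give $C_{d,0}^{*}=\sqrt\pi$. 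So the displayed theorem carries a spurious factor $\sqrt\pi$ (a slip the paper's two-line proof, being the same computation, does not catch either), and the one step of your proposal that does not survive is the assertion that the cancellations ``produce the scalar prefactor of the displayed formula''; everything else, including the check that exposes the discrepancy, is sound.
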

\begin{proof}
Compute $\bJ_{d,k+1}(-\frac 12)$ by Theorem~\ref{theo:bJ_formula_residue} with $n=d$, $\alpha = d-2$ and $k+1$ instead of $k$. Inserting this value into~\eqref{eq:reitzner_const_sphere}, we obtain the claim after straightforward cancellations.
\end{proof}

\subsection{Further applications}
Beta and beta' polytopes are defined as convex hulls of points  sampled independently according to a $d$-dimensional beta or beta' distribution. The results of Sections~\ref{subsec:beta_simplices} and~\ref{subsec:beta_simplices_prime} of the present paper, combined with those of~\cite{beta_polytopes}, yield explicit formulae for the expected $f$-vectors of these polytopes, solving in particular Problem~A stated in Section~\ref{subsec:intro}. Since these formulae are most conveniently stated after introducing some notation, we postpone them to Theorems~\ref{theo:beta_poly_f_vector} and~\ref{theo:beta_poly_f_vector_tilde}. Using~\cite{beta_polytopes} it is also possible to compute expected sums of (Grassmann) angles of beta and beta' polytopes.
Let us finally mention that there are further quantities in stochastic geometry that can be studied by the methods of the present paper, for example expected $f$-vectors of typical Voronoi cells on the sphere or in the hyperbolic space, probability contents of beta and beta' polytopes, and expected face intensities in certain Laguerre tessellations.

\section{Preliminaries}\label{sec:prelim}
The remaining part of the paper is devoted to the proofs of the above results. The only exception is Section~\ref{sec:expected_face_beta_poly} where we explain the solution of Problem~A. We start by recalling some facts about the expected external angles of beta and beta' simplices.

\subsection{Expected external angles}\label{subsec:external}
Given a $d$-dimensional simplex $T := [x_1,\ldots,x_n]\subset \R^{n-1}$, the \textit{normal cone} $N(F,T)$ at its face $F := [x_{i_1},\ldots,x_{i_k}]$ is defined as
$$
N(F,T) := \{y\in \R^{n-1}: \langle y, x-z\rangle \leq 0 \text{ for all } x\in T\},
$$
where $z$ is any point in $F$ not belonging to a face of smaller dimension. The \textit{external angle} of $T$ at its face $F$ is then defined as
$$
\gamma(F,T) := \frac{\Vol_{n-1}(\bB_r^{n-1}(0)\cap N(F,T)) }{\Vol_{n-1}(\bB_r^{n-1}(0))},
$$
where $r>0$ is arbitrary and $\bB_r^{n-1}(0)$ is the ball of radius $r$ centered at $0$.

The expected internal angles $J_{n,k}(\beta)$ and $\tilde J_{n,k}(\beta)$ appeared in~\cite{beta_polytopes} together with the quantities $I_{n,k}(\alpha)$ and $\tilde I_{n,k}(\alpha)$ that are related to the expected external angles and are defined as follows:
\begin{align}
I_{n,k}(\alpha)
&=
\int_{-\pi/2}^{+\pi/2} c_{\frac{\alpha k - 1}{2}} (\cos \varphi)^{\alpha k} \left(\int_{-\pi/2}^\varphi c_{\frac{\alpha-1}{2}}(\cos \theta)^{\alpha} \,\dd \theta \right)^{n-k} \, \dd \varphi,
\quad \alpha>-1/k,
\label{eq:I_n_k}\\
\tilde I_{n,k}(\alpha)
&=
\int_{-\pi/2}^{+\pi/2} \tilde c_{\frac{\alpha k + 1}{2}} (\cos \varphi)^{\alpha k-1} \left(\int_{-\pi/2}^\varphi \tilde c_{\frac{\alpha+1}{2}}(\cos \theta)^{\alpha-1} \,\dd \theta \right)^{n-k} \, \dd \varphi,
\quad \alpha>0.
\label{eq:I_n_k_tilde}
\end{align}
They next theorem states that the expected external angles of beta and beta' simplices can be expressed through $I_{n,k}(\alpha)$ and $\tilde I_{n,k}(\alpha)$.  It is a special case of Theorems 1.6 and 1.16 in~\cite{beta_polytopes}.
\begin{theorem}\label{theo:external}
Let $X_1,\ldots,X_n$ be i.i.d.\ random points in $\R^{n-1}$ with beta distribution $f_{n-1,\beta}$, where $\beta\geq -1$. Similarly, let $\tilde X_1,\ldots,\tilde X_n$ be i.i.d.\ random points in $\R^{n-1}$ with beta' distribution $\tilde f_{n-1,\beta}$, where $\beta > \frac {n-1}{2}$.  Then, for all $k\in \{1,\ldots,n\}$, 
\begin{align*}
&\E \gamma ([X_1,\ldots,X_k], [X_1,\ldots,X_n]) = I_{n,k}(2\beta + n-1),\\
&\E \gamma ([\tilde X_1,\ldots,\tilde X_k], [\tilde X_1,\ldots,\tilde X_n]) = \tilde I_{n,k}(2\beta - n + 1).
\end{align*}
\end{theorem}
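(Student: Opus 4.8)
The plan is to reconstruct the computation behind \cite[Theorems~1.6 and~1.16]{beta_polytopes}, of which the statement is a special case; I sketch the beta case, the beta' case being entirely parallel. Write $d=n-1$ and fix $k$. The first step reduces the external angle to a solid angle in an orthogonal complement. Let $L:=\lin(X_2-X_1,\ldots,X_k-X_1)$ be the $(k-1)$-dimensional direction space of the affine hull $E:=\aff(X_1,\ldots,X_k)$, and let $L^\perp$ be its $(n-k)$-dimensional orthogonal complement. Any $y$ in the normal cone must annihilate $L$, and for such $y$ one has $\langle y, X_j-z\rangle=\langle y, X_j-p\rangle$ for every $z\in\relint F$, where $p$ is the orthogonal projection of $X_1$ (equivalently of the whole flat $E$) onto $L^\perp$. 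Hence $N(F,T)=\{y\in L^\perp:\langle y, X_j-p\rangle\le 0 \text{ for } j=k+1,\ldots,n\}$; in particular $\gamma(F,T)$ depends on $X_1,\ldots,X_k$ only through the flat $E$, not on their positions inside it, and it equals the normalized $(n-k-1)$-dimensional spherical measure of $N(F,T)\cap\bS^{n-k-1}$.

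The second step turns this solid angle into a product. Writing $\gamma(F,T)=\P_U[\langle U, X_j-p\rangle\le 0\ \forall j>k]$ for $U$ uniform on $\bS^{n-k-1}\subset L^\perp$ and taking the conditional expectation given $E$ and $U$, the conditional independence of $X_{k+1},\ldots,X_n$ (which remain i.i.d.\ $f_{n-1,\beta}$ and independent of $E$ and of $U$) yields
\[
\E[\gamma(F,T)\mid E]=\E_U\prod_{j=k+1}^{n}\P\big[\langle U, X_j\rangle\le\langle U, p\rangle\,\big|\,E,U\big].
\]
Here I would invoke the key analytic property of the beta family: the orthogonal projection of $f_{d,\beta}$ onto an $m$-dimensional subspace is $f_{m,\beta+(d-m)/2}$. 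Projecting $X_j$ onto the line $\R U$ gives $\langle U,X_j\rangle\sim f_{1,\frac{\alpha-1}{2}}$ with $\alpha=2\beta+n-1$, whose angular density (after $t=\sin\theta$) is exactly $c_{\frac{\alpha-1}{2}}(\cos\theta)^{\alpha}$. Writing $\langle U,p\rangle=\sin\varphi$, each factor becomes the cumulative integral $\int_{-\pi/2}^{\varphi}c_{\frac{\alpha-1}{2}}(\cos\theta)^{\alpha}\,\dd\theta$, reproducing the inner integral of~\eqref{eq:I_n_k} raised to the power $n-k$.

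The third step identifies the outer density, i.e.\ the law of $\langle U,p\rangle$. Since $U\in L^\perp$ we may write $\langle U,p\rangle=\langle U,X_1\rangle$, so I must determine the distribution of the projection of a beta point onto a uniformly random direction orthogonal to the span of $k-1$ independent beta differences. This is where the Ruben--Miles canonical decomposition \cite{ruben_miles,beta_polytopes} enters: it fixes the law of the position $p$ of the random flat $E$ relative to the origin, and combining the resulting radial distribution of $|p|$ with the uniform cosine coming from $U$, via the product algebra of beta laws, gives $\langle U,p\rangle\sim f_{1,\frac{\alpha k-1}{2}}$, i.e.\ $\sin\varphi$ has angular density $c_{\frac{\alpha k-1}{2}}(\cos\varphi)^{\alpha k}$. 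As a consistency check, for $k=1$ the flat is the single point $X_1$, no decomposition is needed, and rotational invariance alone gives $\langle U,X_1\rangle\sim f_{1,\frac{\alpha-1}{2}}$, matching $\alpha k=\alpha$. Substituting this outer density and integrating over $\varphi$ assembles precisely $I_{n,k}(2\beta+n-1)$.

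The \emph{main obstacle} is this third step: tracking the parameter shift through the canonical decomposition so that the $k$ points defining the flat combine to produce the exponent $\alpha k$ in the outer density, and checking that the normalizing constants $c_{\frac{\alpha k-1}{2}}$ and $c_{\frac{\alpha-1}{2}}$ come out correctly. In the beta' case one repeats the argument with the projection rule $\tilde f_{d,\beta}\mapsto\tilde f_{m,\beta-(d-m)/2}$, which produces $\alpha=2\beta-n+1$ together with the densities $\tilde c_{\frac{\alpha+1}{2}}(\cos\theta)^{\alpha-1}$ and $\tilde c_{\frac{\alpha k+1}{2}}(\cos\varphi)^{\alpha k-1}$ of~\eqref{eq:I_n_k_tilde}; the only extra care needed is integrability, which is guaranteed by the hypothesis $\beta>\frac{n-1}{2}$ (equivalently $\alpha>0$), ensuring that every intermediate beta' projection has parameter above $1/2$.
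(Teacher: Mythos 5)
The paper itself contains no proof of Theorem~\ref{theo:external}: it is stated as a special case of Theorems~1.6 and~1.16 of \cite{beta_polytopes}, so the only ``internal'' proof is that citation. Your proposal reconstructs the argument behind the citation, and it follows exactly the strategy of that source: reduction of the external angle to the spherical measure of the normal cone inside $L^\perp$ (note that you are tacitly using the correct convention that the angle is measured relative to $\lin N(F,T)=L^\perp$; the paper's displayed formula with $\Vol_{n-1}$, read literally, would give $0$ whenever $k\geq 2$), the factorization over the i.i.d.\ points $X_{k+1},\ldots,X_n$ conditionally on $(E,U)$, and the one-dimensional projection property of the beta family, which correctly yields the inner density $c_{\frac{\alpha-1}{2}}(\cos\theta)^{\alpha}$ with $\alpha=2\beta+n-1$. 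The single step you leave as an assertion --- that $\langle U,p\rangle\sim f_{1,\frac{\alpha k-1}{2}}$ --- is indeed the crux, and it is true for the reason you indicate: the Ruben--Miles canonical decomposition shows that the foot of the perpendicular $p$, regarded as a random point of the $(n-k)$-dimensional space $L^\perp$, has distribution $f_{n-k,\;k\beta+(k-1)\frac{n}{2}}$, and one further application of the projection property, now inside $L^\perp$ along the direction $U$, gives $\langle U,p\rangle\sim f_{1,\;k\beta+(k-1)\frac{n}{2}+\frac{n-k-1}{2}}=f_{1,\frac{\alpha k-1}{2}}$, i.e.\ precisely the outer density $c_{\frac{\alpha k-1}{2}}(\cos\varphi)^{\alpha k}$ after the substitution $\langle U,p\rangle=\sin\varphi$ (your $k=1$ consistency check is also correct). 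So your sketch is sound and complete modulo the canonical decomposition itself with its parameter bookkeeping, which is exactly the same external input that the paper defers to \cite{ruben_miles} and \cite{beta_polytopes}; the beta' half is parallel, with the tangent (rather than sine) substitution producing $(\cos\theta)^{\alpha-1}$ and the hypothesis $\beta>\frac{n-1}{2}$, i.e.\ $\alpha>0$, securing integrability, as you note.
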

Since it will be more convenient to work with angle sums rather than with  individual angles, let us introduce the quantities
\begin{equation}\label{eq:I_tilde_I_bold}
\bI_{n,k}(\alpha) := \binom nk I_{n,k}(\alpha),
\qquad
\tilde \bI_{n,k}(\alpha) := \binom nk \tilde I_{n,k}(\alpha).
\end{equation}
Note that $\bI_{n,n}(\alpha) = \tilde \bI_{n,n}(\alpha) = 1$ (which follows either from~\eqref{eq:I_n_k_tilde} or from the observation that if all external angles at vertices are shifted to the origin, they fill the whole space and intersect only at their boundaries).

\subsection{Relations between internal and external angles}\label{sec:relations}
The expected external and internal angles satisfy non-linear relations which will play a crucial role in the following and were obtained in~\cite{kabluchko_algorithm} (although the main work had been done in~\cite{beta_polytopes}). As explained in~\cite{kabluchko_algorithm}, these relations can be derived by combining McMullen's angle-sum relations~\cite{mcmullen,mcmullen_polyhedra} with the canonical decomposition of beta and beta' distributions~\cite{ruben_miles}.
\begin{proposition}\label{prop:relations}
For all $n\in \{2,3,\ldots\}$ and $k\in \{1,\ldots,n-1\}$,  the following relations are satisfied:
\begin{align}
&\sum_{m=k}^n (-1)^m \bI_{n,m}(\alpha) \bJ_{m,k} \left(\frac{\alpha - m + 1}{2}\right) = 0
\quad \text{for  $\alpha\geq n-3$},
\label{eq:relation_I_J_1}\\
&\sum_{m=k}^n (-1)^m \tilde \bI_{n,m}(\alpha) \tilde \bJ_{m,k} \left(\frac{\alpha + m - 1}{2}\right) = 0 \quad \text{for  $\alpha > 0$}.
\label{eq:relation_I_J_1_tilde}
\end{align}
\end{proposition}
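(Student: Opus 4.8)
The plan is to derive both relations by averaging a single deterministic identity---McMullen's non-linear angle-sum relation---over the random beta (resp.\ beta') simplex, and then to factorize the resulting expectation using the canonical decomposition of Ruben and Miles \cite{ruben_miles,beta_polytopes}. I treat the beta case in detail; the beta' case is entirely parallel. First I would fix the beta simplex $T := [X_1,\ldots,X_n] \subset \R^{n-1}$ spanned by i.i.d.\ beta-distributed points with parameter $\beta$, so that $\alpha = 2\beta + n - 1$. Since $T$ is almost surely a genuine $(n-1)$-simplex, its faces are indexed by the non-empty subsets of $\{1,\ldots,n\}$, and a face $F$ with $|F| = m$ vertices has $\dim F = m-1$. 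For any \emph{proper} face $G$ of $T$ (which is guaranteed here because $k \le n-1$), McMullen's angle-sum relation \cite{mcmullen,mcmullen_polyhedra} is the deterministic identity
\[
\sum_{F \colon G \subseteq F \subseteq T} (-1)^{\dim F}\, \beta(G,F)\, \gamma(F,T) = 0,
\]
where $\beta(G,F)$ is the internal angle of $F$ at $G$ and $\gamma(F,T)$ the external angle of $T$ at $F$. Summing this identity over all $k$-vertex faces $G$ and grouping the faces $F$ by their number of vertices $m$, the inner sum $\sum_{G\subseteq F,\,|G|=k}\beta(G,F)$ collapses to the angle sum $\sigma_k(F)$ of the sub-simplex $F$, so that (up to the irrelevant global sign coming from $\dim F = m-1$)
\[
\sum_{m=k}^{n} (-1)^{m} \sum_{F\colon |F|=m} \gamma(F,T)\,\sigma_k(F) = 0.
\]

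Next I would take expectations and exploit exchangeability: the $\binom nm$ faces $F$ with $m$ vertices contribute identically, so the relation reduces to
\[
\sum_{m=k}^{n} (-1)^{m} \binom nm\, \E\!\left[\gamma([X_1,\ldots,X_m],T)\,\sigma_k([X_1,\ldots,X_m])\right] = 0.
\]
The crucial step is the factorization of this expectation. Conditioning on the affine hull $H := \aff(X_1,\ldots,X_m)$, the canonical decomposition \cite{ruben_miles,beta_polytopes} asserts two things: (a) the restriction of the configuration to $H$ is, in its own right, a beta simplex in $H\cong\R^{m-1}$ with the shifted parameter $\beta + \tfrac{n-m}{2}$, and this shape is independent of $H$ itself and of the data governing the external angle; (b) consequently the scale- and translation-invariant quantity $\sigma_k([X_1,\ldots,X_m])$ is independent of $\gamma([X_1,\ldots,X_m],T)$. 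Hence the expectation factors as $I_{n,m}(\alpha)\cdot \bJ_{m,k}\bigl(\beta+\tfrac{n-m}{2}\bigr)$, where $\E\gamma([X_1,\ldots,X_m],T)=I_{n,m}(2\beta+n-1)=I_{n,m}(\alpha)$ by Theorem~\ref{theo:external} and $\E\sigma_k([X_1,\ldots,X_m])=\bJ_{m,k}\bigl(\beta+\tfrac{n-m}{2}\bigr)$ by the very definition of the angle sums. Since $\beta + \tfrac{n-m}{2} = \tfrac{\alpha-m+1}{2}$ and $\binom nm I_{n,m}(\alpha) = \bI_{n,m}(\alpha)$ by \eqref{eq:I_tilde_I_bold}, substitution yields exactly \eqref{eq:relation_I_J_1}. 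The beta' version \eqref{eq:relation_I_J_1_tilde} follows verbatim with $\alpha = 2\beta - n + 1$, the shifted parameter $\beta - \tfrac{n-m}{2} = \tfrac{\alpha + m - 1}{2}$, and $\tilde I, \tilde\bI, \tilde\bJ$ in place of their untilded counterparts.

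Finally I would check that every quantity is well-defined on the stated parameter ranges, which in fact \emph{forces} those ranges. In the beta case the shifted parameter $\tfrac{\alpha-m+1}{2}$ must be $\ge -1$ for all $m\le n$; the binding constraint is $m=n$, giving precisely $\alpha \ge n-3$. In the beta' case $\tfrac{\alpha+m-1}{2} > \tfrac{m-1}{2}$ is equivalent to $\alpha>0$, again exactly the hypothesis. The main obstacle, and the place where the genuine content sits, is step~(a)--(b): justifying the conditional independence of the internal shape of $F$ from the external angle $\gamma(F,T)$ and pinning down the parameter shift $\beta\mapsto\beta+\tfrac{n-m}{2}$. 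This is where the canonical decomposition must be invoked with care; essentially all the geometric work is already contained in \cite{beta_polytopes}, and the remaining task is the combinatorial bookkeeping that turns McMullen's identity into the stated relation linear in the $\bJ_{m,k}$.
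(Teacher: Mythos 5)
Your proposal follows the same route that the paper itself ascribes to this proposition: the paper gives no self-contained proof of Proposition~\ref{prop:relations}, importing it from \cite{kabluchko_algorithm} with the one-line description that it follows from McMullen's angle-sum relations combined with the canonical decomposition. Within that strategy, your McMullen step is stated correctly (the relation $\sum_{F\colon G\subseteq F\subseteq T}(-1)^{\dim F}\beta(G,F)\gamma(F,T)=0$ does hold for every proper face $G$, which is why $k\le n-1$ is needed), the exchangeability bookkeeping is right, and so is the independence claim in your step (b): $\gamma([X_1,\ldots,X_m],T)$ is a function of the affine hull $H$ and of $X_{m+1},\ldots,X_n$ only, while the internal angles of $[X_1,\ldots,X_m]$ are functions of the rescaled configuration inside $H$, and the canonical decomposition makes these two blocks independent. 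The accounting of the parameter ranges $\alpha\ge n-3$ and $\alpha>0$ at the end is also correct.

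There is, however, a genuine gap at your step (a), and it sits exactly where the content of the proposition lies. The Ruben--Miles canonical decomposition does \emph{not} say that, conditionally on $H=\aff(X_1,\ldots,X_m)$, the rescaled points inside $H$ are i.i.d.\ beta with the shifted parameter $\beta+\frac{n-m}{2}$. What it says (via the affine Blaschke--Petkantschin formula) is that the rescaled configuration is independent of $H$ and has density proportional to
\begin{equation*}
\Vol_{m-1}([z_1,\ldots,z_m])^{\,n-m}\,\prod_{i=1}^m\bigl(1-\|z_i\|^2\bigr)^{\beta}
\end{equation*}
on $(\bB^{m-1})^m$, i.e.\ the \emph{same} parameter $\beta$ tilted by a power of the volume; this is visibly a different law from $m$ independent copies of $f_{m-1,\beta+\frac{n-m}{2}}$. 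Consequently the identity you need, $\E\,\sigma_k([X_1,\ldots,X_m])=\bJ_{m,k}\bigl(\beta+\tfrac{n-m}{2}\bigr)$, is not ``by the very definition of the angle sums''; it is a separate, nontrivial theorem --- precisely the ``main work'' that the paper credits to \cite{beta_polytopes} --- and the same misattribution occurs verbatim in your beta' half. One way to close the gap: by the projection property of beta densities, the orthogonal projections $\pi X_1,\ldots,\pi X_m$ onto a fixed $(m-1)$-dimensional subspace are i.i.d.\ $f_{m-1,\beta+\frac{n-m}{2}}$, so the projected simplex has expected angle sums $\bJ_{m,k}(\beta+\frac{n-m}{2})$ by definition; and expected angles survive projection because, by rotational invariance, $\pi$ acts on the face-simplex as a rotation-invariant random invertible linear map $\phi$, and for any such $\phi$ and any cone $C$ with normalized solid angle $\angle(C)$ one has $\E\,\angle(\phi C)=\P[\phi^{-1}g\in C]=\angle(C)$, since $\phi^{-1}g$ is spherically symmetric whenever $g$ is a standard Gaussian vector independent of $\phi$. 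You do flag steps (a)--(b) as ``where the genuine content sits'', but as written you attribute to the canonical decomposition a statement it does not make, and the parameter shift --- which is the whole point of the relation being proved --- is left resting on that false attribution.
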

An important observation, also made in~\cite{kabluchko_algorithm}, is that these relations, together with the condition $\bJ_{n,n}(\beta) = \tilde \bJ_{n,n}(\beta)=1$, determine the $\bJ$-quantities \textit{uniquely}. Precise statements are given in the next two propositions.
\begin{proposition}\label{prop:bJ_unique}
Fix some integer $N\geq 2$ and $\alpha \geq N-3$.
Consider the following finite system of linear equations in the unknowns $\xi_{n,k}$, where $n\in\{1,\ldots,N\}$, $k\in \{1,\ldots,n\}$:
\begin{equation}\label{eq:system_alpha_arbitrary_notilde}
\begin{cases}
\sum_{m=k}^n (-1)^{m-k} \bI_{n,m}(\alpha) \xi_{m,k} =  0, &\text{ for all } n\in \{1,\ldots,N\}, \; k\in \{1,\ldots,n-1\},\\
\xi_{n,n} = 1, \text{ for all } n\in \{1,\ldots,N\}.
\end{cases}
\end{equation}
Then, the unique solution to this system is $\xi_{n,k} = \bJ_{n,k}(\frac{\alpha - n + 1}{2})$.
\end{proposition}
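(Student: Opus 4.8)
The plan is to verify that the proposed assignment solves the system and then to exploit the triangular structure of the system to conclude uniqueness. For the first part, fix $n\in\{1,\ldots,N\}$ and $k\in\{1,\ldots,n-1\}$. Since $\alpha\geq N-3\geq n-3$, relation~\eqref{eq:relation_I_J_1} of Proposition~\ref{prop:relations} applies and gives $\sum_{m=k}^n (-1)^m \bI_{n,m}(\alpha)\bJ_{m,k}(\frac{\alpha-m+1}{2}) = 0$. Multiplying by $(-1)^{-k}=(-1)^k$ yields exactly the first family of equations in~\eqref{eq:system_alpha_arbitrary_notilde} with $\xi_{m,k} := \bJ_{m,k}(\frac{\alpha-m+1}{2})$. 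The normalization $\xi_{n,n}=1$ holds because $\bJ_{n,n}(\beta)=1$ for every $\beta$. Hence $\xi_{n,k}=\bJ_{n,k}(\frac{\alpha-n+1}{2})$ is a solution.

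For uniqueness, I would fix the column index $k\in\{1,\ldots,N\}$ and regard the unknowns $\xi_{k,k},\xi_{k+1,k},\ldots,\xi_{N,k}$ as a single block, noting that the equations of~\eqref{eq:system_alpha_arbitrary_notilde} never mix different values of $k$. The value $\xi_{k,k}=1$ is prescribed directly. For each $n\in\{k+1,\ldots,N\}$, the equation indexed by the pair $(n,k)$ reads $\sum_{m=k}^n (-1)^{m-k}\bI_{n,m}(\alpha)\xi_{m,k}=0$ and involves only the unknowns $\xi_{m,k}$ with $m\leq n$. Its leading coefficient, namely the coefficient of $\xi_{n,k}$, equals $(-1)^{n-k}\bI_{n,n}(\alpha)=(-1)^{n-k}$, because $\bI_{n,n}(\alpha)=1$; in particular it is nonzero. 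Thus the subsystem for the block indexed by $k$ is lower triangular with invertible diagonal, and starting from $\xi_{k,k}=1$ one solves successively for $\xi_{k+1,k},\xi_{k+2,k},\ldots,\xi_{N,k}$. This forward recursion determines each $\xi_{n,k}$ uniquely, so the full system has exactly one solution, which must coincide with the solution exhibited above.

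The argument involves no genuine obstacle: the only points requiring care are the index bookkeeping (ensuring $\alpha\geq n-3$ throughout, so that~\eqref{eq:relation_I_J_1} is legitimately applicable for every $n\leq N$) and the observation that $\bI_{n,n}(\alpha)=1$, which is what makes the diagonal of the triangular system nonvanishing. Everything else is immediate from Proposition~\ref{prop:relations}.
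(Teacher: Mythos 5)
Your proof is correct and takes essentially the same route as the paper: existence of the solution follows from relation~\eqref{eq:relation_I_J_1} of Proposition~\ref{prop:relations} after multiplying by $(-1)^{-k}$, and uniqueness follows from the column-decoupled, lower-triangular structure with diagonal entries $(-1)^{n-k}\bI_{n,n}(\alpha)=(-1)^{n-k}\neq 0$. This is precisely the paper's observation that the system amounts to inverting a lower-triangular matrix with $\pm 1$'s on the main diagonal, which can be done sequentially row by row.
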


\begin{proposition}\label{prop:bJ_tilde_unique}
Fix some $\alpha>0$. Consider the following infinite system of linear equations in the unknowns $\tilde \xi_{n,k}$, where $n\in\N$, $k\in \{1,\ldots,n\}$:
\begin{equation}\label{eq:system_alpha_arbitrary}
\begin{cases}
\sum_{m=k}^n (-1)^{m-k}\tilde \bI_{n,m}(\alpha) \tilde \xi_{m,k} =  0, &\text{ for all } n\in\N, \; k\in \{1,\ldots,n-1\},\\
\tilde \xi_{n,n} = 1, \text{ for all } n\in \N.
\end{cases}
\end{equation}
Then, the unique solution to this system is $\tilde \xi_{n,k} = \tilde \bJ_{n,k}(\frac{\alpha + n-1}{2})$.
\end{proposition}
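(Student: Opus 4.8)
The plan is to exploit the triangular structure of the system~\eqref{eq:system_alpha_arbitrary}, which reduces the statement to the already-established relation~\eqref{eq:relation_I_J_1_tilde} together with the normalization $\tilde\bI_{n,n}(\alpha)=1$. The first observation is that the system \emph{decouples} completely across the value of the second index: every equation of the first type in~\eqref{eq:system_alpha_arbitrary} with indices $(n,k)$ involves only the unknowns $\tilde\xi_{m,k}$ sharing the same second index $k$, and the only equation of the second type touching this family is $\tilde\xi_{k,k}=1$. Hence it suffices to analyze, for each fixed $k\in\N$ separately, the subsystem in the unknowns $(\tilde\xi_{n,k})_{n\geq k}$.

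To prove uniqueness I would argue by induction on $n\geq k$. The base case $n=k$ is the normalization $\tilde\xi_{k,k}=1$. For the inductive step, fix $n>k$ and isolate the top term $m=n$ in the corresponding first-type equation; its coefficient equals $(-1)^{n-k}\tilde\bI_{n,n}(\alpha)=(-1)^{n-k}\neq 0$, so the equation can be solved explicitly for $\tilde\xi_{n,k}$ in terms of $\tilde\xi_{k,k},\ldots,\tilde\xi_{n-1,k}$, all of which are already determined by the induction hypothesis. Since each unknown is pinned down after finitely many steps, the recursion is well-founded despite the system being infinite, and the solution, if it exists, is unique.

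Existence, and the identification of the solution, then follows by verifying that the candidate $\tilde\xi_{n,k}:=\tilde\bJ_{n,k}(\tfrac{\alpha+n-1}{2})$ satisfies both families of equations. The normalization holds because $\tilde\bJ_{n,n}(\beta)=1$ for every $n$. For a first-type equation, substituting the candidate and pulling the constant factor $(-1)^{-k}=(-1)^{k}$ out of the sum turns $\sum_{m=k}^n(-1)^{m-k}\tilde\bI_{n,m}(\alpha)\tilde\bJ_{m,k}(\tfrac{\alpha+m-1}{2})$ into $(-1)^{k}\sum_{m=k}^n(-1)^{m}\tilde\bI_{n,m}(\alpha)\tilde\bJ_{m,k}(\tfrac{\alpha+m-1}{2})$, which vanishes by~\eqref{eq:relation_I_J_1_tilde} (valid precisely in the range $n\geq 2$, $k\in\{1,\ldots,n-1\}$ occupied by the first-type equations). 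Combined with the uniqueness established above, this shows that $\tilde\xi_{n,k}=\tilde\bJ_{n,k}(\tfrac{\alpha+n-1}{2})$ is the unique solution.

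There is no serious obstacle here: all the analytic content resides in Proposition~\ref{prop:relations}, and what remains is elementary linear algebra. The only points demanding slight care are the sign bookkeeping relating the factor $(-1)^{m-k}$ to the $(-1)^m$ appearing in~\eqref{eq:relation_I_J_1_tilde}, and the remark that the infiniteness of the system is harmless, since the decoupling together with the unit leading coefficients $\tilde\bI_{n,n}(\alpha)=1$ makes the defining recursion terminate for each individual unknown. The argument is the exact analogue of the one behind Proposition~\ref{prop:bJ_unique} in the beta case, the sole difference being that the index $n$ now ranges over all of $\N$ rather than up to a fixed $N$.
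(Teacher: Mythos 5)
Your proof is correct and follows essentially the same route as the paper: the paper likewise observes that the system is lower-triangular with unit diagonal entries $\tilde\bI_{n,n}(\alpha)=1$ (so it can be solved uniquely row by row), and identifies the solution via the relations~\eqref{eq:relation_I_J_1_tilde} of Proposition~\ref{prop:relations} together with $\tilde\bJ_{n,n}(\beta)=1$. Your sign bookkeeping and the remark that the infinite system is harmless are both accurate.
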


Both propositions can be conveniently stated in matrix form. For example, Proposition~\ref{prop:bJ_unique} states that the following lower-triangular matrices with $\pm 1$'s on the main diagonal are inverse to each other:
$$
\left((-1)^{m} \bI_{n,m}(\alpha)\right)_{1\leq n,m\leq N} \text{ and } \left((-1)^{m} \bJ_{n,m}\left(\frac{\alpha - n + 1}{2}\right)\right)_{1\leq n,m\leq N}.
$$
For both matrices, we agree to define the elements above the main diagonal to be $0$.  Thus, our task is ``just'' to invert a lower-triangular matrix. Of course, this can be done sequentially, row by row. This approach has been exploited in~\cite{kabluchko_algorithm}. Although it allows to compute any entry of the inverse matrix in finitely many steps, it does not lead to a satisfactory general formula and, moreover, fails to explain the  arithmetic properties of the entries of the inverse matrix; see~\cite{kabluchko_algorithm}.
The most difficult problem is to \textit{guess} the solutions to~\eqref{eq:system_alpha_arbitrary_notilde} and~\eqref{eq:system_alpha_arbitrary} (which is much harder than proving that the guess is indeed correct).

\subsection{How to guess the solution}\label{subsec:method}
To get some impression on how the solution may look like, we look at the case $\beta=+\infty$ which, as we shall explain, is closely related to the angles of the regular simplex.

\subsubsection{The regular simplex and the Gaussian simplex}
The \textit{regular simplex} with $n$ vertices is defined as $[e_1,\ldots,e_n]$, where $e_1,\ldots, e_n$ is the standard orthonormal basis in $\R^n$. Both external and internal angles of the regular simplex are known explicitly; see~\cite{hadwiger} and~\cite{ruben} for external angles and~\cite[Section~4]{rogers} (where the method used was attributed to H.\ E.\ Daniels) as well as~\cite[Lemma~4]{vershik_sporyshev} for internal angles.  We state these formulae in the form given in~\cite[Proposition~1.7]{kabluchko_zaporozhets_absorption}:
\begin{align}
\gamma(n,k) &:= \gamma ([e_1,\ldots,e_k], [e_1,\ldots,e_n])  = g_{n-k}\left(\frac 1k\right),\label{eq:I_J_infty1}\\
\beta(n,k) &:= \beta ([e_1,\ldots,e_k], [e_1,\ldots,e_n])  = g_{n-k}\left(-\frac 1n\right),\label{eq:I_J_infty2}
\end{align}
where $g_m:[-1/m,\infty] \to [0,1]$ is a real-analytic function given by
$$
g_m(r) =  \frac 1 {\sqrt {2\pi}} \int_{-\infty}^{\infty} \Phi^m (\sqrt r x) \eee^{-x^2/2} \dd x.
$$
Here, $\Phi(y)$ is the standard Gaussian distribution function (which can be analytically continued to the entire complex plane), and we use the convention $\sqrt{r} = i \sqrt{-r}$ for $r<0$. The function $g_m$ is closely related to the Schl\"afli function which expresses volumes of regular simplices in the spherical space; see, e.g., \cite{kabluchko_zaporozhets_absorption}.  An interesting consequence of~\eqref{eq:I_J_infty1} and~\eqref{eq:I_J_infty2} is that, on the formal level, we have the ``reciprocity law''
\begin{equation}\label{eq:reciprocity_infty}
\beta(n,k) = \gamma(-k,-n).
\end{equation}


The \textit{Gaussian simplex} is the random simplex  $[X_1,\ldots,X_n]$, where $X_1,\ldots,X_n$ are independent random points having the standard Gaussian random distribution on $\R^{n-1}$. It is known that both internal and external angles of the Gaussian simplex coincide, on average, with the corresponding expected angles of the regular simplex~\cite{kabluchko_zaporozhets_gauss_simplex,goetze_kabluchko_zaporozhets,kabluchko_zaporozhets_ongoing}.
More precisely, for all $k\in \{1,\ldots,n\}$ we have
$$
\E \gamma ([X_1,\ldots,X_k], [X_1,\ldots,X_n]) = \gamma(n,k),
\qquad
\E \beta ([X_1,\ldots,X_k], [X_1,\ldots,X_n]) = \beta(n,k).
$$
On the other hand, both the beta and the beta' distribution, when appropriately rescaled, weakly converge to the standard Gaussian distribution as $\beta\to+\infty$; see~\cite[Lemma~1.1]{beta_polytopes}. Since the rescaling does not change angles, the continuous mapping theorem implies that
\begin{align*}
I_{n,k}(+\infty) := \lim_{\beta\uparrow +\infty} I_{n,k}(\beta)= \gamma(n,k),
\qquad
J_{n,k}(+\infty) := \lim_{\beta\uparrow +\infty} J_{n,k}(\beta)= \beta(n,k),
\end{align*}
and similarly for $\tilde I_{n,k}(+\infty)$ and $\tilde J_{n,k}(+\infty)$. Summarizing, we have the ``reciprocity law''
$$
J_{n,k}(+\infty) = I_{-k,-n}(+\infty),
$$
where the right-hand side has to be understood in the sense of analytic continuation.

\subsubsection{Stirling numbers}
Relations very similar to~\eqref{eq:system_alpha_arbitrary_notilde} and~\eqref{eq:system_alpha_arbitrary} hold for Stirling numbers of the first and second kind denoted by $\stirling{n}{k}$ and $\stirlingb{n}{k}$, respectively. Namely, it is well known~\cite[p.250]{graham_knuth_patashnik_book} that
\begin{equation}\label{eq:stirling_inverse}
\sum_{m=k}^n (-1)^{n-m} \stirlingb{n}{m}\stirling{m}{k}
=
\sum_{m=k}^n (-1)^{n-m} \stirling{n}{m}\stirlingb{m}{k}
=
\delta_{nk}.
\end{equation}
On the other hand, there is a natural definition of Stirling numbers for negative $n$ and $k$, see~\cite{knuth}, such that the following reciprocity relation holds:
\begin{equation}\label{eq:stirling_reciprocity}
\stirlingb{n}{k} = \stirling{-k}{-n}.
\end{equation}
Assuming that $\bI_{n,k}(\alpha)$ is the analogue of $\stirlingb{n}{k}$, while $\bJ_{n,k}(\frac{\alpha - n + 1}{2})$ is the analogue of $\stirling{n}{k}$, Equation~\eqref{eq:stirling_reciprocity} suggests that we should have something like
\begin{equation}\label{eq:reciprocity_conj}
\bJ_{n,k}\left(\frac{\alpha - n + 1}{2}\right) \stackrel{?}{=} \bI_{-k,-n}(\alpha).
\end{equation}
In fact, things are more complicated. Attempting to guess the formula for $\bJ_{n,k}(\beta)$ we constructed an analytic continuation of $\bI_{n,k}(\alpha)$ to complex values of $n$ and $k$.  It turned out that some version of~\eqref{eq:reciprocity_conj} is indeed valid, but it requires additional assumptions on $\alpha$. To avoid unnecessary assumptions on $\alpha$, we shall refrain from constructing the analytic continuation here and use certain closely related quantity instead. The question of how the true analytic continuation is related to the substitute used here will be studied elsewhere.

\subsection{Outline of the proofs}
We start by presenting the proof of Theorem~\ref{theo:bJ_alpha_2} in Section~\ref{sec:simplest_case_proof}. On the one hand, this proof, being purely combinatorial,  is easy compared to the proof of the general Theorems~\ref{theo:bJ_formula_integral}, \ref{theo:bJ_formula_residue}, \ref{theo:bJ_tilde_formula_integral}, \ref{theo:bJ_tilde_formula_residue}. On the other hand, some of the ideas of this argument will be important for the proof in the general case.

The proof of Theorem~\ref{theo:bJ_formula_integral}  will be given in Section~\ref{sec:proof_beta}. It is based on Proposition~\ref{prop:bJ_unique} and apart from this does not use any stochastic geometry. The proof is structured as follows.
\begin{itemize}
\item [(1)] We define certain quantities called $\lB\{\nu,\kappa\}$ that are related to the expected external angle sums $\bI_{n,k}(\alpha)$.
\item[(2)]  We prove that these quantities satisfy certain recurrence relations which are somewhat reminiscent of the relations satisfied by the Stirling numbers.
\item[(3)] We define another set of quantities, called $\lA[\nu,\kappa]$, which satisfy the same recurrence relations as the quantities $\lB\{-\kappa, -\nu\}$.
\item[(4)] Using both sets of recurrence relations, we prove that certain matrices constructed out of $\lB\{\nu,\kappa\}$ and $\lA[\nu,\kappa]$ are inverse to each other.
\item[(5)] We construct a solution to the system of equations stated in Proposition~\ref{prop:bJ_unique} in terms of $\lA[\nu,\kappa]$, thus identifying $\bJ_{n,k}(\frac{\alpha - n + 1}{2})$.
\end{itemize}
The remaining results in the beta case can be deduced from Theorem~\ref{theo:bJ_formula_integral} by residue calculus and other standard methods. The proofs in the beta' case are very similar and will be sketched in Section~\ref{sec:proof_beta_tilde}.

\subsection{Notational conventions}
We use the letters $\nu, \mu,\kappa$ to denote variables that are similar to $n,m,k$ but need not be integer.
If $\lA[\nu,\kappa],\lB\{\nu,\kappa\},F(x),\ldots$ denote some quantities related to beta simplices, then  $\mA[\nu,\kappa],\mB\{\nu,\kappa\},\tilde F(x),\ldots$ denote similar quantities related to beta' simplices. Record for future use the standard integrals
\begin{align}
\int_{-\pi/2}^{+\pi/2} (\cos x)^{h-1} \dd x
=
\int_{-\infty}^{+\infty} (\cosh y)^{-h} \dd y
=
\frac{\sqrt \pi\, \Gamma \left(\frac{h}{2}\right)}{\Gamma\left(\frac{h+1}{2}\right)}
=
\frac{1}{c_{\frac{h-2}{2}}}
=
\frac{1}{\tilde c_{\frac{h+1}{2}}} ,
\quad
\Re h >0.\label{eq:int_cos_cosh}
\end{align}
The first integral can be reduced to the beta function, whereas the second one can be reduced to the first one by the substitution $y= \arcsin \tanh x$.

\section{The simplest case: Proof of Theorem~\ref{theo:bJ_alpha_2}}\label{sec:simplest_case_proof}

\subsection{Proof  of Theorem~\ref{theo:bJ_alpha_2}}\label{subsec:proof_alpha_2}
Our aim is to show that for all $n\in \N$ and $k\in \{1,\ldots,n\}$ we have
\begin{equation}\label{eq:bJ_alpha_2_proof}
\binom {2n}{n} \tilde \bJ_{n,k}\left(\frac{n+1}2\right) = \binom nk \binom{n+k}{k} - \binom {n-2} k \binom{n-2+k}{k}.
\end{equation}
Recalling Proposition~\ref{prop:bJ_tilde_unique} and taking $\alpha=2$ there, consider the following system of linear equations in the unknown quantities $\tilde \xi_{n,k}$, where $n\in \N$ and $k\in \{1,\ldots,n\}$:
\begin{equation}\label{eq:system_alpha_2}
\begin{cases}
\sum_{m=k}^n (-1)^{m-k}\tilde \bI_{n,m}(2) \tilde \xi_{m,k} =  0, &\text{ for all } n\in\N, \; k\in \{1,\ldots,n-1\},\\
\tilde \xi_{n,n} = 1, \text{ for all } n\in \N.
\end{cases}
\end{equation}
By~\eqref{eq:I_n_k_tilde}, \eqref{eq:I_tilde_I_bold} and the Legendre duplication formula,  the coefficients of these equations are given by the formula
$$
\tilde \bI_{n,k}(2) = \binom {2k}{k} \frac{n!}{2^{n+k}} \frac{1}{(n-k)!(k-1)!} \int_{-\pi/2}^{+\pi/2} (\cos x)^{2k-1}(1+\sin x)^{n-k} \dd x.
$$
Proposition~\ref{prop:bJ_tilde_unique}  states  that
$$
\tilde \xi_{n,k} = \tilde \bJ_{n,k} \left(\frac {n+1}{2}\right)
$$
is the \textit{unique} solution to this system. To prove~\eqref{eq:bJ_alpha_2_proof} it therefore suffices to check that
$$
\tilde \xi_{n,k} =  \binom {2n}{n}^{-1} \left(\binom nk \binom{n+k}{k} - \binom {n-2} k \binom{n-2+k}{k}\right)
$$
defines a solution to~\eqref{eq:system_alpha_2}.
To this end, we introduce triangular arrays $\mtA[n,k]$ and $\mtB\{n,k\}$ as follows:
\begin{align}
\mtA[n,k]
&:=  \binom nk \binom{n+k}{k} \frac{k!}{4^k} =  \frac{(n+k)!}{4^k(n-k)!k!}, \quad n\in\N_0,\,k\in \{0,\ldots,n\}, \label{eq:def_A_alpha_2}\\
\mtB\{n,k\}
&:= \frac{2^{n-k}}{(n-k)!(k-1)!} \int_{-\pi/2}^{+\pi/2} (\cos x)^{2k-1}(1+\sin x)^{n-k} \dd x \notag\\
&\phantom{:}=\frac{2^{2n-1}(n-1)!}{(n-k)!(n+k-1)!}, \quad n\in\N,\, k\in \{1,\ldots,n\}.
\label{eq:def_B_alpha_2}
\end{align}
The integral in the definition of $\mtB\{n,k\}$ can be evaluated by the substitution $x = -\frac \pi 2 + 2 y$  which reduces it to the well-known~\cite[12.42]{whittaker_watson_book}  formula
$$
\int_{0}^{\pi/2} (\cos y)^{2k-1} (\sin y)^{2n-1} \dd y = \frac 12 \frac{\Gamma(n)\Gamma(k)}{\Gamma(n+k)}.
$$
We also put $\mtA[n,k] := \mtB\{n,k\} := 0$ if $k>n$. With this notation,
$$
\tilde \bI_{n,k}(2)
=
\frac{n!}{2^{2n}} \binom {2k}{k} \mtB\{n,k\}, \qquad n\in \N,\, k\in \{1,\ldots,n\}.
$$
We claim that
$$
\tilde \xi_{n,k} = \frac{4^k}{k!} \binom {2n}{n}^{-1} (\mtA[n,k] - \mtA[n-2,k]),
\qquad n\in \N,\, k\in \{1,\ldots,n\},
$$
defines a solution to~\eqref{eq:system_alpha_2}. Using~\eqref{eq:def_A_alpha_2} it is trivial to check that $\tilde \xi_{n,n} = 1$, and it remains to show that
$$
\sum_{m=k}^n (-1)^{m-k} \left(\frac{n!}{2^{2n}} \binom {2m}{m} \mtB\{n,m\} \right)  \left(\frac{4^k}{k!} \binom {2m}{m}^{-1} (\mtA[m,k] - \mtA[m-2,k])\right) =  0
$$
for all $n\in\N$ and $k\in \{1,\ldots,n-1\}$. After some cancellations, the identity simplifies to
\begin{equation}\label{eq:s_r_def_alpha_2}
\tilde s_{n-k}(k) := \sum_{m=k}^n  (-1)^{m-k} \mtB\{n,m\} (\mtA[m,k] - \mtA[m-2,k]) =  0,
\end{equation}
for all $n\in\N$ and $k\in \{1,\ldots,n-1\}$.
The proof of this identity is based on the following recurrence relations for $\mtA[n,k]$ and $\mtB\{n,k\}$:
\begin{align}
& \mtA[n,k] - \mtA[n-2,k] = \left(n - \frac 12\right) \mtA[n-1, k-1], \qquad n\in\N, \, k\in \{0,\ldots,n\},\label{eq:rec_A_alpha_2}\\
& \mtB\{n,k+2\}- \mtB\{n,k\} = -\left(\frac 12 + k\right) \mtB\{n+1,k+1\}, \qquad n\in \N, \, k\in \{0,\ldots,n\}.\label{eq:rec_B_alpha_2}
\end{align}
Here, we put $\mtA[n,-1]:=0$ and $\mtA[0,-1] = 2$, as well as $\mtB\{n,0\} := \mtB\{n,1\} = 2^{2n-1}/n!$ for $n\in\N$, which is quite natural in view of~\eqref{eq:def_B_alpha_2}.
Verifying both relations is an easy exercise.
Let $n\geq 2$ and $k\in \{1,\ldots,n-1\}$. Using first~\eqref{eq:rec_A_alpha_2} and then~\eqref{eq:rec_B_alpha_2}, we obtain
\begin{align*}
\tilde s_{n-k}(k)
&= \sum_{m=k}^n  (-1)^{m-k} \mtB\{n,m\} (\mtA[m,k] - \mtA[m-2,k]) \\
&=  \sum_{m=k}^n   (-1)^{m-k} \mtB\{n,m\} \left(m - \frac 12\right) \mtA[m-1,k-1]\\
&= - \sum_{m=k}^n  (-1)^{m-k}  (\mtB\{n-1,m+1\} - \mtB\{n-1,m-1\})  \mtA[m-1,k-1]\\
&=  \sum_{m=k}^n  (-1)^{m-k} \mtB\{n-1,m-1\}  \mtA[m-1,k-1] - \sum_{m=k}^n   (-1)^{m-k} \mtB\{n-1,m+1\}\mtA[m-1,k-1].
\end{align*}
After shifting the summation index in the second sum, we obtain
\begin{align*}
\tilde s_{n-k}(k)
&= \sum_{m=k}^n (-1)^{m-k} \mtB\{n-1,m-1\}  \mtA[m-1,k-1] - \sum_{m=k+2}^{n+2}  (-1)^{m-k} \mtB\{n-1,m-1\}\mtA[m-3,k-1]\\
&= \sum_{m=k}^n (-1)^{m-k} \mtB\{n-1,m-1\}  \mtA[m-1,k-1] - \sum_{m=k}^{n}  (-1)^{m-k} \mtB\{n-1,m-1\}\mtA[m-3,k-1],
\end{align*}
where the last identity holds because the terms in the second sum vanish for $m\in \{k,k+1,n+1,n+2\}$. Finally, taking both sums together and shifting the summation index one more time, we arrive at
\begin{align*}
\tilde s_{n-k}(k)
&=  \sum_{m=k}^{n} (-1)^{m-k}  \mtB\{n-1,m-1\}(\mtA[m-1,k-1] - \mtA[m-3,k-1] )\\
&=  \sum_{m=k-1}^{n-1} (-1)^{m-k+1}  \mtB\{n-1,m\}(\mtA[m,k-1] - \mtA[m-2,k-1])\\
&= \tilde s_{n-k}(k-1).
\end{align*}
Iterating this $k$ times, we obtain
$$
\tilde s_{n-k}(k)  =   \tilde s_{n-k}(0), \qquad k\in\N_0, \; n\geq k.
$$
To complete the proof, it remains to check that $\tilde s_r(0)=0$ for every $r\in\N$.  But this is trivial since $\mtA[m,0]=\mtA[m-2,0] = 1$ for $m\geq 2$ and hence
$$
\tilde s_r(0) = \sum_{m=0}^{r} (-1)^m \mtB\{r,m\} (\mtA[m,0] - \mtA[m-2,0]) =\mtB \{r,0\} \mtA[0,0] - \mtB\{r,1\}\mtA[1,0]=0,
$$
where we recall that $\mtA[0,0] = \mtA[1,0] = 1$ and $\mtB\{r,0\}=\mtB\{r,1\} = 2^{2r-1}/r!$.
The proof of Theorem~\ref{theo:bJ_alpha_2}  is complete. \hfill $\Box$

Let us record for future reference the following
\begin{proposition}\label{prop:inverse_matr_alpha_2}
Let $\mtA[n,k]$ and $\mtB\{n,k\}$ be defined by~\eqref{eq:def_A_alpha_2} and~\eqref{eq:def_B_alpha_2}.
Then, for every $n\in\N$ and $k\in \{1,\ldots,n\}$ we have
\begin{align}
\sum_{m=k}^n (-1)^{m-k} \mtB\{n,m\} \left(m - \frac 12\right) \mtA\left[m - 1, k - 1\right]
=
\delta_{nk},
\end{align}
where $\delta_{nk}$ denotes Kronecker's delta function. With other words, the infinite lower-triangular matrices $((-1)^m \mtB\{n,m\})_{n,m\in \N}$ and $((-1)^k (m - \frac 12) \mtA[m-1, k-1])_{m,k\in\N}$ are inverse to each other.
\end{proposition}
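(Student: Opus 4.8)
The plan is to recognize that the left-hand side of the asserted identity is, via the recurrence~\eqref{eq:rec_A_alpha_2}, nothing but the quantity $\tilde s_{n-k}(k)$ introduced in~\eqref{eq:s_r_def_alpha_2}. Indeed, substituting $\mtA[m,k]-\mtA[m-2,k] = (m-\tfrac12)\mtA[m-1,k-1]$ into the definition of $\tilde s_{n-k}(k)$ gives
\[
\tilde s_{n-k}(k)
=
\sum_{m=k}^n (-1)^{m-k} \mtB\{n,m\} \left(m-\tfrac12\right) \mtA[m-1,k-1],
\]
which is precisely the sum appearing in the statement. Hence the whole content of the proposition is the single assertion $\tilde s_{n-k}(k) = \delta_{nk}$, and the proof splits according to whether $k<n$ or $k=n$.

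For the off-diagonal case $k\in\{1,\ldots,n-1\}$ there is nothing new to do: it was already shown in the proof of Theorem~\ref{theo:bJ_alpha_2} that $\tilde s_{n-k}(k)=0$ throughout this range, by the descent $\tilde s_{n-k}(k)=\tilde s_{n-k}(k-1)=\cdots=\tilde s_{n-k}(0)=0$. Thus the off-diagonal entries of the product vanish automatically. The diagonal case $k=n$, which lies outside the range covered there, is the only genuinely new point; I would dispose of it by direct computation, since the sum collapses to its single term $m=n$. Using the closed forms~\eqref{eq:def_A_alpha_2} and~\eqref{eq:def_B_alpha_2},
\[
\mtB\{n,n\}\left(n-\tfrac12\right)\mtA[n-1,n-1]
=
\frac{2^{2n-1}(n-1)!}{(2n-1)!}\cdot\frac{2n-1}{2}\cdot\frac{(2n-2)!}{4^{n-1}(n-1)!}
=
1,
\]
the cancellations ($2^{2n-1}/4^{n-1}=2$ and $(2n-2)!/(2n-1)!=1/(2n-1)$) being routine. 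This gives $\tilde s_0(n)=1$ and completes the verification that the displayed sum equals $\delta_{nk}$.

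Finally I would read this identity as the matrix statement. Writing $M=((-1)^m\mtB\{n,m\})$ and $N=((-1)^k(m-\tfrac12)\mtA[m-1,k-1])$, the $(n,k)$ entry of $MN$ is $\sum_m (-1)^{m+k}\mtB\{n,m\}(m-\tfrac12)\mtA[m-1,k-1]$, and since $(-1)^{m+k}=(-1)^{m-k}$ this is exactly the sum just evaluated, so $MN=I$. As both $M$ and $N$ are lower-triangular with nonzero diagonal entries, a right inverse is automatically a two-sided inverse, whence the two matrices are mutually inverse, as claimed. I expect no real obstacle here: the substantive combinatorial identity is inherited from the proof of Theorem~\ref{theo:bJ_alpha_2}, so the only fresh ingredient is the one-line diagonal computation, and the matrix reformulation is purely a matter of bookkeeping the signs.
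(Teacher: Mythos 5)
Your proof is correct and follows essentially the same route as the paper's: identify the sum as $\tilde s_{n-k}(k)$ via the recurrence~\eqref{eq:rec_A_alpha_2}, invoke the already-established vanishing $\tilde s_{n-k}(k)=0$ for $k<n$ from the proof of Theorem~\ref{theo:bJ_alpha_2}, and check the diagonal case by a one-line computation (the paper evaluates $\mtB\{k,k\}\mtA[k,k]$ directly, which by the recurrence is the same quantity you compute as $\mtB\{n,n\}(n-\tfrac12)\mtA[n-1,n-1]$, both equal to $1$). The added remarks on sign bookkeeping and on a one-sided inverse of triangular matrices being two-sided are fine and merely make explicit what the paper leaves implicit.
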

\begin{proof}
In view of~\eqref{eq:rec_A_alpha_2}, the left-hand side equals $\tilde s_{n-k}(k)$, the sum defined in~\eqref{eq:s_r_def_alpha_2}. For $k<n$ we have already shown that $\tilde s_{n-k}(k) = 0$. Let us verify that $\tilde s_0(k) = 1$ for all $k\in\N$. Indeed,
$$
\tilde s_0(k) =  \mtB\{k,k\} \mtA[k,k] = \frac{2^{2k-1}(k-1)!}{(2k-1)!} \cdot \frac{(2k)!}{4^k k!} = 1,
$$
where we used~\eqref{eq:def_B_alpha_2} and~\eqref{eq:def_A_alpha_2}.
\end{proof}

\begin{remark}
The above proof would not be possible without the pre-knowledge of the final formula for $\bJ_{n,k}(\frac{n+1}{2})$ stated in~\eqref{eq:bJ_alpha_2_proof}. In fact, we proceeded as follows.  Using computer algebra, we calculated the solution $\tilde \xi_{n,k}$ to~\eqref{eq:system_alpha_2} for small $n$ and then guessed~\eqref{eq:bJ_alpha_2_proof} using the Online Encyclopedia of Integer Sequences (OEIS)~\cite{sloane}. In a similar way, it is also possible to guess $\tilde \bJ_{n,k}(\frac n2)$ (which was done in~\cite{kabluchko_poisson_zero}; see  also Section~\ref{subsec:alpha_1}). As we shall see in Remark~\ref{rem:alpha_1_2_special} below, these two cases are distinguished by a very special property making them easy compared to the general case.
\end{remark}


\section{Proofs in the beta case}\label{sec:proof_beta}

\subsection{Recurrence relations for external quantities}
Fix some  $\alpha >0$ once and for all. Most functions we shall introduce depend on $\alpha$, although this dependence is usually suppressed in our notation. Define
\begin{equation}\label{eq:def_F}
F(x) = \int_{-\pi/2}^x (\cos y)^{\alpha} \dd y, \qquad x\in \R.
\end{equation}
Later, we shall need an extension of this definition to complex $x$. There are no problems if $\alpha$ is integer, but for non-integer $\alpha$ the function $(\cos x)^{\alpha}$ has branch points located at $\frac \pi 2 + \pi n$, $n\in\Z$. In order to define $F$ as a \textit{univalued} analytic function we agree to cut the complex plane at $(-\infty, -\frac \pi 2]$ and  $[+\frac \pi2,\infty)$. In~\eqref{eq:def_F}, we integrate along any contour which connects $-\frac \pi2$ to $x$ and stays in the doubly slit plane.

Next we are going to introduce quantities which are related to the expected external angles. For $\nu,\kappa\in \C$ such that $\Re \kappa > -\frac 1 \alpha$ and $\nu-\kappa\in \N_0$ we define
\begin{equation}\label{eq:def_B}
\lB\{\nu, \kappa\}
:=
\frac{\alpha^{\nu-\kappa}}{(\nu-\kappa)!}\int_{-\pi/2}^{+\pi/2} (\cos x)^{\alpha \kappa}
(F(x))^{\nu-\kappa} \dd x.
\end{equation}
By convention, we also put $\lB \{\nu,\kappa\} = 0$ if $\nu-\kappa \in \{-1,-2,\ldots\}$.  The expected external angles of beta simplices can be expressed through the $\lB\{\nu,\kappa\}$'s since by~\eqref{eq:I_n_k} and~\eqref{eq:I_tilde_I_bold},
\begin{align}
\bI_{n,k}(\alpha)
&=
\binom nk  c_{\frac{\alpha k-1}{2}} c_{\frac{\alpha-1}{2}}^{n-k} \int_{-\pi/2}^{+\pi/2} (\cos x)^{\alpha k} (F(x))^{n-k} \dd x\notag \\
&=
\binom nk  c_{\frac{\alpha k-1}{2}} c_{\frac{\alpha-1}{2}}^{n-k} \cdot \frac{(n-k)!}{\alpha^{n-k}}\lB\{n,k\}\notag \\
&=
\alpha^{-n} n! c_{\frac{\alpha-1}{2}}^{n} \cdot \frac{c_{\frac{\alpha k-1}{2}} \alpha^{k}}{c_{\frac{\alpha-1}{2}}^k k!} \cdot \lB\{n,k\}.\label{eq:I_n_m_b_n_m}
\end{align}
The quantities $\lB\{\nu,\kappa\}$ satisfy a recurrence relation which will be crucial for what follows.
\begin{proposition}\label{prop:B_relation}
For $\nu,\kappa \in \C$ such that $\Re \kappa > \frac 1\alpha$ and $\nu-\kappa\in \N_0$, we have
\begin{equation}\label{eq:B_relation}
\lB\{\nu,\kappa+2\} + (\kappa+1)\kappa\, \lB\{\nu,\kappa\}
=
\left(\kappa - \frac 1\alpha\right) (\kappa+1)\, \lB \left\{\nu - \frac 2\alpha, \kappa - \frac 2 \alpha\right\}.
\end{equation}
\end{proposition}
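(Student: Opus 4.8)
The plan is to reduce the claimed recurrence to a single integral identity and then prove that identity by recognizing the relevant integrand as a total derivative. Write $m := \nu - \kappa \in \N_0$. Unwinding the definition~\eqref{eq:def_B}, each of the three terms in~\eqref{eq:B_relation} carries the factor $\alpha^m/m!$; for the first term one uses $\frac{\alpha^{m-2}}{(m-2)!} = \frac{\alpha^m}{m!}\cdot\frac{m(m-1)}{\alpha^2}$, and one notes that the exponent of $F$ equals $m$ in the second and third terms and $m-2$ in the first. After cancelling the common factor $\alpha^m/m!$, relation~\eqref{eq:B_relation} becomes equivalent to
\[
\frac{m(m-1)}{\alpha^2}\,D + \kappa(\kappa+1)\,A = (\kappa+1)\left(\kappa - \tfrac 1\alpha\right) B,
\]
where $A := \int_{-\pi/2}^{+\pi/2}(\cos x)^{\alpha\kappa}(F(x))^m\dd x$, $B := \int_{-\pi/2}^{+\pi/2}(\cos x)^{\alpha\kappa-2}(F(x))^m\dd x$, and $D := \int_{-\pi/2}^{+\pi/2}(\cos x)^{\alpha\kappa + 2\alpha}(F(x))^{m-2}\dd x$. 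Thus everything reduces to a linear relation between $A$, $B$, $D$, which are genuine real integrals depending on the complex parameter $\kappa$.

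To establish this relation I would integrate the derivative of the auxiliary function
\[
g(x) := \sin x\,(\cos x)^{\alpha\kappa - 1}(F(x))^m
\]
over $[-\pi/2, +\pi/2]$. Using $F'(x) = (\cos x)^\alpha$ from~\eqref{eq:def_F} together with $\sin^2 x = 1 - \cos^2 x$, a direct computation gives
\[
g'(x) = \alpha\kappa\,(\cos x)^{\alpha\kappa}(F(x))^m - (\alpha\kappa - 1)(\cos x)^{\alpha\kappa - 2}(F(x))^m + m\sin x\,(\cos x)^{\alpha\kappa + \alpha - 1}(F(x))^{m-1}.
\]
Since $\Re\kappa > 1/\alpha$ forces $\Re(\alpha\kappa - 1) > 0$, the boundary values $g(\pm\pi/2)$ vanish, so $\int g' = 0$, which yields $\alpha\kappa A - (\alpha\kappa - 1)B + C = 0$, where $C$ is the integral of the last term. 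One further integration by parts, writing $\sin x\,(\cos x)^{\alpha\kappa+\alpha-1} = -\frac{1}{\alpha(\kappa+1)}\frac{\dd}{\dd x}(\cos x)^{\alpha(\kappa+1)}$ and differentiating the remaining factor $(F(x))^{m-1}$, turns $C$ into $\frac{m(m-1)}{\alpha(\kappa+1)}D$ (again the boundary term vanishes because $\Re(\alpha(\kappa+1)) > 0$). Substituting this into $\alpha\kappa A - (\alpha\kappa-1)B + C = 0$ and multiplying through by $\frac{\kappa+1}{\alpha}$ produces precisely the displayed identity.

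The only genuinely delicate points are choosing the correct test function $g$ — it must be engineered so that one differentiation produces the two target powers $(\cos x)^{\alpha\kappa}$ and $(\cos x)^{\alpha\kappa-2}$ simultaneously — and controlling the leftover cross term $C$, whose handling requires the second integration by parts and is exactly the place where the $F^{m-2}$ integral $D$ (and hence the shifted argument $\lB\{\nu - \tfrac 2\alpha, \kappa - \tfrac 2\alpha\}$) enters. The hypothesis $\Re\kappa > 1/\alpha$ is precisely what guarantees both the convergence of $B$ and the vanishing of every boundary contribution; it is moreover exactly the condition under which $\lB\{\nu - \tfrac 2\alpha, \kappa - \tfrac 2\alpha\}$ is well-defined in~\eqref{eq:def_B}. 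Finally, the degenerate cases $m \in \{0,1\}$ need no separate treatment: the factor $m(m-1)$ annihilates the $D$-term there, in agreement with the convention that $\lB\{\nu,\kappa+2\} = 0$ whenever $\nu - \kappa - 2 < 0$.
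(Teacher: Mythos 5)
Your proof is correct, and every step checks out: the reduction of~\eqref{eq:B_relation} to the identity $\tfrac{m(m-1)}{\alpha^2}D + \kappa(\kappa+1)A = (\kappa+1)\bigl(\kappa-\tfrac 1\alpha\bigr)B$ is a faithful unwinding of~\eqref{eq:def_B}, the formula for $g'$ is right, and the hypothesis $\Re \kappa > \tfrac 1\alpha$ is invoked at exactly the places where it is needed (convergence of $B$ and vanishing of all boundary contributions). The computational content coincides with the paper's proof --- the paper likewise argues by integration by parts using $F'(x)=(\cos x)^\alpha$ and $\sin^2 x = 1-\cos^2 x$, and your auxiliary function $g(x)=\sin x\,(\cos x)^{\alpha\kappa-1}(F(x))^m$ is precisely the product appearing inside the paper's second integration by parts --- but you traverse the chain of identities in the opposite direction: the paper starts from $\lB\{\nu,\kappa+2\}$ (your $D$) and works down to the $A$- and $B$-integrals by two successive integrations by parts, whereas you start from $\int_{-\pi/2}^{\pi/2} g'\,\dd x = 0$, which couples $A$, $B$ and the cross term $C$, and then convert $C$ into $D$ by a single integration by parts. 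The genuine payoff of your arrangement is uniformity in the degenerate cases: the paper must treat $\nu-\kappa=0$ and $\nu-\kappa=1$ separately, verifying~\eqref{eq:b_rec_diagonal1} and~\eqref{eq:b_rec_diagonal2} through the explicit Gamma-function evaluations~\eqref{eq:b_kappa_kappa} and~\eqref{eq:b_kappa+1_kappa}, while in your setup the factor $m(m-1)$ automatically annihilates the $D$-term and the same argument covers $m\in\{0,1\}$ without extra work (for $m=1$ the cross term $C$ vanishes by oddness of the integrand, in agreement with your formula). So your route is marginally cleaner, at the small cost of having to guess the test function $g$ up front rather than having it emerge from successive integrations by parts.
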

\begin{proof}
Let us first give a proof assuming that $\nu - \kappa \in \{2,3,\ldots\}$.
Writing $\dd (F(x))^{\nu-\kappa-1} = (\nu-\kappa-1) (F(x))^{\nu-\kappa-2} (\cos x)^{\alpha} \dd x$ and integrating by parts, we obtain
\begin{align*}
\lB\{\nu, \kappa+2\}
&=
\frac {\alpha^{\nu-\kappa-2}} { (\nu-\kappa-1)!} \int_{-\pi/2}^{+\pi/2} (\cos x)^{\alpha \kappa + \alpha} \dd (F(x))^{\nu-\kappa-1}\\
&=
\frac {(\kappa+1)\alpha^{\nu-\kappa-1}} {(\nu-\kappa-1)!} \int_{-\pi/2}^{+\pi/2} (F(x))^{\nu-\kappa-1} (\cos x)^{\alpha \kappa + \alpha - 1} (\sin x) \dd x.
\end{align*}
The boundary term in the partial integration formula vanishes since $\alpha \kappa + \alpha >0$. Writing $\dd (F(x))^{\nu-\kappa} = (\nu-\kappa) (F(x))^{\nu-\kappa-1} (\cos x)^{\alpha} \dd x$ and again integrating  by parts, we obtain
\begin{align*}
\lB \{\nu, \kappa+2\}
&=
\frac {(\kappa + 1) \alpha^{\nu-\kappa-1}} {(\nu-\kappa)!}
\int_{-\pi/2}^{+\pi/2} (\cos x)^{\alpha \kappa - 1 } (\sin x) \dd (F(x))^{\nu-\kappa}\\
&=
- \frac {(\kappa + 1) \alpha^{\nu-\kappa-1}} {(\nu-\kappa)!}
\int_{-\pi/2}^{+\pi/2} (F(x))^{\nu-\kappa}
\left((\cos x)^{\alpha \kappa-1} \sin x\right)' \dd x\\
&=
 \frac {(\kappa + 1) \alpha^{\nu-\kappa-1}} {(\nu-\kappa)!}
\int_{-\pi/2}^{+\pi/2} (F(x))^{\nu-\kappa}
\left((\alpha \kappa - 1)(\cos x)^{\alpha \kappa - 2} \sin^2 x -(\cos x)^{\alpha \kappa} \right) \dd x.
\end{align*}
Again, the boundary term in partial integration vanishes since  $\alpha \kappa -1 >0$ by our assumptions. Using the identity $\sin^2 x = 1-\cos^2 x$  we can write
\begin{align*}
\lB \{\nu, \kappa+2\}
&=
\frac {(\kappa + 1) \alpha^{\nu-\kappa-1}} {(\nu-\kappa)!}
\int_{-\pi/2}^{+\pi/2} (F(x))^{\nu-\kappa}
(\alpha\kappa -1) (\cos x)^{\alpha \kappa-2} \dd x\\
&\phantom{=}
-
\frac {(\kappa + 1) \alpha^{\nu-\kappa-1}} {(\nu-\kappa)!}
\int_{-\pi/2}^{+\pi/2} (F(x))^{\nu-\kappa}
(\alpha \kappa)(\cos x)^{\alpha \kappa}\dd x.
\end{align*}
Recalling the definition of $\lB\{\nu,\kappa\}$, we arrive at
$$
\lB\{\nu,\kappa+2\}
=
\left(\kappa - \frac 1\alpha\right) (\kappa+1)\, \lB \left\{\nu - \frac 2\alpha, \kappa - \frac 2 \alpha\right\}
-
(\kappa+1)\kappa\, \lB\{\nu,\kappa\}
,
$$
which proves the claim under the assumption $\nu-\kappa \in \{2,3,\ldots\}$.  In the cases when $\nu=\kappa$ and $\nu= \kappa+1$, Equation~\eqref{eq:B_relation} takes the form
\begin{align}
\kappa\, \lB\{\kappa,\kappa\}
&=
\left(\kappa - \frac 1\alpha\right) \, \lB \left\{\kappa - \frac 2\alpha, \kappa - \frac 2 \alpha\right\},\label{eq:b_rec_diagonal1}\\
\kappa\, \lB\{\kappa+1,\kappa\}
&=
\left(\kappa - \frac 1\alpha\right)\, \lB \left\{\kappa - \frac 2\alpha + 1, \kappa - \frac 2 \alpha\right\},\label{eq:b_rec_diagonal2}
\end{align}
respectively, where we recall that $\lB\{\kappa,\kappa+2\}=\lB\{\kappa+1,\kappa+2\}=0$ by definition. Identity~\eqref{eq:b_rec_diagonal1} is a direct consequence of the definition of $\lB\{\kappa,\kappa\}$ since by~\eqref{eq:def_B},
\begin{equation}\label{eq:b_kappa_kappa}
\lB\{\kappa, \kappa\}
=
\int_{-\pi/2}^{+\pi/2} (\cos x)^{\alpha \kappa} \dd x
=
\frac{\sqrt \pi\, \Gamma\left(\frac{\alpha \kappa + 1}{2}\right)}{\Gamma\left(\frac{\alpha \kappa + 2}{2}\right)}.
\end{equation}
To prove~\eqref{eq:b_rec_diagonal2}, we again recall~\eqref{eq:def_B}:
\begin{align}
\lB\{\kappa+1, \kappa\}
&=
\alpha\, \int_{-\pi/2}^{+\pi/2} (\cos x)^{\alpha \kappa}
F(x)\, \dd x
=
\alpha\, \int_{-\pi/2}^{+\pi/2} (\cos x)^{\alpha \kappa}
F(0)\, \dd x\notag\\
&=
\frac \alpha 2\cdot \frac{\sqrt \pi\, \Gamma\left(\frac{\alpha + 1}{2}\right)}{\Gamma\left(\frac{\alpha + 2}{2}\right)} \cdot  \frac{\sqrt \pi\, \Gamma\left(\frac{\alpha \kappa + 1}{2}\right)}{\Gamma\left(\frac{\alpha \kappa + 2}{2}\right)}
, \label{eq:b_kappa+1_kappa}
\end{align}
where we used that $(\cos x)^{\alpha \kappa} (F(x)-F(0))$ is an odd function implying that its integral over $[-\frac\pi2, +\frac \pi 2]$ vanishes.
\end{proof}

\subsection{Recurrence relations for internal quantities}
In view of what has been said in Section~\ref{subsec:method}, it would be natural to proceed as follows. First, construct a meromorphic continuation of $\lB\{\nu, \kappa\}$, considered as a function of $\kappa$ with fixed $r:=\nu-\kappa\in\N_0$, to the whole complex plane. For example, in the cases $r=0$ and $r=1$ the meromorphic continuation was already given  in~\eqref{eq:b_kappa_kappa} and~\eqref{eq:b_kappa+1_kappa}.  Introduce the quantities
\begin{equation*}
\lA_1[\nu, \kappa] := \lB\{-\kappa, -\nu\}.
\end{equation*}
Then, one could conjecture that the expected internal angles can be expressed through $\lA_1[\nu,\kappa]$. After having tried this approach out, we convinced ourselves that it works only under certain restrictions on $\alpha$ (namely, one has to assume that $\alpha$ is integer and that certain parity assumptions hold).  Since we need a proof for arbitrary $\alpha>0$, we refrain from presenting the details of meromorphic continuation of $\lB\{\nu, \kappa\}$ here and proceed in a slightly different way.
For $\nu,\kappa\in \C$ such that $\nu-\kappa\in \N_0$ and $\Re \kappa > 0$ we define
\begin{equation}\label{eq:def_A_0}
\lA[\nu,\kappa] := \frac{\alpha^{\nu - \kappa + 1}}{(\nu - \kappa)!} \cdot \frac 1 {2\pi \ii} \int_{- \ii \infty}^{+\ii \infty} (\cos x)^{-\alpha \nu} (F(x))^{\nu-\kappa} \dd x.
\end{equation}
As it turns out, the function $a$ is more suitable for what follows than $a_1$. Parametrizing the contour of integration by $x= \ii u$ with $u\in\R$, we can equivalently write
\begin{equation}\label{eq:def_A_0_rep}
\lA[\nu,\kappa] = \frac{\alpha^{\nu - \kappa + 1}}{(\nu - \kappa)!} \cdot \frac 1 {2\pi} \int_{-\infty}^{+\infty} (\cosh u)^{-\alpha \nu} (F(\ii u))^{\nu-\kappa} \dd u.
\end{equation}
Our conditions on $\nu$ and $\kappa$ ensure that this integral converges absolutely. Indeed, using~\eqref{eq:def_F} and the L'Hospital rule it is easy to check that
\begin{equation}\label{eq:F_asympt}
F(\ii u) = \frac{\ii}{\alpha 2^{\alpha}} (\sgn u) \eee^{\alpha |u|} (1+o(1)), \qquad \text{as } u\to \pm \infty.
\end{equation}
We also put $\lA \{\nu,\kappa\} = 0$ if $\nu-\kappa \in \{-1,-2,\ldots\}$. The quantities $\lA[\nu,\kappa]$ satisfy recurrence relations which are, in some sense, dual to those satisfied by $\lB\{\nu,\kappa\}$.
\begin{proposition}\label{prop:A_0_relations}
For $\nu,\kappa\in \C$ such that $\nu-\kappa\in \N_0$ and $\Re \kappa > 0$, we have
\begin{align}
\lA[\nu-2,\kappa] +(\nu-1)\nu \, \lA[\nu,\kappa]
&=
\left(\nu+\frac 1 \alpha\right) (\nu-1) \,\lA \left[\nu + \frac 2\alpha, \kappa + \frac 2 \alpha\right]. \label{eq:A_0_relation}
\end{align}
\end{proposition}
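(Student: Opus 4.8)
The plan is to mirror the integration-by-parts argument that proved Proposition~\ref{prop:B_relation}, but to run it on the contour integral~\eqref{eq:def_A_0} instead of the real integral~\eqref{eq:def_B}. Write $r := \nu - \kappa \in \N_0$ and introduce the auxiliary function
\[
g(x) := (\cos x)^{-\alpha\nu - 1}(\sin x)\,(F(x))^{r},
\]
which is holomorphic on the doubly slit plane, so in particular along the imaginary axis (which never meets the cuts $(-\infty,-\tfrac\pi2]$ and $[\tfrac\pi2,\infty)$). Differentiating, and then using $\sin^2 x = 1-\cos^2 x$ together with $F'(x)=(\cos x)^{\alpha}$ from~\eqref{eq:def_F}, one gets
\[
g'(x) = (\alpha\nu+1)(\cos x)^{-\alpha\nu-2}(F(x))^{r} - \alpha\nu\,(\cos x)^{-\alpha\nu}(F(x))^{r} + r\,(\cos x)^{\alpha-\alpha\nu-1}(\sin x)(F(x))^{r-1}.
\]
Up to the normalising prefactors in~\eqref{eq:def_A_0}, the first two summands are the integrands of $\lA[\nu+\tfrac2\alpha,\kappa+\tfrac2\alpha]$ and $\lA[\nu,\kappa]$, while the third is a leftover term to be removed by a second integration by parts.

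First I would integrate the identity for $g'$ along the contour from $-\ii\infty$ to $+\ii\infty$. Since $g$ is a holomorphic antiderivative of $g'$, the left-hand side collapses to the boundary values $\lim_{x\to+\ii\infty}g(x)-\lim_{x\to-\ii\infty}g(x)$, and the crux is that these vanish exactly under the hypothesis $\Re\kappa>0$. Parametrising $x=\ii u$ and using $\cos(\ii u)=\cosh u$, $\sin(\ii u)=\ii\sinh u$ together with the growth~\eqref{eq:F_asympt}, $F(\ii u)\sim \tfrac{\ii}{\alpha 2^{\alpha}}(\sgn u)\,\eee^{\alpha|u|}$, one finds $|g(\ii u)| \asymp \eee^{(-\alpha\Re\nu+\alpha r)|u|}=\eee^{-\alpha\Re\kappa\,|u|}\to 0$ as $u\to\pm\infty$, because $r=\Re\nu-\Re\kappa$. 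After discarding the boundary term this yields
\[
0 = (\alpha\nu+1)\!\int (\cos x)^{-\alpha\nu-2}(F(x))^{r}\dd x - \alpha\nu\!\int (\cos x)^{-\alpha\nu}(F(x))^{r}\dd x + r\!\int (\cos x)^{\alpha-\alpha\nu-1}(\sin x)(F(x))^{r-1}\dd x,
\]
all integrals along the contour.

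To dispose of the third integral I would integrate by parts once more, writing $(\cos x)^{\alpha-\alpha\nu-1}(\sin x)=\tfrac{1}{\alpha(\nu-1)}\tfrac{\dd}{\dd x}(\cos x)^{\alpha-\alpha\nu}$; this is legitimate since whenever the leftover term actually contributes one has $r\geq 2$, hence $\Re\nu=\Re\kappa+r>1$ and $\nu\neq1$. Its boundary term vanishes by the same asymptotic estimate (the rate is again $\eee^{-\alpha\Re\kappa|u|}$), leaving $-\tfrac{r(r-1)}{\alpha(\nu-1)}\int(\cos x)^{2\alpha-\alpha\nu}(F(x))^{r-2}\dd x$, i.e.\ the integrand of $\lA[\nu-2,\kappa]$; the coefficient $r(r-1)$ automatically annihilates this term in the degenerate cases $r\in\{0,1\}$, consistently with the convention $\lA[\nu-2,\kappa]=0$ there. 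Finally I would re-express the three surviving integrals through~\eqref{eq:def_A_0}, collect the factorial and power-of-$\alpha$ prefactors, use $\tfrac{\alpha\nu+1}{\alpha}=\nu+\tfrac1\alpha$, and multiply through by $\tfrac{\alpha^{r}(\nu-1)}{r!}$, which turns the identity into precisely~\eqref{eq:A_0_relation}. The prefactor bookkeeping is routine; the one genuinely delicate point, and the place where $\Re\kappa>0$ is essential, is the justification of the two vanishing boundary terms.
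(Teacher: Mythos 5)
Your proof is correct, and at its core it is the paper's own argument: the same contour, the same integrands, and the same boundary estimate $\eee^{-\alpha\Re\kappa|u|}$ coming from \eqref{eq:F_asympt}, which is exactly where the hypothesis $\Re\kappa>0$ enters. The only genuine difference is organizational, and it does buy something. The paper performs two successive integrations by parts starting from $\lA[\nu-2,\kappa]$, which forces the standing assumption $\nu-\kappa\in\{2,3,\ldots\}$; the cases $\nu-\kappa\in\{0,1\}$ are then checked separately as \eqref{eq:a_kappa_kappa} and \eqref{eq:a_kappa+1_kappa}, via explicit Gamma-function evaluations of $\lA[\kappa,\kappa]$ and $\lA[\kappa+1,\kappa]$ (the latter resting on $\overline{F(\ii u)}=F(-\ii u)$ and $\Re F(\ii u)=F(0)$). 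Your version --- integrate the exact derivative of $g(x)=(\cos x)^{-\alpha\nu-1}(\sin x)(F(x))^{r}$ over the contour and remove the mixed term by one further integration by parts --- treats all $r=\nu-\kappa\in\N_0$ at once, because the factor $r(r-1)$ vanishes precisely in the cases where the paper's convention sets $\lA[\nu-2,\kappa]=0$; this makes the separate Gamma-function computations unnecessary. One phrase should be corrected: the mixed term is present already for $r=1$, not only for $r\geq 2$. Your argument still goes through there, since $r\geq 1$ gives $\Re\nu=\Re\kappa+r>1$, hence $\nu\neq 1$, so the division by $\alpha(\nu-1)$ and the vanishing of the extra boundary term (again at rate $\eee^{-\alpha\Re\kappa|u|}$) are both legitimate; after that integration by parts the surviving integral carries the factor $r-1=0$ and drops out, as it must.
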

\begin{remark}
Comparing the definitions of $\lB\{\nu,\kappa\}$ and $\lA[\nu,\kappa]$ given in~\eqref{eq:def_B} and~\eqref{eq:def_A_0}, we detect only the following three differences. Firstly, the integrand in~\eqref{eq:def_B} turns into the integrand in~\eqref{eq:def_A_0} if we replace $(\nu, \kappa)$ by $(-\kappa, -\nu)$. Secondly, the integration contours are different. Finally, there is an additional factor of $\alpha/(2\pi \ii)$ in~\eqref{eq:def_A_0}, but it does not influence the recurrence relation. Since the shape of the integration contour is completely irrelevant for the proof Proposition~\ref{prop:B_relation} provided we can check that the boundary terms in the  partial integration formula vanish, it should not be surprising that the recurrence relation for $\lA[\nu,\kappa]$ has the same form as the recurrence relation for $\lB\{-\kappa,-\nu\}$.
\end{remark}
\begin{proof}[Proof of Proposition~\ref{prop:A_0_relations}]
Let us first assume that $\nu-\kappa \in \{2,3,\ldots\}$.
Writing $\dd (F(x))^{\nu-\kappa - 1} = (\nu-\kappa - 1) (F(x))^{\nu-\kappa-2} (\cos x)^{\alpha} \dd x$ and using integration by parts, we obtain
\begin{align*}
\lA[\nu-2, \kappa]
&=
\frac {\alpha^{\nu-\kappa-1}} { (\nu-\kappa-1)!} \frac 1 {2\pi \ii} \int_{- \ii \infty}^{+\ii \infty} (\cos x)^{-\alpha \nu + \alpha} \dd (F(x))^{\nu-\kappa-1}\\
&=
-\frac {(\nu-1)\alpha^{\nu-\kappa}} {(\nu-\kappa-1)!} \frac 1 {2\pi \ii} \int_{- \ii \infty}^{+\ii \infty} (F(x))^{\nu-\kappa-1} (\cos x)^{-\alpha \nu +\alpha - 1} (\sin x) \dd x.
\end{align*}
The boundary terms in the partial integration formula vanish since $\Re \kappa>0$; see~\eqref{eq:F_asympt}.  Writing $\dd (F(x))^{\nu-\kappa} = (\nu-\kappa) (F(x))^{\nu-\kappa-1} (\cos x)^{\alpha} \dd x$ and again integrating  by parts, we obtain
\begin{align*}
\lA [\nu-2, \kappa]
&=
-\frac {(\nu - 1) \alpha^{\nu-\kappa}} {(\nu-\kappa)!}
\frac 1 {2\pi \ii} \int_{- \ii \infty}^{+\ii \infty} (\cos x)^{-\alpha \nu - 1 } (\sin x) \dd (F(x))^{\nu-\kappa}\\
&=
\frac {(\nu - 1) \alpha^{\nu-\kappa}} {(\nu-\kappa)!}
\frac 1 {2\pi \ii} \int_{- \ii \infty}^{+\ii \infty} (F(x))^{\nu-\kappa}
\left((\cos x)^{-\alpha \nu -1} \sin x\right)' \dd x\\
&=
 \frac {(\nu - 1) \alpha^{\nu-\kappa}} {(\nu-\kappa)!}
\frac 1 {2\pi \ii} \int_{- \ii \infty}^{+\ii \infty} (F(x))^{\nu-\kappa}
\left((\alpha \nu  + 1)(\cos x)^{-\alpha \nu  - 2} \sin^2 x +(\cos x)^{-\alpha \nu} \right) \dd x.
\end{align*}
Again, the boundary terms in partial integration vanish since  $\Re \kappa>0$; see~\eqref{eq:F_asympt}. Using the identity $\sin^2 x = 1-\cos^2 x$  we can write
\begin{align*}
\lA [\nu-2, \kappa]
&=
\frac {(\nu - 1) \alpha^{\nu-\kappa}} {(\nu-\kappa)!}
\frac 1 {2\pi \ii} \int_{- \ii \infty}^{+\ii \infty} (F(x))^{\nu-\kappa}
(\alpha\nu  + 1) (\cos x)^{- \alpha \nu - 2} \dd x\\
&\phantom{=}
-
\frac {(\nu - 1) \alpha^{\nu-\kappa}} {(\nu-\kappa)!}
\frac 1 {2\pi \ii} \int_{- \ii \infty}^{+\ii \infty}(F(x))^{\nu-\kappa}
(\alpha \nu) (\cos x)^{-\alpha \nu}\dd x.
\end{align*}
Recalling the definition of $\lA[\nu,\kappa]$, we arrive at
$$
\lA[\nu-2,\kappa]
=
\left(\nu + \frac 1\alpha\right) (\nu - 1) \lA \left[\nu + \frac 2\alpha, \kappa + \frac 2 \alpha\right]
-
(\nu-1)\nu\, \lA[\nu,\kappa],
$$
which completes the proof if $\nu-\kappa \in \{2,3,\ldots\}$. It remains to consider the cases $\nu = \kappa$ and $\nu = \kappa + 1$. The identities we need to prove take the form
\begin{align}
\kappa \, \lA[\kappa,\kappa]
&=
\left(\kappa+\frac 1 \alpha\right) \,\lA \left[\kappa + \frac 2\alpha, \kappa + \frac 2 \alpha\right],\label{eq:a_kappa_kappa}\\
(\kappa+1) \, \lA[\kappa+1,\kappa]
&=
\left(\kappa+\frac 1 \alpha + 1\right) \lA \left[\kappa + \frac 2\alpha + 1, \kappa + \frac 2 \alpha\right].\label{eq:a_kappa+1_kappa}
\end{align}
To prove~\eqref{eq:a_kappa_kappa} observe that by~\eqref{eq:def_A_0_rep} and~\eqref{eq:int_cos_cosh},
$$
\lA[\kappa,\kappa]
=
\frac \alpha {2\pi} \int_{-\infty}^{+\infty} (\cosh u)^{-\alpha \kappa}\dd u
=
\frac \alpha 2 \cdot \frac{\Gamma\left(\frac{\alpha \kappa}{2}\right)}{\sqrt \pi\,  \Gamma\left(\frac{\alpha \kappa+1}{2}\right)}.
$$
Identity~\eqref{eq:a_kappa+1_kappa} follows from the formula
$$
\lA[\kappa+1,\kappa]  = \frac \alpha 2 \cdot \frac{\Gamma\left(\frac{\alpha+1}{2}\right)}{\Gamma\left(\frac{\alpha}{2}\right)} \cdot \frac{\Gamma\left(\frac{\alpha \kappa + \alpha}{2}\right)}{\Gamma\left(\frac{\alpha \kappa + \alpha +  1}{2}\right)},
$$
which can be obtained as follows. Observe that $\overline{F(\ii u)} = F(-\ii u)$ and $\Re F(\ii u) = F(0) = \frac{\sqrt{\pi}\, \Gamma(\frac{\alpha+1}{2})}{\alpha \, \Gamma(\frac \alpha 2)}$. Hence,
$$
\lA[\kappa+1,\kappa]
=
\frac{\alpha^{2}}{2\pi} \int_{-\infty}^{+\infty} (\cosh u)^{-\alpha \kappa - \alpha} F(\ii u) \dd u
=
\frac{\alpha^2}{2\pi} F(0) \int_{-\infty}^{+\infty} (\cosh u)^{-\alpha \kappa - \alpha} \dd u,
$$
and the claim follows.
\end{proof}

\subsection{Periodicity}\label{subsec:periodicity}
Fix some $r\in\N_0$ and consider the expression
\begin{equation}\label{eq:def_s_r}
s_r(\kappa)
:=
\sum_{\mu = \kappa}^{\kappa + r}
(-1)^{\mu - \kappa}
\lB\{\kappa + r, \mu\}
\left(\mu + \frac 1\alpha\right) \lA\left[\mu+\frac 2\alpha, \kappa+\frac 2\alpha\right].
\end{equation}
By convention, the sum is taken over $\mu\in \{\kappa, \kappa+1, \ldots, \kappa + r\}$. The involved $\lB$- and $\lA$-terms are well-defined if assume that $\Re \kappa > - \frac 1\alpha$. In this range, $s_r(\kappa)$ is an analytic function of $\kappa$.   Our aim is to prove that $s_r(\kappa) = 0$, for all $r\in\N$.    As a first step,  we prove that $s_r$ is a periodic function.
\begin{proposition}\label{prop:periodic}
Let $\alpha > 0$ and $r\in\N_0$. Then, for every $\kappa\in\C$ with $\Re \kappa>\frac 1 \alpha$, we have
$$
s_{r}(\kappa) = s_r \left(\kappa - \frac 2 \alpha\right).
$$
\end{proposition}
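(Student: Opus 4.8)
The plan is to imitate the telescoping argument used for the $\alpha=2$ case in Section~\ref{subsec:proof_alpha_2}, where the identity $\tilde s_{n-k}(k)=\tilde s_{n-k}(k-1)$ was obtained by feeding the two recurrence relations for $\mtA$ and $\mtB$ into one another and then shifting the summation index. Here the roles of those two relations are played by the recurrence for $\lB\{\nu,\kappa\}$ from Proposition~\ref{prop:B_relation} and the recurrence for $\lA[\nu,\kappa]$ from Proposition~\ref{prop:A_0_relations}. The idea is to rewrite both $s_r(\kappa)$ and $s_r(\kappa-\tfrac2\alpha)$ as linear combinations of the same ``basis'' products $\lB\{\kappa+r,\mu\}\,\lA[\mu,\kappa]$ and $\lB\{\kappa+r,\mu+2\}\,\lA[\mu,\kappa]$, and then to check that the two combinations coincide term by term.

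First I would prepare $s_r(\kappa-\tfrac2\alpha)$. Starting from~\eqref{eq:def_s_r} with $\kappa$ replaced by $\kappa-\tfrac2\alpha$ and substituting the summation variable $\mu\mapsto\mu-\tfrac2\alpha$ (so that the new index again runs over $\{\kappa,\ldots,\kappa+r\}$ and the $\lA$-factor becomes $\lA[\mu,\kappa]$), one obtains
\[
s_r\!\left(\kappa-\tfrac2\alpha\right)=\sum_{\mu=\kappa}^{\kappa+r}(-1)^{\mu-\kappa}\Big(\mu-\tfrac1\alpha\Big)\,\lB\Big\{\kappa+r-\tfrac2\alpha,\mu-\tfrac2\alpha\Big\}\,\lA[\mu,\kappa].
\]
Now Proposition~\ref{prop:B_relation}, applied with first argument $\kappa+r$ and recurrence variable $\mu$, expresses $(\mu-\tfrac1\alpha)(\mu+1)\lB\{\kappa+r-\tfrac2\alpha,\mu-\tfrac2\alpha\}$ as $\lB\{\kappa+r,\mu+2\}+(\mu+1)\mu\,\lB\{\kappa+r,\mu\}$, so after dividing by $\mu+1$ the sum splits into a ``diagonal'' part $\sum_\mu(-1)^{\mu-\kappa}\mu\,\lB\{\kappa+r,\mu\}\lA[\mu,\kappa]$ and an ``off-diagonal'' part $\sum_\mu(-1)^{\mu-\kappa}\tfrac1{\mu+1}\lB\{\kappa+r,\mu+2\}\lA[\mu,\kappa]$. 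Symmetrically, I would apply Proposition~\ref{prop:A_0_relations} with $\nu=\mu$ directly to~\eqref{eq:def_s_r}, rewriting $(\mu+\tfrac1\alpha)\lA[\mu+\tfrac2\alpha,\kappa+\tfrac2\alpha]$ as $\mu\,\lA[\mu,\kappa]+\tfrac1{\mu-1}\lA[\mu-2,\kappa]$; this turns $s_r(\kappa)$ into the same diagonal part plus the off-diagonal part $\sum_\mu(-1)^{\mu-\kappa}\tfrac1{\mu-1}\lB\{\kappa+r,\mu\}\lA[\mu-2,\kappa]$.

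The two diagonal parts are literally identical, so it remains to match the two off-diagonal sums. Shifting $\mu\mapsto\mu+2$ in the off-diagonal sum coming from $s_r(\kappa)$ sends $\lB\{\kappa+r,\mu\}\lA[\mu-2,\kappa]$ to $\lB\{\kappa+r,\mu+2\}\lA[\mu,\kappa]$, turns the denominator $\mu-1$ into $\mu+1$, and leaves the sign $(-1)^{\mu-\kappa}$ unchanged; this is exactly the off-diagonal sum coming from $s_r(\kappa-\tfrac2\alpha)$. Here the vanishing conventions $\lA[\nu,\kappa]=0$ and $\lB\{\nu,\kappa\}=0$ for $\nu-\kappa\in\{-1,-2,\ldots\}$ guarantee that both off-diagonal sums are supported on the same index range and that the shift introduces no spurious boundary terms. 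Combining the matched pieces gives $s_r(\kappa)=s_r(\kappa-\tfrac2\alpha)$, which is the assertion.

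The step I expect to require the most care is the bookkeeping at the two ends of the summation, rather than any analytic input. On the one hand I must verify that the hypotheses of Propositions~\ref{prop:B_relation} and~\ref{prop:A_0_relations} (namely $\Re\kappa>\tfrac1\alpha$ for the first, and a positive real part of the second argument for the second) hold for every index occurring; this is immediate since the indices only increase the real parts beyond $\Re\kappa>\tfrac1\alpha$. On the other hand I must ensure that the divisions by $\mu\pm1$ are legitimate: on the nonzero terms of the two off-diagonal sums the factors $\lA[\mu-2,\kappa]$ and $\lB\{\kappa+r,\mu+2\}$ force $\mu\ge\kappa+2$ and $\mu\ge\kappa$, respectively, where $\Re(\mu-1)$ and $\Re(\mu+1)$ exceed $1$, so no denominator vanishes. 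The two indices $\mu-\kappa\in\{0,1\}$, at which the divided form of the recurrences would be formally indeterminate, contribute only to the diagonal part and are covered by the explicit diagonal cases~\eqref{eq:a_kappa_kappa},~\eqref{eq:a_kappa+1_kappa} (and their $\lB$-counterparts~\eqref{eq:b_rec_diagonal1},~\eqref{eq:b_rec_diagonal2}). Once these points are settled, the term-by-term matching is routine.
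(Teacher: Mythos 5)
Your proof is correct and is essentially the paper's own argument: both rest on feeding the recurrences of Propositions~\ref{prop:B_relation} and~\ref{prop:A_0_relations} into the sum, shifting the summation index by $2$, and using the vanishing conventions $\lA[\nu,\kappa]=\lB\{\nu,\kappa\}=0$ for $\nu-\kappa\in\{-1,-2,\ldots\}$ to control boundary terms, your ``meet-in-the-middle'' matching of diagonal and off-diagonal parts being the same computation the paper performs as a one-directional chain after renormalizing to $\rB\{\nu,\kappa\}=\lB\{\nu,\kappa\}/\Gamma(\kappa)$ and $\rA[\nu,\kappa]=\lA[\nu,\kappa]\,\Gamma(\nu+1)$. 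The paper's Gamma-renormalization makes your divisions by $\mu\pm1$ (and hence the edge case $\mu=\kappa=1$) unnecessary, but your explicit handling of that case via the diagonal identities~\eqref{eq:a_kappa_kappa}, \eqref{eq:a_kappa+1_kappa}, \eqref{eq:b_rec_diagonal1}, \eqref{eq:b_rec_diagonal2} is sound.
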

\begin{proof}
The only properties of $\lA$ and $\lB$ we shall rely on in this proof are the recurrence relations stated in Propositions~\ref{prop:B_relation} and~\ref{prop:A_0_relations}.
It will be convenient to work with the quantities
\begin{equation}
\rB\{\nu,\kappa\} := \frac{\lB\{\nu,\kappa\}}{\Gamma(\kappa)},
\qquad
\rA[\nu,\kappa] := \lA[\nu,\kappa] \cdot \Gamma(\nu+1).
\end{equation}
With this notation, the recurrence relations of Propositions~\ref{prop:B_relation} and~\ref{prop:A_0_relations} take the form
\begin{align}
&\rB\{\nu,\kappa+2\} + \rB\{\nu,\kappa\}
=
\frac{\left(\kappa - \frac 1\alpha\right) \Gamma\left(\kappa - \frac 2\alpha\right)} {\Gamma(\kappa+1)} \rB \left\{\nu - \frac 2\alpha, \kappa - \frac 2 \alpha\right\},\label{eq:b_relation}\\
&\rA[\nu-2,\kappa] + \rA[\nu,\kappa]
=
\frac{\left(\nu+\frac 1 \alpha\right)\Gamma(\nu)}{\Gamma\left(\nu+\frac 2 \alpha + 1\right)} \rA \left[\nu + \frac 2\alpha, \kappa + \frac 2 \alpha\right]. \label{eq:a_0_relation}
\end{align}
The definition~\eqref{eq:def_s_r} of $s_r(\kappa)$ takes the form
\begin{equation}\label{eq:def_s_r_new}
s_r(\kappa)
=
\sum_{\mu = \kappa}^{\kappa + r}
(-1)^{\mu - \kappa}
\rB\{\kappa + r, \mu\}
\frac{\left(\mu+\frac 1\alpha\right)\Gamma(\mu)}{\Gamma \left(\mu+\frac 2\alpha+1\right)}
 \rA\left[\mu+\frac 2\alpha, \kappa+\frac 2\alpha\right].
\end{equation}
In view of~\eqref{eq:a_0_relation}, we can write~\eqref{eq:def_s_r_new} as
$$
s_r(\kappa)
=
\sum_{\mu = \kappa}^{\kappa + r}
(-1)^{\mu - \kappa}
\rB\{\kappa + r, \mu\}
(\rA[\mu-2,\kappa] + \rA[\mu,\kappa]).
$$
We now regroup the terms by applying Abel's partial summation. Splitting the sum into two sums and introducing in the first sum the new index of
summation $\mu':= \mu - 2$, we obtain
$$
s_r(\kappa)
=
\sum_{\mu' = \kappa-2}^{\kappa + r - 2}
(-1)^{\mu' - \kappa}
\rB\{\kappa + r, \mu'+2\} \rA[\mu',\kappa] +
\sum_{\mu = \kappa}^{\kappa + r}
(-1)^{\mu - \kappa}
\rB\{\kappa + r, \mu\}
\rA[\mu,\kappa].
$$
To the first sum we add two vanishing terms and  subtract two vanishing terms as follows:
$$
s_r(\kappa)
=
\sum_{\mu' = \kappa}^{\kappa + r}
(-1)^{\mu' - \kappa}
\rB\{\kappa + r, \mu'+2\} \rA[\mu',\kappa] +
\sum_{\mu = \kappa}^{\kappa + r}
(-1)^{\mu - \kappa}
\rB\{\kappa + r, \mu\}
\rA[\mu,\kappa].
$$
Replacing $\mu'$ by $\mu$ and  taking the sums together,  we obtain
$$
s_r(\kappa)
=
\sum_{\mu = \kappa}^{\kappa + r}
(-1)^{\mu - \kappa}
(\rB\{\kappa + r, \mu+2\}+\rB\{\kappa + r, \mu\})  \rA[\mu,\kappa].
$$
Applying Relation~\eqref{eq:b_relation} to the $\rB$-terms, we obtain
$$
s_r(\kappa)
=
\sum_{\mu = \kappa}^{\kappa + r}
(-1)^{\mu - \kappa}
\rB \left\{\kappa+r - \frac 2\alpha, \mu - \frac 2 \alpha\right\}
\frac{\left(\mu - \frac 1\alpha\right) \Gamma\left(\mu - \frac 2\alpha\right)} {\Gamma(\mu+1)} \rA[\mu,\kappa].
$$
Writing $\mu^*:= \mu - \frac 2\alpha$ we finally arrive at
$$
s_r(\kappa)
=
\sum_{\mu^* = \kappa-\frac 2\alpha}^{\kappa-\frac 2\alpha + r}
(-1)^{\mu^* - \left(\kappa-\frac 2\alpha\right)}
\rB \left\{\kappa+r - \frac 2\alpha, \mu^*\right\}
\frac{\left(\mu^* + \frac 1\alpha\right) \Gamma(\mu^*)} {\Gamma\left(\mu^*+ \frac 2 \alpha + 1\right)} \rA\left[\mu^*+\frac 2\alpha,\kappa\right].
$$
In view of~\eqref{eq:def_s_r_new}, the right-hand side equals $s_r(\kappa - \frac 2\alpha)$, which completes the proof.
\end{proof}

\begin{remark}
Although we shall not need this, let us mention that the same argument, with trivial modifications, shows that the unsigned version of $s_r(\kappa)$ defined by
\begin{align*}
s_r^{+} (\kappa)
&:=  \sum_{\mu = \kappa}^{\kappa + r}   \lB\{\kappa + r, \mu\}
\left(\mu + \frac 1\alpha\right) \lA\left[\mu+\frac 2\alpha, \kappa+\frac 2\alpha\right]
\end{align*}
is also periodic, namely
$$
s_{r}^+ (\kappa) =  s_r^+ \left(\kappa - \frac 2 \alpha\right).
$$
\end{remark}

\begin{remark}\label{rem:alpha_1_2_special}
If $\alpha = 1$ or $\alpha=2$, Proposition~\ref{prop:periodic} can be used to express $s_r(k)$ (for $k\in\N$) through $s_r(0)$ or $s_r(1)$. In the setting of beta' simplices, this was the way we obtained combinatorial formulae for $\tilde \bJ_{n,k}(\frac{\alpha + n - 1}{2})$ in  Section~\ref{sec:simplest_case_proof} (for $\alpha=2$) and in~\cite{kabluchko_poisson_zero} (for $\alpha=1$, see also Section~\ref{subsec:alpha_1}).  Unfortunately, in the setting of beta simplices, the values $\alpha=1$ and $\alpha=2$ are not admissible as arguments of $\bJ_{n,k}(\frac{\alpha - n + 1}{2})$ (except for small $n$) because of the restriction $\alpha \geq n-3$.
On the other hand, except for the special cases $\alpha=1$ and $\alpha=2$, any attempts to decrease the argument of $s_r(k)$ using Proposition~\ref{prop:periodic} do not seem useful. Instead, we shall let the argument go to $\infty$.
\end{remark}

\subsection{Asymptotics}
Our aim is now to prove that $s_r(\kappa) = 0$  for all $r\in\N$ and all $\kappa\in\C$ with $\Re \kappa > - \frac 1\alpha$.
We observe that Proposition~\ref{prop:periodic} implies that for every $T\in\N$,
\begin{equation}\label{eq:s_r_periodic_iterated}
s_{r}(\kappa)
=
s_r \left(\kappa + \frac 2 \alpha T\right).
\end{equation}
Now, we are going to let $T\to +\infty$ and prove that the limit of the right-hand side is $0$. To this end, we need to investigate the asymptotic properties of $\lB\{\nu, \kappa\}$ and $\lA[\nu, \kappa]$. This is done in the next two  lemmas. As usual, we  write $f_1(T)\sim f_2(T)$ if the quotient $f_1(T)/f_2(T)$ converges to $1$ as the real variable $T$ goes to $+\infty$.

\begin{lemma}\label{lem:B_asymptotic}
Fix some $\kappa_1,\kappa_2\in\C$  such that $\kappa_1-\kappa_2\in\N_0$. Then, as $T\to+\infty$, we have
$$
\lB \left\{\kappa_1 +\frac 2 \alpha T, \kappa_2 + \frac 2 \alpha T\right\}
\sim
\frac{(\alpha F(0))^{\kappa_1-\kappa_2}}{(\kappa_1-\kappa_2)!} \sqrt{\frac {\pi}{T}}.
$$
\end{lemma}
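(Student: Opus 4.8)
The plan is to reduce the statement to a one-dimensional Laplace asymptotic. Set $r := \kappa_1 - \kappa_2 \in \N_0$ and substitute $\nu = \kappa_1 + \frac2\alpha T$, $\kappa = \kappa_2 + \frac2\alpha T$ into the definition~\eqref{eq:def_B}. Since $\nu - \kappa = r$ is fixed and $\alpha\kappa = \alpha\kappa_2 + 2T$, this gives, for $T$ large enough that $\Re\kappa > -\frac1\alpha$,
\[
\lB\left\{\kappa_1 + \tfrac2\alpha T,\ \kappa_2 + \tfrac2\alpha T\right\} = \frac{\alpha^r}{r!}\int_{-\pi/2}^{+\pi/2}(\cos x)^{\alpha\kappa_2 + 2T}\,(F(x))^r\,\dd x,
\]
so that the whole problem becomes the $T\to+\infty$ behaviour of this integral. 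The heuristic is that $(\cos x)^{2T}$ acts as an approximate identity concentrating near the unique interior maximum $x=0$ of $\cos x$, so the integral should localise there and reproduce the value of the slowly varying factor $(\cos x)^{\alpha\kappa_2}(F(x))^r$ at $x=0$, which is $(F(0))^r$ because $(\cos 0)^{\alpha\kappa_2}=1$.

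To make this rigorous I would run the standard Laplace method. Fix a small $\delta\in(0,\pi/2)$ and split the integral at $|x|=\delta$. On the tail $\delta\le|x|\le\pi/2$, write $|(\cos x)^{\alpha\kappa_2}| = (\cos x)^{\Re\alpha\kappa_2}$, use that $F$ is bounded and continuous on $[-\pi/2,\pi/2]$ (as $\alpha>0$), and use that $(\cos x)^p$ is decreasing in $|x|$ for $p>0$; for $T$ large this bounds the integrand by $C(\cos x)^{2T+\Re\alpha\kappa_2}\le C(\cos\delta)^{2T+\Re\alpha\kappa_2}$, so the tail is $O((\cos\delta)^{2T})$, exponentially small and hence $o(T^{-1/2})$. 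For the central part, put $\psi(x):=(\cos x)^{\alpha\kappa_2}(F(x))^r$, which is continuous and bounded on $[-\delta,\delta]$ with $\psi(0)=(F(0))^r$, and rescale by $x=s/\sqrt T$:
\[
\int_{-\delta}^{+\delta}(\cos x)^{2T}\psi(x)\,\dd x = \frac{1}{\sqrt T}\int_{-\delta\sqrt T}^{+\delta\sqrt T}\left(\cos\tfrac{s}{\sqrt T}\right)^{2T}\psi\!\left(\tfrac{s}{\sqrt T}\right)\dd s.
\]

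From $\log\cos u = -u^2/2 + O(u^4)$ the integrand converges pointwise to $\psi(0)e^{-s^2}$, while the elementary inequality $\cos u\le e^{-u^2/2}$ on $[0,\pi/2]$ gives the $T$-uniform domination $(\cos(s/\sqrt T))^{2T}\le e^{-s^2}$; since $\psi$ is bounded on $[-\delta,\delta]$, dominated convergence yields $\int \to \psi(0)\int_{-\infty}^{+\infty}e^{-s^2}\,\dd s = (F(0))^r\sqrt\pi$. Combining the two pieces, the integral is $\sim (F(0))^r\sqrt{\pi/T}$, and multiplying by $\alpha^r/r!$ and writing $\alpha^r(F(0))^r=(\alpha F(0))^r$ gives the claim. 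The computation is routine; the only points needing genuine care are the $T$-uniform control of the tail (handled by the monotonicity of $(\cos x)^p$) and of the central integrand (handled by $\cos u\le e^{-u^2/2}$), together with the observation that the fixed complex exponent $\alpha\kappa_2$ enters only through the factor $(\cos x)^{\alpha\kappa_2}\to1$ and therefore does not disturb the leading Gaussian asymptotics.
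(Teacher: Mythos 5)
Your proof is correct and takes essentially the same route as the paper: both reduce the claim, via the definition~\eqref{eq:def_B}, to the $T\to+\infty$ asymptotics of $\int_{-\pi/2}^{+\pi/2}(\cos x)^{\alpha\kappa_2+2T}(F(x))^{\kappa_1-\kappa_2}\,\dd x$ and evaluate this by Laplace localization at the maximum of $\log\cos x$ at $x=0$, where the second derivative $-1$ produces the factor $\sqrt{\pi/T}$. The only difference is presentational: the paper cites the standard Laplace asymptotics (Flajolet--Sedgewick, Theorem B.7), whereas you verify it by hand through the tail/central splitting, the bound $\cos u\le e^{-u^2/2}$, and dominated convergence, which is a self-contained but equivalent execution of the same argument.
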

\begin{proof}
Recall from~\eqref{eq:def_B} that
$$
\lB \left\{\kappa_1 +\frac 2 \alpha T, \kappa_2 + \frac 2 \alpha T\right\}
= \frac{\alpha^{\kappa_1-\kappa_2}}{(\kappa_1-\kappa_2)!}\int_{-\pi/2}^{+\pi/2} (\cos x)^{\alpha \kappa_2 + 2T} (F(x))^{\kappa_1-\kappa_2} \dd x.
$$
The standard Laplace asymptotics, see, e.g., \cite[Theorem~B.7 on p.~758]{Flajolet_book}, yields
\begin{align*}
\int_{-\pi/2}^{+\pi/2} (\cos x)^{\alpha \kappa_2 + 2T} (F(x))^{\kappa_1-\kappa_2} \dd x
&=
\int_{-\pi/2}^{+\pi/2} \eee^{2T \log \cos x} \cdot  (\cos x)^{\alpha \kappa_2} (F(x))^{\kappa_1-\kappa_2}\dd x\\
&\sim
\sqrt {\frac {\pi} {T}} (F(0))^{\kappa_1-\kappa_2},
\end{align*}
as $T\to +\infty$,
because the maximum of the function $\varphi(x) := \log \cos x$ on $[-\frac \pi 2, +\frac \pi 2]$ is $\varphi(0) = 0$ and the second derivative is $\varphi''(0) = -1$.
\end{proof}

\begin{lemma}\label{lem:A_0_asymptotic}
Fix some $\kappa_1,\kappa_2\in\C$ such that $\kappa_1-\kappa_2\in\N_0$.  Then, as $T\to+\infty$, we have
$$
\lA\left[\kappa_1 +\frac 2 \alpha T, \kappa_2 + \frac 2 \alpha T\right]
\sim
\frac \alpha 2 \cdot \frac{(\alpha F(0))^{\kappa_1-\kappa_2}}{(\kappa_1-\kappa_2)!}\cdot \frac {1}{\sqrt{\pi T}}.
$$
\end{lemma}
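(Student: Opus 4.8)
The plan is to mimic the Laplace-asymptotics argument already used for Lemma~\ref{lem:B_asymptotic}, applied now to the real-integral representation~\eqref{eq:def_A_0_rep} of $\lA[\nu,\kappa]$. Writing $r:=\kappa_1-\kappa_2\in\N_0$ and putting $\nu=\kappa_1+\frac 2\alpha T$, $\kappa=\kappa_2+\frac 2\alpha T$, so that $\nu-\kappa=r$ and $\alpha\nu=\alpha\kappa_1+2T$, formula~\eqref{eq:def_A_0_rep} reads
$$
\lA\left[\kappa_1+\tfrac 2\alpha T,\kappa_2+\tfrac 2\alpha T\right]
=\frac{\alpha^{r+1}}{r!}\cdot\frac{1}{2\pi}\int_{-\infty}^{+\infty}\eee^{-2T\log\cosh u}\,(\cosh u)^{-\alpha\kappa_1}(F(\ii u))^{r}\,\dd u.
$$
First I would isolate the phase $\psi(u):=\log\cosh u$, whose unique global minimum on $\R$ is $\psi(0)=0$ with $\psi'(0)=0$ and $\psi''(0)=1$, and the amplitude $g(u):=(\cosh u)^{-\alpha\kappa_1}(F(\ii u))^{r}$, which is continuous and satisfies $g(0)=(F(0))^{r}$ since $\cosh 0=1$ and $F(\ii\cdot 0)=F(0)$ by~\eqref{eq:def_F}.

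The standard Laplace method then gives, as $T\to+\infty$,
$$
\int_{-\infty}^{+\infty}\eee^{-2T\psi(u)}g(u)\,\dd u
\sim g(0)\sqrt{\frac{2\pi}{2T\,\psi''(0)}}
=(F(0))^{r}\sqrt{\frac{\pi}{T}}.
$$
Substituting this back and collecting the prefactor, one obtains $\frac{\alpha^{r+1}}{2\pi r!}\cdot(F(0))^{r}\sqrt{\pi/T}=\frac{\alpha^{r+1}(F(0))^{r}}{2\,r!}\cdot\frac{1}{\sqrt{\pi T}}$, which equals $\frac\alpha 2\cdot\frac{(\alpha F(0))^{r}}{r!}\cdot\frac{1}{\sqrt{\pi T}}$, exactly the asserted constant; this final matching is a one-line verification.

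The main obstacle, and the only point where this differs from Lemma~\ref{lem:B_asymptotic}, is that the amplitude $g$ is \emph{unbounded}: by~\eqref{eq:F_asympt} the factor $(F(\ii u))^{r}$ grows like $\eee^{r\alpha|u|}$, so the cited Laplace theorem for bounded integrands does not apply verbatim. I would handle this by observing that $g$ grows only at a fixed exponential order $\eee^{c|u|}$ with $c$ independent of $T$, whereas $\psi(u)\sim|u|$ forces $\eee^{-2T\psi(u)}$ to decay like $\eee^{-2T|u|}$; hence for large $T$ the contribution of $\{|u|\ge\delta\}$ is $O(\eee^{-cT})$ and negligible against $T^{-1/2}$, while the bulk near $0$ is treated by Taylor expansion and the Gaussian integral. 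Alternatively---and more cleanly---the substitution $\sin x=\tanh u$, under which $\cos x=1/\cosh u$ and $\dd u=\dd x/\cos x$ (cf.~\eqref{eq:int_cos_cosh}), rewrites the integral as $\int_{-\pi/2}^{+\pi/2}(\cos x)^{\alpha\nu-1}(F(\ii u(x)))^{r}\,\dd x$ over a compact interval, reducing the problem to precisely the setting of Lemma~\ref{lem:B_asymptotic}: the factor $(\cos x)^{2T}=\eee^{2T\log\cos x}$ suppresses the endpoint singularities for $T$ large, and the cited theorem~\cite[Theorem~B.7 on p.~758]{Flajolet_book} applies with $\varphi(x)=\log\cos x$, $\varphi''(0)=-1$, yielding the same limit.
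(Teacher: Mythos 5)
Your proposal is correct and follows essentially the same route as the paper: the paper's proof likewise applies the Laplace method to the representation~\eqref{eq:def_A_0_rep} with phase $-\log\cosh u$ maximized at $u=0$ (second derivative $-1$), obtaining $\sqrt{\pi/T}\,(F(0))^{\kappa_1-\kappa_2}$ and then matching the prefactor. The only difference is that you explicitly justify the tail estimate needed because $(F(\ii u))^{\kappa_1-\kappa_2}$ grows exponentially at a $T$-independent rate — a point the paper subsumes in its citation of \cite[Theorem~B.7]{Flajolet_book} — which is a welcome extra precision but not a change of method.
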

\begin{proof}
Recall from~\eqref{eq:def_A_0_rep} that
$$
\lA\left[\kappa_1 +\frac 2 \alpha T, \kappa_2 + \frac 2 \alpha T\right]
=
\frac{\alpha^{\kappa_1 - \kappa_2 + 1}}{(\kappa_1 - \kappa_2)!} \cdot \frac 1 {2\pi}
\int_{-\infty}^{+\infty} (\cosh u)^{-\alpha \kappa_1 - 2T} (F(\ii u))^{\kappa_1-\kappa_2} \dd u.
$$
The Laplace method, see, e.g., \cite[Theorem~B.7 on p.~758]{Flajolet_book}, yields
\begin{align*}
\int_{-\infty}^{+\infty} (\cosh u)^{-\alpha \kappa_1 - 2T} (F(\ii u))^{\kappa_1-\kappa_2} \dd u
&=
\int_{-\infty}^{+\infty} \eee^{-2T \log \cosh u} \cdot (\cosh u)^{-\alpha \kappa_1}  (F(\ii u))^{\kappa_1-\kappa_2} \dd u\\
&\sim
\sqrt {\frac {\pi} {T}} (F(0))^{\kappa_1-\kappa_2},
\end{align*}
as $T\to +\infty$, because the maximum of the function $\phi(x) := - \log \cosh x$ on $\R$ is $\phi(0) = 0$ and the second derivative is $\phi''(0) = -1$.
\end{proof}

\subsection{Evaluating the sum}
We are now ready to compute the sum $s_r$.
\begin{proposition}\label{prop:s_r_vanishes}
Let $\alpha>0$.  Then, for all $\kappa\in\C$ with $\Re \kappa > - \frac 1\alpha$, we have
$$
s_r(\kappa) =
\begin{cases}
1, &\text{ if } r=0,\\
0, &\text{ if } r\in\N.
\end{cases}
$$
\end{proposition}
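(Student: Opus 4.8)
The plan is to use the periodicity of $s_r$ to translate its argument arbitrarily far to the right and then extract the value of $s_r$ from the sharp asymptotics of the $\lB$- and $\lA$-terms. By the iterated periodicity~\eqref{eq:s_r_periodic_iterated}, we have $s_r(\kappa) = s_r(\kappa + \tfrac 2\alpha T)$ for every $T\in\N$, so it will suffice to evaluate $\lim_{T\to\infty} s_r(\kappa + \tfrac 2\alpha T)$. The decisive simplification is that the sum defining $s_r$ consists of exactly $r+1$ summands, a number that does not depend on $T$; hence the limit can be passed inside the sum and computed term by term.

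First I would reindex the shifted sum by setting $\mu = \kappa + \tfrac 2\alpha T + j$ with $j \in \{0,\ldots,r\}$, which turns $s_r(\kappa + \tfrac 2\alpha T)$ into
\begin{equation*}
\sum_{j=0}^{r} (-1)^{j}\,
\lB\Big\{\kappa + \tfrac 2\alpha T + r,\; \kappa + \tfrac 2\alpha T + j\Big\}
\Big(\kappa + \tfrac 2\alpha T + j + \tfrac 1\alpha\Big)\,
\lA\Big[\kappa + \tfrac 2\alpha T + j + \tfrac 2\alpha,\; \kappa + \tfrac 2\alpha T + \tfrac 2\alpha\Big].
\end{equation*}
To the $\lB$-factor I apply Lemma~\ref{lem:B_asymptotic} with $\kappa_1 = \kappa + r$ and $\kappa_2 = \kappa + j$ (difference $r-j \in \N_0$), and to the $\lA$-factor I apply Lemma~\ref{lem:A_0_asymptotic} with $\kappa_1 = \kappa + j + \tfrac 2\alpha$ and $\kappa_2 = \kappa + \tfrac 2\alpha$ (difference $j \in \N_0$); the middle factor satisfies $\kappa + \tfrac 2\alpha T + j + \tfrac 1\alpha \sim \tfrac 2\alpha T$. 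Multiplying the three estimates, the $j$-th summand is asymptotically equal to
\begin{equation*}
(-1)^{j}\,\frac{(\alpha F(0))^{r-j}}{(r-j)!}\sqrt{\frac \pi T}\cdot \frac 2\alpha T \cdot \frac \alpha 2\,\frac{(\alpha F(0))^{j}}{j!}\frac{1}{\sqrt{\pi T}}
=
(\alpha F(0))^{r}\,\frac{(-1)^{j}}{j!\,(r-j)!},
\end{equation*}
the crucial feature being that the powers of $T$, the factor $\sqrt\pi$, and the constants $\tfrac 2\alpha$ and $\tfrac \alpha 2$ all cancel, leaving a $T$-independent limit.

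Passing to the limit and summing, I would conclude
\begin{equation*}
s_r(\kappa)
=
(\alpha F(0))^{r}\sum_{j=0}^{r}\frac{(-1)^{j}}{j!\,(r-j)!}
=
\frac{(\alpha F(0))^{r}}{r!}\sum_{j=0}^{r}(-1)^{j}\binom rj
=
\frac{(\alpha F(0))^{r}}{r!}\,(1-1)^{r},
\end{equation*}
which equals $1$ when $r=0$ (interpreting $0^0 = 1$) and vanishes for $r\in\N$, exactly as claimed. The main obstacle is really just the bookkeeping of the powers of $T$ across the three asymptotic factors: one must verify that the linear growth $\tfrac 2\alpha T$ of the middle factor is cancelled precisely by the product $\sqrt{\pi/T}\cdot(\pi T)^{-1/2} = \sqrt\pi\,T^{-1}$ of the two square-root decays coming from the Laplace method, so that no power of $T$ survives. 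Once this is checked, the rest is a cancellation of constants and a single application of the binomial theorem. I would also note that although Lemmas~\ref{lem:B_asymptotic} and~\ref{lem:A_0_asymptotic} are stated for a continuous parameter $T\to+\infty$, we only use them along the integers $T\in\N$ appearing in~\eqref{eq:s_r_periodic_iterated}, which is a subsequence and hence unproblematic.
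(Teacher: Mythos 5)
Your proposal is correct and takes essentially the same route as the paper's own proof: iterate the periodicity relation~\eqref{eq:s_r_periodic_iterated}, reindex the finite ($T$-independent) sum, apply Lemmas~\ref{lem:B_asymptotic} and~\ref{lem:A_0_asymptotic} term by term as $T\to\infty$ along the integers, check that the factors $\sqrt{\pi/T}$, $(\pi T)^{-1/2}$ and the linear growth $\tfrac 2\alpha T$ of the middle factor cancel exactly, and finish with the binomial identity $\sum_{j}(-1)^j\binom rj=(1-1)^r$. Your bookkeeping of the powers of $T$ and of the constants $\tfrac\alpha 2$, $\tfrac 2\alpha$, $\sqrt\pi$ agrees with the paper's computation, so there is nothing to add.
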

\begin{proof}
 Using~\eqref{eq:s_r_periodic_iterated} and then the definition for $s_r$ given in~\eqref{eq:def_s_r}, we can write
\begin{align*}
s_{r}(\kappa)
&=
s_r \left(\kappa + \frac 2 \alpha T\right)\\
&=
\sum_{\mu = \kappa + \frac 2\alpha T}^{\kappa + \frac 2\alpha T + r}
(-1)^{\mu - \kappa - \frac 2\alpha T}
\lB\left\{\kappa + r + \frac 2 \alpha T, \mu\right\}
\left(\mu + \frac 1\alpha\right) \lA\left[\mu + \frac 2\alpha, \kappa+ \frac 2 \alpha T + \frac 2\alpha\right]
\end{align*}
Introducing the new summation index $\ell$ by $\mu = \kappa + \ell + \frac 2 \alpha T$, we get
\begin{multline*}
s_{r}(\kappa)
=
\sum_{\ell = 0}^{r}
(-1)^{\ell}\lB\left\{\kappa + r + \frac 2 \alpha T, \kappa + \ell + \frac 2 \alpha T \right\}\\
\times
\left(\kappa + \ell + \frac 2 \alpha T + \frac 1\alpha\right) \lA\left[\kappa + \ell + \frac 2 \alpha T + \frac 2\alpha, \kappa+ \frac 2 \alpha T +\frac 2\alpha\right]
.
\end{multline*}
We now let $T\to+\infty$ along the integers.  By Lemma~\ref{lem:B_asymptotic}, we have
$$
\lB\left\{\kappa +r + \frac 2 \alpha T , \kappa  +\ell + \frac 2 \alpha T\right\}
\sim
\frac{(\alpha F(0))^{r-\ell}}{(r-\ell)!} \sqrt{\frac {\pi}{T}}.
$$
Furthermore, by Lemma~\ref{lem:A_0_asymptotic}, we have
\begin{align*}
\lA\left[\kappa +  \ell + \frac 2 \alpha T   + \frac 2 \alpha, \kappa+\frac 2 \alpha T  + \frac 2 \alpha\right]
\sim
\frac \alpha 2 \cdot \frac{(\alpha F(0))^{\ell}}{\ell!}\cdot \frac {1}{\sqrt{\pi T}}.
\end{align*}
Taking everything together,  we arrive at
\begin{align*}
s_{r}(\kappa)
&=
\lim_{T\to+\infty}  s_r \left(\kappa + \frac 2 \alpha T\right)\\
&=
\frac \alpha 2 \cdot (\alpha  F(0))^r    \lim_{T\to+\infty}  \frac {1} T \sum_{\ell = 0}^{r} \left(\kappa + \ell + \frac 2 \alpha T  + \frac 1 \alpha \right)  \frac {(-1)^{\ell}} {\ell! (r-\ell)!}\\
&=
(\alpha  F(0))^r  \sum_{\ell = 0}^{r} \frac {(-1)^{\ell}} {\ell! (r-\ell)!}\\
&=
\frac{(\alpha  F(0))^r}{r!}  \sum_{\ell = 0}^{r}  (-1)^{\ell} \binom r \ell.
\end{align*}
For $r\in\N$, the sum on the right-hand side equals $(1-1)^r=0$. For $r=0$, the right-hand side equals $1$.
\end{proof}

\begin{remark}\label{rem:s_r_+}
A similar argument shows that for all $r\in\N_0$, the unsigned version of $s_r$ satisfies
$$
s_r^+ (\kappa) =  \frac{(2\alpha F(0))^r}{r!}.
$$
The only essential difference is that at the very end we have to replace $(1-1)^r$ by $(1+1)^r$.
\end{remark}





As a special case of Proposition~\ref{prop:s_r_vanishes} and Remark~\ref{rem:s_r_+}, we obtain the following non-linear relations between $\lA[n,k]$ and $\lB\{n,k\}$. The first of them reminds the property of Stirling numbers stated in~\eqref{eq:stirling_inverse}.
\begin{proposition}\label{prop:relations_A_B}
Let $\alpha>0$.  For every $n\in\N$ and $k\in \{1,\ldots,n\}$ we have
\begin{align}
&\sum_{m=k}^n (-1)^{m-k} \lB\{n,m\} \left(m+\frac 1\alpha\right) \lA\left[m+\frac 2 \alpha, k+\frac 2 \alpha\right]
=
 \delta_{nk},\label{eq:A_B_rel1}\\
&\sum_{m=k}^n  \lB\{n,m\} \left(m+\frac 1\alpha\right) \lA\left[m+\frac 2 \alpha, k+\frac 2 \alpha\right]
=
\frac{(2\alpha F(0))^{n-k}}{(n-k)!},\label{eq:A_B_rel2}
\end{align}
where $\delta_{nk}$ denotes Kronecker's delta.
\end{proposition}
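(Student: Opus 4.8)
The plan is to read off both identities as immediate specializations of the facts already established for the periodic sums $s_r(\kappa)$ and $s_r^+(\kappa)$; essentially no new analysis is required, since all the substance is contained in Propositions~\ref{prop:periodic} and~\ref{prop:s_r_vanishes} and in Remark~\ref{rem:s_r_+}. The key move is simply to specialize the continuous parameters to integers. Concretely, I would set $\kappa = k$ and $r = n-k$ in the definition~\eqref{eq:def_s_r} of $s_r(\kappa)$. Then $\kappa + r = n$, and the summation index $\mu$ ranges over $\{k, k+1, \ldots, n\}$; renaming $\mu =: m$, the summand $(-1)^{\mu-\kappa}\lB\{\kappa+r,\mu\}(\mu+\tfrac 1\alpha)\lA[\mu+\tfrac 2\alpha,\kappa+\tfrac 2\alpha]$ becomes exactly the summand on the left-hand side of~\eqref{eq:A_B_rel1}.

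Before invoking the evaluation I would verify the hypothesis of Proposition~\ref{prop:s_r_vanishes}, namely $\Re\kappa > -\tfrac 1\alpha$. Since $k \in \{1,\ldots,n\}$ is a positive integer and $\alpha > 0$, we have $\Re\kappa = k \ge 1 > -\tfrac 1\alpha$, so the proposition applies with no restriction. It then yields $s_{n-k}(k) = \delta_{r0}$, and since $r = n-k$ vanishes precisely when $n = k$, this equals $\delta_{nk}$, which is~\eqref{eq:A_B_rel1}.

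For~\eqref{eq:A_B_rel2} I would carry out the identical substitution $\kappa = k$, $r = n-k$ in the unsigned sum $s_r^+(\kappa)$ defined in the remark following Proposition~\ref{prop:periodic}; this reproduces the left-hand side of~\eqref{eq:A_B_rel2} verbatim, and Remark~\ref{rem:s_r_+} evaluates it as $(2\alpha F(0))^{n-k}/(n-k)!$.

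Since all the genuine work (periodicity from partial summation plus the Laplace asymptotics as $T\to\infty$) has already been done, there is no real analytic obstacle here. The only thing that could go wrong is a bookkeeping slip in matching the index ranges and the sign $(-1)^{\mu-\kappa} = (-1)^{m-k}$, so I would double-check that the upper endpoint $\kappa + r = n$ and the sign convention align exactly with the summation $\sum_{m=k}^n$ asserted in the proposition.
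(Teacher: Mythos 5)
Your proposal is correct and follows exactly the paper's route: the paper itself introduces this proposition as "a special case of Proposition~\ref{prop:s_r_vanishes} and Remark~\ref{rem:s_r_+}," i.e.\ precisely the specialization $\kappa = k$, $r = n-k$ that you carry out, with the hypothesis $\Re\kappa > -\tfrac{1}{\alpha}$ trivially satisfied for integer $k \ge 1$.
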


\subsection{First formula for expected internal angles}
We are now ready to express the quantities $\bJ_{n,k}(\gamma)$ through the quantities $\lA [\nu,\kappa]$ as follows.
\begin{proposition}\label{prop:bJ_formula_A_notilde}
For all integer $n\geq 3$, $k\in \{1,\ldots,n\}$ and all positive $\alpha \geq  n-3$, we have
\begin{equation}\label{eq:prop:bJ_formula_A_notilde}
\bJ_{n,k}\left(\frac{\alpha - n + 1}{2}\right)
=
\alpha^{k-n} \frac{n!}{k!}
\left(\frac {\Gamma(\frac {\alpha+2} 2)} {\sqrt \pi\, \Gamma(\frac{\alpha+1} 2)}\right)^{n-k}
\cdot
\frac{\Gamma({\frac{\alpha  k + 2} 2})}{\Gamma(\frac {\alpha k +3}2)}
\frac{\Gamma({\frac{\alpha  n + 3} 2})}{\Gamma(\frac {\alpha n +2}2)}
\cdot
\frac{\lA\left[n+\frac 2\alpha, k+\frac 2\alpha\right]}{\lA\left[k+\frac 2\alpha, k+\frac 2\alpha\right]}.
\end{equation}
\end{proposition}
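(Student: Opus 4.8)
The plan is to invoke the uniqueness statement in Proposition~\ref{prop:bJ_unique}: since the array $\bJ_{m,k}(\frac{\alpha-m+1}{2})$ is the \emph{unique} solution of the linear system~\eqref{eq:system_alpha_arbitrary_notilde}, it suffices to verify that the array $\xi_{m,k}$ defined by the right-hand side of~\eqref{eq:prop:bJ_formula_A_notilde} (with $m$ in place of $n$) solves that system for $N=n$. Here the restriction $\alpha\geq n-3$ is precisely what is needed to apply Proposition~\ref{prop:bJ_unique}. The diagonal condition is immediate: setting $k=m$ in the formula makes the ratio of Gamma factors collapse to $1$ and makes the ratio $\lA[m+\frac 2\alpha,m+\frac 2\alpha]/\lA[m+\frac 2\alpha,m+\frac 2\alpha]$ equal to $1$, so $\xi_{m,m}=1$.

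The substance is the off-diagonal identity $\sum_{m=k}^n(-1)^{m-k}\bI_{n,m}(\alpha)\,\xi_{m,k}=0$ for $k<n$. First I would substitute the expression~\eqref{eq:I_n_m_b_n_m} for $\bI_{n,m}(\alpha)$ and the candidate $\xi_{m,k}$ into the sum, and sort the factors into those depending on $m$ and those depending only on $n,k$. Recognizing that $\Gamma(\tfrac{\alpha+2}{2})/(\sqrt\pi\,\Gamma(\tfrac{\alpha+1}{2}))=c_{\frac{\alpha-1}{2}}$ by~\eqref{eq:c_beta}, the powers $\alpha^{\pm m}$, the factorials $m!$, and the powers $(c_{\frac{\alpha-1}{2}})^{\pm m}$ all cancel between $\bI_{n,m}$ and $\xi_{m,k}$. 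The only surviving $m$-dependent scalar is $\frac{\Gamma(\frac{\alpha m+3}{2})}{\Gamma(\frac{\alpha m+2}{2})}\,c_{\frac{\alpha m-1}{2}}$, multiplying $\lB\{n,m\}\,\lA[m+\frac 2\alpha,k+\frac 2\alpha]$.

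The key simplification is the Gamma-function identity: using $c_{\frac{\alpha m-1}{2}}=\frac{\Gamma(\frac{\alpha m+2}{2})}{\sqrt\pi\,\Gamma(\frac{\alpha m+1}{2})}$ and $\Gamma(\frac{\alpha m+3}{2})=\frac{\alpha m+1}{2}\,\Gamma(\frac{\alpha m+1}{2})$, this scalar reduces to $\frac{\alpha m+1}{2\sqrt\pi}=\frac{\alpha}{2\sqrt\pi}\bigl(m+\frac 1\alpha\bigr)$. Factoring out the $m$-independent constant, the sum becomes a fixed multiple of $\sum_{m=k}^n(-1)^{m-k}\lB\{n,m\}\bigl(m+\frac 1\alpha\bigr)\lA[m+\frac 2\alpha,k+\frac 2\alpha]$, which by~\eqref{eq:A_B_rel1} equals $\delta_{nk}$ and hence vanishes for $k<n$. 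This is exactly the orthogonality between the lower-triangular matrices built from $\lB$ and $\lA$ established in Proposition~\ref{prop:relations_A_B}, so the candidate solves the system and the two arrays coincide.

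I expect the only real difficulty to be the bookkeeping of the normalizing constants $c_\beta$ and the Gamma quotients, in particular making the $m$-dependent prefactor collapse to $\frac{\alpha}{2\sqrt\pi}(m+\frac 1\alpha)$ so that~\eqref{eq:A_B_rel1} applies verbatim; there is no conceptual obstacle, since well-definedness of all $\lA[m+\frac 2\alpha,k+\frac 2\alpha]$ and $\lB\{n,m\}$ is guaranteed by $\Re(k+\frac 2\alpha)>0$ and $n-m\in\N_0$, and $\lA[k+\frac 2\alpha,k+\frac 2\alpha]\neq 0$ by the explicit evaluation~\eqref{eq:a_kappa_kappa}.
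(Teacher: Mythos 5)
Your proposal is correct and takes essentially the same route as the paper's own proof: both verify that the stated right-hand side satisfies the linear system of Proposition~\ref{prop:bJ_unique} (the diagonal normalization $\xi_{m,m}=1$ plus the off-diagonal orthogonality supplied by Proposition~\ref{prop:relations_A_B}, after the Gamma-factor bookkeeping collapses the $m$-dependent prefactor to a multiple of $m+\tfrac 1\alpha$) and then invoke uniqueness. The only cosmetic difference is that the paper first writes a solution with a free constant $c(k)$ and fixes it by the diagonal condition, whereas you substitute the final formula directly.
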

\begin{proof}
Take some integer $N\geq 3$. It suffices to show that~\eqref{eq:prop:bJ_formula_A_notilde}  holds for all $n\in \{1,\ldots,N\}$ and $k\in \{1,\ldots,n\}$ provided that $\alpha \geq N-3$ and $\alpha\neq 0$.
Consider the following finite system of linear equations in the unknowns $\xi_{n,k}$, where $n\in\{1,\ldots,N\}$, $k\in \{1,\ldots,n\}$:
\begin{equation}\label{eq:system_alpha_arbitrary_notilde_rep}
\begin{cases}
\sum_{m=k}^n (-1)^{m-k} \bI_{n,m}(\alpha) \xi_{m,k} =  0, &\text{ for all } n\in \{1,\ldots,N\}, \; k\in \{1,\ldots,n-1\},\\
\xi_{n,n} = 1, \text{ for all } n\in \{1,\ldots,N\}.
\end{cases}
\end{equation}
We know from Proposition~\ref{prop:bJ_unique} that the \textit{unique} solution to this system is $\xi_{n,k} = \bJ_{n,k}(\frac{\alpha - n + 1}{2})$. To prove the proposition it therefore suffices to check that the right-hand side of~\eqref{eq:prop:bJ_formula_A_notilde} also defines a solution to~\eqref{eq:system_alpha_arbitrary_notilde_rep}. Recall from~\eqref{eq:I_n_m_b_n_m} that
\begin{align*}
\bI_{n,m}(\alpha)
=\alpha^{-n} n! c_{\frac{\alpha-1}{2}}^{n} \cdot \frac{c_{\frac{\alpha m-1}{2}} \alpha^{m}}{c_{\frac{\alpha-1}{2}}^m m!} \cdot \lB\{n,m\}.
\end{align*}
We now claim that for every function $c(k)$, the following defines a solution to the first line of~\eqref{eq:system_alpha_arbitrary_notilde_rep}:
\begin{equation}\label{eq:def_xi_m_k_notilde}
\xi_{m,k} := c(k)
\cdot \frac{c_{\frac{\alpha-1}{2}}^{m} m!}{c_{\frac{\alpha m-1}{2}} \alpha^{m}} \cdot \left(m+\frac 1\alpha\right) \lA\left[ m+\frac 2\alpha,k+\frac 2\alpha \right].
\end{equation}
Indeed, with this $\xi_{m,k}$ and for all $k\in \{1,\ldots,n-1\}$ we have
\begin{multline*}
\sum_{m=k}^n (-1)^{m-k} \bI_{n,m}(\alpha) \xi_{m,k}
=
\alpha^{-n} n!  c_{\frac{\alpha-1}{2}}^{n}\cdot
c(k)
\cdot
\sum_{m=k}^n (-1)^{m-k} \lB\{n,m\}   \left(m+\frac 1\alpha\right) \lA\left[ m+\frac 2\alpha,k+\frac 2\alpha \right],
\end{multline*}
which vanishes by Proposition~\ref{prop:relations_A_B}. It remains to determine the normalizing constant $c(k)$ such that $\xi_{k,k} = 1$ for all $k\in \{1,\ldots,N\}$. Taking $m=k$ in~\eqref{eq:def_xi_m_k_notilde} we see that $\xi_{k,k} = 1$ is satisfied iff
$$
c(k) = \frac{c_{\frac{\alpha k-1}{2}} \alpha^{k}}{c_{\frac{\alpha-1}{2}}^{k} k!} \cdot \frac{1} {\left(k+\frac 1\alpha\right) \lA\left[ k+\frac 2\alpha,k+\frac 2\alpha \right]}.
$$
Inserting this into~\eqref{eq:def_xi_m_k_notilde}, we obtain a solution to~\eqref{eq:system_alpha_arbitrary_notilde_rep}. Since the solution is unique, and since the numbers $\bJ_{n,k}(\frac{\alpha - n+1}{2})$ also define a solution, we arrive at the formula
\begin{equation}\label{eq:xi_n_k}
\bJ_{n,k}\left(\frac{\alpha - n+1}{2}\right)
=
\alpha^{k-n} \frac{n!}{k!}\cdot  c_{\frac{\alpha-1}{2}}^{n-k} \cdot \frac{c_{\frac{\alpha k-1}{2}}}{c_{\frac{\alpha n-1}{2}}}  \cdot \frac{\left(n+\frac 1\alpha\right) \lA\left[ n+\frac 2\alpha,k+\frac 2\alpha \right]}{\left(k+\frac 1\alpha\right) \lA\left[ k+\frac 2\alpha,k+\frac 2\alpha \right]}.
\end{equation}
Recall that $c_{\beta}$ is given by~\eqref{eq:c_beta}, namely
\begin{equation}\label{eq:c:1_beta_wspom}
c_{\beta}= \frac{ \Gamma\left(\beta + \frac{3}{2}\right) }{\sqrt\pi \,  \Gamma\left(\beta+1\right)}.
\end{equation}
After some simplifications we arrive at~\eqref{eq:prop:bJ_formula_A_notilde}.
\end{proof}
\subsection{Proof of Theorem~\ref{theo:bJ_formula_integral}}
We are now in position to prove Theorem~\ref{theo:bJ_formula_integral} or, more precisely, its equivalent form~\eqref{eq:bJ_nk_equiv2} which we restate as follows.
\begin{proposition}
For all integer $n\geq 3$, $k\in \{1,\ldots,n\}$ and all $\alpha \geq n-3$, we have
\begin{equation}\label{eq:J_nk_integral_recall}
\bJ_{n,k}\left(\frac{\alpha - n + 1}{2}\right)
=
\binom nk \int_{-\infty}^{+\infty} c_{\frac{\alpha n}2} (\cosh u)^{-\alpha n - 2}
\left(\frac 12  + \ii \int_0^u  c_{\frac{\alpha-1}{2}} (\cosh v)^{\alpha}\dd v \right)^{n-k} \dd u.
\end{equation}
\end{proposition}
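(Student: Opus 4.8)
The plan is to derive~\eqref{eq:J_nk_integral_recall} directly from the closed formula of Proposition~\ref{prop:bJ_formula_A_notilde} by inserting the integral representation~\eqref{eq:def_A_0_rep} of the internal quantities $\lA$ and then matching the resulting integrand with the target one. First I would apply~\eqref{eq:def_A_0_rep} to the numerator $\lA[n+\frac 2\alpha, k+\frac 2\alpha]$. Since $\alpha(n+\frac 2\alpha) = \alpha n + 2$ and $(n+\frac 2\alpha)-(k+\frac 2\alpha) = n-k\in\N_0$, this gives
$$
\lA\left[n+\tfrac 2\alpha, k+\tfrac 2\alpha\right]
=
\frac{\alpha^{n-k+1}}{(n-k)!}\cdot\frac 1{2\pi}\int_{-\infty}^{+\infty}(\cosh u)^{-\alpha n-2}(F(\ii u))^{n-k}\,\dd u,
$$
which already produces the weight $(\cosh u)^{-\alpha n-2}$ appearing in~\eqref{eq:J_nk_integral_recall}.

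The crucial algebraic step is to rewrite $F(\ii u)$ in the shifted form $\tfrac 12 + \ii\int_0^u(\cdots)\,\dd v$. I would split $F(\ii u) = F(0) + \int_0^{\ii u}(\cos y)^\alpha\,\dd y$ using~\eqref{eq:def_F}; on the imaginary axis the substitution $y = \ii v$ together with $\cos(\ii v) = \cosh v$ turns the second term into $\ii\int_0^u(\cosh v)^\alpha\,\dd v$, the contour staying in the doubly slit plane of~\eqref{eq:def_F}. By the evenness of $\cos$ and~\eqref{eq:int_cos_cosh} with $h=\alpha+1$, the constant term is $F(0) = \tfrac 12\int_{-\pi/2}^{\pi/2}(\cos y)^\alpha\,\dd y = 1/(2c_{\frac{\alpha-1}{2}})$, where $c_{\frac{\alpha-1}{2}}$ is given by~\eqref{eq:c_beta}. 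Factoring out $c_{\frac{\alpha-1}{2}}$ then yields
$$
c_{\frac{\alpha-1}{2}}F(\ii u) = \tfrac 12 + \ii\int_0^u c_{\frac{\alpha-1}{2}}(\cosh v)^\alpha\,\dd v,
$$
so that $(F(\ii u))^{n-k}$ supplies exactly the bracketed factor of~\eqref{eq:J_nk_integral_recall}, up to the constant $(c_{\frac{\alpha-1}{2}})^{-(n-k)}$.

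It then remains to check that all prefactors collapse. The denominator $\lA[k+\frac 2\alpha, k+\frac 2\alpha]$ is a diagonal term, which by~\eqref{eq:def_A_0_rep} and~\eqref{eq:int_cos_cosh} equals $\frac\alpha2\cdot\Gamma(\frac{\alpha k+2}{2})/(\sqrt\pi\,\Gamma(\frac{\alpha k+3}{2}))$. Assembling this with the two $\Gamma$-ratios and the factor $(\Gamma(\frac{\alpha+2}{2})/(\sqrt\pi\,\Gamma(\frac{\alpha+1}{2})))^{n-k} = (c_{\frac{\alpha-1}{2}})^{n-k}$ in~\eqref{eq:prop:bJ_formula_A_notilde}, I expect the powers of $\alpha$, the powers of $c_{\frac{\alpha-1}{2}}$, and the $\Gamma$-factors involving $\alpha k$ to cancel pairwise, leaving $n!/(k!(n-k)!) = \binom nk$ together with $\Gamma(\frac{\alpha n+3}{2})/(\sqrt\pi\,\Gamma(\frac{\alpha n+2}{2})) = c_{\frac{\alpha n}{2}}$, which reproduces~\eqref{eq:J_nk_integral_recall}. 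There is no genuine obstacle beyond this constant bookkeeping; the only points requiring care are the branch/contour justification for $\int_0^{\ii u}(\cos y)^\alpha\,\dd y$ and the absolute convergence of the integral, the latter being clear for every $\alpha>0$ and $k\geq 1$ since by~\eqref{eq:F_asympt} the integrand decays like $\eee^{-(\alpha k+2)|u|}$. The restriction $\alpha\geq n-3$ is inherited solely from Proposition~\ref{prop:bJ_formula_A_notilde}, on which the whole computation rests.
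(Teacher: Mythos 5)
Your proposal is correct and follows essentially the same route as the paper: the paper's own proof starts from the formula of Proposition~\ref{prop:bJ_formula_A_notilde} (in the intermediate form~\eqref{eq:xi_n_k}), evaluates the diagonal term $(k+\frac 1\alpha)\lA[k+\frac 2\alpha,k+\frac 2\alpha]=c_{\frac{\alpha k-1}{2}}$ via~\eqref{eq:int_cos_cosh}, inserts the representation~\eqref{eq:def_A_0_rep} for $\lA[n+\frac 2\alpha,k+\frac 2\alpha]$, and finishes with exactly your decomposition $F(\ii u)=\frac 1{2c_{\frac{\alpha-1}{2}}}+\ii\int_0^u(\cosh v)^\alpha\,\dd v$; your constant bookkeeping matches~\eqref{eq:form_J_n_k_wspom}. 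The one point you miss is the edge case $\alpha=0$ (possible only for $n=3$, giving $\beta=-1$): the statement allows it, but Proposition~\ref{prop:bJ_formula_A_notilde} is stated for \emph{positive} $\alpha$ and all the intermediate quantities involve $\frac 2\alpha$, so the restriction is \emph{not} ``inherited solely'' from that proposition --- the paper closes this case by letting $\alpha\downarrow 0$ and invoking continuity of both sides, and your argument needs the same one-line addendum.
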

\begin{proof}
We shall assume that $\alpha\neq 0$ since the case $\alpha=0$ (which occurs only if $n=3$) follows afterwards by letting $\alpha \downarrow 0$ and using continuity. Our starting point is~\eqref{eq:xi_n_k}.
In this formula,
$$
\left(k+\frac 1\alpha\right)\lA\left[ k+\frac 2\alpha,k+\frac 2\alpha \right]
=
\frac {\alpha k +1}{2\pi} \int_{-\infty}^{+\infty} (\cosh x)^{-\alpha k - 2} \dd x
=
\frac{\Gamma\left(\frac{\alpha k + 2}{2}\right)}{\sqrt \pi\, \Gamma\left(\frac{\alpha k + 1}{2}\right)}
=
c_{\frac{\alpha k -1}{2}};
$$
see~\eqref{eq:def_A_0_rep} and~\eqref{eq:int_cos_cosh}.
It follows from~\eqref{eq:xi_n_k} that
\begin{align}
\bJ_{n,k}\left(\frac{\alpha - n+1}{2}\right)
&=
\alpha^{k-n} \frac{n!}{k!}\cdot  c_{\frac{\alpha-1}{2}}^{n-k} \cdot \frac{1}{c_{\frac{\alpha n-1}{2}}}  \cdot \left(n+\frac 1\alpha\right) \lA\left[ n+\frac 2\alpha,k+\frac 2\alpha \right]\notag\\
&=
\alpha^{k-n} \frac{n!}{k!}
\cdot
c_{\frac{\alpha-1}{2}}^{n-k}
\cdot
\frac{\sqrt \pi\, \Gamma({\frac{\alpha  n + 1} 2})}{\Gamma(\frac {\alpha n +2}2)}
\cdot
\left(n+\frac 1\alpha\right)  \, \lA\left[ n+\frac 2\alpha,k+\frac 2\alpha \right]\notag\\
&=
\alpha^{k-n} \frac{n!}{k!}
\cdot
c_{\frac{\alpha-1}{2}}^{n-k}
\cdot
c_{\frac{\alpha n}{2}}
\cdot
\frac {2\pi}\alpha \, \lA\left[ n+\frac 2\alpha,k+\frac 2\alpha \right]
. \label{eq:form_J_n_k_wspom}
\end{align}
Recall from~\eqref{eq:def_A_0_rep} that
\begin{equation*}
\frac {2\pi}\alpha \, \lA\left[n + \frac 2 \alpha,k + \frac 2 \alpha\right]
=
\frac{\alpha^{n - k}}{(n - k)!} \cdot  \int_{-\infty}^{+\infty} \frac{(F(\ii u))^{n-k}}{(\cosh u)^{\alpha n + 2}} \dd u.
\end{equation*}
Hence,
\begin{equation*}
\bJ_{n,k}\left(\frac{\alpha - n+1}{2}\right)
=
\binom nk
c_{\frac{\alpha-1}{2}}^{n-k}
c_{\frac{\alpha n}{2}}
\cdot
\int_{-\infty}^{+\infty} \frac{(F(\ii u))^{n-k}}{(\cosh u)^{\alpha n + 2}} \dd u.
\end{equation*}
It remains to observe that by~\eqref{eq:def_F},
$$
F(\ii u)
= \int_{-\pi/2}^{0} (\cos y)^{\alpha} \dd y + \int_{0}^{\ii u} (\cos y)^{\alpha} \dd y
= \frac 1 {2 c_{\frac{\alpha-1}{2}}} + \ii \int_{0}^{u} (\cosh x)^{\alpha} \dd x.
$$
The proof is complete.
\end{proof}

\subsection{Evaluating the integral: Proof of Theorem~\ref{theo:bJ_formula_residue}}
In view of~\eqref{eq:form_J_n_k_wspom}, in order to prove Theorem~\ref{theo:bJ_formula_residue} we need to compute $\lA[\nu,\kappa]$.
\begin{proposition}\label{prop:residue_evaluate}
Let $\alpha\in\N$ and let $\nu,\kappa\in \alpha^{-1}\N$ be such that $\nu-\kappa\in \N_0$.
If $\alpha \kappa + \nu - \kappa$ is odd,
then
\begin{equation}\label{eq:int_residue}
\lA[\nu,\kappa]
=
\frac{\alpha^{\nu-\kappa+1}}{2\cdot  (\nu-\kappa)!}
\Res\limits_{x=0} \left[\frac{\left(\int_{0}^x (\sin y)^{\alpha} \dd y\right)^{\nu-\kappa}}{(\sin x)^{\alpha \nu}}\right].
\end{equation}
\end{proposition}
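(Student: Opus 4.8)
The plan is to evaluate the contour integral defining $\lA[\nu,\kappa]$ in~\eqref{eq:def_A_0} by folding its contour onto a single residue at the origin, exploiting a hidden symmetry of the integrand that is switched on precisely by the parity hypothesis.

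\textbf{Step 1 (normal form of the integral).} Write $r := \nu - \kappa \in \N_0$ and perform the substitution $w = x + \tfrac\pi2$ in~\eqref{eq:def_A_0}. Since $\cos x = \sin w$ and, by~\eqref{eq:def_F} with the change of variables $z = y + \tfrac\pi2$, one has $F(x) = \int_0^{w}(\sin z)^{\alpha}\,\dd z =: G(w)$, the contour (the imaginary axis, traversed upward) becomes the vertical line $\Re w = \tfrac\pi2$, and
\[
\lA[\nu,\kappa] = \frac{\alpha^{r+1}}{r!}\cdot\frac{1}{2\pi\ii}\int_{\frac\pi2 - \ii\infty}^{\frac\pi2 + \ii\infty} H(w)\,\dd w,\qquad H(w) := \frac{(G(w))^{r}}{(\sin w)^{\alpha\nu}}.
\]
Because $\alpha\in\N$, both $G$ and $(\sin w)^{\alpha\nu}$ are entire (with $\alpha\nu\in\N$), so $H$ is meromorphic with poles contained in $\pi\Z$.

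\textbf{Step 2 (the parity symmetry).} Next I would record the reflection law $G(-w) = (-1)^{\alpha+1}G(w)$ (immediate from $z\mapsto -z$) together with $\sin(-w) = -\sin w$, which combine to give
\[
H(-w) = (-1)^{(\alpha+1)r - \alpha\nu}\,H(w) = (-1)^{\,r + \alpha\kappa}\,H(w),
\]
using $\nu = \kappa + r$. Under the hypothesis that $\alpha\kappa + r = \alpha\kappa + (\nu - \kappa)$ is odd, this says that $H$ is an \emph{odd} function. The substitution $w\mapsto -w$ then identifies the integral of $H$ up the line $\Re w = -\tfrac\pi2$ with the negative of the integral up the line $\Re w = +\tfrac\pi2$: writing $I_{\pm}$ for the two (upward) line integrals, we get $I_{-} = -I_{+}$, while $\lA[\nu,\kappa] = \frac{\alpha^{r+1}}{r!}\,I_{+}$.

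\textbf{Step 3 (contour shift and residue).} Finally I would apply the residue theorem to the rectangle with vertical sides $\Re w = \pm\tfrac\pi2$ and horizontal sides $\Im w = \pm R$, oriented so that $\Re w = +\tfrac\pi2$ is traversed upward and $\Re w = -\tfrac\pi2$ downward. The only pole of $H$ in the closed strip $-\tfrac\pi2\le\Re w\le\tfrac\pi2$ is $w=0$, the remaining zeros of $\sin$ lying at $\pm\pi,\pm2\pi,\dots$, so the enclosed residue is $\Res_{w=0}H$. On the horizontal sides, for $\Im w = R\to\infty$ one has $|(\sin w)^{-\alpha\nu}|\asymp e^{-\alpha\nu R}$ and $|G(w)|\asymp e^{\alpha R}$ (consistent with~\eqref{eq:F_asympt}), whence $|H(w)|\asymp e^{-\alpha\kappa R}$; since $\alpha\kappa\in\N$ is positive, these contributions vanish as $R\to\infty$. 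Discarding them leaves $I_{+} - I_{-} = \Res_{w=0}H$, and combining with $I_{-} = -I_{+}$ gives $I_{+} = \tfrac12\Res_{w=0}H$. Substituting back and renaming $w$ to $x$ yields exactly~\eqref{eq:int_residue}.

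The main obstacle is conceptual rather than computational: one has to \emph{discover} that the condition ``$\alpha\kappa + (\nu-\kappa)$ odd'' is precisely what makes $H$ odd, which is what allows the two boundary lines of the strip to be folded onto one another and collapses the whole evaluation to a single residue. The only genuinely analytic point is the decay estimate on the horizontal sides, and it works because $\alpha\kappa\in\N$ forces $\Re\kappa>0$, matching the standing assumption on $\kappa$ in~\eqref{eq:def_A_0}.
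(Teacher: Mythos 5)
Your proof is correct and is essentially the paper's own argument in shifted coordinates: the paper integrates $\frac{(F(y))^{\nu-\kappa}}{(\cos y)^{\alpha\nu}}$ over the rectangle with vertical sides $\Re y=-\pi$ and $\Re y=0$, uses the reflection $y\mapsto-\pi-y$ about the pole at $-\pi/2$ together with $F(-\pi-\ii u)=(-1)^{\alpha+1}F(\ii u)$, and the parity hypothesis to equate the two vertical contributions, which after your substitution $w=y+\pi/2$ is exactly your rectangle $|\Re w|\le\pi/2$, your oddness of $H$, and your decay estimate $|H|\asymp e^{-\alpha\kappa R}$ on the horizontal sides. The two write-ups differ only in where the pole sits (at $0$ versus $-\pi/2$), so there is nothing substantive to distinguish them.
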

\begin{remark}
Recalling~\eqref{eq:form_J_n_k_wspom} and using the proposition with $\nu:=n+\frac 2\alpha$ and $\kappa:= k+\frac 2\alpha$ we obtain
$$
\bJ_{n,k}\left(\frac{\alpha - n + 1}{2}\right)
=
\binom nk
c_{\frac{\alpha-1}{2}}^{n-k} c_{\frac{\alpha n}2} \cdot
\pi  \Res\limits_{x=0} \left[\frac{\left(\int_{0}^x (\sin y)^{\alpha} \dd y\right)^{n-k}}{(\sin x)^{\alpha n + 2}}\right]
$$
provided (i) $\alpha$  is even and $n-k = \nu-\kappa$ is odd (because then $\alpha \kappa + \nu - \kappa = \alpha k + 2 + n-k$ is odd), or (ii) both $\alpha$ and $n$ are odd (because then $\alpha \kappa + \nu - \kappa = \alpha k - k + 2 + n$ is odd). This proves Theorem~\ref{theo:bJ_formula_residue}.
\end{remark}
\begin{proof}[Proof of Proposition~\ref{prop:residue_evaluate}.]
In view of the definition of $\lA[\nu,\kappa]$ given in~\eqref{eq:def_A_0_rep}, we need to prove that
\begin{equation}\label{eq:int_residue_proof}
\int_{-\infty}^{+\infty} \frac{(F(\ii u))^{\nu-\kappa}}{(\cosh u)^{\alpha \nu}} \dd u
=
\pi \Res\limits_{x=0} \left[\frac{\left(\int_{0}^x (\sin y)^{\alpha} \dd y\right)^{\nu-\kappa}}{(\sin x)^{\alpha \nu}}\right].
\end{equation}
Let $R>0$ and consider the closed rectangular contour $\gamma(R)$ passing through the points $-\pi+R\ii$, $-\pi -R\ii$, $- R\ii$, $+R\ii$. By the Cauchy residue formula, we have
$$
I(R) := \frac 1 {2\pi \ii} \oint_{\gamma(R)} \frac{(F(y))^{\nu-\kappa}}{(\cos y)^{\alpha \nu}} \dd y
=
\Res\limits_{y=-\pi/2} \left[\frac{(F(y))^{\nu-\kappa}}{(\cos y)^{\alpha \nu}}\right]
=
\Res\limits_{x=0} \left[\frac{\left(\int_{0}^x (\sin z)^{\alpha} \dd z\right)^{\nu-\kappa}}{(\sin x)^{\alpha \nu}}\right].
$$
On the other hand, since the value of the integral does not depend on $R$, we can let $R\to+\infty$. The contribution of the horizontal parts of the contour vanishes in the limit because in the same way as in~\eqref{eq:F_asympt} one can check that
$$
\lim_{v\to \pm \infty} \frac{(F(u+\ii v))^{\nu-\kappa}}{(\cos (u+\ii v))^{\alpha \nu}} = 0
$$
locally uniformly in $u\in\R$.  We are left with the contributions of the vertical lines:
$$
I := I(R) =
\frac 1 {2\pi \ii} \int_{-\ii \infty}^{+\ii \infty} \frac{(F(y))^{\nu-\kappa}}{(\cos y)^{\alpha \nu}} \dd y
+
\frac 1 {2\pi \ii} \int_{-\pi + \ii \infty}^{-\pi -\ii \infty} \frac{(F(y))^{\nu-\kappa}}{(\cos y)^{\alpha \nu} } \dd y.
$$
To complete the proof of the proposition, we need to argue that under (i) or (ii),
\begin{equation}\label{eq:I_twice_integral}
I= \frac 1 {\pi} \int_{-\infty}^{+\infty}  \frac{(F(\ii u))^{\nu-\kappa}}{(\cosh u)^{\alpha \nu}} \dd u.
\end{equation}
First of all, even without assuming (i) or (ii), the change of variables $y=\ii u$ yields
$$
\frac 1 {2\pi \ii} \int_{-\ii \infty}^{+\ii \infty} \frac{(F(y))^{\nu-\kappa}}{(\cos y)^{\alpha \nu }} \dd y
=
\frac 1 {2\pi} \int_{-\infty}^{+\infty} \frac{(F(\ii u))^{\nu-\kappa}}{(\cosh u)^{\alpha \nu}} \dd u.
$$
Hence, to prove~\eqref{eq:I_twice_integral} it remains to show that
\begin{equation}\label{eq:I_remains_to_show}
\frac 1 {2\pi \ii} \int_{-\pi + \ii \infty}^{-\pi -\ii \infty} \frac{(F(y))^{\nu-\kappa}}{(\cos y)^{\alpha \nu} } \dd y
=
\frac 1 {2\pi} \int_{-\infty}^{+\infty} \frac{(F(\ii u))^{\nu-\kappa}}{(\cosh u)^{\alpha \nu}} \dd u.
\end{equation}
Observe that by the change of variables $w= - x -\pi$ we have
\begin{align*}
F(-\pi - \ii u)
&=
\int_{-\pi/2}^{-\pi - \ii u} (\cos x)^{\alpha} \dd x
=
- \int_{-\pi/2}^{\ii u} (\cos (-w-\pi))^{\alpha} \dd w\\
&=
(-1)^{\alpha + 1} \int_{-\pi/2}^{\ii u} (\cos w)^{\alpha} \dd w
=
(-1)^{\alpha + 1} F(\ii u),
\end{align*}
where we used that $(\cos (-w-\pi))^\alpha = (-1)^{\alpha}(\cos w)^\alpha$.
By the substitution $y = -\pi -\ii u$ it follows that
\begin{align*}
\frac 1 {2\pi \ii} \int_{-\pi + \ii \infty}^{-\pi -\ii \infty} \frac{(F(y))^{\nu-\kappa}}{(\cos y)^{\alpha \nu} } \dd y
&=
-\frac 1 {2\pi} \int_{-\infty}^{+\infty} \frac{(F(-\pi -\ii u))^{\nu-\kappa}}{(\cos (-\pi -\ii u))^{\alpha \nu} } \dd u\\
&=
(-1)^{1+(\alpha+1)(\nu-\kappa) + \alpha \nu} \cdot \frac 1 {2\pi} \int_{-\infty}^{+\infty} \frac{(F(\ii u))^{\nu-\kappa}}{(\cosh u)^{\alpha \nu} } \dd u\\
&=
\frac 1 {2\pi} \int_{-\infty}^{+\infty} \frac{(F(\ii u))^{\nu-\kappa}}{(\cosh u)^{\alpha \nu} } \dd u
\end{align*}
since $\alpha \kappa + \nu - \kappa$ is odd.
This proves~\eqref{eq:I_remains_to_show} and completes the proof of~\eqref{eq:I_twice_integral}.
\end{proof}
\begin{remark}
In the case when $\alpha \kappa + \nu - \kappa$ is even, essentially the same argument shows that $I(R)=0$ and hence
$$
\Res\limits_{x=0} \left[\frac{\left(\int_{0}^x (\sin y)^{\alpha} \dd y\right)^{\nu-\kappa}}{(\sin x)^{\alpha \nu}}\right] = 0.
$$
\end{remark}

\subsection{Evaluating the residue: Proof of Proposition~\ref{prop:J_n_1_half_integer}}
We are going to apply Theorem~\ref{theo:bJ_formula_residue} with $\alpha = n + 2m - 2$ and $k=1$. Note that for every integer  $n\geq 3$ exactly one of the parity assumptions (i) or (ii) of this theorem is satisfied. By Theorem~\ref{theo:bJ_formula_residue}, our task reduces to showing that for every fixed $m\in\N_0$,
$$
R_m(n) := (n+2m-1)^{n-1} [x^{-1}] g_{n,m}(x)
$$
is a rational function of $n$ with rational coefficients, where
$$
g_{n,m}(x) = \frac{\left(\int_{0}^x (\sin y)^{n+2m-2} \dd y\right)^{n-1}}{(\sin x)^{(n +2m - 2) n +2}}.
$$
Since $\sin x \sim  x$ as $x\to 0$, it is easy to check that the first term of the Laurent expansion of $g_{n,m}(x)$ is  as follows:
$$
g_{n,m}(x) \sim x^{(n+2m-1)(n-1) - (n +2m - 2) n  - 2} = x^{-2m-1}, \qquad x\to 0.
$$
Let us stress that the power on the right-hand side does not depend on $n$, which is crucial for the following argument and explains why Proposition~\ref{prop:J_n_1_half_integer} cannot be extended to the quantities $\bJ_{n,k}(m-\frac 12)$ with $k\geq 2$ (for which a similar computation yields a power depending on $n$).

In the rest of the proof, we shall analyze the Laurent series of $g_{n,m}(x)$ at $x=0$. The Taylor series of $\sin y$, raised to the power $n+2m-2$, can be written as
$$
(\sin y)^{n + 2m-2}
=
y^{n + 2m - 2}\left(1 - \frac{y^2}{3!} + \frac{y^4}{5!} - \ldots\right)^{n+2m-2}
=
y^{n + 2m - 2} \sum_{k=0}^{\infty} P_{k}(n) y^{2k},
$$
where $P_0(n)=1, P_{1}(n) = -\frac 16 (n+2m-2),\ldots$ are polynomials in $n$ with rational coefficients. For example, the term $y^4$ can appear on the right-hand side either as a product of the term $\frac 1{5!} y^4$ and many $1$'s (which occurs $n+2m-2$ times), or as a product of two terms $-\frac 1{3!} y^2$ and many $1$'s (which occurs $\frac 12 (n+2m-2)(n+2m-3)$ times). Similar argument applies to any $y^{2k}$ and shows that the $P_k(n)$'s are indeed polynomials in $n$.  The dependence on $m$ is suppressed in our notation and the coefficients of all polynomials and rational functions are rational by default. Integrating, we obtain
$$
\int_{0}^x (\sin y)^{n+2m-2} \dd y
=
\sum_{k=0}^{\infty} P_{k}(n) \frac{x^{n + 2m - 1 + 2k}}{n+2m-1+2k}
=
\frac {x^{n + 2m - 1}} {n+2m-1} \sum_{k=0}^{\infty} S_{k}(n) x^{2k}
,
$$
where $S_k(n)$ are rational functions of $n$  with $S_0(n) = P_0(n) = 1$. Raising this Taylor series to the power $n-1$, we can write
\begin{equation}\label{eq:power_series1}
\left(\int_{0}^x (\sin y)^{n+2m-2} \dd y\right)^{n-1}
=
\frac {x^{(n + 2m - 1)(n-1)}} {(n+2m-1)^{n-1}} \sum_{k=0}^\infty  T_{k}(n) x^{2k},
\end{equation}
where $T_k(n)$ are rational functions of $n$ with $T_0(n) = 1$. Similar argumentation shows that
\begin{equation}\label{eq:power_series2}
(\sin x)^{(n +2m - 2) n +2} = x^{(n +2m - 2) n +2} \sum_{k=0}^\infty U_k(n) x^{2k},
\end{equation}
where $U_k(n)$ are polynomials in $n$ with $U_0(n) = 1$. Dividing the power series in~\eqref{eq:power_series1} and~\eqref{eq:power_series2}, we obtain the following Laurent expansion:
$$
g_{n,m}(x) =   \frac {x^{-2m-1}} {(n+2m-1)^{n-1}} \sum_{k=0}^\infty V_{k,m}(n) x^{2k},
$$
where $V_{k,m}(n)$ are rational functions of $n$ with $V_{0,m}(n) = 1$. The residue of $f$ at $0$ is determined by the term with $k=m$ in the above series. Hence,
$$
R_m(n) := (n+2m-1)^{n-1} [x^{-1}] g_{n,m}(x) = V_{m,m}(n)
$$
is a rational function of $n$. The above argument is in fact an algorithm for computing $R_m(n)$. The expressions for $R_0,R_1,R_2$ were obtained by running this algorithm.
\hfill $\Box$

\begin{remark}
The above proof cannot be adapted to the quantities of the form $\bJ_{n,1}(m)$ with $m\in\{-1,0,\ldots\}$. Indeed, we cannot use Theorem~\ref{theo:bJ_formula_residue} with $\alpha = n + 2m - 1$ and $k=1$ since neither of the parity assumptions (i) or (ii) is satisfied.
\end{remark}

\section{Proofs in the beta' case} \label{sec:proof_beta_tilde}

\subsection{Recurrence relations for external quantities}
Since the proofs in the beta' case are very similar to the proofs in the beta case, we only sketch them. Our aim is to prove Theorem~\ref{theo:bJ_tilde_formula_integral} and then to deduce its corollaries. Fix some  $\alpha >0$ once and for all.  Define
\begin{equation}\label{eq:def_F_tilde}
\tilde F(x) = \int_{-\pi/2}^x (\cos y)^{\alpha-1} \dd y, \qquad x\in \R.
\end{equation}
The same formula defines a univalued analytic function on the complex plane if we agree to make two cuts at $(-\infty, -\frac \pi2]$ and  $[+\frac \pi2,\infty)$.   For $\nu,\kappa\in \C$ such that $\Re \kappa > 0$ and $\nu-\kappa\in \N_0$ we define
\begin{equation}\label{eq:def_B_tilde}
\mB\{\nu, \kappa\}
:=
\frac{\alpha^{\nu-\kappa}}{(\nu-\kappa)!}\int_{-\pi/2}^{+\pi/2} (\cos x)^{\alpha \kappa-1} (\tilde F(x))^{\nu-\kappa} \dd x.
\end{equation}
By convention, we also put $\mB \{\nu,\kappa\} = 0$ if $\nu-\kappa \in \{-1,-2,\ldots\}$.  The recurrence relations satisfied by $\mB \{\nu,\kappa\}$ are as follows.
\begin{proposition}\label{prop:B_relation_tilde}
For $\kappa, \nu \in \C$ such that $\Re \kappa > 0$ and $\nu-\kappa\in \N_0$, we have
\begin{equation}\label{eq:B_relation_tilde}
\mB\{\nu,\kappa+2\} = (\kappa+1) \kappa \, \mB\{\nu,\kappa\}
-
\left(\kappa + \frac 1\alpha\right) (\kappa+1)\, \mB \left\{\nu +\frac 2\alpha, \kappa + \frac 2 \alpha\right\}.
\end{equation}
\end{proposition}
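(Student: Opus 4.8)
The plan is to follow the proof of Proposition~\ref{prop:B_relation} almost verbatim, adjusting the exponents to reflect that here $\tilde F'(x) = (\cos x)^{\alpha-1}$ (rather than $(\cos x)^{\alpha}$) and that the integrand in \eqref{eq:def_B_tilde} carries $(\cos x)^{\alpha\kappa - 1}$ (rather than $(\cos x)^{\alpha\kappa}$). As there, I would first treat the generic case $\nu-\kappa\in\{2,3,\ldots\}$ and recover the two diagonal cases $\nu=\kappa$ and $\nu=\kappa+1$ separately at the end. Starting from the definition of $\mB\{\nu,\kappa+2\}$ in \eqref{eq:def_B_tilde}, I would split off a factor $(\cos x)^{\alpha-1}$ from the power $(\cos x)^{\alpha(\kappa+2)-1}$ and use $(\cos x)^{\alpha-1}(\tilde F(x))^{\nu-\kappa-2}\,\dd x = (\nu-\kappa-1)^{-1}\,\dd(\tilde F(x))^{\nu-\kappa-1}$, so that a first integration by parts shifts the derivative onto $(\cos x)^{\alpha\kappa+\alpha}$ and produces the factor $\kappa+1$; the boundary term vanishes because $\Re(\alpha\kappa+\alpha)>0$.

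A second integration by parts, after rewriting the remaining $(\cos x)^{\alpha\kappa+\alpha-1}\sin x$ as $(\cos x)^{\alpha\kappa}\sin x\cdot(\cos x)^{\alpha-1}$ and again absorbing $(\cos x)^{\alpha-1}\,\dd x$ into $\dd(\tilde F(x))^{\nu-\kappa}$, moves the derivative onto $(\cos x)^{\alpha\kappa}\sin x$. The boundary term is now $(\cos x)^{\alpha\kappa}\sin x\,(\tilde F(x))^{\nu-\kappa}$, which vanishes precisely under the hypothesis $\Re\kappa>0$; this is exactly the step explaining why the admissible range is $\Re\kappa>0$ rather than the stronger $\Re\kappa>\frac1\alpha$ forced in the beta case (where the analogous boundary term carried $(\cos x)^{\alpha\kappa-1}$). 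Computing $((\cos x)^{\alpha\kappa}\sin x)' = (\cos x)^{\alpha\kappa+1} - \alpha\kappa(\cos x)^{\alpha\kappa-1}\sin^2 x$ and substituting $\sin^2 x = 1-\cos^2 x$ then leaves only the powers $(\cos x)^{\alpha\kappa-1}$ and $(\cos x)^{\alpha\kappa+1}$. The first reproduces $\mB\{\nu,\kappa\}$, contributing $(\kappa+1)\kappa\,\mB\{\nu,\kappa\}$; the second reproduces $\mB\{\nu+\frac2\alpha,\kappa+\frac2\alpha\}$, since replacing $(\nu,\kappa)$ by $(\nu+\frac2\alpha,\kappa+\frac2\alpha)$ leaves $\nu-\kappa$ unchanged and raises the cosine exponent from $\alpha\kappa-1$ to $\alpha\kappa+1$, contributing $-(\kappa+\frac1\alpha)(\kappa+1)\,\mB\{\nu+\frac2\alpha,\kappa+\frac2\alpha\}$. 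Collecting the two terms gives exactly \eqref{eq:B_relation_tilde}.

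It remains to dispatch the diagonal cases, where $\mB\{\nu,\kappa+2\}=0$ by convention and the asserted identity collapses, after dividing by $\kappa+1$, to $\kappa\,\mB\{\kappa,\kappa\} = (\kappa+\frac1\alpha)\,\mB\{\kappa+\frac2\alpha,\kappa+\frac2\alpha\}$ when $\nu=\kappa$, and to $\kappa\,\mB\{\kappa+1,\kappa\} = (\kappa+\frac1\alpha)\,\mB\{\kappa+1+\frac2\alpha,\kappa+\frac2\alpha\}$ when $\nu=\kappa+1$. For $\nu=\kappa$ both sides are computed from \eqref{eq:int_cos_cosh}, and the identity reduces to the Gamma recursion $\Gamma(z+1)=z\Gamma(z)$ applied once in the numerator and once in the denominator. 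For $\nu=\kappa+1$, the observation that $(\cos x)^{\alpha\kappa\pm1}(\tilde F(x)-\tilde F(0))$ is odd (by \eqref{eq:def_F_tilde}) shows $\mB\{\kappa+1,\kappa\} = \alpha\,\tilde F(0)\int_{-\pi/2}^{+\pi/2}(\cos x)^{\alpha\kappa-1}\,\dd x$ and likewise for the shifted quantity, so this case reduces to the very same Gamma identity. The only genuine subtlety, and hence the step to verify most carefully, is the vanishing of the two boundary terms under $\Re\kappa>0$; the remainder is bookkeeping identical in structure to the beta case.
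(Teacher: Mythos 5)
Your proof is correct and follows essentially the same route as the paper's: the same two integrations by parts (with the derivative absorbed into $\dd(\tilde F(x))^{\nu-\kappa-1}$ and then $\dd(\tilde F(x))^{\nu-\kappa}$, followed by $\sin^2 x = 1-\cos^2 x$), and the same separate treatment of the diagonal cases $\nu=\kappa$ and $\nu=\kappa+1$ via \eqref{eq:int_cos_cosh} and the oddness of $(\cos x)^{\alpha\kappa\pm 1}(\tilde F(x)-\tilde F(0))$. Your observation that the second boundary term $(\cos x)^{\alpha\kappa}\sin x\,(\tilde F(x))^{\nu-\kappa}$ is exactly what fixes the weaker hypothesis $\Re\kappa>0$ (versus $\Re\kappa>\frac1\alpha$ in the beta case) is accurate and is the right point to single out.
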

\begin{proof}
The idea is the same as in the proof of Proposition~\ref{prop:B_relation}. Let first $\nu-\kappa \in \{2,3,\ldots\}$.
Writing $\dd (\tilde F(x))^{\nu-\kappa-1} = (\nu-\kappa-1) (\tilde F(x))^{\nu-\kappa-2} (\cos x)^{\alpha-1} \dd x$ and integrating by parts, we obtain
\begin{align*}
\mB\{\nu, \kappa+2\}
&=
\frac {\alpha^{\nu-\kappa-2}} { (\nu-\kappa-1)!} \int_{-\pi/2}^{+\pi/2} (\cos x)^{\alpha \kappa + \alpha} \dd (\tilde F(x))^{\nu-\kappa-1}\\
&=
\frac {(\kappa+1)\alpha^{\nu-\kappa-1}} {(\nu-\kappa-1)!} \int_{-\pi/2}^{+\pi/2} (\tilde F(x))^{\nu-\kappa-1} (\cos x)^{\alpha \kappa + \alpha - 1} (\sin x) \dd x
\end{align*}
Writing $\dd (\tilde F(x))^{\nu-\kappa} = (\nu-\kappa) (\tilde F(x))^{\nu-\kappa-1} (\cos x)^{\alpha-1} \dd x$ and again integrating  by parts, we obtain
\begin{align*}
\mB \{\nu, \kappa+2\}
&=
\frac {(\kappa + 1) \alpha^{\nu-\kappa-1}} {(\nu-\kappa)!}
\int_{-\pi/2}^{+\pi/2} (\cos x)^{\alpha \kappa} (\sin x) \dd (\tilde F(x))^{\nu-\kappa}\\
&=
- \frac {(\kappa + 1) \alpha^{\nu-\kappa-1}} {(\nu-\kappa)!}
\int_{-\pi/2}^{+\pi/2} (\tilde F(x))^{\nu-\kappa} \left((\cos x)^{\alpha \kappa} \sin x\right)' \dd x\\
&=
 \frac {(\kappa + 1) \alpha^{\nu-\kappa-1}} {(\nu-\kappa)!}
\int_{-\pi/2}^{+\pi/2} (\tilde F(x))^{\nu-\kappa} \left(\alpha \kappa\, (\cos x)^{\alpha \kappa  - 1} \sin^2 x -(\cos x)^{\alpha \kappa+1} \right) \dd x
\end{align*}
Using the identity $\sin^2 x = 1-\cos^2 x$  we can write
\begin{align*}
\mB \{\nu, \kappa+2\}
&=
\frac {(\kappa + 1) \alpha^{\nu-\kappa-1}} {(\nu-\kappa)!}
\int_{-\pi/2}^{+\pi/2} (\tilde F(x))^{\nu-\kappa} \alpha \kappa \,(\cos x)^{\alpha \kappa - 1} \dd x\\
&\phantom{=}
-
\frac {(\kappa + 1) \alpha^{\nu-\kappa-1}} {(\nu-\kappa)!}
\int_{-\pi/2}^{+\pi/2} (\tilde F(x))^{\nu-\kappa} (\alpha\kappa+1)(\cos x)^{\alpha \kappa +1}\dd x.
\end{align*}
Recalling the definition of $\mB\{\nu,\kappa\}$, we arrive at
$$
\mB\{\nu,\kappa+2\} = (\kappa+1)\kappa \, \mB\{\nu,\kappa\}
-
\left(\kappa + \frac 1\alpha\right) (\kappa+1)\, \mB \left\{\nu +\frac 2\alpha, \kappa + \frac 2 \alpha\right\},
$$
which completes the proof if $\nu-\kappa\in \{2,3,\ldots\}$. The remaining cases $\nu=\kappa$ and $\nu=\kappa+1$ follow from the identities
\begin{align}
\mB\{\kappa, \kappa\}
&=
\int_{-\pi/2}^{+\pi/2} (\cos x)^{\alpha \kappa-1}\dd x
=
\frac{\sqrt \pi\, \Gamma\left(\frac{\alpha \kappa}{2}\right)}{\Gamma\left(\frac{\alpha \kappa +1}{2}\right)},\\
\mB\{\kappa +1, \kappa\}
&=
\alpha \int_{-\pi/2}^{+\pi/2} (\cos x)^{\alpha \kappa-1} \tilde F(x) \dd x
=
\alpha \int_{-\pi/2}^{+\pi/2} (\cos x)^{\alpha \kappa-1} \tilde F(0) \dd x\\
&=
\frac{\alpha}{2}\cdot \frac{\sqrt \pi\, \Gamma\left(\frac{\alpha}{2}\right)}{\Gamma\left(\frac{\alpha +1}{2}\right)} \cdot \frac{\sqrt \pi\, \Gamma\left(\frac{\alpha \kappa}{2}\right)}{\Gamma\left(\frac{\alpha \kappa +1}{2}\right)},
\end{align}
which are consequences of~\eqref{eq:def_B_tilde}, \eqref{eq:int_cos_cosh} and the fact that the function $(\cos x)^{\alpha \kappa-1} (\tilde F(x)-\tilde F(0))$ is odd.
\end{proof}

\subsection{Recurrence relations for internal quantities}
The dual quantities (which, as we shall see, are related to expected internal angles) are defined as follows. For $\nu,\kappa\in \C$ such that $\nu-\kappa\in \N_0$ we put
\begin{align}
\mA[\nu,\kappa]
&:=
\frac{\alpha^{\nu - \kappa + 1}}{(\nu - \kappa)!} \cdot \frac 1 {2\pi \ii} \int_{- \ii \infty}^{+\ii \infty} (\cos x)^{-\alpha \nu-1} (\tilde F(x))^{\nu-\kappa} \dd x \label{eq:def_A_0_tilde1}\\
&=
\frac{\alpha^{\nu - \kappa + 1}}{(\nu - \kappa)!} \cdot \frac 1 {2\pi} \int_{-\infty}^{+\infty} (\cosh u)^{-\alpha \nu-1} (\tilde F(\ii u))^{\nu-\kappa} \dd u. \label{eq:def_A_0_tilde2}
\end{align}
To ensure that the integral~\eqref{eq:def_A_0_tilde2} is absolutely convergent we have to impose the following additional conditions on  $\nu$ and $\kappa$:
\begin{align}
&\alpha \Re \nu + 1 > 0 \qquad  &&\text{if } \alpha \in (0,1],\label{eq:cond_nu_kappa1}\\
&\alpha \Re \kappa + (\nu-\kappa) + 1 > 0  \qquad  &&\text{if } \alpha >1.\label{eq:cond_nu_kappa2}
\end{align}
To see that the condition in the first line suffices, observe that in the case when $0<\alpha<1$, the limit of $\tilde F(\ii u)$ as $u\to \pm \infty$ is finite, while in the case when $\alpha=1$, the function $\tilde F(\ii u) = \frac \pi 2 + \ii u$ grows subexponentially. To see that the condition in the second line suffices, observe that for $\alpha>1$ the L'Hospital rule yields
\begin{equation}\label{eq:F_asympt_tilde}
\tilde F(\ii u) = \frac{\ii \cdot \sgn u}{(\alpha-1) 2^{\alpha-1}}  \eee^{(\alpha-1) |u|} (1+o(1)), \qquad \text{as } u\to \pm \infty.
\end{equation}
The quantities $\mA[\nu,\kappa]$ satisfy a recurrence relation which is ``dual'' to the relation satisfied by $\mB\{\nu,\kappa\}$.
\begin{proposition}\label{prop:A_0_relations_tilde}
For all $\nu,\kappa\in \C$ such that $\nu-\kappa\in \N_0$ and Conditions~\eqref{eq:cond_nu_kappa1}, \eqref{eq:cond_nu_kappa2} hold,  we have
\begin{align*}
\mA[\nu-2,\kappa]
=
(\nu-1)\nu \, \mA[\nu,\kappa] -\left(\nu-\frac 1 \alpha\right) (\nu-1) \,\mA \left[\nu - \frac 2\alpha, \kappa - \frac 2 \alpha\right].
\end{align*}
\end{proposition}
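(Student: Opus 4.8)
The plan is to follow the proof of Proposition~\ref{prop:A_0_relations} line by line, with the single systematic change that the weight $(\cos x)^{\alpha}$ generating $F$ is replaced by $(\cos x)^{\alpha-1}$ generating $\tilde F$ (see~\eqref{eq:def_F_tilde}) and the cosine power $-\alpha\nu$ in the integrand of $\lA$ is replaced by $-\alpha\nu-1$ as in~\eqref{eq:def_A_0_tilde1}. First I treat the generic case $\nu-\kappa\in\{2,3,\dots\}$ and integrate by parts twice along the imaginary axis. Starting from~\eqref{eq:def_A_0_tilde1}, the integrand of $\mA[\nu-2,\kappa]$ is $(\cos x)^{-\alpha\nu+2\alpha-1}(\tilde F(x))^{\nu-\kappa-2}$; peeling off one factor $(\cos x)^{\alpha-1}\dd x=\dd\tilde F(x)$ rewrites the integral as $\int(\cos x)^{-\alpha\nu+\alpha}\,\dd(\tilde F(x))^{\nu-\kappa-1}$, and since $\dd(\cos x)^{-\alpha\nu+\alpha}=\alpha(\nu-1)(\cos x)^{-\alpha\nu+\alpha-1}\sin x\,\dd x$ one obtains
\begin{equation*}
\mA[\nu-2,\kappa] = -\frac{(\nu-1)\alpha^{\nu-\kappa}}{(\nu-\kappa-1)!}\cdot\frac1{2\pi\ii}\int_{-\ii\infty}^{+\ii\infty}(\tilde F(x))^{\nu-\kappa-1}(\cos x)^{-\alpha\nu+\alpha-1}\sin x\,\dd x,
\end{equation*}
which is the exact analogue of the intermediate expression in the beta case.

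A second integration by parts, now peeling $(\cos x)^{\alpha-1}\dd x=\dd\tilde F(x)$ off $(\cos x)^{-\alpha\nu+\alpha-1}=(\cos x)^{-\alpha\nu}(\cos x)^{\alpha-1}$, converts the integral into $\int(\cos x)^{-\alpha\nu}\sin x\,\dd(\tilde F(x))^{\nu-\kappa}$; differentiating and substituting $\sin^2 x=1-\cos^2 x$ produces the integrand $(\tilde F(x))^{\nu-\kappa}\bigl[\alpha\nu(\cos x)^{-\alpha\nu-1}+(1-\alpha\nu)(\cos x)^{-\alpha\nu+1}\bigr]$. The first summand is exactly the integrand of $\mA[\nu,\kappa]$, while in the second $(\cos x)^{-\alpha\nu+1}=(\cos x)^{-\alpha(\nu-2/\alpha)-1}$ is the integrand of $\mA[\nu-\frac2\alpha,\kappa-\frac2\alpha]$ (both shifts preserve $\nu-\kappa$). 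Restoring the prefactors of~\eqref{eq:def_A_0_tilde1} and using $\frac{1-\alpha\nu}\alpha=-(\nu-\frac1\alpha)$ collapses everything to
\begin{equation*}
\mA[\nu-2,\kappa] = (\nu-1)\nu\,\mA[\nu,\kappa]-\left(\nu-\tfrac1\alpha\right)(\nu-1)\,\mA\left[\nu-\tfrac2\alpha,\kappa-\tfrac2\alpha\right],
\end{equation*}
which is the asserted relation.

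The hard part is justifying that the boundary terms in both integrations by parts vanish at $\pm\ii\infty$. Using $\cos(\ii u)=\cosh u$, the asymptotics~\eqref{eq:F_asympt_tilde} of $\tilde F(\ii u)$ (and the bounded, resp.\ linear, growth when $\alpha<1$, resp.\ $\alpha=1$), a direct estimate shows that every boundary integrand and every intermediate integrand decays like $\exp\{(1-\alpha\Re\kappa-(\nu-\kappa))|u|\}$; for $\alpha>1$ this exponent is negative precisely under Condition~\eqref{eq:cond_nu_kappa2} applied to $\mA[\nu-2,\kappa]$, the most restrictive of the three quantities involved, so no extra hypothesis is needed, while for $\alpha\le1$ one first proves the identity for $\Re\nu$ large, where Conditions~\eqref{eq:cond_nu_kappa1}--\eqref{eq:cond_nu_kappa2} manifestly force decay, and then extends it to the full domain by analytic continuation in $\kappa$ with $\nu-\kappa$ held fixed, both sides being analytic there. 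Finally, the degenerate cases $\nu=\kappa$ and $\nu=\kappa+1$ must be settled separately: there $\mA[\nu-2,\kappa]=0$ by convention and, after cancelling the factor $\nu-1$, the identity reduces to $\kappa\,\mA[\kappa,\kappa]=(\kappa-\frac1\alpha)\mA[\kappa-\frac2\alpha,\kappa-\frac2\alpha]$ and to $(\kappa+1)\mA[\kappa+1,\kappa]=(\kappa+1-\frac1\alpha)\mA[\kappa+1-\frac2\alpha,\kappa-\frac2\alpha]$. I would check these directly from the closed form $\mA[\kappa,\kappa]=\frac{\alpha}{2\sqrt\pi}\,\Gamma(\frac{\alpha\kappa+1}2)/\Gamma(\frac{\alpha\kappa+2}2)$ (from~\eqref{eq:def_A_0_tilde2} and~\eqref{eq:int_cos_cosh}) and the companion formula for $\mA[\kappa+1,\kappa]$ obtained by noting that $\Re\tilde F(\ii u)=\tilde F(0)$, so that only $\tilde F(0)$ survives the integration; in each case the verification reduces to the functional equation $\Gamma(z+1)=z\Gamma(z)$.
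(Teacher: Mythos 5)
Your proof is correct and follows essentially the same route as the paper: the paper's own proof simply says ``analogous to Proposition~\ref{prop:A_0_relations}'' (two integrations by parts peeling off $\dd\tilde F(x)=(\cos x)^{\alpha-1}\dd x$ along the imaginary axis, with $\sin^2 x=1-\cos^2 x$ producing the two terms) and then settles the degenerate cases $\nu=\kappa$ and $\nu=\kappa+1$ via the explicit Gamma-function formulae for $\mA[\kappa,\kappa]$ and $\mA[\kappa+1,\kappa]$, exactly as you do. Your additional care with the boundary-term decay rate $\exp\{(1-\alpha\Re\kappa-(\nu-\kappa))|u|\}$ and the analytic-continuation step for $\alpha\le 1$ fills in details the paper leaves implicit, and your reductions of the degenerate cases to $\Gamma(z+1)=z\Gamma(z)$ check out.
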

\begin{proof}
Analogous to the proof of Proposition~\ref{prop:A_0_relations}. The cases $\nu = \kappa$ and $\nu = \kappa+1$ have to be considered separately by using the formulae
\begin{align}
\mA[\kappa,\kappa]
&=
\frac \alpha {2\pi} \int_{-\infty}^{+\infty} (\cosh u)^{-\alpha \kappa-1} \dd u =
\frac \alpha 2 \cdot \frac{\Gamma\left(\frac{\alpha \kappa +1}{2}\right)}{\sqrt \pi\, \Gamma\left(\frac{\alpha \kappa+2}{2}\right)} = \frac 1 \kappa \, \tilde c_{\frac{\alpha \kappa + 1}{2}}, \label{eq:a_diag_tilde}\\
\mA[\kappa+1,\kappa]
&=
\frac{\alpha^{2}}{2\pi} \int_{-\infty}^{+\infty} (\cosh u)^{-\alpha \kappa - \alpha -1} \tilde F(\ii u) \dd u
=
\frac{\alpha^{2}}{2\pi} \tilde F(0) \int_{-\infty}^{+\infty} (\cosh u)^{-\alpha \kappa - \alpha -1}  \dd u
\notag\\
&=
\frac \alpha 2 \cdot \frac{\Gamma(\frac{\alpha+2}{2})}{\Gamma(\frac {\alpha+1} 2)} \cdot \frac{\Gamma(\frac{\alpha \kappa +\alpha + 1}{2})}{\Gamma(\frac {\alpha\kappa+\alpha+2} 2)}.
\end{align}
In the proof of the second formula we used that $\Re \tilde F(\ii u) = \tilde F(0) = \frac{\sqrt{\pi}\, \Gamma(\frac{\alpha+2}{2})}{\alpha \, \Gamma(\frac {\alpha+1} 2)}$, while the imaginary part of the integral cancels.
\end{proof}
\begin{remark}\label{rem:other_quantities_tilde}
Sometimes it is more convenient to work with the quantities
\begin{equation}\label{eq:def_mtA_mtB}
\mtB\{\nu,\kappa\} := \frac{\mB\{\nu,\kappa\}}{\Gamma(\kappa)},
\qquad
\mtA[\nu,\kappa] := \mA[\nu,\kappa] \cdot \Gamma(\nu+1).
\end{equation}
In the special case $\alpha=2$ these definitions are consistent with the ones introduced in Section~\ref{sec:simplest_case_proof}. This will be shown in Section~\ref{subsec:special_cases}.
With this notation, the recurrence relations of Propositions~\ref{prop:B_relation_tilde} and~\ref{prop:A_0_relations_tilde} take the form
\begin{align}
&\mtB\{\nu,\kappa\} - \mtB\{\nu,\kappa+2\}
=
\frac{\left(\kappa  +  \frac 1\alpha\right) \Gamma\left(\kappa  + \frac 2\alpha\right)} {\Gamma(\kappa+1)} \mtB \left\{\nu + \frac 2\alpha, \kappa + \frac 2 \alpha\right\},\label{eq:b_relation_tilde}\\
&
\mtA[\nu,\kappa] - \mtA[\nu-2,\kappa]
=
\frac{\left(\nu - \frac 1 \alpha\right)\Gamma(\nu)}{\Gamma\left(\nu - \frac 2 \alpha + 1\right)} \mtA \left[\nu  -  \frac 2\alpha, \kappa -  \frac 2 \alpha\right]. \label{eq:a_0_relation_tilde}
\end{align}
\end{remark}

\subsection{Evaluating the sum}
Fix some $r\in\N_0$ and consider the expression
\begin{equation}\label{eq:def_s_r_tilde}
\tilde s_r(\kappa)
:=
\sum_{\mu = \kappa}^{\kappa + r}
(-1)^{\mu - \kappa}
\mB\{\kappa + r, \mu\}
\left(\mu - \frac 1\alpha\right) \mA\left[\mu-\frac 2\alpha, \kappa-\frac 2\alpha\right].
\end{equation}
Using~\eqref{eq:cond_nu_kappa1} and~\eqref{eq:cond_nu_kappa2} it is easy to check that $\tilde s_r(\kappa)$ is well defined provided $\Re \kappa > \frac 1\alpha$, for all $\alpha>0$.
\begin{proposition}\label{prop:periodic_tilde}
Let $\alpha\in\N$. Then, for every $r\in\N_0$ and all $\kappa\in\C$ such that $\Re \kappa > \frac 1 \alpha$,
$$
\tilde s_{r}(\kappa) = \tilde s_r \left(\kappa + \frac 2 \alpha\right).
$$
\end{proposition}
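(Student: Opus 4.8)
The proof will follow the proof of Proposition~\ref{prop:periodic} line by line, the only inputs being the two recurrence relations recorded in~\eqref{eq:b_relation_tilde} and~\eqref{eq:a_0_relation_tilde} for the normalised quantities $\mtB\{\nu,\kappa\}$ and $\mtA[\nu,\kappa]$ from~\eqref{eq:def_mtA_mtB}. The structural difference with the beta case is that in the beta$'$ recurrences the arguments on the right-hand side are shifted \emph{up} by $2/\alpha$ rather than down, which is precisely the reason the period comes out as $+2/\alpha$ instead of $-2/\alpha$. The hypotheses $\alpha\in\N$ and $\Re\kappa>\tfrac1\alpha$ ensure that every $\mtA$- and $\mtB$-term occurring in the range $\mu\in\{\kappa,\dots,\kappa+r\}$ is well defined (via~\eqref{eq:cond_nu_kappa1},~\eqref{eq:cond_nu_kappa2}) and that both recurrences are applicable at the points used.

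First I would rewrite the definition~\eqref{eq:def_s_r_tilde} of $\tilde s_r(\kappa)$ in terms of $\mtA,\mtB$, obtaining
$$
\tilde s_r(\kappa)
=
\sum_{\mu=\kappa}^{\kappa+r}
(-1)^{\mu-\kappa}\,\mtB\{\kappa+r,\mu\}\,
\frac{\left(\mu-\frac1\alpha\right)\Gamma(\mu)}{\Gamma\!\left(\mu-\frac2\alpha+1\right)}\,
\mtA\!\left[\mu-\tfrac2\alpha,\kappa-\tfrac2\alpha\right].
$$
Applying the $\mtA$-recurrence~\eqref{eq:a_0_relation_tilde} with $\nu=\mu$ replaces the Gamma-factor times $\mtA[\mu-\tfrac2\alpha,\kappa-\tfrac2\alpha]$ by the difference $\mtA[\mu,\kappa]-\mtA[\mu-2,\kappa]$, so that
$$
\tilde s_r(\kappa)
=
\sum_{\mu=\kappa}^{\kappa+r}
(-1)^{\mu-\kappa}\,\mtB\{\kappa+r,\mu\}\bigl(\mtA[\mu,\kappa]-\mtA[\mu-2,\kappa]\bigr).
$$
Next I would split this into two sums and, in the one containing $\mtA[\mu-2,\kappa]$, substitute $\mu\mapsto\mu+2$. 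The range $\{\kappa,\dots,\kappa+r\}$ may then be restored: the two terms newly introduced at $\mu=\kappa-2,\kappa-1$ vanish because $\mtA[\mu,\kappa]=0$ for $\mu-\kappa\in\{-2,-1\}$, while the two terms dropped at $\mu=\kappa+r-1,\kappa+r$ vanish because $\mtB\{\kappa+r,\mu+2\}=0$ for $(\kappa+r)-(\mu+2)\in\{-1,-2\}$. After recombining this yields
$$
\tilde s_r(\kappa)
=
\sum_{\mu=\kappa}^{\kappa+r}
(-1)^{\mu-\kappa}\bigl(\mtB\{\kappa+r,\mu\}-\mtB\{\kappa+r,\mu+2\}\bigr)\mtA[\mu,\kappa].
$$

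To conclude I would apply the $\mtB$-recurrence~\eqref{eq:b_relation_tilde} to the difference of $\mtB$-terms and then make the change of summation index $\mu^\ast:=\mu+\tfrac2\alpha$. Under this substitution the prefactor $\frac{(\mu+\frac1\alpha)\Gamma(\mu+\frac2\alpha)}{\Gamma(\mu+1)}$ becomes $\frac{(\mu^\ast-\frac1\alpha)\Gamma(\mu^\ast)}{\Gamma(\mu^\ast-\frac2\alpha+1)}$, the factor $\mtB\{\kappa+r+\tfrac2\alpha,\mu+\tfrac2\alpha\}$ becomes $\mtB\{(\kappa+\tfrac2\alpha)+r,\mu^\ast\}$, and $\mtA[\mu,\kappa]=\mtA[\mu^\ast-\tfrac2\alpha,(\kappa+\tfrac2\alpha)-\tfrac2\alpha]$, while the sign $(-1)^{\mu-\kappa}=(-1)^{\mu^\ast-(\kappa+2/\alpha)}$ and the summation range $\{\kappa+\tfrac2\alpha,\dots,\kappa+\tfrac2\alpha+r\}$ turn into exactly those appearing in the first rewriting of $\tilde s_r$ with $\kappa$ replaced by $\kappa+\tfrac2\alpha$. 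Hence the resulting sum equals $\tilde s_r(\kappa+\tfrac2\alpha)$, proving the claim. The main obstacle is purely bookkeeping: one must track the signs and ranges through the two index shifts and verify, using the conventions $\mtA[\nu,\kappa]=\mtB\{\nu,\kappa\}=0$ for $\nu-\kappa$ a negative integer, that exactly the right boundary terms vanish; the telescoping closes only because the beta$'$ recurrences carry their shifts in the direction opposite to the beta case, so that the $+2/\alpha$ translation is the one reproducing the identity.
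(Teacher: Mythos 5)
Your proposal is correct and takes essentially the same route as the paper: the paper's own proof of Proposition~\ref{prop:periodic_tilde} simply states that the argument of Proposition~\ref{prop:periodic} ``applies with obvious changes,'' and what you have written out is exactly those changes — the same rewrite in terms of $\mtA,\mtB$, the same telescoping/index-shift with vanishing boundary terms, and the two beta' recurrences \eqref{eq:a_0_relation_tilde}, \eqref{eq:b_relation_tilde} in place of \eqref{eq:a_0_relation}, \eqref{eq:b_relation}, with the reversed shift direction correctly producing the period $+\tfrac2\alpha$ instead of $-\tfrac2\alpha$. Your bookkeeping of signs, ranges, and the zero conventions for $\mtA[\nu,\kappa]$ and $\mtB\{\nu,\kappa\}$ with $\nu-\kappa$ a negative integer matches the paper's treatment in the beta case, so there is no gap.
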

\begin{proof}
Analogous to the proof of Proposition~\ref{prop:periodic}. Indeed, the definition of $\tilde s_r(\kappa)$ takes the form
\begin{equation}\label{eq:def_s_r_new_tilde}
\tilde s_r(\kappa)
=
\sum_{\mu = \kappa}^{\kappa + r}
(-1)^{\mu - \kappa}
\mtB\{\kappa + r, \mu\}
\frac{\left(\mu - \frac 1\alpha\right)\Gamma(\mu)}{\Gamma \left(\mu - \frac 2\alpha+1\right)}
 \mtA\left[\mu - \frac 2\alpha, \kappa - \frac 2\alpha\right]
\end{equation}
and the relations satisfied by $\mtA[\nu,\kappa]$ and $\mtB\{\nu,\kappa\}$, see~\eqref{eq:a_0_relation_tilde}, \eqref{eq:b_relation_tilde},  are the same as the relations satisfied by $\ii^{\nu-\kappa}\rA[\nu,\kappa]$ and $\ii^{\nu-\kappa}\rB\{\nu,\kappa\}$, see~\eqref{eq:a_0_relation}, \eqref{eq:b_relation}, provided we replace $\alpha$ by $-\alpha$ throughout. Since the proof of Proposition~\ref{prop:periodic} used only the latter relations, it applies with obvious changes.
\end{proof}
In order to evaluate $\tilde s_r(\kappa)$, we need the following two lemmas on the asymptotic behaviour of $\mA[\nu,\kappa]$ and $\mB\{\nu,\kappa\}$.
\begin{lemma}\label{lem:B_asymptotic_tilde}
Fix some real $\kappa_1,\kappa_2\in\C$  such that $\kappa_1-\kappa_2\in\N_0$. Then, as $T\to+\infty$, we have
$$
\mB \left\{\kappa_1 +\frac 2 \alpha T, \kappa_2 + \frac 2 \alpha T\right\}
\sim
\frac{(\alpha \tilde F(0))^{\kappa_1-\kappa_2}}{(\kappa_1-\kappa_2)!} \sqrt{\frac {\pi}{T}}.
$$
\end{lemma}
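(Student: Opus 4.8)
The plan is to mirror the proof of Lemma~\ref{lem:B_asymptotic} essentially verbatim, the only change being the slightly different exponent of $\cos x$ in the definition~\eqref{eq:def_B_tilde} of the beta' quantity $\mB$. First I would substitute $\nu = \kappa_1 + \frac2\alpha T$ and $\kappa = \kappa_2 + \frac2\alpha T$ into~\eqref{eq:def_B_tilde}. Since $\nu-\kappa = \kappa_1-\kappa_2$ is a fixed non-negative integer, the combinatorial prefactor $\alpha^{\kappa_1-\kappa_2}/(\kappa_1-\kappa_2)!$ does not depend on $T$, and one is left with
$$
\mB\left\{\kappa_1 + \tfrac 2\alpha T, \kappa_2 + \tfrac 2\alpha T\right\}
=
\frac{\alpha^{\kappa_1-\kappa_2}}{(\kappa_1-\kappa_2)!}\int_{-\pi/2}^{+\pi/2} (\cos x)^{\alpha\kappa_2 + 2T - 1}(\tilde F(x))^{\kappa_1-\kappa_2}\,\dd x.
$$

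Next I would isolate the exponentially large factor by writing $(\cos x)^{\alpha\kappa_2 + 2T - 1} = \eee^{2T\log\cos x}(\cos x)^{\alpha\kappa_2 - 1}$ and apply the standard Laplace method \cite[Theorem~B.7 on p.~758]{Flajolet_book} with large parameter $2T$. The phase $\varphi(x) := \log\cos x$ attains its unique maximum on $[-\frac\pi2,+\frac\pi2]$ at $x=0$, where $\varphi(0)=0$ and $\varphi''(0)=-1$, while the amplitude $(\cos x)^{\alpha\kappa_2-1}(\tilde F(x))^{\kappa_1-\kappa_2}$ is analytic near $0$ and takes the value $(\tilde F(0))^{\kappa_1-\kappa_2}$ there, because $\cos 0 = 1$. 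Laplace's formula then yields
$$
\int_{-\pi/2}^{+\pi/2}\eee^{2T\log\cos x}(\cos x)^{\alpha\kappa_2-1}(\tilde F(x))^{\kappa_1-\kappa_2}\,\dd x
\sim
(\tilde F(0))^{\kappa_1-\kappa_2}\sqrt{\frac{2\pi}{2T\,|\varphi''(0)|}}
=
(\tilde F(0))^{\kappa_1-\kappa_2}\sqrt{\frac{\pi}{T}},
$$
and multiplying by the prefactor produces the claimed asymptotic.

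The one point requiring a word of justification — and the nearest thing to an obstacle — is that the amplitude $(\cos x)^{\alpha\kappa_2-1}$ may blow up at the endpoints $x=\pm\frac\pi2$ when $\Re(\alpha\kappa_2)<1$. This is harmless: for $T$ large enough the full exponent satisfies $\Re(\alpha\kappa_2)+2T-1>-1$, so the integral converges, and the factor $\eee^{2T\log\cos x}$ decays to $0$ at the endpoints and confines essentially all the mass to a shrinking neighbourhood of $x=0$, where the amplitude is bounded; thus Laplace's method applies exactly as in Lemma~\ref{lem:B_asymptotic}. The extra $(\cos x)^{-1}$ relative to the beta case contributes only the factor $(\cos 0)^{-1}=1$ at the critical point and so leaves the leading term unchanged.
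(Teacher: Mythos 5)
Your proof is correct and takes essentially the same route as the paper, which simply invokes the Laplace-method argument of Lemma~\ref{lem:B_asymptotic} (same phase $\varphi(x)=\log\cos x$, same critical point $x=0$ with $\varphi''(0)=-1$, amplitude evaluating to $(\tilde F(0))^{\kappa_1-\kappa_2}$). Your extra remark justifying integrability near the endpoints when $\Re(\alpha\kappa_2)<1$ is a welcome explicit treatment of a point the paper leaves implicit in the phrase ``almost identical.''
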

\begin{proof}
Almost identical to the proof of Lemma~\ref{lem:B_asymptotic}.
\end{proof}
\begin{lemma}\label{lem:A_asymptotic_tilde}
Fix some $\kappa_1,\kappa_2\in\C$ such that $\kappa_1-\kappa_2\in\N_0$.  Then, as $T\to+\infty$, we have
$$
\mA\left[\kappa_1 +\frac 2 \alpha T, \kappa_2 + \frac 2 \alpha T\right]
\sim
\frac \alpha 2 \cdot \frac{(\alpha \tilde F(0))^{\kappa_1-\kappa_2}}{(\kappa_1-\kappa_2)!}\cdot \frac {1}{\sqrt{\pi T}}.
$$
\end{lemma}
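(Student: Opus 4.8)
The plan is to follow the proof of Lemma~\ref{lem:A_0_asymptotic} (the beta case) line by line, altering only what the tilde definitions force. First I would substitute $\nu = \kappa_1 + \frac 2\alpha T$ and $\kappa = \kappa_2 + \frac 2\alpha T$ into the integral representation~\eqref{eq:def_A_0_tilde2} of $\mA[\nu,\kappa]$. Since $\nu-\kappa = \kappa_1-\kappa_2$ is a fixed nonnegative integer and $\alpha\nu = \alpha\kappa_1 + 2T$, this gives
\begin{equation*}
\mA\left[\kappa_1 +\tfrac 2 \alpha T, \kappa_2 + \tfrac 2 \alpha T\right]
=
\frac{\alpha^{\kappa_1 - \kappa_2 + 1}}{(\kappa_1 - \kappa_2)!} \cdot \frac 1 {2\pi} \int_{-\infty}^{+\infty} (\cosh u)^{-\alpha \kappa_1 - 2T-1} (\tilde F(\ii u))^{\kappa_1-\kappa_2} \dd u,
\end{equation*}
so that everything reduces to the large-$T$ asymptotics of this single integral.

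Second, I would apply the standard Laplace asymptotics quoted in \cite[Theorem~B.7 on p.~758]{Flajolet_book}. Writing $(\cosh u)^{-2T} = \eee^{-2T\log\cosh u}$, the phase $\phi(u) := -\log\cosh u$ has a unique maximum $\phi(0)=0$ on $\R$ with $\phi''(0) = -1$, exactly as in the beta case; the continuous amplitude $(\cosh u)^{-\alpha\kappa_1-1}(\tilde F(\ii u))^{\kappa_1-\kappa_2}$ takes the value $(\tilde F(0))^{\kappa_1-\kappa_2}$ at $u=0$. Hence the integral concentrates near $u=0$ and behaves like $\sqrt{\pi/T}\,(\tilde F(0))^{\kappa_1-\kappa_2}$. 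Combining this with the display above and simplifying $\frac{\alpha^{\kappa_1-\kappa_2+1}}{2\pi}\sqrt{\pi/T} = \frac\alpha2\cdot\frac{\alpha^{\kappa_1-\kappa_2}}{\sqrt{\pi T}}$ yields precisely the claimed equivalent
\begin{equation*}
\mA\left[\kappa_1 +\tfrac 2 \alpha T, \kappa_2 + \tfrac 2 \alpha T\right]
\sim
\frac \alpha 2 \cdot \frac{(\alpha \tilde F(0))^{\kappa_1-\kappa_2}}{(\kappa_1-\kappa_2)!}\cdot \frac {1}{\sqrt{\pi T}}.
\end{equation*}

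The one point requiring genuine care — and the only place where the tilde case differs from Lemma~\ref{lem:A_0_asymptotic} — is justifying that Laplace's method applies even though the amplitude $(\tilde F(\ii u))^{\kappa_1-\kappa_2}$ is \emph{not} bounded: by~\eqref{eq:F_asympt_tilde}, $\tilde F(\ii u)$ grows like $\eee^{(\alpha-1)|u|}$ when $\alpha>1$ (and only subexponentially when $0<\alpha\leq 1$). The hard part is therefore the domination estimate: for large $|u|$ the integrand behaves like $\eee^{-(\alpha\kappa_1 + 2T + 1 - (\alpha-1)(\kappa_1-\kappa_2))|u|}$, whose exponent is strictly negative once $T$ is large enough. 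This is exactly the regime guaranteed by Conditions~\eqref{eq:cond_nu_kappa1} and~\eqref{eq:cond_nu_kappa2}, so the integral converges and its exponentially small tails do not disturb the leading-order contribution from a neighbourhood of $u=0$. With this domination secured, the remainder of the argument is identical to that of Lemma~\ref{lem:A_0_asymptotic}.
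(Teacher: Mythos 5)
Your proof is correct and follows essentially the same route as the paper: the paper's proof of Lemma~\ref{lem:A_asymptotic_tilde} simply says it is ``almost identical'' to that of Lemma~\ref{lem:A_0_asymptotic}, i.e.\ substitute into the integral representation~\eqref{eq:def_A_0_tilde2} and apply Laplace's method to the phase $-\log\cosh u$ with maximum at $u=0$ and $\phi''(0)=-1$, exactly as you do. Your extra paragraph justifying the tail domination when the amplitude $(\tilde F(\ii u))^{\kappa_1-\kappa_2}$ grows exponentially is a welcome added rigor that the paper leaves implicit (note only that the same issue already arises in the beta case, by~\eqref{eq:F_asympt}, so it is not genuinely special to the tilde setting).
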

\begin{proof}
Almost identical to the proof of Lemma~\ref{lem:A_0_asymptotic}.
\end{proof}

\begin{proposition}\label{prop:s_r_vanishes_tilde}
Let $\alpha>0$.  Then, for all $\kappa\in\C$ such that $\Re \kappa > \frac 1 \alpha$, we have that
$$
\tilde s_r(\kappa) =
\begin{cases}
1, &\text{ if } r=0,\\
0, &\text{ if } r\in\N.
\end{cases}
$$
\end{proposition}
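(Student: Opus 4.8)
The plan is to imitate, almost verbatim, the proof of Proposition~\ref{prop:s_r_vanishes} in the beta case, replacing $F$ by $\tilde F$ and the quantities $\lB,\lA$ by $\mB,\mA$. The two ingredients are the periodicity statement of Proposition~\ref{prop:periodic_tilde} and the Laplace-type asymptotic estimates of Lemmas~\ref{lem:B_asymptotic_tilde} and~\ref{lem:A_asymptotic_tilde}. First I would iterate the relation $\tilde s_r(\kappa) = \tilde s_r(\kappa + \frac 2\alpha)$, valid for $\Re\kappa > \frac 1\alpha$, to obtain $\tilde s_r(\kappa) = \tilde s_r(\kappa + \frac 2\alpha T)$ for every $T\in\N$. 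Since the left-hand side does not depend on $T$, it equals the limit of the right-hand side as $T\to+\infty$, and the whole point is that this limit can be computed explicitly from the asymptotics.

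Concretely, inserting the definition~\eqref{eq:def_s_r_tilde} at the shifted argument and reindexing the sum by $\ell$ through $\mu = \kappa + \ell + \frac 2\alpha T$ turns $\tilde s_r(\kappa)$ into
\begin{multline*}
\tilde s_r(\kappa) = \sum_{\ell=0}^{r} (-1)^\ell\, \mB\left\{\kappa + r + \tfrac 2\alpha T,\ \kappa + \ell + \tfrac 2\alpha T\right\}\\
\times \left(\kappa + \ell + \tfrac 2\alpha T - \tfrac 1\alpha\right) \mA\left[\kappa + \ell + \tfrac 2\alpha T - \tfrac 2\alpha,\ \kappa + \tfrac 2\alpha T - \tfrac 2\alpha\right].
\end{multline*}
Now I apply Lemma~\ref{lem:B_asymptotic_tilde} with $\kappa_1-\kappa_2 = r-\ell$, and Lemma~\ref{lem:A_asymptotic_tilde} with $\kappa_1-\kappa_2 = \ell$ (writing the $\mA$-argument as $(\kappa+\ell-\frac 2\alpha)+\frac 2\alpha T$ and $(\kappa-\frac 2\alpha)+\frac 2\alpha T$). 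The $\mB$-factor contributes $\frac{(\alpha\tilde F(0))^{r-\ell}}{(r-\ell)!}\sqrt{\pi/T}$, the $\mA$-factor contributes $\frac{\alpha}{2}\frac{(\alpha\tilde F(0))^{\ell}}{\ell!}\frac{1}{\sqrt{\pi T}}$, and the linear prefactor is asymptotic to $\frac 2\alpha T$. The three $T$-dependent pieces $\sqrt{\pi/T}$, $\frac{1}{\sqrt{\pi T}}$ and $\frac 2\alpha T\cdot\frac\alpha2$ cancel to a constant, so the $\ell$-th summand has limit $(-1)^\ell\frac{(\alpha\tilde F(0))^{r}}{\ell!\,(r-\ell)!}$.

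Summing over $\ell$ then yields
$$
\tilde s_r(\kappa) = \frac{(\alpha\tilde F(0))^{r}}{r!}\sum_{\ell=0}^{r} (-1)^\ell \binom r\ell = \frac{(\alpha\tilde F(0))^{r}}{r!}\,(1-1)^r,
$$
which equals $0$ for $r\in\N$ and $1$ for $r=0$, as claimed.

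I expect no genuine obstacle, since every step is the beta' mirror of an argument already carried out in the beta case. The only points requiring care are bookkeeping ones: I must check that as $T$ grows the shifted arguments keep all $\mA$- and $\mB$-terms inside the convergence range dictated by~\eqref{eq:cond_nu_kappa1}--\eqref{eq:cond_nu_kappa2}, and I should note that the linear prefactor here carries a $-\frac 1\alpha$ rather than the $+\frac 1\alpha$ of the beta case, which is immaterial because only its leading term $\frac 2\alpha T$ survives in the limit. Finally, replacing the finite sum of asymptotic equivalents by the asymptotic equivalent of the sum is legitimate because there are only $r+1$ terms.
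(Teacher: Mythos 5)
Your proposal is correct and is essentially the paper's own proof: the paper's proof of Proposition~\ref{prop:s_r_vanishes_tilde} simply observes that, since the asymptotics of Lemmas~\ref{lem:B_asymptotic_tilde} and~\ref{lem:A_asymptotic_tilde} coincide with those of Lemmas~\ref{lem:B_asymptotic} and~\ref{lem:A_0_asymptotic}, the proof of Proposition~\ref{prop:s_r_vanishes} ``applies with minimal changes,'' and your write-up carries out exactly those changes (iterated periodicity, reindexing by $\ell$, the two Laplace asymptotics, cancellation of the powers of $T$, and the final $(1-1)^r$ sum). The one point worth flagging is that Proposition~\ref{prop:periodic_tilde} is stated in the paper under the hypothesis $\alpha\in\N$ while the present statement assumes only $\alpha>0$; this mismatch is the paper's rather than yours, since the proof of the periodicity relation (via the recurrences of Propositions~\ref{prop:B_relation_tilde} and~\ref{prop:A_0_relations_tilde}) works for all $\alpha>0$, which is what both your argument and the paper's implicitly use.
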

\begin{proof}
Since the asymptotic expressions given in Lemmas~\ref{lem:B_asymptotic_tilde} and~\ref{lem:A_asymptotic_tilde} coincide with those given in Lemmas~\ref{lem:B_asymptotic} and~\ref{lem:A_0_asymptotic}, the proof of Proposition~\ref{prop:s_r_vanishes} applies with minimal changes.
\end{proof}
\begin{remark}\label{rem:s_r_+_tilde}
It is also possible to show that the unsigned version of $\tilde s_r$ satisfies
\begin{align*}
\tilde s_r^{+} (\kappa)
&:=  \sum_{\mu = \kappa}^{\kappa + r}   \mB\{\kappa + r, \mu\}
\left(\mu - \frac 1\alpha\right) \mA\left[\mu - \frac 2\alpha, \kappa - \frac 2\alpha\right]
=
\frac{(2\alpha \tilde F(0))^r}{r!}.
\end{align*}
\end{remark}

The next result, which is a  special case of Proposition~\ref{prop:s_r_vanishes_tilde} and Remark~\ref{rem:s_r_+_tilde}, records relations similar to those satisfied by the Stirling numbers; see~\eqref{eq:stirling_inverse}.
\begin{proposition}\label{prop:relations_A_B_tilde}
Let $\alpha>0$.  For every $n\in\N$ and $k\in \{1,\ldots,n\}$  such that $\alpha k>1$ we have
\begin{align}
&\sum_{m=k}^n (-1)^{m-k} \mB\{n,m\} \left(m - \frac 1\alpha\right) \mA\left[m - \frac 2 \alpha, k - \frac 2 \alpha\right]
=
 \delta_{nk},\label{eq:A_B_rel1_tilde}\\
&\sum_{m=k}^n  \mB\{n,m\} \left(m - \frac 1\alpha\right) \mA\left[m - \frac 2 \alpha, k - \frac 2 \alpha\right]
=
\frac{(2\alpha \tilde F(0))^{n-k}}{(n-k)!},\label{eq:A_B_rel2_tilde}
\end{align}
where $\delta_{nk}$ denotes Kronecker's delta.
\end{proposition}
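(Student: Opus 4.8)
The plan is to recognize that both identities are simply the statements of Proposition~\ref{prop:s_r_vanishes_tilde} and Remark~\ref{rem:s_r_+_tilde} read off after the substitution $\kappa = k$ and $r = n - k$, so the entire argument reduces to matching summation indices and confirming that the relevant quantities are well-defined in the stated range. No stochastic geometry and no new estimates are needed.

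First I would recall the definition~\eqref{eq:def_s_r_tilde},
$$
\tilde s_r(\kappa)
=
\sum_{\mu = \kappa}^{\kappa + r}
(-1)^{\mu - \kappa}
\mB\{\kappa + r, \mu\}
\left(\mu - \frac 1\alpha\right) \mA\left[\mu-\frac 2\alpha, \kappa-\frac 2\alpha\right],
$$
and set $\kappa := k$ and $r := n - k$. Since $k \in \{1,\ldots,n\}$ we have $r = n-k \in \N_0$ and $\kappa + r = n$, so the summation variable $\mu$ runs precisely over the integers $k, k+1, \ldots, n$; relabelling $m := \mu$ turns $\tilde s_{n-k}(k)$ into the left-hand side of~\eqref{eq:A_B_rel1_tilde} verbatim. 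I would then invoke Proposition~\ref{prop:s_r_vanishes_tilde}, which gives $\tilde s_{n-k}(k) = 1$ when $n-k = 0$ and $\tilde s_{n-k}(k) = 0$ when $n-k \geq 1$; this is exactly $\delta_{nk}$, establishing~\eqref{eq:A_B_rel1_tilde}.

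For~\eqref{eq:A_B_rel2_tilde} I would carry out the identical substitution in the unsigned quantity $\tilde s_r^{+}(\kappa)$ of Remark~\ref{rem:s_r_+_tilde}. With $\kappa = k$ and $r = n-k$ the sum $\tilde s_{n-k}^{+}(k)$ is precisely the left-hand side of~\eqref{eq:A_B_rel2_tilde}, while the closed form $\tilde s_r^{+}(\kappa) = (2\alpha \tilde F(0))^r / r!$ supplies directly the right-hand side $(2\alpha \tilde F(0))^{n-k}/(n-k)!$.

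The only point requiring attention — and the closest thing to an obstacle — is the domain of validity. The quantity $\tilde s_r(\kappa)$ was declared well-defined for $\Re \kappa > 1/\alpha$, this being a consequence of the convergence conditions~\eqref{eq:cond_nu_kappa1} and~\eqref{eq:cond_nu_kappa2}; under the substitution $\kappa = k$ this becomes $k > 1/\alpha$, i.e.\ $\alpha k > 1$, which is exactly the hypothesis of the proposition, so no boundary issues arise. All the genuine work — the periodicity of $\tilde s_r$ (Proposition~\ref{prop:periodic_tilde}) and its evaluation through the Laplace asymptotics of Lemmas~\ref{lem:B_asymptotic_tilde} and~\ref{lem:A_asymptotic_tilde} — has already been performed in the proof of Proposition~\ref{prop:s_r_vanishes_tilde}, so the present statement follows with essentially no further computation.
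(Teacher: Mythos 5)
Your proof is correct and follows exactly the paper's route: the paper itself introduces this proposition as a special case of Proposition~\ref{prop:s_r_vanishes_tilde} and Remark~\ref{rem:s_r_+_tilde}, obtained by setting $\kappa = k$ and $r = n-k$ in $\tilde s_r$ and $\tilde s_r^{+}$. Your additional check that the hypothesis $\alpha k > 1$ is precisely the condition $\Re\kappa > 1/\alpha$ needed for $\tilde s_r(\kappa)$ to be well defined is the right (and only) point of care in this specialization.
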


\subsection{Formula for expected internal angles}
We are now ready to express the expected internal angles $\tilde \bJ_{n,k}(\gamma)$ through the quantities $\mA[\nu,\kappa]$.
\begin{proposition}\label{prop:bJ_tilde_formula_A_tilde}
Let $\alpha>0$. For all $n\in\N$ and $k\in \{1,\ldots,n\}$ such that $\alpha k>1$ we have
\begin{equation}\label{eq:prop:bJ_tilde_formula_A_tilde}
\tilde \bJ_{n,k}\left(\frac{\alpha  +  n  -  1}{2}\right)
=
\alpha^{k-n} \frac{n!}{k!}
\left(\frac {\Gamma(\frac {\alpha+1} 2)} {\sqrt \pi\, \Gamma(\frac{\alpha} 2)}\right)^{n-k}
\cdot
\frac{\Gamma({\frac{\alpha  k -1} 2})}{\Gamma(\frac {\alpha k}2)}
\frac{\Gamma({\frac{\alpha  n} 2})}{\Gamma(\frac {\alpha n-1}2)}
\cdot
\frac{\mA\left[n - \frac 2\alpha, k - \frac 2\alpha\right]}{\mA\left[k - \frac 2\alpha, k - \frac 2\alpha\right]}
.
\end{equation}
\end{proposition}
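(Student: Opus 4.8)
The plan is to mirror the proof of Proposition~\ref{prop:bJ_formula_A_notilde} almost verbatim, the only conceptual inputs being the uniqueness statement of Proposition~\ref{prop:bJ_tilde_unique} and the orthogonality relation~\eqref{eq:A_B_rel1_tilde} of Proposition~\ref{prop:relations_A_B_tilde}; no stochastic geometry enters. First I would rewrite the external-angle sums $\tilde \bI_{n,m}(\alpha)$ in terms of the quantities $\mB\{n,m\}$. Combining~\eqref{eq:I_n_k_tilde} and~\eqref{eq:I_tilde_I_bold} with the definition~\eqref{eq:def_B_tilde} of $\mB\{n,m\}$, and noting that $\tilde F(\varphi)=\int_{-\pi/2}^{\varphi}(\cos\theta)^{\alpha-1}\dd\theta$, one obtains
\[
\tilde \bI_{n,m}(\alpha) = \alpha^{-n}\,n!\,\tilde c_{\frac{\alpha+1}{2}}^{\,n}\cdot\frac{\tilde c_{\frac{\alpha m+1}{2}}\,\alpha^{m}}{\tilde c_{\frac{\alpha+1}{2}}^{\,m}\,m!}\cdot\mB\{n,m\},
\]
which is the exact beta$'$ analogue of~\eqref{eq:I_n_m_b_n_m}.

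Next I would invoke Proposition~\ref{prop:bJ_tilde_unique}, according to which $\tilde \xi_{n,k}=\tilde \bJ_{n,k}(\frac{\alpha+n-1}{2})$ is the \emph{unique} solution of the system~\eqref{eq:system_alpha_arbitrary}, so that it suffices to exhibit a solution. For an as-yet-undetermined normalizing factor $\tilde c(k)$, I propose the ansatz
\[
\tilde \xi_{m,k} := \tilde c(k)\cdot\frac{\tilde c_{\frac{\alpha+1}{2}}^{\,m}\,m!}{\tilde c_{\frac{\alpha m+1}{2}}\,\alpha^{m}}\cdot\left(m-\frac1\alpha\right)\mA\left[m-\frac2\alpha,\,k-\frac2\alpha\right].
\]
The point of this choice is that upon substituting it together with the expression for $\tilde \bI_{n,m}(\alpha)$ into the first line of~\eqref{eq:system_alpha_arbitrary}, all the $\tilde c$-factors depending on $m$ cancel, leaving
\[
\sum_{m=k}^{n}(-1)^{m-k}\tilde \bI_{n,m}(\alpha)\,\tilde \xi_{m,k} = \alpha^{-n}n!\,\tilde c_{\frac{\alpha+1}{2}}^{\,n}\,\tilde c(k)\sum_{m=k}^{n}(-1)^{m-k}\mB\{n,m\}\left(m-\frac1\alpha\right)\mA\left[m-\frac2\alpha,k-\frac2\alpha\right],
\]
and the inner sum equals $\delta_{nk}$ by~\eqref{eq:A_B_rel1_tilde}, hence vanishes for every $k<n$. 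The normalization $\tilde \xi_{k,k}=1$ then forces
\[
\tilde c(k) = \frac{\tilde c_{\frac{\alpha k+1}{2}}\,\alpha^{k}}{\tilde c_{\frac{\alpha+1}{2}}^{\,k}\,k!}\cdot\frac{1}{\left(k-\frac1\alpha\right)\mA\left[k-\frac2\alpha,k-\frac2\alpha\right]}.
\]

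Inserting $\tilde c(k)$ back and appealing to uniqueness yields the intermediate identity
\[
\tilde \bJ_{n,k}\left(\frac{\alpha+n-1}{2}\right) = \alpha^{k-n}\frac{n!}{k!}\,\tilde c_{\frac{\alpha+1}{2}}^{\,n-k}\,\frac{\tilde c_{\frac{\alpha k+1}{2}}}{\tilde c_{\frac{\alpha n+1}{2}}}\cdot\frac{\left(n-\frac1\alpha\right)\mA\left[n-\frac2\alpha,k-\frac2\alpha\right]}{\left(k-\frac1\alpha\right)\mA\left[k-\frac2\alpha,k-\frac2\alpha\right]},
\]
the beta$'$ counterpart of~\eqref{eq:xi_n_k}. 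The remaining work is purely computational: substituting $\tilde c_{\beta}=\Gamma(\beta)/(\sqrt\pi\,\Gamma(\beta-\tfrac12))$ from~\eqref{eq:c_beta_tilde} and absorbing the factors $n-\frac1\alpha=\frac{\alpha n-1}{\alpha}$ and $k-\frac1\alpha=\frac{\alpha k-1}{\alpha}$ into the Gamma prefactors by means of $\Gamma(z+1)=z\Gamma(z)$ --- concretely $\Gamma(\frac{\alpha k+1}{2})=\frac{\alpha k-1}{2}\Gamma(\frac{\alpha k-1}{2})$ and $\Gamma(\frac{\alpha n+1}{2})=\frac{\alpha n-1}{2}\Gamma(\frac{\alpha n-1}{2})$ --- transforms the above into the claimed~\eqref{eq:prop:bJ_tilde_formula_A_tilde}. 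Since the genuine difficulties (the recurrences, the periodicity argument, and the asymptotic evaluation behind Proposition~\ref{prop:relations_A_B_tilde}) have already been dealt with, I do not expect any real obstacle here; the one point requiring care is the bookkeeping of the convergence conditions~\eqref{eq:cond_nu_kappa1}--\eqref{eq:cond_nu_kappa2}, which must be verified for every index $m\in\{k,\ldots,n\}$ appearing in the sum. This is guaranteed precisely by the hypothesis $\alpha k>1$: for $\alpha>1$ one has $\alpha\Re(k-\tfrac2\alpha)+(m-k)+1=\alpha k-1+(m-k)>0$, while for $\alpha\le1$ one has $\alpha(m-\tfrac2\alpha)+1=\alpha m-1\ge\alpha k-1>0$, so that all the $\mA$- and $\mB$-terms, and hence relation~\eqref{eq:A_B_rel1_tilde}, are legitimately available throughout.
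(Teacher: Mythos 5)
Your proposal is correct and follows essentially the same route as the paper's own proof: reduce to the uniqueness statement of Proposition~\ref{prop:bJ_tilde_unique}, express $\tilde \bI_{n,m}(\alpha)$ through $\mB\{n,m\}$, verify the same ansatz via the orthogonality relation~\eqref{eq:A_B_rel1_tilde}, fix the normalization $\tilde c(k)$, and simplify the Gamma factors. Your explicit bookkeeping of the convergence conditions~\eqref{eq:cond_nu_kappa1}--\eqref{eq:cond_nu_kappa2} under the hypothesis $\alpha k>1$ is also correct, and is in fact slightly more careful than the paper, which only remarks that this hypothesis makes the ansatz well defined.
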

\begin{proof}
Consider the following infinite system of linear equations in the unknowns $\tilde \xi_{n,k}$, where $n\in\N$, $k\in \{1,\ldots,n\}$:
\begin{equation}\label{eq:system_alpha_arbitrary_tilde}
\begin{cases}
\sum_{m=k}^n (-1)^{m-k} \tilde \bI_{n,m}(\alpha) \tilde \xi_{m,k} =  0, &\text{ for all } n\in\N, \; k\in \{1,\ldots,n-1\},\\
\tilde \xi_{n,n} = 1, \text{ for all } n\in \N.
\end{cases}
\end{equation}
We know from Proposition~\ref{prop:bJ_tilde_unique} that the \textit{unique} solution to this system is $\tilde \xi_{n,k} = \tilde \bJ_{n,k}(\frac{\alpha +n-1}{2})$. To prove~\eqref{eq:prop:bJ_tilde_formula_A_tilde} it therefore suffices to check that its right-hand side also defines a solution to~\eqref{eq:system_alpha_arbitrary_tilde}. Recall from~\eqref{eq:I_n_k_tilde} and~\eqref{eq:I_tilde_I_bold} that
\begin{align}
\tilde \bI_{n,m}(\alpha)
&=
\binom nm \tilde c_{\frac{\alpha m+1}{2}} \tilde c_{\frac{\alpha+1}{2}}^{n-m} \int_{-\pi/2}^{+\pi/2} (\cos x)^{\alpha m-1} (\tilde  F(x))^{n-m} \dd x\notag\\
&=
\alpha^{-n} n! \tilde c_{\frac{\alpha+1}{2}}^{n} \cdot \frac{\tilde c_{\frac{\alpha m+1}{2}} \alpha^{m}}{\tilde c_{\frac{\alpha+1}{2}}^{m} m!} \cdot \mB\{n,m\}.\label{eq:I_n_m_b_n_m_tilde}
\end{align}
We now claim that for every function $c(k)$, the following defines a solution to the first line of~\eqref{eq:system_alpha_arbitrary_tilde}:
\begin{equation}\label{eq:def_xi_m_k_tilde}
\tilde \xi_{m,k} :=
c(k)\cdot  \frac{\tilde c_{\frac{\alpha+1}{2}}^{m} m!}{\tilde c_{\frac{\alpha m+1}{2}} \alpha^{m}}
\cdot
\left(m - \frac 1\alpha\right) \mA\left[m - \frac 2 \alpha, k - \frac 2 \alpha\right],
\end{equation}
where we need the condition $\alpha k>1$ to ensure that the right-hand side is well defined.  With this $\tilde \xi_{m,k}$ and for all $k\in \{1,\ldots,n-1\}$ we have
$$
\sum_{m=k}^n (-1)^{m-k} \tilde \bI_{n,m}(\alpha) \tilde \xi_{m,k}
=
\alpha^{-n} n! \tilde c_{\frac{\alpha+1}{2}}^{n}
\cdot
c(k)
\cdot
\sum_{m=k}^n (-1)^{m-k} \mB\{n,m\}   \left(m - \frac 1\alpha\right) \mA\left[ m - \frac 2\alpha,k - \frac 2\alpha \right],
$$
which vanishes by Proposition~\ref{prop:relations_A_B_tilde}. It remains to determine the normalization $c(k)$ such that $\tilde \xi_{k,k} = 1$ for all $k\in\N$. Taking $m=k$ in~\eqref{eq:def_xi_m_k_tilde} we see that $\tilde \xi_{k,k} = 1$ holds true iff
$$
c(k) = \frac{\tilde c_{\frac{\alpha k + 1}{2}} \alpha^{k}}{\tilde c_{\frac{\alpha + 1}{2}}^{k} k!} \cdot \frac{1} {\left(k - \frac 1\alpha\right) \mA\left[ k - \frac 2\alpha,k - \frac 2\alpha \right]}.
$$
Inserting this into~\eqref{eq:def_xi_m_k_tilde}, we obtain a solution to~\eqref{eq:system_alpha_arbitrary_tilde}. Since the solution is unique, it must coincide with $\tilde \bJ_{m,k}(\frac{\alpha + m - 1}{2})$, and we arrive at
\begin{equation}\label{eq:xi_n_k_tilde}
\tilde \bJ_{n,k}\left(\frac{\alpha + n - 1}{2}\right)
=
\alpha^{k-n} \frac{n!}{k!}\cdot  \tilde c_{\frac{\alpha + 1}{2}}^{n-k} \cdot \frac{\tilde c_{\frac{\alpha k + 1}{2}}}{\tilde c_{\frac{\alpha n + 1}{2}}}  \cdot \frac{\left(n - \frac 1\alpha\right) \mA\left[ n - \frac 2\alpha,k - \frac 2\alpha \right]}{\left(k - \frac 1\alpha\right) \mA\left[ k - \frac 2\alpha,k - \frac 2\alpha \right]}.
\end{equation}
Recall that $\tilde c_{\beta}$ is given by~\eqref{eq:c_beta_tilde}, namely
\begin{equation}\label{eq:c:1_beta_wspom_tilde}
\tilde c_{\beta}= \frac{ \Gamma(\beta) }{\sqrt\pi \,  \Gamma\left(\beta - \frac 12\right)}.
\end{equation}
After some simplifications we arrive at~\eqref{eq:prop:bJ_tilde_formula_A_tilde}.
The proof is complete.
\end{proof}

\subsection{Proof of Theorem~\ref{theo:bJ_tilde_formula_integral}}
We are going to prove~\eqref{eq:bJ_nk_tilde_equiv2} which is an equivalent way of stating Theorem~\ref{theo:bJ_tilde_formula_integral}.
Let $\alpha>0$. We claim that for all  $n\in\N$ and  $k\in \{1,\ldots,n\}$ such that $\alpha n >1$, we have
\begin{equation}\label{eq:J_nk_integral_recall_tilde_proof}
\tilde \bJ_{n,k}\left(\frac{\alpha + n - 1}{2}\right)
=
\binom nk \int_{-\infty}^{+\infty} \tilde c_{\frac{\alpha n}2} (\cosh u)^{- (\alpha n - 1)} \left(\frac 12  + \ii \int_0^u \tilde c_{\frac{\alpha+1}{2}}(\cosh v)^{\alpha-1}\dd v \right)^{n-k} \dd u.
\end{equation}
\begin{proof}
Let first the stronger condition $\alpha k >1$ be satisfied. By~\eqref{eq:def_A_0_tilde2} and~\eqref{eq:int_cos_cosh}, the term appearing in the denominator of the last fraction in~\eqref{eq:xi_n_k_tilde} is
$$
\left(k-\frac 1\alpha\right)\mA\left[ k - \frac 2\alpha,k - \frac 2\alpha \right]
=
\frac {\alpha k - 1}{2\pi} \int_{-\infty}^{+\infty} (\cosh x)^{-\alpha k + 1} \dd x
=
\frac{\Gamma\left(\frac{\alpha k + 1}{2}\right)}{\sqrt \pi\, \Gamma\left(\frac{\alpha k}{2}\right)}
=
\tilde c_{\frac{\alpha k  + 1}{2}}.
$$
It follows from~\eqref{eq:xi_n_k_tilde} that
\begin{align}
\tilde \bJ_{n,k}\left(\frac{\alpha  + n - 1}{2}\right)
&=
\alpha^{k-n} \frac{n!}{k!}\cdot  \tilde c_{\frac{\alpha + 1}{2}}^{n-k} \cdot \frac{1}{\tilde c_{\frac{\alpha n + 1}{2}}}  \cdot \left(n - \frac 1\alpha\right) \mA\left[ n - \frac 2\alpha,k - \frac 2\alpha \right]\notag \\
&=
\alpha^{k-n} \frac{n!}{k!}
\cdot
\tilde c_{\frac{\alpha + 1}{2}}^{n-k}
\cdot
\frac{\sqrt \pi\, \Gamma({\frac{\alpha  n} 2})}{\Gamma(\frac {\alpha n + 1}2)}
\cdot
\left(n - \frac 1\alpha\right)  \, \mA\left[ n - \frac 2\alpha,k - \frac 2\alpha \right]\notag \\
&=
\alpha^{k-n} \frac{n!}{k!}
\cdot
\tilde c_{\frac{\alpha + 1}{2}}^{n-k}
\cdot
\tilde c_{\frac{\alpha n}{2}}
\cdot
\frac {2\pi}\alpha \, \mA\left[ n - \frac 2\alpha,k - \frac 2\alpha \right] \label{eq:form_J_n_k_wspom_tilde}
.
\end{align}
Recall from~\eqref{eq:def_A_0_tilde2} that
\begin{equation*}
\frac {2\pi}\alpha \, \mA\left[n - \frac 2 \alpha,k - \frac 2 \alpha\right]
=
\frac{\alpha^{n - k}}{(n - k)!} \cdot  \int_{-\infty}^{+\infty} \frac{(\tilde F(\ii u))^{n-k}}{(\cosh u)^{\alpha n - 1}} \dd u.
\end{equation*}
Hence,
\begin{equation*}
\tilde \bJ_{n,k}\left(\frac{\alpha  +  n - 1}{2}\right)
=
\binom nk
\cdot
\tilde c_{\frac{\alpha + 1}{2}}^{n-k}
\cdot
\tilde c_{\frac{\alpha n}{2}}
\cdot
\int_{-\infty}^{+\infty} \frac{(\tilde F(\ii u))^{n-k}}{(\cosh u)^{\alpha n - 1}} \dd u.
\end{equation*}
It remains to observe that by~\eqref{eq:def_F_tilde},
$$
\tilde F(\ii u)
= \int_{-\pi/2}^{0} (\cos y)^{\alpha-1} \dd y + \int_{0}^{\ii u} (\cos y)^{\alpha-1} \dd y
= \frac 1 {2 \tilde c_{\frac{\alpha+1}{2}}} + \ii \int_{0}^{u} (\cosh x)^{\alpha-1} \dd x.
$$
This completes the proof in the case when $\alpha k>1$. To treat the more general case $\alpha n>1$ we use analytic continuation. Fix some $n\in\N$ and $k\in \{1,\ldots,n\}$.  On the one hand, the integral on the right-hand side of~\eqref{eq:J_nk_integral_recall_tilde_proof} is well-defined and represents an analytic function of $\alpha$, considered as a complex variable,  provided that $\Re \alpha > \frac 1n$. On the other hand, for all $\alpha >0$ we can write the definition of expected internal angles as follows:
$$
\tilde \bJ_{n,k}\left(\frac{\alpha + n - 1}{2}\right)
=
\binom nk \int_{(\R^{n-1})^n} \beta ([x_1,\ldots,x_k], [x_1,\ldots,x_n]) \prod_{i=1}^n \left(\frac{\tilde{c}_{n-1,\frac{\alpha + n - 1}{2}}\dd x_i}{\left( 1+\left\| x_i \right\|^2 \right)^{\frac{\alpha + n - 1}{2}}} \right).
$$
The right-hand side is well-defined and represents an analytic function of the complex variable $\alpha$ in the half-plane $\Re \alpha >0$. This can be shown by essentially the same argument as in~\cite[lemma~4.3]{beta_polytopes}. Thus, both sides of~\eqref{eq:J_nk_integral_recall_tilde_proof} can be continued analytically at least to the half-plane $\Re \alpha > \frac 1n$. Since they coincide for real $\alpha> \frac 1k$, the uniqueness of analytic continuation implies that they coincide on their whole domain of definition, thus completing the proof.
\end{proof}

\subsection{Evaluating the integral:  Proof of Theorem~\ref{theo:bJ_tilde_formula_residue}}
Our aim is to compute $\mA[\nu,\kappa]$ by residue calculus whenever this is possible.
\begin{proposition}\label{prop:a_nu_kappa_residue_tilde}
Let $\alpha\in \N$ and let $\nu,\kappa \in \alpha^{-1}\N$ be such that $\nu-\kappa\in \N_0$ and $\alpha \kappa$ is even. Then,
\begin{equation*}
\mA[\nu,\kappa]
=
\frac{\alpha^{\nu-\kappa+1}}{2\cdot  (\nu-\kappa)!}  \Res\limits_{x=0} \left[\frac{\left(\int_{0}^x (\sin y)^{\alpha-1} \dd y\right)^{\nu-\kappa}}{(\sin x)^{\alpha \nu + 1}}\right].
\end{equation*}
\end{proposition}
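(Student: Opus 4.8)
The plan is to mirror the proof of Proposition~\ref{prop:residue_evaluate} (the beta analogue), the only genuinely new inputs being the shifted exponents and a parity bookkeeping that now turns on $\alpha\kappa$ rather than on $\alpha\kappa+\nu-\kappa$. Starting from the representation~\eqref{eq:def_A_0_tilde2} and cancelling the common prefactor $\alpha^{\nu-\kappa+1}/(\nu-\kappa)!$, the claim reduces to the contour identity
$$
\frac1\pi\int_{-\infty}^{+\infty}\frac{(\tilde F(\ii u))^{\nu-\kappa}}{(\cosh u)^{\alpha\nu+1}}\,\dd u = \Res_{x=0}\left[\frac{\left(\int_0^x(\sin y)^{\alpha-1}\,\dd y\right)^{\nu-\kappa}}{(\sin x)^{\alpha\nu+1}}\right].
$$
The crucial structural point, which legitimises the residue calculus, is that $\alpha\in\N$: then $(\cos x)^{-\alpha\nu-1}$ is meromorphic (note $\alpha\nu\in\N$ since $\nu\in\alpha^{-1}\N$) with poles only at $\tfrac\pi2+\pi\Z$, while $\tilde F$, being an antiderivative of the polynomial $(\cos t)^{\alpha-1}$ in $\cos t$, is entire. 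Hence $g(y):=(\tilde F(y))^{\nu-\kappa}(\cos y)^{-\alpha\nu-1}$ is single-valued and meromorphic, so the contour may cross the line $\Re y=-\pi$ with no branch-cut obstruction.

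Next I would integrate $g$ over the positively oriented rectangle $\gamma(R)$ with vertices $-\pi\pm R\ii$ and $\pm R\ii$, exactly as in Proposition~\ref{prop:residue_evaluate}. The only pole enclosed is $y=-\pi/2$, so $\frac1{2\pi\ii}\oint_{\gamma(R)}g = \Res_{y=-\pi/2}g$; the substitution $x=y+\tfrac\pi2$, under which $\cos y=\sin x$ and $\tilde F(y)=\int_0^x(\sin s)^{\alpha-1}\,\dd s$, turns this residue into the right-hand side above. Letting $R\to\infty$, the two horizontal edges contribute nothing, because on them $|g|$ decays like $\exp\!\big(-(\alpha\nu+1-(\alpha-1)(\nu-\kappa))R\big)=\exp\!\big(-(\nu+1+(\alpha-1)\kappa)R\big)$ by the asymptotics~\eqref{eq:F_asympt_tilde}, and the exponent is strictly negative under the standing hypotheses; this is precisely the decay guaranteeing convergence of the defining integral in~\eqref{eq:def_A_0_tilde2}.

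What remains is to identify the two vertical edges. The right edge $y=\ii u$ gives $\tfrac1{2\pi}\int_{-\infty}^{+\infty}(\cosh u)^{-\alpha\nu-1}(\tilde F(\ii u))^{\nu-\kappa}\,\dd u$. For the left edge $y=-\pi-\ii u$ I would invoke the two elementary symmetries $\cos(-\pi-\ii u)=-\cosh u$ and, via the substitution $t\mapsto -\pi-t$ together with $\cos(\pi+t)=-\cos t$,
$$
\tilde F(-\pi-\ii u)=(-1)^{\alpha}\tilde F(\ii u).
$$
These produce the factor $(-1)^{\alpha\nu+1}$ from the cosine and $(-1)^{\alpha(\nu-\kappa)}$ from $\tilde F$; combined with the Jacobian sign $-1$ of the substitution (equivalently, the reversal of limits), the left edge equals $(-1)^{\alpha\kappa}$ times the right edge, since $1+(\alpha\nu+1)+\alpha(\nu-\kappa)\equiv 2\alpha\nu-\alpha\kappa\equiv-\alpha\kappa\pmod 2$.

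Here is where the hypothesis enters: $\alpha\kappa$ even makes this factor $+1$, so the two edges agree and their sum is $\tfrac1\pi\int_{-\infty}^{+\infty}(\cosh u)^{-\alpha\nu-1}(\tilde F(\ii u))^{\nu-\kappa}\,\dd u$. Equating this with $\Res_{y=-\pi/2}g$ yields the displayed identity and hence the proposition. The main obstacle is purely the sign bookkeeping of this last step; everything else is a verbatim transcription of the beta-case argument. In the actual write-up I would therefore refer to the proof of Proposition~\ref{prop:residue_evaluate} and only record the two modified exponents (the extra $-1$ in the power of $\cos$, the shift $\alpha\to\alpha-1$ inside $\tilde F$) and the changed parity condition $\alpha\kappa$ even.
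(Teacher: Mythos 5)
Your proposal is correct and follows essentially the same route as the paper's own proof: the same rectangular contour through $-\pi\pm R\ii$, $\pm R\ii$, the same identification of the enclosed residue at $y=-\pi/2$ with the residue at $x=0$, the same decay estimate killing the horizontal edges, and the same symmetry $\tilde F(-\pi-\ii u)=(-1)^{\alpha}\tilde F(\ii u)$ with the sign count $(-1)^{\alpha\kappa}$ reconciling the two vertical edges. The sign bookkeeping and the reduction to the identity $\frac1\pi\int_{-\infty}^{+\infty}(\tilde F(\ii u))^{\nu-\kappa}(\cosh u)^{-\alpha\nu-1}\,\dd u=\Res_{x=0}\bigl[(\int_0^x(\sin y)^{\alpha-1}\dd y)^{\nu-\kappa}(\sin x)^{-\alpha\nu-1}\bigr]$ both match the paper exactly.
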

\begin{remark}
Recalling~\eqref{eq:form_J_n_k_wspom_tilde} and using the proposition with $\nu:=n-\frac 2\alpha$ and $\kappa:= k-\frac 2\alpha$ we obtain
$$
\tilde \bJ_{n,k}\left(\frac{\alpha + n-1}{2}\right)
=
\binom nk
\tilde c_{\frac{\alpha+1}{2}}^{n-k} \tilde c_{\frac{\alpha n}2} \cdot
\pi  \Res\limits_{x=0} \left[\frac{\left(\int_{0}^x (\sin y)^{\alpha-1} \dd y\right)^{n-k}}{(\sin x)^{\alpha n - 1}}\right]
$$
provided $\alpha \kappa = \alpha k - 2$ is even. This proves Theorem~\ref{theo:bJ_tilde_formula_residue}.
\end{remark}
\begin{proof}[Proof of Proposition~\ref{prop:a_nu_kappa_residue_tilde}]
In view of the definition of $\mA[\nu,\kappa]$ given in~\eqref{eq:def_A_0_tilde2}, we need to prove that
\begin{equation}\label{eq:int_residue_tilde}
\int_{-\infty}^{+\infty} \frac{(\tilde F(\ii u))^{\nu-\kappa}}{(\cosh u)^{\alpha \nu + 1}} \dd u
=
\pi \Res\limits_{x=0} \left[\frac{\left(\int_{0}^x (\sin y)^{\alpha-1} \dd y\right)^{\nu-\kappa}}{(\sin x)^{\alpha \nu + 1}}\right].
\end{equation}
Observe that the integrand is \textit{univalued} and meromorphic because $\alpha\in\N$ and $\alpha \nu\in\N$. Let $R>0$ and consider the closed rectangular contour $\gamma(R)$ passing through the points $-\pi+R\ii$, $-\pi -R\ii$, $- R\ii$, $+R\ii$. By the Cauchy residue formula, we have
$$
\tilde I(R) := \frac 1 {2\pi \ii} \oint_{\gamma(R)} \frac{(\tilde F(y))^{\nu-\kappa}}{(\cos y)^{\alpha \nu + 1}} \dd y
=
\Res\limits_{y=-\pi/2} \left[\frac{(\tilde F(y))^{\nu-\kappa}}{(\cos y)^{\alpha \nu + 1}}\right]
=
\Res\limits_{x=0} \left[\frac{\left(\int_{0}^x (\sin z)^{\alpha-1} \dd z\right)^{\nu-\kappa}}{(\sin x)^{\alpha \nu + 1}}\right].
$$
On the other hand, since the value of the integral does not depend on $R$, we can let $R\to+\infty$. The contribution of the horizontal parts of the contour vanishes in the limit because
$$
\lim_{v\to \pm \infty} \frac{(\tilde F(u+\ii v))^{\nu-\kappa}}{(\cos (u+\ii v))^{\alpha \nu + 1}} = 0
$$
locally uniformly in $u\in\R$. This is shown essentially as in the discussion preceding~\eqref{eq:F_asympt_tilde}. For example, for $\alpha>1$ we need to verify that $(\alpha-1)(\nu-\kappa) < \alpha \nu + 1$, which is equivalent to $\alpha \kappa +\nu +1 >\kappa$. This inequality holds true since  $\alpha\in\N$. In the case $0< \alpha \leq 1$ one uses that $\alpha \nu + 1 >0$.   Thus, we are left with the contributions of the vertical lines:
$$
\tilde I := \tilde I(R) =
\frac 1 {2\pi \ii} \int_{-\ii \infty}^{+\ii \infty} \frac{(\tilde F(y))^{\nu-\kappa}}{(\cos y)^{\alpha \nu + 1}} \dd y
+
\frac 1 {2\pi \ii} \int_{-\pi + \ii \infty}^{-\pi -\ii \infty} \frac{(\tilde F(y))^{\nu-\kappa}}{(\cos y)^{\alpha \nu  + 1} } \dd y.
$$
To complete the proof of the proposition, we need to argue that
\begin{equation}\label{eq:I_twice_integral_tilde}
\tilde I= \frac 1 {\pi} \int_{-\infty}^{+\infty}  \frac{(\tilde F(\ii u))^{\nu-\kappa}}{(\cosh u)^{\alpha \nu + 1}} \dd u.
\end{equation}
The substitution $y=\ii u$ yields
$$
\frac 1 {2\pi \ii} \int_{-\ii \infty}^{+\ii \infty} \frac{(\tilde F(y))^{\nu-\kappa}}{(\cos y)^{\alpha \nu + 1}} \dd y
=
\frac 1 {2\pi} \int_{-\infty}^{+\infty} \frac{(\tilde F(\ii u))^{\nu-\kappa}}{(\cosh u)^{\alpha \nu  + 1 }} \dd u.
$$
To prove~\eqref{eq:I_twice_integral_tilde} it remains to show that under the assumption that $\alpha \kappa$ is even,
\begin{equation}\label{eq:I_remains_to_show_tilde}
\frac 1 {2\pi \ii} \int_{-\pi + \ii \infty}^{-\pi -\ii \infty} \frac{(\tilde F(y))^{\nu-\kappa}}{(\cos y)^{\alpha \nu +1} } \dd y
=
\frac 1 {2\pi} \int_{-\infty}^{+\infty} \frac{(\tilde F(\ii u))^{\nu-\kappa}}{(\cosh u)^{\alpha \nu + 1}} \dd u.
\end{equation}
Observe that  $\cos (\pm x\pm \pi) = -\cos x$. By the change of variables $w= - x -\pi$, we have
\begin{align*}
\tilde F(-\pi - \ii u)
&=
\int_{-\pi/2}^{-\pi - \ii u} (\cos x)^{\alpha-1} \dd x
=
- \int_{-\pi/2}^{\ii u} (\cos (-w-\pi))^{\alpha-1} \dd w\\
&=
(-1)^{\alpha} \int_{-\pi/2}^{\ii u} (\cos w)^{\alpha-1} \dd w
=
(-1)^{\alpha} \tilde F(\ii u).
\end{align*}
By the substitution $y = -\pi -\ii u$ it follows that
\begin{align*}
\frac 1 {2\pi \ii} \int_{-\pi + \ii \infty}^{-\pi -\ii \infty} \frac{(\tilde F(y))^{\nu-\kappa}}{(\cos y)^{\alpha \nu + 1} } \dd y
&=
-\frac 1 {2\pi} \int_{-\infty}^{+\infty} \frac{(\tilde F(-\pi -\ii u))^{\nu-\kappa}}{(\cos (-\pi -\ii u))^{\alpha \nu + 1} } \dd u\\
&=
(-1)^{1 + \alpha(\nu-\kappa)- (\alpha \nu + 1)} \cdot \frac {1} {2\pi} \int_{-\infty}^{+\infty} \frac{(\tilde F(\ii u))^{\nu-\kappa}}{(\cosh u)^{\alpha \nu + 1} } \dd u,
\end{align*}
which proves~\eqref{eq:I_remains_to_show_tilde} since $\alpha \kappa$ is even.
\end{proof}

\begin{remark}
In the case when $\alpha \kappa$ is odd, essentially the same argument shows that
$$
\Res\limits_{x=0} \left[\frac{\left(\int_{0}^x (\sin y)^{\alpha-1} \dd y\right)^{\nu-\kappa}}{(\sin x)^{\alpha \nu + 1}}\right] = 0.
$$
\end{remark}

\subsection{Proof of Theorem~\ref{theo:poisson_polyhedra}}
Fix $\alpha>0$. We claim that for all $d\in\N$ and $k\in \{1,\ldots,d\}$  we have
\begin{equation}\label{eq:f_k_wspom}
\E f_{k-1}(\conv \Pi_{d,\alpha})
=
\alpha^{d}  \binom{d}{k}
\cdot
\tilde c_{\frac{\alpha+1}{2}}^{-k}
\cdot
\frac {1} {\pi} \int_{-\infty}^{+\infty} \frac{(\tilde F(\ii u))^{d-k}}{(\cosh u)^{\alpha d + 1} } \dd u.
\end{equation}
This is equivalent to the first claim of Theorem~\ref{theo:poisson_polyhedra}. If $\alpha\in\N$ and $\alpha k$ is even, \eqref{eq:int_residue_tilde} immediately yields the second claim of Theorem~\ref{theo:poisson_polyhedra}:
$$
\E f_{k-1}(\conv \Pi_{d,\alpha})
=
\alpha^{d}  \binom{d}{k}
\left(\frac{\sqrt \pi\, \Gamma(\frac \alpha2)}{\Gamma(\frac{\alpha+1}{2})}\right)^k
\Res\limits_{x=0} \left[\frac{\left(\int_{0}^x (\sin y)^{\alpha-1} \dd y\right)^{d-k}}{(\sin x)^{\alpha d+1}}\right].
$$
\begin{proof}[Proof of~\eqref{eq:f_k_wspom}]
Let us first assume that $\alpha k >1$, which is stronger than $\alpha>0$.
According to~\cite[Theorem~1.21]{beta_polytopes}, the expected number of $(k-1)$-dimensional faces of $\conv \Pi_{d,\alpha}$ is given by
\begin{equation} \label{eq:E_f_k_beta_prime_to_poisson_rep}
\E f_{k-1}(\conv \Pi_{d,\alpha})
=
2 \sum_{\substack{m\in \{k,\ldots,d\}\\ m\equiv d \Mod{2}}}
\tilde \bI_{\infty,m}(\alpha) \tilde \bJ_{m,k}\left(\frac{\alpha + m-1}{2}\right),
\end{equation}
where
\begin{equation}\label{eq:poisson_voronoi_f_k_addition_rep}
\tilde \bI_{\infty,m}(\alpha)
:=
\lim_{n\to\infty} \tilde \bI_{n,m}(\alpha)
=
\frac{\tilde c_{\frac{\alpha m+1}{2}}}{\tilde c_{\frac{\alpha+1}{2}}^m} \cdot \frac{\alpha^{m-1}}{m}
=
\frac{\Gamma({\frac{m\alpha  + 1} 2})}{\Gamma(\frac {m\alpha}2)}
\left(\frac {\Gamma(\frac \alpha 2)} {\Gamma(\frac{\alpha+1} 2)}\right)^{m}
\frac{(\sqrt{\pi}\alpha)^{m-1}}{m}.
\end{equation}
For our purposes, it suffices to define $\tilde \bI_{\infty,m}(\alpha)$ as the right-hand side of~\eqref{eq:poisson_voronoi_f_k_addition_rep}. We shall not use the fact that $\lim_{n\to\infty} \tilde \bI_{n,m}(\alpha)$ equals the right-hand side; see the proof of Theorem~1.21 and Lemma~4.9 in~\cite{beta_polytopes} for the proof.  On the other hand, we know from~\eqref{eq:xi_n_k_tilde} that
$$
\tilde \bJ_{m,k}\left(\frac{\alpha + m-1}{2}\right)
=
\alpha^{k-m} \frac{m!}{k!}\cdot  \tilde c_{\frac{\alpha + 1}{2}}^{m-k} \cdot \frac{\tilde c_{\frac{\alpha k + 1}{2}}}{\tilde c_{\frac{\alpha m + 1}{2}}}  \cdot \frac{\left(m - \frac 1\alpha\right) \mA\left[ m - \frac 2\alpha,k - \frac 2\alpha \right]}{\left(k - \frac 1\alpha\right) \mA\left[ k - \frac 2\alpha,k - \frac 2\alpha \right]},
$$
where we relied on the assumption $\alpha k >1$.
After some cancellations, we obtain
$$
\tilde \bI_{\infty,m}(\alpha) \tilde \bJ_{m,k}\left(\frac{\alpha + m-1}{2}\right)
=
\tilde \bI_{\infty,k}(\alpha)\cdot \frac{\left(m - \frac 1\alpha\right) \Gamma(m) \cdot \mA\left[ m - \frac 2\alpha,k - \frac 2\alpha \right]}{\left(k - \frac 1\alpha\right)\Gamma(k)\cdot  \mA\left[ k - \frac 2\alpha,k - \frac 2\alpha \right]}.
$$
Recall from Remark~\ref{rem:other_quantities_tilde} that the quantities defined by
$
\mtA[\nu,\kappa] := \mA[\nu,\kappa] \cdot \Gamma(\nu+1)
$
satisfy the recurrence relations
\begin{equation*}
\mtA[\nu,\kappa] - \mtA[\nu-2,\kappa]
=
\frac{\left(\nu - \frac 1 \alpha\right)\Gamma(\nu)}{\Gamma\left(\nu - \frac 2 \alpha + 1\right)} \mtA \left[\nu -  \frac 2\alpha, \kappa - \frac 2 \alpha\right].
\end{equation*}
It follows that
$$
\tilde \bI_{\infty,m}(\alpha) \tilde \bJ_{m,k}\left(\frac{\alpha + m-1}{2}\right)
=
\tilde \bI_{\infty,k}(\alpha)\cdot \frac{\mtA[m,k] - \mtA[m-2,k]}{\mtA[k,k]},
$$
where we also used that $\mtA[k-2,k]=0$ by definition. With this at hand, the right-hand side of~\eqref{eq:E_f_k_beta_prime_to_poisson_rep} becomes a telescope sum. Evaluating it, we obtain
$$
\E f_{k-1}(\conv \Pi_{d,\alpha})
= 2 \tilde \bI_{\infty,k}(\alpha)\cdot \sum_{\substack{m\in \{k,\ldots,d\}\\ m\equiv d \Mod{2}}}  \frac{\mtA[m,k] - \mtA[m-2,k]}{\mtA[k,k]}
= 2 \, \tilde \bI_{\infty,k}(\alpha) \cdot \frac{\mtA[d,k]}{\mtA[k,k]}.
$$
Using~\eqref{eq:poisson_voronoi_f_k_addition_rep}, the relation $\mtA[n,k] = n! \, \mA[n,k]$ and the formula $\mA[k,k] = \frac 1 k \, \tilde c_{\frac{\alpha k + 1}{2}}$ that follows from~\eqref{eq:a_diag_tilde}, we arrive at
$$
\E f_{k-1}(\conv \Pi_{d,\alpha}) = 2 \alpha^{k-1} \frac{d!}{k!} \cdot \tilde c_{\frac{\alpha+1}{2}}^{-k} \cdot  \mA[d,k].
$$
Recalling the definition of $\mA[d,k]$ given in~\eqref{eq:def_A_0_tilde2} completes the proof of~\eqref{eq:f_k_wspom} under the assumption $\alpha k >1$. To prove it under the weaker assumption $\alpha > 0$, we use analytic continuation. Fix $d\in\N$ and $k\in \{1,\ldots,d\}$. We claim that the function $\alpha \mapsto \E f_{k-1}(\conv \Pi_{d,\alpha})$  can be analytically continued to the half-plane $\Re \alpha >0$. Indeed, all terms on the right-hand side of~\eqref{eq:E_f_k_beta_prime_to_poisson_rep} can be analytically continued to this half-plane. For the $\tilde \bI_{\infty,m}$-terms this follows from~\eqref{eq:poisson_voronoi_f_k_addition_rep}, whereas for the $\tilde \bJ_{m,k}$-terms we established this in the proof of Theorem~\ref{theo:bJ_tilde_formula_integral}. As in this proof, one also shows that the right-hand side of~\eqref{eq:f_k_wspom} defines an analytic function of $\alpha$ in the half-plane $\Re \alpha > -\frac 1d$. By the uniqueness of analytic continuation, \eqref{eq:f_k_wspom} holds for all $\alpha > 0$.
\end{proof}


\subsection{Evaluating the residue: Proof of Proposition~\ref{prop:J_n_tilde_polynomial}}
The proof is similar to the proof of Proposition~\ref{prop:J_n_1_half_integer}, but is simpler. In view of Theorem~\ref{theo:bJ_tilde_formula_residue} we have to show that
$$
P_{\alpha,k}(n) := \alpha^{n-k} [x^{-1}] h_{n,k}(x)
$$
is a polynomial of $n$ with rational coefficients, where
$$
h_{n,k}(x) = \frac{\left(\int_{0}^x (\sin y)^{\alpha-1} \dd y\right)^{n-k}}{(\sin x)^{\alpha n - 1}}.
$$
Let us analyze the Laurent series of $h_{n,k}(x)$ at $x=0$. We have the Taylor series
$$
\int_{0}^x (\sin y)^{\alpha-1} \dd y = \frac {x^{\alpha}} {\alpha} \sum_{j=0}^\infty a_{j} x^{2j},
$$
where $a_{0},a_2,\ldots$ are rational numbers with $a_0=1$. Raising this series to the $(n-k)$-th power and arguing as in the proof of Proposition~\ref{prop:J_n_1_half_integer}, we obtain
\begin{equation}\label{eq:wspom_taylor1}
\left(\int_{0}^x (\sin y)^{\alpha-1} \dd y\right)^{n-k}
=
\frac {x^{\alpha (n-k)}} {\alpha^{n-k}} \sum_{j=0}^\infty b_{j}(n) x^{2j}
\end{equation}
for some polynomials $b_{0}(n),b_2(n),\ldots$ with rational coefficients and $b_0(n) = 1$. Similarly,
\begin{equation}\label{eq:wspom_taylor2}
(\sin x)^{\alpha n - 1} = x^{\alpha n - 1} \sum_{j=0}^\infty c_j(n) x^{2j}
\end{equation}
for some polynomials $c_0(n),c_2(n),\ldots$ with rational coefficients and $c_0(n) = 1$. Taking the quotient of the power series~\eqref{eq:wspom_taylor1} and~\eqref{eq:wspom_taylor2}, we obtain
$$
h_{n,k}(x) = x^{-\alpha k + 1} \alpha^{-(n-k)} \sum_{j=0}^\infty d_j(n) x^{2j}
$$
for some polynomials $d_0(n),d_2(n),\ldots$ with rational coefficients and $d_0(n) = 1$. For the residue of $h_{n,k}(x)$ we thus obtain
$$
\alpha^{n-k} [x^{-1}] h_{n,k}(x) = d_{\frac{\alpha k - 2}{2}}(n),
$$
which is a polynomial in $n$ with rational coefficients.
\hfill $\Box$

\subsection{Remarks on consistency of notation}\label{subsec:special_cases}
In the special case $\alpha=2$ we have two definitions of the quantities $\mtA[\nu,\kappa]$ and $\mtB\{\nu,\kappa\}$. These were given in Section~\ref{sec:simplest_case_proof}, see~\eqref{eq:def_A_alpha_2} and~\eqref{eq:def_B_alpha_2},  and in the present Section~\ref{sec:proof_beta_tilde}, see~\eqref{eq:def_B_tilde}, \eqref{eq:def_A_0_tilde1}, \eqref{eq:def_mtA_mtB}. Let us argue that these definitions are equivalent. For $\mtB\{\nu,\kappa\}$, it is evident that~\eqref{eq:def_B_alpha_2} is equivalent to~\eqref{eq:def_B_tilde} (where we take $\alpha=2$ and observe that $F(x) = 1 + \sin x$); see also~\eqref{eq:def_mtA_mtB}. To prove that the definitions of $\mtA[\nu,\kappa]$ given in~\eqref{eq:def_A_alpha_2} and~\eqref{eq:def_A_0_tilde1}, \eqref{eq:def_mtA_mtB} are equivalent, it suffices to check that with either of the definitions the matrices $((-1)^m \mtB\{n,m\})_{n,m\in \N}$ and $((-1)^k (m - \frac 12) \mtA[m-1, k-1])_{m,k\in\N}$ are inverses of each other. This has been established in Propositions~\ref{prop:inverse_matr_alpha_2} and~\ref{prop:relations_A_B_tilde}. Note that although Proposition~\ref{prop:relations_A_B_tilde} deals with $\mB$ and $\mA$ rather than with $\mtB$ and $\mtA$, both versions are equivalent when $\alpha=2$, as follows from~\eqref{eq:def_mtA_mtB}.

In the special case $\alpha=1$ the definitions of the quantities $\mtA[\nu,\kappa]$ and $\mtB\{\nu,\kappa\}$ given in~\eqref{eq:def_B_tilde}, \eqref{eq:def_A_0_tilde1}, \eqref{eq:def_mtA_mtB} are almost equivalent to the definitions of $A[n,k]$ and $B\{n,k\}$ given in~\cite{kabluchko_poisson_zero}. We have $\mtB\{n,k\} = B\{n,k\}$, which follows directly by comparing Equations~\eqref{eq:def_B_tilde}, \eqref{eq:def_mtA_mtB} of the present paper with Equations~(1.9), (2.14) of~\cite{kabluchko_poisson_zero}. Also, we have $\mtA[n,k] = \frac 12 A[n,k]$. This is most easily seen if $k$ is even. Indeed,  by Equation~(1.15) in~\cite{kabluchko_poisson_zero} (which we regard as the definition of $A[n,k]$) and Theorem~\ref{theo:poisson_zero_f_vect} of the present paper, we have
$$
A[n,k]
=
\frac{k!}{\pi^k} \E f_{n-k}(\mathcal Z_n)
=
\frac{n!}{(n-k)!} [x^k] \left(\frac x {\sin x}\right)^{n+1}
=
\frac{n!}{(n-k)!} \Res\limits_{x=0} \left[\frac{x^{n-k}}{(\sin x)^{n+1}}\right].
$$
The right-hand side equals $2 \mtA[n,k]$ by Proposition~\ref{prop:a_nu_kappa_residue_tilde}. Without parity restrictions on $k$, the equality $\mtA[n,k] = \frac 12 A[n,k]$ can be established by comparing the following two formulae for $\tilde \bJ_{n,k}(\frac{\alpha  + n - 1}{2})$: Equation~\eqref{eq:form_J_n_k_wspom_tilde} from the present paper and Equation~(3.9) from~\cite{kabluchko_poisson_zero}.

\section{Expected face numbers of beta and beta' polytopes}\label{sec:expected_face_beta_poly}
\subsection{Beta polytopes}
Let $X_1,\ldots, X_n$ be i.i.d.\ points in $\R^d$ with probability distribution $f_{d,\beta}$, where $\beta\geq -1$.  Their convex hull  $P_{n,d}^\beta :=[X_1,\ldots,X_n]$ is called the \textit{beta polytope}. We are interested in the expected $f$-vector of this random polytope.  The case when $\beta\to +\infty$ corresponds to the Gaussian polytope. By a result of Baryshnikov and Vitale~\cite{baryshnikov_vitale}, its expected $f$-vector coincides with the expected $f$-vector of the orthogonal projection of the regular simplex with $n$ vertices onto a random, uniformly distributed $d$-dimensional subspace. For the latter model, the expected $f$-vector  has been determined by Affentranger and Schneider~\cite{AS92} in terms of the internal and external angles of the regular simplex.
For finite values of $\beta$, explicit results have been available only in some special cases~\cite{buchta_mueller,buchta_mueller_tichy,affentranger,beta_polytopes_temesvari}, see also Section~\ref{subsec:intro}.  We are now in position to determine the complete expected $f$-vector.

\begin{theorem}\label{theo:beta_poly_f_vector}
Let $d\geq 3$ and $n\in\N$ be such that $n\geq d+1$. Also, let $\beta\geq -1$ and define $\alpha:= 2\beta+d$. Then, for all $k\in \{1,\ldots,d\}$ we have
$$
\E f_{k-1} (P_{n,d}^\beta)
=
\frac{2\cdot n!}{k!} \left(\frac{\Gamma(\frac{\alpha}{2})}{2\sqrt \pi \, \Gamma(\frac{\alpha+1}{2})}\right)^{n-k}
\sum_{\substack{m\in \{k,\ldots,d\} \\ m\equiv d \Mod{2}}}
\lB\{n,m\} \left(m+\frac 1\alpha\right) \lA\left[m+\frac 2\alpha, k+\frac 2\alpha\right],
$$
where $\lB\{\cdot,\cdot\}$ and $\lA[\cdot,\cdot]$ are given by~\eqref{eq:def_B} and~\eqref{eq:def_A_0}.
\end{theorem}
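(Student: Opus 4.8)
The plan is to start from the angle--sum decomposition of the expected $f$-vector of a beta polytope that was established in~\cite{beta_polytopes}, and then merely insert the explicit representations of the external and internal angle sums in terms of $\lB\{\cdot,\cdot\}$ and $\lA[\cdot,\cdot]$ derived in Section~\ref{sec:proof_beta}. Concretely, with $\alpha = 2\beta + d$, the result of~\cite{beta_polytopes} is the finite-$n$ analogue of the Poisson formula~\eqref{eq:E_f_k_beta_prime_to_poisson}, namely
\begin{equation*}
\E f_{k-1}(P_{n,d}^\beta) = 2 \sum_{\substack{m\in \{k,\ldots,d\}\\ m\equiv d \Mod 2}} \bI_{n,m}(\alpha)\, \bJ_{m,k}\left(\frac{\alpha - m + 1}{2}\right).
\end{equation*}
Thus the entire task reduces to simplifying the generic summand $\bI_{n,m}(\alpha)\,\bJ_{m,k}(\frac{\alpha-m+1}{2})$ and checking that the resulting $m$-dependence is exactly the one appearing in the claim.

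For this I would substitute~\eqref{eq:I_n_m_b_n_m}, which writes $\bI_{n,m}(\alpha)$ as a constant times $\lB\{n,m\}$, and~\eqref{eq:xi_n_k}, which writes $\bJ_{m,k}(\frac{\alpha-m+1}{2})$ as a constant times $\lA[m+\frac2\alpha, k+\frac2\alpha]\big/\lA[k+\frac2\alpha, k+\frac2\alpha]$. The key observation is that essentially all $m$-dependent prefactors cancel: the factorials telescope to $n!/k!$, the powers of $\alpha$ combine to $\alpha^{k-n}$, the powers of $c_{\frac{\alpha-1}{2}}$ combine to $c_{\frac{\alpha-1}{2}}^{n-k}$, and, crucially, the factor $c_{\frac{\alpha m - 1}{2}}$ appearing in $\bI_{n,m}$ is cancelled exactly by the $1/c_{\frac{\alpha m - 1}{2}}$ appearing in $\bJ_{m,k}$. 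After these cancellations the only remaining $m$-dependence sits inside $\lB\{n,m\}\,(m+\frac1\alpha)\,\lA[m+\frac2\alpha, k+\frac2\alpha]$, precisely the quantity summed in the statement.

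It then remains to match the overall constant. Here I would use the identity $(k+\frac1\alpha)\lA[k+\frac2\alpha, k+\frac2\alpha] = c_{\frac{\alpha k - 1}{2}}$, established in the proof of Theorem~\ref{theo:bJ_formula_integral}, which cancels the leftover $c_{\frac{\alpha k - 1}{2}}$ in the numerator. Finally one converts $c_{\frac{\alpha-1}{2}}/\alpha$ into the stated Gamma ratio: since $c_{\frac{\alpha-1}{2}} = \Gamma(\frac{\alpha+2}{2})\big/(\sqrt\pi\,\Gamma(\frac{\alpha+1}{2}))$ and $\Gamma(\frac\alpha2 + 1) = \frac\alpha2\,\Gamma(\frac\alpha2)$, one obtains $c_{\frac{\alpha-1}{2}}/\alpha = \Gamma(\frac\alpha2)\big/(2\sqrt\pi\,\Gamma(\frac{\alpha+1}{2}))$, so that $\alpha^{k-n}c_{\frac{\alpha-1}{2}}^{n-k}$ becomes $\big(\Gamma(\frac\alpha2)/(2\sqrt\pi\,\Gamma(\frac{\alpha+1}{2}))\big)^{n-k}$. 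Carrying along the factor $2$ from the base formula reproduces the claimed constant exactly.

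Finally I would dispose of the (minor) domain conditions. Applying~\eqref{eq:xi_n_k} to each summand requires $\alpha \geq m - 3$ for every $m$ in the range; since $\beta \geq -1$ and $d\geq 3$ we have $\alpha = 2\beta + d \geq d - 2 \geq m - 2$, so this holds, $\alpha \geq 1 > 0$, and the uniqueness underlying~\eqref{eq:xi_n_k} is available with $N = d$ because $\alpha \geq d-3$. The small cases $m \in \{1,2\}$ cause no trouble, since the construction in Proposition~\ref{prop:bJ_formula_A_notilde} yields the unique solution for \emph{all} $m \in \{1,\ldots,d\}$. The only substantive input one must quote is the base decomposition from~\cite{beta_polytopes}; everything after it is bookkeeping, and I anticipate no real obstacle, as the content of the theorem has effectively been front-loaded into the identity~\eqref{eq:xi_n_k} together with the external-angle representation~\eqref{eq:I_n_m_b_n_m}.
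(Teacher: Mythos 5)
Your proposal is correct and follows essentially the same route as the paper: it starts from the decomposition $\E f_{k-1}(P_{n,d}^\beta) = 2\sum_{m} \bI_{n,m}(\alpha)\,\bJ_{m,k}(\frac{\alpha-m+1}{2})$ of \cite[Theorem~1.2]{beta_polytopes} and substitutes~\eqref{eq:I_n_m_b_n_m} together with~\eqref{eq:xi_n_k} (which, after the identity $(k+\frac1\alpha)\lA[k+\frac2\alpha,k+\frac2\alpha]=c_{\frac{\alpha k-1}{2}}$, is exactly the paper's~\eqref{eq:form_J_n_k_wspom}), then performs the same cancellations. Your explicit verification of the constant and of the domain conditions ($\alpha\geq d-2$, the small-$m$ cases) merely spells out what the paper dismisses as ``straightforward transformations.''
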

\begin{proof}
In~\cite[Theorem~1.2]{beta_polytopes} it was shown that
\begin{equation}\label{eq:beta_poly_f_vector}
\E f_{k-1} (P_{n,d}^\beta) = 2 \sum_{\substack{m\in \{k,\ldots,d\} \\ m\equiv d \Mod{2}}} \bI_{n,m}(\alpha) \bJ_{m,k}\left(\frac{\alpha - m + 1}{2}\right).
\end{equation}
For the terms appearing on the right-hand side we have shown in~\eqref{eq:I_n_m_b_n_m} and~\eqref{eq:form_J_n_k_wspom} that
\begin{align*}
\bI_{n,m}(\alpha)
&=
\alpha^{-n} n! c_{\frac{\alpha-1}{2}}^{n} \cdot \frac{c_{\frac{\alpha m-1}{2}} \alpha^{m}}{c_{\frac{\alpha-1}{2}}^m m!} \cdot \lB\{n,m\},
\\
\bJ_{m,k}\left(\frac{\alpha - m + 1}{2}\right)
&=
\alpha^{k-m} \frac{m!}{k!}
\cdot
c_{\frac{\alpha-1}{2}}^{m-k}
\cdot
c_{\frac{\alpha m}{2}}
\cdot
\frac {2\pi}\alpha \, \lA\left[ m+\frac 2\alpha,k+\frac 2\alpha \right].
\end{align*}
Plugging these formulae into~\eqref{eq:beta_poly_f_vector} we obtain the required result after straightforward transformations.
\end{proof}

Taking $\beta=0$ or $\beta=-1$ in Theorem~\ref{theo:beta_poly_f_vector}, we finally obtain a solution to Problem~A stated in the introductory Section~\ref{subsec:intro}. The involved integrals can be evaluated using computer algebra. The next theorem describes the arithmetic properties of the expected $f$-vector.

\begin{theorem}
\label{theo:arithm_beta_f_vect}
Let $d\in\N$, $n\in\N$ and $k\in \{1,\ldots,d\}$ be such that $n\geq d+1$. Also, let $\beta\geq -1$ be integer or half-integer.
\begin{itemize}
\item[(a)] If $2\beta + d$ is odd, then $\E f_{k-1} (P_{n,d}^\beta)$ is a rational number.
\item[(b)] If $2\beta + d$ is even, then  $\E f_{k-1} (P_{n,d}^\beta)$ can be expressed as a linear combination of the numbers $\pi^{-2r}$, where $r=0,1,\ldots, \lfloor \frac{n-k}{2}\rfloor$, with rational coefficients.
\end{itemize}
\end{theorem}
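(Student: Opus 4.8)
The plan is to start from the angle-sum decomposition \eqref{eq:beta_poly_f_vector} of Theorem~\ref{theo:beta_poly_f_vector},
\[
\E f_{k-1}(P_{n,d}^\beta)=2\!\!\sum_{\substack{m\in\{k,\ldots,d\}\\ m\equiv d\Mod 2}}\!\!\bI_{n,m}(\alpha)\,\bJ_{m,k}\!\left(\tfrac{\alpha-m+1}{2}\right),\qquad \alpha:=2\beta+d,
\]
and to deduce the arithmetic of the left-hand side from that of the two factors. Since $\beta$ is integer or half-integer, $\alpha$ is an integer, and for $d\ge3$ we have $\alpha\ge d-2\ge1$, so every index $m\le d$ satisfies $\alpha\ge m-2\ge m-3$ and all internal terms lie in the admissible range of Theorem~\ref{theo:arithm_J}; the cases $d\in\{1,2\}$ give segments and polygons and are elementary.

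I would first dispose of the internal factor. With $\gamma_m:=\tfrac{\alpha-m+1}{2}$ one has $2\gamma_m+m=\alpha+1$, so Theorem~\ref{theo:arithm_J} applies according to the parity of $\alpha$: if $\alpha$ is odd then $2\gamma_m+m$ is even and part (a) gives $\bJ_{m,k}(\gamma_m)\in\mathbb Q$; if $\alpha$ is even then $2\gamma_m+m$ is odd and parts (b),(c) show that $\bJ_{m,k}(\gamma_m)$ is a rational combination of the even powers $\pi^{-2r}$ with $r=0,\ldots,\lfloor\tfrac{m-k}{2}\rfloor$ (a single term $\pi^{-(m-k-1)}$ if $m-k$ is odd, the full range $q_0+q_2\pi^{-2}+\cdots+q_{m-k}\pi^{-(m-k)}$ if $m-k$ is even). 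In particular, for even $\alpha$ one has $\bJ_{m,k}(\gamma_m)\in\bigoplus_{r=0}^{\lfloor(m-k)/2\rfloor}\mathbb Q\,\pi^{-2r}$.

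The substantive step is the arithmetic of the external factor $\bI_{n,m}(\alpha)$, which is not recorded in the excerpt and which I would establish directly from \eqref{eq:I_n_m_b_n_m}, namely $\bI_{n,m}(\alpha)=\binom nm c_{\frac{\alpha m-1}{2}}c_{\frac{\alpha-1}{2}}^{\,n-m}\mathcal I$ with $\mathcal I:=\int_{-\pi/2}^{+\pi/2}(\cos x)^{\alpha m}(F(x))^{n-m}\dd x$ and $F$ as in \eqref{eq:def_F}. From \eqref{eq:c_beta} and Legendre's duplication formula, $c_{(\mu-1)/2}$ is rational when $\mu$ is odd and a rational multiple of $\pi^{-1}$ when $\mu$ is even; applied with $\mu=\alpha$ and $\mu=\alpha m$ this controls the constants. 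For the integral I would expand $(\cos y)^{\alpha}$ into frequencies and integrate. When $\alpha$ is odd, $(\cos y)^\alpha$ has no constant Fourier term, so $F$ is a rational trigonometric polynomial in $\sin$ of odd frequencies plus a rational constant (no term linear in $x$); a parity count of the frequencies of $(\cos x)^{\alpha m}(F)^{n-m}$ then gives $\mathcal I\in\mathbb Q$ for odd $m$ and $\mathcal I\in\mathbb Q\,\pi$ for even $m$, and since the prefactor is rational for odd $m$ and a rational multiple of $\pi^{-1}$ for even $m$, the single power of $\pi$ cancels and $\bI_{n,m}(\alpha)\in\mathbb Q$. When $\alpha$ is even, $F(x)=a_0\bigl(x+\tfrac\pi2\bigr)+T(x)$ with $a_0\in\mathbb Q$ and $T$ a rational $\sin$-of-even-frequency polynomial; writing $F=\tfrac{a_0\pi}{2}+\phi$ with $\phi$ odd, only the even powers $\phi^{l}$ survive the symmetric integration, and the elementary fact that $\int_{-\pi/2}^{+\pi/2}x^{i}\,(\text{rational even-frequency trigonometric polynomial})\,\dd x$ is a rational combination of \emph{odd} powers of $\pi$ with top power $\pi^{i+1}$ shows that $\mathcal I$ is a rational combination of $\pi^{n-m+1},\pi^{n-m-1},\ldots$; since here the prefactor is a rational multiple of $\pi^{-(n-m+1)}$ (both $\alpha$ and $\alpha m$ being even), this yields $\bI_{n,m}(\alpha)\in\bigoplus_{s=0}^{\lfloor(n-m)/2\rfloor}\mathbb Q\,\pi^{-2s}$.

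Finally I would assemble the two factors. For odd $\alpha$ every summand of the displayed decomposition is a product of rational numbers, so $\E f_{k-1}(P_{n,d}^\beta)\in\mathbb Q$, which is part (a). For even $\alpha$ each summand lies in $\bigl(\bigoplus_{s=0}^{\lfloor(n-m)/2\rfloor}\mathbb Q\pi^{-2s}\bigr)\cdot\bigl(\bigoplus_{r=0}^{\lfloor(m-k)/2\rfloor}\mathbb Q\pi^{-2r}\bigr)$, whose powers of $\pi$ are even and of order at most $2\bigl(\lfloor\tfrac{n-m}2\rfloor+\lfloor\tfrac{m-k}2\rfloor\bigr)\le 2\lfloor\tfrac{n-k}2\rfloor$ by superadditivity of the floor; summing over $m$ keeps us in $\bigoplus_{t=0}^{\lfloor(n-k)/2\rfloor}\mathbb Q\pi^{-2t}$, which is part (b). The main obstacle is precisely the even-$\alpha$ analysis of $\bI_{n,m}(\alpha)$: the secular term $a_0x$ in $F$ produces integrals of $x^i$ against trigonometric polynomials and hence, a priori, all powers of $\pi$, so one must show both that only even powers survive after accounting for the prefactor and that the degree is capped by $\lfloor\tfrac{n-m}2\rfloor$; the decomposition $F=\tfrac{a_0\pi}2+\phi$ together with the odd-powers-of-$\pi$ lemma is what makes this transparent.
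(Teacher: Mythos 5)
Your proof is correct, and its skeleton coincides with the paper's: both start from the decomposition \eqref{eq:beta_poly_f_vector} and reduce the claim to the arithmetic of the two factors $\bI_{n,m}(\alpha)$ and $\bJ_{m,k}\bigl(\frac{\alpha-m+1}{2}\bigr)$, the latter handled through Theorem~\ref{theo:arithm_J} exactly as you do (the bookkeeping $2\gamma_m+m=\alpha+1$ is the same). The difference lies in the external factor: the paper disposes of $\bI_{n,m}(\alpha)$ in one line by citing Propositions~5.4 and~5.6 of~\cite{kabluchko_algorithm}, whereas you prove the needed statements from scratch --- expanding $(\cos y)^{\alpha}$ into Fourier modes, splitting $F=\frac{a_0\pi}{2}+\phi$ with $\phi$ odd in the even-$\alpha$ case, and invoking the fact that $\int_{-\pi/2}^{\pi/2}x^{i}\,(\text{even-frequency trigonometric polynomial})\,\dd x$ is a rational combination of odd powers of $\pi$ capped at $\pi^{i+1}$. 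I checked this analysis and it holds up: in the odd-$\alpha$ case only odd (hence nonzero) frequencies survive when $m$ is odd, while for even $m$ only the constant mode survives, producing exactly one power of $\pi$ that cancels against the prefactor $c_{\frac{\alpha m-1}{2}}$; in the even-$\alpha$ case the minimal power of $\pi$ occurring in the integral is $1$ or $2$ according to the parity of $n-m$, which gives precisely the cap $s\le\lfloor\frac{n-m}{2}\rfloor$, and the superadditivity $\lfloor\frac{n-m}{2}\rfloor+\lfloor\frac{m-k}{2}\rfloor\le\lfloor\frac{n-k}{2}\rfloor$ then yields the stated exponent bound. What your route buys is self-containedness and an explicit derivation of the degree cap $\lfloor\frac{n-k}{2}\rfloor$, which in the paper is hidden inside the cited propositions; what the paper's route buys is brevity.

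One small remark: your deferral of $d\in\{1,2\}$ to ``elementary'' is the only soft spot. For $d=2$ and integer $\beta$ the claim is not elementary (already Sylvester's four-point problem produces a $\pi^{-2}$ term), but no separate argument is needed: the decomposition \eqref{eq:beta_poly_f_vector} and your Fourier analysis apply verbatim for $d=2$, where $\alpha=2\beta+2\ge 0$ is still a nonnegative integer (with $\alpha=0$ giving $F(x)=x+\frac{\pi}{2}$, i.e.\ your even case with $T\equiv 0$); only the degenerate case $(d,\beta)=(1,-1)$, where the points take just two values, requires a trivial direct check.
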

\begin{proof}
If $\alpha:= 2\beta + d$ is odd, then the terms $\bI_{n,m}(\alpha)$ and $\bJ_{m,k}\left(\frac{\alpha - m + 1}{2}\right)$ involved in~\eqref{eq:beta_poly_f_vector} are rational numbers by~\cite[Proposition~5.4]{kabluchko_algorithm} and Theorem~\ref{theo:arithm_J}, respectively. If $\alpha$ is even, the claim follows from~\cite[Proposition~5.6]{kabluchko_algorithm} and Theorem~\ref{theo:arithm_J}.
\end{proof}

\subsection{Beta' polytopes}
Let $\tilde X_1,\ldots, \tilde X_n$ be i.i.d.\ points in $\R^d$ with probability distribution $\tilde f_{d,\beta}$, where $\beta > \frac d2$.   Their convex hull  $\tilde P_{n,d}^\beta :=[X_1,\ldots,X_n]$ is called the \textit{beta' polytope}.  In the next theorem we compute its expected $f$-vector.

\begin{theorem}\label{theo:beta_poly_f_vector_tilde}
Let $d\in\N$ and $n\in\N$ be such that $n\geq d+1$. Let also $\beta > \frac d2$ and put $\alpha:= 2\beta - d > 0$. Then, for all $k\in \{1,\ldots,d\}$ such that $\alpha k>1$ we have
$$
\E f_{k-1} (\tilde P_{n,d}^\beta)
=
\frac{2\cdot n!}{k!} \left(\frac{\Gamma(\frac{\alpha+1}{2})}{2\sqrt \pi \, \Gamma(\frac{\alpha+2}{2})}\right)^{n-k}
\sum_{\substack{m\in \{k,\ldots,d\} \\ m\equiv d \Mod{2}}}
\mB\{n,m\} \left(m - \frac 1\alpha\right) \mA\left[m - \frac 2\alpha, k - \frac 2\alpha\right],
$$
where $\mB\{\cdot,\cdot\}$ and $\mA[\cdot,\cdot]$ are given by~\eqref{eq:def_B_tilde} and~\eqref{eq:def_A_0_tilde1}.
\end{theorem}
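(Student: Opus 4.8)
The plan is to follow the proof of Theorem~\ref{theo:beta_poly_f_vector} essentially verbatim, replacing every beta object by its tilded beta' counterpart. The starting point is the beta' counterpart of~\eqref{eq:beta_poly_f_vector}, which is likewise established in~\cite{beta_polytopes} and which expresses the expected face numbers of $\tilde P_{n,d}^\beta$ through expected external and internal angle sums of beta' simplices:
\begin{equation}\label{eq:beta_prime_decomp_plan}
\E f_{k-1}(\tilde P_{n,d}^\beta)
=
2\sum_{\substack{m\in\{k,\ldots,d\}\\ m\equiv d\Mod{2}}}
\tilde\bI_{n,m}(\alpha)\,\tilde\bJ_{m,k}\left(\frac{\alpha+m-1}{2}\right),
\qquad \alpha=2\beta-d.
\end{equation}
Because $m\geq k$ in each summand and $\alpha k>1$ by hypothesis, every index satisfies $\alpha m>1$, so all the quantities $\tilde\bJ_{m,k}$ lie in the range where the closed form~\eqref{eq:form_J_n_k_wspom_tilde} is valid and no analytic continuation is required; the formula may be applied termwise.

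First I would substitute into~\eqref{eq:beta_prime_decomp_plan} the two explicit expressions already at our disposal: the formula~\eqref{eq:I_n_m_b_n_m_tilde} for $\tilde\bI_{n,m}(\alpha)$ in terms of $\mB\{n,m\}$, and the formula~\eqref{eq:form_J_n_k_wspom_tilde} for $\tilde\bJ_{m,k}$ in terms of $\mA[m-\tfrac2\alpha,k-\tfrac2\alpha]$. Multiplying the two and tracking the elementary factors, the powers of $\alpha$ combine to $\alpha^{k-n-1}$, the factorials telescope to $n!/k!$, and the powers of $\tilde c_{\frac{\alpha+1}{2}}$ combine to $\tilde c_{\frac{\alpha+1}{2}}^{\,n-k}$; what is left over is the $m$-dependent factor $2\pi\,\tilde c_{\frac{\alpha m+1}{2}}\tilde c_{\frac{\alpha m}{2}}$ multiplying $\mB\{n,m\}\mA[m-\tfrac2\alpha,k-\tfrac2\alpha]$.

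The crux is then to recognize this leftover factor. Using $\Gamma(\tfrac{\alpha m+1}{2})=\tfrac{\alpha m-1}{2}\Gamma(\tfrac{\alpha m-1}{2})$ together with the definition~\eqref{eq:c_beta_tilde} of $\tilde c_\beta$, one checks the identity $2\pi\,\tilde c_{\frac{\alpha m+1}{2}}\tilde c_{\frac{\alpha m}{2}}=\alpha m-1=\alpha\bigl(m-\tfrac1\alpha\bigr)$, which is exactly the relation already used (in reverse) to pass from~\eqref{eq:xi_n_k_tilde} to~\eqref{eq:form_J_n_k_wspom_tilde}. This converts each summand into $\alpha^{k-n}\tfrac{n!}{k!}\tilde c_{\frac{\alpha+1}{2}}^{\,n-k}\bigl(m-\tfrac1\alpha\bigr)\mB\{n,m\}\mA[m-\tfrac2\alpha,k-\tfrac2\alpha]$. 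A final Gamma-function simplification, namely $\tilde c_{\frac{\alpha+1}{2}}/\alpha=\Gamma(\tfrac{\alpha+1}{2})/(2\sqrt\pi\,\Gamma(\tfrac{\alpha+2}{2}))$ (again from~\eqref{eq:c_beta_tilde} and $\Gamma(\tfrac{\alpha+2}{2})=\tfrac\alpha2\Gamma(\tfrac\alpha2)$), turns $\alpha^{k-n}\tilde c_{\frac{\alpha+1}{2}}^{\,n-k}$ into the prefactor $\bigl(\Gamma(\tfrac{\alpha+1}{2})/(2\sqrt\pi\,\Gamma(\tfrac{\alpha+2}{2}))\bigr)^{n-k}$ appearing in the statement. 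Inserting everything back into~\eqref{eq:beta_prime_decomp_plan} and pulling the $m$-independent factors out of the sum yields the claimed formula. I do not expect any genuine obstacle here: the only thing to watch is the bookkeeping of Gamma factors and the duplication-type identities, which are entirely routine and parallel to the beta case. The one structural point worth stating explicitly is that the hypothesis $\alpha k>1$ is precisely what guarantees that every internal-angle term in the finite sum is well defined, so that the termwise application of~\eqref{eq:form_J_n_k_wspom_tilde} is legitimate.
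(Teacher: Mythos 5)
Your proposal is correct and follows essentially the same route as the paper's own proof: start from the decomposition $\E f_{k-1}(\tilde P_{n,d}^\beta)=2\sum_{m\equiv d\;(\mathrm{mod}\;2)}\tilde\bI_{n,m}(\alpha)\tilde\bJ_{m,k}(\frac{\alpha+m-1}{2})$ from~\cite[Theorem~1.14]{beta_polytopes}, substitute~\eqref{eq:I_n_m_b_n_m_tilde} and~\eqref{eq:form_J_n_k_wspom_tilde} termwise (legitimate since $\alpha k>1$), and simplify the Gamma factors. Your explicit bookkeeping, including the identities $2\pi\,\tilde c_{\frac{\alpha m+1}{2}}\tilde c_{\frac{\alpha m}{2}}=\alpha m-1$ and $\tilde c_{\frac{\alpha+1}{2}}/\alpha=\Gamma(\tfrac{\alpha+1}{2})/(2\sqrt\pi\,\Gamma(\tfrac{\alpha+2}{2}))$, correctly fills in the "straightforward simplifications" the paper leaves implicit.
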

\begin{proof}
In~\cite[Theorem~1.14]{beta_polytopes} it was shown that
\begin{equation}\label{eq:beta_poly_f_vector_tilde}
\E f_{k-1} (\tilde P_{n,d}^\beta) = 2 \sum_{\substack{m\in \{k,\ldots,d\} \\ m\equiv d \Mod{2}}} \tilde \bI_{n,m}(\alpha) \tilde \bJ_{m,k}\left(\frac{\alpha + m - 1}{2}\right).
\end{equation}
For the terms appearing on the right-hand side we have shown in~\eqref{eq:I_n_m_b_n_m_tilde} and~\eqref{eq:form_J_n_k_wspom_tilde} that
\begin{align*}
\tilde \bI_{n,m}(\alpha)
&=
\alpha^{-n} n! \tilde c_{\frac{\alpha+1}{2}}^{n} \cdot \frac{\tilde c_{\frac{\alpha m + 1}{2}} \alpha^{m}}{\tilde c_{\frac{\alpha+1}{2}}^m m!} \cdot \mB\{n,m\},
\\
\tilde \bJ_{m,k}\left(\frac{\alpha  + m -  1}{2}\right)
&=
\alpha^{k-m} \frac{m!}{k!}
\cdot
\tilde c_{\frac{\alpha+1}{2}}^{m-k}
\cdot
\tilde c_{\frac{\alpha m}{2}}
\cdot
\frac {2\pi}\alpha \, \mA\left[ m - \frac 2\alpha,k - \frac 2\alpha \right].
\end{align*}
Plugging these formulae into~\eqref{eq:beta_poly_f_vector_tilde} and performing straightforward simplifications yields the required formula.
\end{proof}

The special case of beta' polytopes with $\alpha=1$ (and $\beta= \frac{d+1}{2}$) is related to random convex hulls on the half-sphere, a model first studied in~\cite{barany_etal}. The connection to beta' polytopes has been exploited in~\cite{convex_hull_sphere} and~\cite{kabluchko_poisson_zero}.   Let us also mention that the Poisson polytope $\Pi_{d,\alpha}$  with $\alpha = 2\beta-d$ can be seen as the weak limit, as $n\to\infty$,  of the beta'-polytope $\tilde P_{n,d}^{\beta}$ rescaled by a constant multiple of $n^{-1/\alpha}$; see~\cite[Theorem~3.1]{davis_mulrow_resnick} and~\cite[Theorem~2.1]{convex_hull_sphere}, \cite[Sections~1.4,1.5]{beta_polytopes}. The next theorem clarifies the arithmetic structure of the expected $f$-vector of $\tilde P_{n,d}^{\beta}$.

\begin{theorem}
\label{theo:arithm_beta_f_vect_tilde}
Let $d\in\N$, $n\in\N$ and $k\in \{1,\ldots,d\}$ be such that $n\geq d+1$. Also, let $\beta>\frac d2$ be integer or half-integer.
\begin{itemize}
\item[(a)] If $2\beta - d$ is even, then $\E f_{k-1} (\tilde P_{n,d}^\beta)$ is a rational number.
\item[(b)] If $2\beta - d$ is odd, then  $\E f_{k-1} (\tilde P_{n,d}^\beta)$ can be expressed as a linear combination of the numbers $\pi^{-2r}$, where $r=0,1,\ldots, \lfloor \frac{n-k}{2}\rfloor$, with rational coefficients.
\end{itemize}
\end{theorem}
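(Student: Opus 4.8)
The plan is to run the same argument as in the proof of Theorem~\ref{theo:arithm_beta_f_vect}, now in the beta' setting, feeding the angle decomposition~\eqref{eq:beta_poly_f_vector_tilde} with the arithmetic already available for the two families of factors. Write $\alpha := 2\beta - d$; since $\beta$ is an integer or half-integer and $d\in\N$, the number $\alpha$ is a positive integer (positive because $\beta>d/2$), and its parity is exactly the quantity occurring in the hypotheses. By~\eqref{eq:beta_poly_f_vector_tilde},
\[
\E f_{k-1}(\tilde P_{n,d}^\beta) = 2\sum_{\substack{m\in\{k,\ldots,d\}\\ m\equiv d \Mod{2}}} \tilde\bI_{n,m}(\alpha)\,\tilde\bJ_{m,k}\!\left(\frac{\alpha+m-1}{2}\right),
\]
so it suffices to control the arithmetic of each factor and then multiply and add. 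The internal factor carries the parameter $\beta_m := \frac{\alpha+m-1}{2}$, for which $2\beta_m - m = \alpha - 1$; this is precisely the quantity governing Theorem~\ref{theo:arithm_J_tilde}.

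For part (a), suppose $\alpha$ is even. Then $\alpha - 1$ is odd, so Theorem~\ref{theo:arithm_J_tilde}(a) gives $\tilde\bJ_{m,k}(\frac{\alpha+m-1}{2})\in\bQ$. I would then check directly that the external factor $\tilde\bI_{n,m}(\alpha)$ is also rational: for even $\alpha$ the exponent $\alpha-1$ in~\eqref{eq:def_F_tilde} is odd, so $\tilde F(\varphi)$ is a rational polynomial in $\sin\varphi$; since $\alpha m-1$ is odd as well, the substitution $u=\sin\varphi$ turns the integral defining $\tilde I_{n,m}(\alpha)$ in~\eqref{eq:I_n_k_tilde} into the integral over $[-1,1]$ of a rational polynomial, and, $\tilde c_{\frac{\alpha m+1}{2}}$ and $\tilde c_{\frac{\alpha+1}{2}}$ being rational for even $\alpha$, this yields $\tilde\bI_{n,m}(\alpha)\in\bQ$ (equivalently, one may invoke the external-angle arithmetic of~\cite{kabluchko_algorithm}). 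Hence every summand, and the whole sum, is rational.

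For part (b), suppose $\alpha$ is odd. Then $\alpha-1$ is even, and Theorem~\ref{theo:arithm_J_tilde}(c) writes $\tilde\bJ_{m,k}(\frac{\alpha+m-1}{2})$ as a $\bQ$-polynomial in $\pi^{-2}$ of degree at most $\lfloor\frac{m-k}{2}\rfloor$. Now $\tilde\bI_{n,m}(\alpha)$ is no longer rational, because for odd $\alpha$ the even exponent $\alpha-1$ forces a term linear in $\varphi$ in $\tilde F$ and hence powers of $\pi$; the input I need is that the external arithmetic of~\cite{kabluchko_algorithm} exhibits $\tilde\bI_{n,m}(\alpha)$ as a $\bQ$-polynomial in $\pi^{-2}$ of degree at most $\lfloor\frac{n-m}{2}\rfloor$. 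Multiplying, each summand is then a $\bQ$-polynomial in $\pi^{-2}$ of degree at most $\lfloor\frac{n-m}{2}\rfloor+\lfloor\frac{m-k}{2}\rfloor\le\lfloor\frac{n-k}{2}\rfloor$, the last inequality being $\lfloor a\rfloor+\lfloor b\rfloor\le\lfloor a+b\rfloor$. Summing over $m$ yields the asserted form $q_0+q_2\pi^{-2}+\cdots+q_{2\lfloor(n-k)/2\rfloor}\pi^{-2\lfloor(n-k)/2\rfloor}$ with rational coefficients.

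The main obstacle is the precise $\pi$-arithmetic of the external angle sums $\tilde\bI_{n,m}(\alpha)$ in the odd-$\alpha$ case of part (b): one must know not merely that $\tilde\bI_{n,m}(\alpha)$ is some Laurent polynomial in $\pi$, but that only even powers of $\pi^{-1}$ survive and that the $\pi^{-2}$-degree is controlled by $\lfloor\frac{n-m}{2}\rfloor$, so that its product with the internal factor stays inside $\bQ[\pi^{-2}]$ and respects the stated degree bound. This is exactly the role played by \cite[Propositions~5.4 and~5.6]{kabluchko_algorithm} in the beta case; once their beta' analogues for $\tilde\bI_{n,m}(\alpha)$ are in hand, the theorem follows immediately from the decomposition above together with Theorem~\ref{theo:arithm_J_tilde}.
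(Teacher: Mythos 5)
Your proposal is correct and takes essentially the same route as the paper: it uses the decomposition~\eqref{eq:beta_poly_f_vector_tilde}, applies Theorem~\ref{theo:arithm_J_tilde} to the internal factors (via $2\beta_m - m = \alpha-1$), and imports the arithmetic of the external factors $\tilde\bI_{n,m}(\alpha)$ from~\cite{kabluchko_algorithm}. The external input you flag as the ``main obstacle'' in the odd-$\alpha$ case is precisely what the paper cites, namely \cite[Theorem~2.10(b)]{kabluchko_algorithm} (with part (a) of that theorem covering the even-$\alpha$ rationality you verified by hand), so your argument closes exactly as the paper's does once that citation is inserted.
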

\begin{proof}
If $\alpha:= 2\beta - d$ is even, then the terms $\tilde \bI_{n,m}(\alpha)$ and $\tilde \bJ_{m,k}\left(\frac{\alpha + m - 1}{2}\right)$ appearing in~\eqref{eq:beta_poly_f_vector_tilde} are rational numbers by~\cite[Theorem~2.10(a)]{kabluchko_algorithm} and Theorem~\ref{theo:arithm_J_tilde}, respectively. If $\alpha$ is even, the claim follows from~\cite[Theorem~2.10(b)]{kabluchko_algorithm} and Theorem~\ref{theo:arithm_J_tilde}.
\end{proof}

\section{Poincar\'e and Dehn-Sommerville relations}\label{sec:dehn_sommerville}
\subsection{Statement of the relations}
Let $T$ be a $d$-dimensional simplex with $n = d+1$ vertices and recall that $\sigma_k(T)$ denotes the sum of internal angles of $T$ at all of its $k$-vertex faces, where $k\in \{1,\ldots, n\}$. The quantities $\sigma_k(T)$ satisfy the following \textit{Poincar\'e relations}:
\begin{equation}\label{eq:poincare_angles}
\sum_{k=m}^n (-1)^k \binom{k}{m} \sigma_{k}(T) = (-1)^n\sigma_{m}(T),
\end{equation}
for all $m\in \{0,\ldots,n\}$; see~\cite{poincare} and~\cite[p.~304]{GruenbaumBook}.
In the special case $m=0$ (where we put $\sigma_0(T) := 0$) this relation is also called the \textit{Euler-Gram relation}.

On the other hand, the $f$-vector of any $d$-dimensional simplicial polytope $P$ satisfies the so-called \textit{Dehn-Sommerville relations} which we shall write in the form
\begin{equation}\label{eq:dehn_sommerville}
\sum_{k=m}^d (-1)^k \binom{k}{m} f_{k-1}(P) = (-1)^d f_{m-1}(P),
\end{equation}
for all $m\in \{0,\ldots,d\}$; see~\cite[p.~146]{GruenbaumBook}. The special case $m=0$ (where we put $f_{-1}(P) = 1$) is the classical \textit{Euler relation}.

Due to the obvious similarity between~\eqref{eq:poincare_angles} and~\eqref{eq:dehn_sommerville}, it is natural to treat them in a uniform way. To this end, let us consider some real numbers $z_0,\ldots, z_n$ satisfying the general relations
\begin{equation}\label{eq:dehn_sommerville_general}
\sum_{k=m}^n (-1)^k \binom{k}{m} z_k = (-1)^n z_m,
\qquad
m\in \{0,\ldots,n\}.
\end{equation}
Obviously, both~\eqref{eq:poincare_angles} and~\eqref{eq:dehn_sommerville} become special cases of~\eqref{eq:dehn_sommerville_general} upon putting $z_k=\sigma_k(T)$ and $z_k= f_{k-1}(P)$, respectively.

\subsection{Even-indexed and odd-indexed terms}
It is well known that relations~\eqref{eq:dehn_sommerville_general} are highly linearly dependent. Various ways of constructing an equivalent system of linearly independent relations are discussed in~\cite[\S~9.2]{GruenbaumBook}. With the aim of proving Propositions~\ref{prop:ugly_residue}, \ref{prop:ugly_residue_tilde}, \ref{prop:poisson_polytope_ugly}, we are interested in expressing the even-indexed quantities $z_k$ through the odd-indexed ones, and vice versa. To state the corresponding formula, let $B_0=1,B_2=\frac 16,B_4=-\frac 1 {30},\ldots$ denote the Bernoulli numbers~\cite[Section~6.5]{graham_knuth_patashnik_book} which appear in the expansions
\begin{equation}\label{eq:bernoulli_def}
\frac z2  \coth \frac{z}{2} = \sum_{n=0}^\infty \frac{B_{2n}}{(2n)!} z^{2n},
\qquad
\frac z2  \tanh \frac{z}{2} = \sum_{n=0}^\infty \frac{(2^{2n}-1)B_{2n}}{(2n)!} z^{2n}, \qquad |z|<\pi.
\end{equation}
\begin{proposition}\label{prop:solve_poincare_rel}
Let  $z_0,\ldots, z_n$ be real numbers satisfying~\eqref{eq:dehn_sommerville_general}.
Then, for all $n\in\N$ and $k\in \{1,\ldots,n-1\}$ we have
\begin{align}
z_{k}
&=
2 \sum_{r=-1,1,3,\ldots} \frac{ B_{r+1}\, (k+r)!}{(r+1)!k!} z_{k+r},
&&\text{ if $n-k$ is even},\label{eq:sol_bernoulli1a}\\
z_{k}
&=
2 \sum_{r=1,3,5,\ldots} (2^{r+1}-1) \frac{ B_{r+1}\, (k+r)!}{(r+1)!k!} z_{k+r},
&&\text{ if $n-k$ is odd}.\label{eq:sol_bernoulli1b}
\end{align}
\end{proposition}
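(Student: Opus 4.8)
The plan is to collapse the whole system \eqref{eq:dehn_sommerville_general} into a single symmetry of the generating polynomial $Z(t):=\sum_{k=0}^{n}z_kt^k$, and then to recognize \eqref{eq:sol_bernoulli1a} and \eqref{eq:sol_bernoulli1b} as two operator identities governed by the expansions \eqref{eq:bernoulli_def}. Multiplying the $m$-th relation by $x^m$ and summing over $m\in\{0,\dots,n\}$, the inner binomial sum telescopes via $\sum_{k\ge m}\binom{k}{m}(-1)^kz_k\,x^m\mapsto\sum_k(-1)^kz_k(1+x)^k=Z(-1-x)$, so \eqref{eq:dehn_sommerville_general} is equivalent to the functional equation $Z(-1-x)=(-1)^nZ(x)$. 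Equivalently, the recentred polynomial $W(s):=Z(s-\tfrac12)$ satisfies $W(-s)=(-1)^nW(s)$, i.e.\ $W$ has parity $(-1)^n$ about the origin; consequently $W^{(j)}$ has parity $(-1)^{n+j}$, and $z_j=Z^{(j)}(0)/j!=W^{(j)}(\tfrac12)/j!$.

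Next I would put the target identities into operator form. Writing $\frac{(k+r)!}{k!}z_{k+r}=Z^{(k+r)}(0)/k!$ and setting $r+1=2p$, the claims \eqref{eq:sol_bernoulli1a}–\eqref{eq:sol_bernoulli1b}, after clearing the factor $1/k!$, assert that $Z^{(k)}(0)$ equals $2\sum_{p}\frac{B_{2p}}{(2p)!}Z^{(k-1+2p)}(0)$ (even case) resp.\ $2\sum_{p}\frac{(2^{2p}-1)B_{2p}}{(2p)!}Z^{(k-1+2p)}(0)$ (odd case). Setting $V:=W^{(k-1)}$, so that $W^{(k-1+2p)}=V^{(2p)}$ and $Z^{(k)}(0)=V'(\tfrac12)$, the two sums are by \eqref{eq:bernoulli_def} exactly $\bigl[D\coth\tfrac{D}{2}\,V\bigr](\tfrac12)$ and $\bigl[D\tanh\tfrac{D}{2}\,V\bigr](\tfrac12)$, where $D=\mathrm d/\mathrm dt$ (all series are finite since $Z$ is a polynomial). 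Thus both claims reduce to $V'(\tfrac12)=[D\coth\tfrac{D}{2}\,V](\tfrac12)$ when $n-k$ is even and $V'(\tfrac12)=[D\tanh\tfrac{D}{2}\,V](\tfrac12)$ when $n-k$ is odd. Crucially, $V=W^{(k-1)}$ has parity $(-1)^{n+k-1}$, so $V$ is an odd polynomial in the first case and an even one in the second, which is exactly what selects $\coth$ versus $\tanh$.

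The key step is a factorization through the Bernoulli operator. Using $\coth\tfrac{D}{2}=\frac{e^D+1}{e^D-1}$ I would set $U:=\frac{D}{e^D-1}V$, so that $[D\coth\tfrac{D}{2}\,V]=(e^D+1)U=U(\cdot+1)+U(\cdot)$, while $(e^D-1)U=DV$ gives the difference relation $U(s+1)-U(s)=V'(s)$. Evaluating both at $s=\tfrac12$ yields $[D\coth\tfrac{D}{2}\,V](\tfrac12)-V'(\tfrac12)=2U(\tfrac12)$, so the even case is equivalent to $U(\tfrac12)=0$. Entirely analogously, with $\tanh\tfrac{D}{2}=\frac{e^D-1}{e^D+1}$ and $\tilde U:=\frac{D}{e^D+1}V$ one finds $[D\tanh\tfrac{D}{2}\,V]=(e^D-1)\tilde U$ and $(e^D+1)\tilde U=DV$, whence $V'(\tfrac12)-[D\tanh\tfrac{D}{2}\,V](\tfrac12)=2\tilde U(\tfrac12)$, reducing the odd case to $\tilde U(\tfrac12)=0$.

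Finally I would deduce these two vanishing statements from the reflection symmetries of the Bernoulli and Euler polynomials. Since $\frac{D}{e^D-1}s^m=B_m(s)$ and $\frac{2}{e^D+1}s^m=E_m(s)$, expanding the polynomial $V=\sum_m v_m s^m$ gives $U=\sum_m v_mB_m$ and $\tilde U=\tfrac12\sum_m v_m\,m\,E_{m-1}$. In the even case $V$ is odd, so only odd $m$ occur, and $B_m(1-s)=(-1)^mB_m(s)$ forces $B_m(\tfrac12)=0$ for odd $m$, whence $U(\tfrac12)=0$; in the odd case $V$ is even, so only even $m$ occur, and $E_{m-1}(1-s)=(-1)^{m-1}E_{m-1}(s)$ forces $E_{m-1}(\tfrac12)=0$ for even $m$, whence $\tilde U(\tfrac12)=0$. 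I expect the main obstacle to be organizational rather than analytic: cleanly collapsing the redundant relations into the single symmetry $Z(-1-x)=(-1)^nZ(x)$, and carefully tracking the half-integer recentring so that the parity of $V$—and hence the dichotomy between \eqref{eq:sol_bernoulli1a} and \eqref{eq:sol_bernoulli1b}—is dictated precisely by the parity of $n-k$.
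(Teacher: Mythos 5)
Your proof is correct, but it takes a genuinely different route from the paper's. The paper substitutes $z_k=\frac{n!}{k!}y_{n-k}$, splits the sequence $(y_r)$ by parity of the index, and derives the truncated generating-function identities $f_{\text{even}}(u)=f_{\text{odd}}(u)\coth(\frac u2)+O(u^{n})$ and $f_{\text{odd}}(u)=f_{\text{even}}(u)\tanh(\frac u2)+O(u^{n})$ by direct power-series manipulation (rewriting the relations as identities involving $\sinh u$ and $\cosh u\pm1$, then dividing by $\sinh u$); the conclusion follows by comparing coefficients against \eqref{eq:bernoulli_def}. You instead collapse the entire redundant system into the single reflection symmetry $Z(-1-x)=(-1)^nZ(x)$ of the generating polynomial, recentre at $-\tfrac12$ so that $V=W^{(k-1)}$ has parity $(-1)^{n+k-1}$, and run the operator factorizations $D\coth\frac D2=(e^D+1)\frac{D}{e^D-1}$ and $D\tanh\frac D2=(e^D-1)\frac{D}{e^D+1}$, which reduce \eqref{eq:sol_bernoulli1a} and \eqref{eq:sol_bernoulli1b} to the classical vanishing $B_m(\tfrac12)=0$ for odd $m$ and $E_j(\tfrac12)=0$ for odd $j$. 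Your route is more conceptual: it makes explicit that the Poincar\'e/Dehn--Sommerville relations are exactly a palindromic-type symmetry of $Z$, and the $\coth$-versus-$\tanh$ dichotomy is explained by the evenness or oddness of the polynomial $V$ rather than by bookkeeping of index parities. The price is an appeal to standard but external facts about Bernoulli and Euler polynomials (the Appell representations $\frac{D}{e^D-1}s^m=B_m(s)$, $\frac{2}{e^D+1}s^m=E_m(s)$ and their reflection formulas), whereas the paper's argument is self-contained formal power-series algebra; moreover, the paper's intermediate identities \eqref{eq:f_even_f_odd_relation} are reused verbatim in the proofs of Propositions~\ref{prop:ugly_residue}, \ref{prop:ugly_residue_tilde} and~\ref{prop:poisson_polytope_ugly}, so the paper's formulation is the one the later sections actually need.
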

\begin{proof}
Equations~\eqref{eq:sol_bernoulli1b} are due to Peschl~\cite{peschl} and, as he mentions, some ideas go back to Schl\"afli~\cite{schlaefli_vielfache_kont}. A simpler derivation was given by Guinand~\cite{guinand}. We shall give here an independent proof of both, \eqref{eq:sol_bernoulli1a} and~\eqref{eq:sol_bernoulli1b}, since parts of this argument will be needed below.
Introduce new variables $y_0,\ldots,y_n$ by $z_k =\frac {n!}{k!} y_{n-k}$ for all $k\in \{0,\ldots,n\}$ and put $y_N:=0$ for $N>n$. With this notation, Equation~\eqref{eq:dehn_sommerville_general} takes the form
$$
\sum_{k=m}^n (-1)^{n-k} \frac{y_{n-k}}{(k-m)!} = y_{n-m},
\qquad
m\in \{0,\ldots,n\}.
$$
With $\ell:= n-m\in \{0,\ldots,n\}$, we can rewrite the equation as follows:
\begin{align}
&\frac{y_0}{\ell!} + \frac{y_2}{(\ell-2)!}  + \ldots + \frac{y_{\ell-1}}{1!} = \frac{y_1}{(\ell-1)!} + \frac{y_3}{(\ell-3)!} + \ldots + \frac{y_{\ell-2}}{2!} + 2y_\ell,
&&\text{if $\ell$ is odd},\label{eq:proof_bernoulli_relation_y2}\\
&\frac{y_0}{\ell!} + \frac{y_2}{(\ell-2)!}  + \ldots + \frac{y_{\ell-2}}{2!} = \frac{y_1}{(\ell-1)!} + \frac{y_3}{(\ell-3)!} + \ldots + \frac{y_{\ell-1}}{1!},
&&\text{if $\ell$ is even},
\label{eq:proof_bernoulli_relation_y1}
\end{align}
where we cancelled $y_\ell$ in~\eqref{eq:proof_bernoulli_relation_y1}.
Introduce the generating functions
$$
f_{\text{even}}(u) := \sum_{\substack{r=0,2,4,\ldots\\r\leq n}} y_ru^r,
\qquad
f_{\text{odd}}(u) := \sum_{\substack{r=1,3,5,\ldots\\ r\leq n}} y_ru^r.
$$
With this notation, we can write~\eqref{eq:proof_bernoulli_relation_y2} and \eqref{eq:proof_bernoulli_relation_y1} as
\begin{align}
f_{\text{even}}(u)\cdot \sinh u &= f_{\text{odd}}(u) \cdot (\cosh u + 1) + O(u^{n+1}),\label{eq:f_even_odd2}
\\
f_{\text{even}}(u)\cdot (\cosh u -1) &= f_{\text{odd}}(u) \cdot \sinh u + O(u^{n+1}). \label{eq:f_even_odd1}
\end{align}
Dividing both sides of~\eqref{eq:f_even_odd2} and~\eqref{eq:f_even_odd1}  by  $\sinh u = u + o(u)$, we obtain
\begin{equation}\label{eq:f_even_f_odd_relation}
f_{\text{even}}(u) = f_{\text{odd}}(u) \cdot \coth \left(\frac u2\right) + O(u^{n}),
\qquad
f_{\text{odd}}(u) = f_{\text{even}}(u) \cdot \tanh \left(\frac u2\right) + O(u^{n}).
\end{equation}
Recalling the Laurent series of $\coth (\frac u2)$  and $\tanh (\frac u2)$ given in~\eqref{eq:bernoulli_def} and comparing the coefficients of $u^{n-k}$, where $k\in \{1,\ldots,n\}$,  we get
\begin{align*}
y_{n-k} &= 2\sum_{r = -1,1,3,\ldots} \frac{B_{r+1}}{(r+1)!} y_{n-k-r},
&&\text{ if $n-k$ is even},\\
y_{n-k} &= 2\sum_{r = 1,3,\ldots} (2^{r+1}-1)\frac{B_{r+1}}{(r+1)!} y_{n-k-r},
&&\text{ if $n-k$ is odd.}
\end{align*}
Recalling that $y_{n-k} = \frac {k!}{n!} z_k$ yields~\eqref{eq:sol_bernoulli1a} and~\eqref{eq:sol_bernoulli1b}.
\end{proof}

\begin{remark}
Relations involving Bernoulli numbers hold also for the expected volumes of the convex hulls of i.i.d.\ samples from arbitrary distributions. These relations were derived in the works of Affentranger~\cite{affentranger_on_buchta,affentranger_on_buchta_rem}, Buchta~\cite{buchta_distr_indep} and Badertscher~\cite{badertscher}; see also~\cite[Theorem~8.2.6]{SW08}.
\end{remark}

\subsection{Proof of Proposition~\ref{prop:ugly_residue}}
Fix some integer $n\geq 3$ and some even $\alpha \geq n-3$.
Since the Poincar\'e relations are linear,  we can apply them to the beta simplex and take the expectation, which yields
$$
\sum_{k=m}^n (-1)^k \binom{k}{m} \bJ_{n,k}\left(\frac{\alpha - n + 1}{2}\right) = (-1)^n\bJ_{n,m}\left(\frac{\alpha - n + 1}{2}\right),
\qquad
\bJ_{n,0}\left(\frac{\alpha - n + 1}{2}\right):=0,
$$
for all $m\in\{0,\ldots,n\}$. We can therefore apply Proposition~\ref{prop:solve_poincare_rel} and its proof to the quantities $z_k := \bJ_{n,k}(\frac{\alpha - n + 1}{2})$, where $k\in \{1,\ldots,n\}$, and $z_0:=0$.  For all $k\in \{1,\ldots,n\}$ such that $r := n-k$ is odd, the quantities $y_{r}= \frac{(n-r)!}{n!}z_{n-r}$ appearing in the proof of Proposition~\ref{prop:solve_poincare_rel} are given by Theorem~\ref{theo:bJ_formula_residue} (i) as follows:
$$
y_r =
\pi\, c_{\frac{\alpha n}2}
\Res\limits_{x=0} \left[\frac{\left(c_{\frac{\alpha-1}{2}} \int_{0}^x (\sin y)^{\alpha} \dd y\right)^{r}}{r!\, (\sin x)^{\alpha n +2}}\right].
$$
It follows that the function $f_{\text{odd}}(u)$ is given by
$$
f_{\text{odd}}(u) = \pi\, c_{\frac{\alpha n}2}
\sum_{\substack{r=1,3,5,\ldots\\r\leq n}}
\Res\limits_{x=0} \left[\frac{\left( u c_{\frac{\alpha-1}{2}} \int_{0}^x (\sin y)^{\alpha} \dd y\right)^{r}}{r!\,(\sin x)^{\alpha n +2}}\right].
$$
Interchanging the sum and the residue and using the Taylor series of the $\sinh$-function, we can write this as
$$
f_{\text{odd}}(u) = \pi\, c_{\frac{\alpha n}2}
\Res\limits_{x=0} \left[\frac{ \sinh \left(u c_{\frac{\alpha-1}{2}} \int_{0}^x (\sin y)^{\alpha} \dd y\right)}{(\sin x)^{\alpha n +2}}\right]
+O(u^{n+1}).
$$
It follows from the first equation in~\eqref{eq:f_even_f_odd_relation} that
$$
f_{\text{even}}(u) = \pi\, c_{\frac{\alpha n}2} \coth \left(\frac u2\right) \cdot \Res\limits_{x=0} \left[\frac{ \sinh \left(u c_{\frac{\alpha-1}{2}} \int_{0}^x (\sin y)^{\alpha} \dd y\right)}{(\sin x)^{\alpha n +2}}\right] + O(u^{n}).
$$
Recall that the residue is the coefficient of $x^{-1}$. Let $k\in \{1,\ldots,n\}$ be such that $r = n-k$ is even. Comparing the coefficients of $u^{n-k}x^{-1}$, we obtain
$$
\bJ_{n,k}\left(\frac{\alpha - n + 1}{2}\right) = \frac{n!}{k!} y_{n-k}
=
\frac{n!}{k!} \pi\, c_{\frac{\alpha n}2} \cdot [u^{n-k}x^{-1}] \left(\frac{\sinh \left(u c_{\frac{\alpha-1}{2}} \int_{0}^x (\sin y)^{\alpha} \dd y\right)}{\tanh \left(\frac u2\right) \cdot (\sin x)^{\alpha n +2}}\right).
$$
To complete the proof, we observe that  $\sinh v  = \sin (\ii v)$ and $\tanh v = \tan (\ii v)$, hence replacing the hyperbolic functions by the corresponding trigonometric functions yields an additional factor of $(-1)^{(n-k)/2}$.
\hfill $\Box$

The proofs of Propositions~\ref{prop:ugly_residue_tilde} and~\ref{prop:poisson_polytope_ugly} are similar except that now one has to use one of the equalities in~\eqref{eq:f_even_f_odd_relation} depending on whether the codimension $d-k$ or $n-k$ is even or odd.


\section*{Acknowledgement}
Supported by the German Research Foundation under Germany's Excellence Strategy  EXC 2044 -- 390685587, Mathematics M\"unster: Dynamics - Geometry - Structure.

\bibliography{angles_rec}
\bibliographystyle{plainnat}

\end{document}